\newenvironment{manualtheorem}[1]{%
  \manualtheoreminner
}{\endmanualtheoreminner}
\newtheorem{theorem}{Theorem}[section]
\newtheorem{lemma}[theorem]{Lemma}
\newtheorem{cor}[theorem]{Corollary}
\newtheorem{conj}[theorem]{Conjecture}
\newtheorem{prop}[theorem]{Proposition}
\newtheorem{question}[theorem]{Question}
\theoremstyle{definition}
\newtheorem{defn}[theorem]{Definition}
\numberwithin{equation}{section}
\newcommand{\overbar}[1]{\mkern 1.5mu\overline{\mkern-1.5mu#1\mkern-1.5mu}\mkern 1.5mu}
\newcommand{\AAA}{\mathbb{A}}
\newcommand{\CC}{\mathbb{C}}
\newcommand{\GG}{\mathbb{G}}
\newcommand{\NN}{\mathbb{N}}
\newcommand{\PP}{\mathbb{P}}
\newcommand{\QQ}{\mathbb{Q}}
\newcommand{\RR}{\mathbb{R}}
\newcommand{\TT}{\mathbb{T}}
\newcommand{\VV}{\mathbb{V}}
\newcommand{\ZZ}{\mathbb{Z}}
\newcommand{\calA}{\mathcal{A}}
\newcommand{\calB}{\mathcal{B}}
\newcommand{\calF}{\mathcal{F}}
\newcommand{\calH}{\mathcal{H}}
\newcommand{\calK}{\mathcal{K}}
\newcommand{\calL}{\mathcal{L}}
\newcommand{\calM}{\mathcal{M}}
\newcommand{\calN}{\mathcal{N}}
\newcommand{\calO}{\mathcal{O}}
\newcommand{\calR}{\mathcal{R}}
\newcommand{\calU}{\mathcal{U}}
\newcommand{\calV}{\mathcal{V}}
\newcommand{\calW}{\mathcal{W}}
\newcommand{\calX}{\mathcal{X}}
\newcommand{\calY}{\mathcal{Y}}
\newcommand{\calZ}{\mathcal{Z}}
\newcommand{\gothA}{\mathfrak{A}}
\newcommand{\ZVHS}{\mathbb{Z}\mathrm{VHS}}
\newcommand{\RVHS}{\mathbb{R}\mathrm{VHS}}
\newcommand{\gothH}{\mathfrak{H}}
\newcommand{\dual}{\vee}
\newcommand{\Ka}{K^{\mathrm{alg}}}
\DeclareMathOperator{\Amp}{Amp}
\DeclareMathOperator{\diag}{diag}
\DeclareMathOperator{\divv}{div}
\DeclareMathOperator{\End}{End}
\DeclareMathOperator{\Gal}{Gal}
\DeclareMathOperator{\GL}{GL}
\DeclareMathOperator{\Hom}{Hom}
\DeclareMathOperator{\imag}{Im}
\DeclareMathOperator{\id}{id}
\DeclareMathOperator{\Lie}{Lie}
\DeclareMathOperator{\NS}{NS}
\DeclareMathOperator{\Null}{\mathfrak{N}}
\DeclareMathOperator{\Pic}{Pic}
\DeclareMathOperator{\cPic}{\mathcal{P}ic}
\DeclareMathOperator{\hPic}{\widehat{Pic}}
\DeclareMathOperator{\tPic}{\widetilde{Pic}}
\DeclareMathOperator{\rank}{rank}
\DeclareMathOperator{\rdim}{rel. dim}
\DeclareMathOperator{\Spec}{Spec}
\DeclareMathOperator{\spec}{sp}
\DeclareMathOperator{\tr}{tr}
\DeclareMathOperator{\vol}{vol}
\begin{document}

\title{Intersecting subvarieties of abelian schemes with group subschemes I}
\author{Tangli Ge}
\address{Department of Mathematics\\
Princeton University\\
Princeton, NJ 08540\\
U.S.A.}
\email{\url{tangli@princeton.edu}}


\begin{abstract}
In this paper, we establish the following family version of Habegger's bounded height theorem on abelian varieties \cite{Habegger_BHConAV}: a locally closed subvariety of an abelian scheme with Gao's $t^{\mathrm{th}}$ degeneracy locus~\cite{Gao_Betti} removed, intersected with all flat group subschemes of relative dimension at most $t$, gives a set of bounded total height. Our main tools include the Ax--Schanuel theorem, and intersection theory of adelic line bundles as developed by Yuan--Zhang~\cite{YuanZhang_ALB}. As two applications, we generalize Silverman's specialization theorem \cite{Silverman_Specialization} to a higher dimensional base, and establish a bounded height result towards Zhang's ICM Conjecture~\cite{Zhang_ICM98}. 
\end{abstract}
\maketitle

\setcounter{tocdepth}{1}

 \tableofcontents

\section{Introduction}\label{sec_intro}
Let $S$ be a normal quasi-projective variety defined over a number field $K$ with function field $F$, and let $\pi:\calA\rightarrow S$ be an abelian scheme with generic fiber $A$.

\subsection{Scenario one}
 By the Lang--N\'eron theorem~\cite{LangNeron_rationalpoints}, the group $A(F)$ is finitely generated. Consider the specialization map for the Mordell--Weil group
\[
\spec_s:A(F)=\calA(S)\rightarrow \calA_s
\]defined by restricting a section to the fiber $\calA_s$ over any $s\in S$. A well-known result of Silverman \cite[Thm.~C]{Silverman_Specialization} states that when $S$ is a curve and $A$ has no constant part, the specialization maps are injective for all closed points of $S$, except for a set of bounded height. Naturally, this raises the question of whether analogous results hold for higher-dimensional bases.

 Let $\sigma_1,\cdots,\sigma_r\in A(F)$ be linearly independent generators for $A(F)_\QQ$. Consider the section $\underline{\sigma}:=(\sigma_1,\cdots,\sigma_r)$ embedded in the fibered power $\calA^r_S$. For $\underline{\lambda}:=(\lambda_1,\cdots,\lambda_r)\in \ZZ^r$, denote by $\calH_{\underline{\lambda}}$  the flat group subscheme of $\calA^r_S$ given as the kernel of 
\[
\calA^r_S\rightarrow \calA,\quad \underline{x}=(x_1,\cdots,x_r)\mapsto \underline{\lambda}\cdot\underline{x}=\lambda_1 x_1+\cdots +\lambda_r x_r.
\]Observe that a linear relation $\underline{\lambda}\cdot\underline{\sigma}(s)=0$ corresponds to an intersection $\underline{\sigma}\cap\calH_{\underline\lambda}$ over $s\in S$. In other words, nontrivial linear relations among $\sigma_1,\cdots,\sigma_r$ are encoded in the intersection $\underline{\sigma}\cap \bigcup_{\underline{\lambda}\neq 0}\calH_{\underline{\lambda}}$. In general, one can not expect the intersections to be rare. The following is our first result, which recovers Silverman's theorem in the curve case:


\begin{manualtheorem}{A}[Theorem~\ref{thm_spec_maximal_variation}]\label{thm_A}
    If all simple abelian subvarieties of $A_{F^{\mathrm{alg}}}$ have maximal variation (i.e., the period maps are generically finite) and dimension at least $\dim S$, then the set of closed points $s\in S$ where $\spec_s$ fails to be injective, or equivalently the set of closed points of the intersection $\underline{\sigma}\cap \bigcup_{\underline{\lambda}\neq 0}\calH_{\underline{\lambda}}$,
    is contained in the union of a strict Zariski closed subset and a set of bounded height of $\underline{\sigma}$. 
\end{manualtheorem}

We will see that the maximal variation assumption ensures that the subvariety $\underline{\sigma}$ is \emph{optimally nondegenerate}, while the dimension assumption restricts the intersection to being, at most, \emph{just likely}. Notably, the above theorem represents a specific instance of a broader phenomenon applicable to sufficiently nondegenerate subvarieties.

\subsection{Scenario two}
For this subsection, we assume $S=C$ is a curve. Let $\langle\cdot,\cdot \rangle$ be the canonical height pairing associated to a relatively ample line bundle $\calL$ on $\calA/C$. Take a finitely generated torsion free subgroup $\Lambda\subseteq \calA(C)$ with linearly independent generators $\sigma_1,\cdots, \sigma_r$. Define the following function
 \[
h_{\Lambda}(s):=\det \left( \langle \sigma_i(s),\sigma_j(s)\rangle_{i,j}\right)
 \]for any $s\in C(K^{\mathrm{alg}})$.
In his ICM notes~\cite{Zhang_ICM98}, S.~Zhang made the following conjecture:
\begin{conj}[S.~Zhang]\label{conj_zhang}
 Let $\calA\rightarrow C$ be an abelian scheme on a curve over a number field $K$ whose generic fiber is geometrically simple of dimension $\geq 2$. Let  $\Lambda\subseteq \calA(C)$ be a finitely generated torsion free subgroup. There is $\epsilon>0$ such that 
 \[\{s\in C(K^{\mathrm{alg}}) \mid h_\Lambda(s)<\epsilon\}\]
 is finite.
\end{conj}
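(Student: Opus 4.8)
\emph{Setup of the attack.} The quantity $h_\Lambda(s)$ is the covolume (regulator) of the subgroup $\langle\sigma_1(s),\dots,\sigma_r(s)\rangle$ of $\calA_s(K^{\mathrm{alg}})/\mathrm{tors}$ for the N\'eron--Tate pairing attached to $\calL$; it vanishes exactly at the closed points of $\underline{\sigma}\cap\bigcup_{\underline{\lambda}\neq 0}\calH_{\underline{\lambda}}$, and it is small precisely when the specialized points cluster near one of the flat subgroups $\calH_{\underline\lambda}$. I would split $\{s:h_\Lambda(s)<\epsilon\}$ according to the size of $h_C(s)$ and treat the two regimes separately: a \emph{height-growth} input when $h_C(s)$ is large, and a \emph{relative Bogomolov} input when $h_C(s)$ is bounded.

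\emph{Step 1 (the bounded-height regime, the soft part).} Under the hypotheses, $\calA/C$ is geometrically simple of relative dimension $g\geq 2$ over a curve, hence either isotrivial --- a case one disposes of directly, as each $\hat h(\sigma_i(s))$ is then eventually constant or controlled on the isotrivial part --- or has no constant part, so one may assume the latter. Then the generic N\'eron--Tate form $\hat h_\eta$ is positive definite and $\mathrm{Reg}_\eta(\sigma_1,\dots,\sigma_r)>0$. By Silverman's theorem on the variation of the canonical height in a family over a curve \cite{Silverman_Specialization}, $\langle\sigma_i(s),\sigma_j(s)\rangle=\hat h_\eta(\sigma_i,\sigma_j)\,h_C(s)+o(h_C(s))$, uniformly over $C(K^{\mathrm{alg}})$, whence $h_\Lambda(s)=\big(\mathrm{Reg}_\eta+o(1)\big)\,h_C(s)^r\to\infty$; so there is $B_\epsilon$ with $\{h_\Lambda(s)<\epsilon\}\subseteq\{h_C(s)\leq B_\epsilon\}$. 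Alternatively, one can feed the curve $\underline\sigma(C)\subset\calA^r_C$ into the family bounded-height theorem of this paper: non-isotriviality gives maximal variation, so $\underline\sigma$ is optimally nondegenerate, the relevant degeneracy locus meets $\underline\sigma(C)$ in a finite set, and since the $\calH_{\underline\lambda}$ have relative codimension $g\geq 2>1$ in $\calA^r_C$ the intersection is unlikely, hence at most just likely --- giving that the exact-relation locus, and more generally the small-regulator locus, has bounded height.

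\emph{Step 2 (the finite regime, the crux).} It remains to rule out infinitely many $s\in C(K^{\mathrm{alg}})$, necessarily of unbounded degree, with $h_C(s)\leq B_\epsilon$ and $h_\Lambda(s)<\epsilon$; equivalently, no non-eventually-constant sequence $s_n$ of bounded height can have $h_\Lambda(s_n)\to 0$. I would deduce this from a \emph{relative Bogomolov property} for $\underline\sigma\hookrightarrow\calA^r_C$: using Yuan--Zhang's intersection theory of adelic line bundles \cite{YuanZhang_ALB}, realize $h_\Lambda$ (up to the comparison of Step~1) as a height against a nef adelic line bundle on a model of $C$, manufactured from the N\'eron--Tate metrized Poincar\'e bundle on $\calA^r_C$ and from $\underline\sigma$, whose essential minimum is \emph{strictly positive}. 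The positivity is where Ax--Schanuel enters --- to control the Betti/period map along $\underline\sigma$ --- together with geometric simplicity and $g\geq 2$, which prevent the putative degeneration from descending to a proper abelian subscheme, exactly as in the optimal-nondegeneracy/degeneracy-locus analysis of \cite{Gao_Betti}. A positive essential minimum yields $\epsilon>0$ for which $\{h_\Lambda(s)<\epsilon\}$ is finite.

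\emph{Main obstacle.} Step~2 is the genuine difficulty, and it is where the present paper stops at bounded height: showing that the adelic line bundle governing $h_\Lambda$ has strictly positive essential minimum is a relative/function-field analogue of the Bogomolov conjecture, not known in the stated generality; as a special case it would also give finiteness of the non-injectivity locus of $\spec_s$, strengthening Theorem~\ref{thm_A} from bounded height to finiteness. I would first try to establish it under a big-monodromy hypothesis on $\calA/C$, or for small $r$, by combining the equidistribution method with the Ax--Schanuel and adelic-intersection inputs already in place, and only then attempt the general statement.
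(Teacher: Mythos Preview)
The statement is a \emph{conjecture} in the paper, not a theorem; the paper does not prove it and explicitly presents it as open. What the paper does establish is the bounded-height analogue (Theorem~\ref{thm_B}, i.e.\ Theorem~\ref{thm_towardszhang}): there is $\epsilon>0$ for which $\{s:h_\Lambda(s)<\epsilon\}$ has bounded height. So your proposal should be read as a program toward the conjecture, not as a comparison with an existing proof.

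Your Step~1 correctly isolates the bounded-height part, and your second route --- feeding $\underline{\sigma}\subseteq\calA^r_C$ into the family bounded-height theorem after checking optimal nondegeneracy --- is exactly how the paper proves Theorem~\ref{thm_towardszhang}. The paper's argument works inside the smallest abelian subscheme $\calB\subseteq\calA^r_C$ containing $\underline{\sigma}$, observes $\underline{\sigma}$ is $(g-1)$-nondegenerate (with $g=\rdim\calB$), and then uses elementary linear algebra on the N\'eron--Tate quadratic form: if $h_\Lambda(s)<\epsilon$ then the smallest eigenvalue of the height matrix is $<\epsilon^{1/r}$, which produces $\underline{\mu}\in\ZZ^r$ with $\hat h(\underline\mu\cdot\underline\sigma(s))$ small relative to $\lVert\underline\mu\rVert^2$, placing $\underline\sigma(s)$ in $C(\epsilon_0,\calB_{\leq g-1})$. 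Your alternative via Silverman's variation formula $h_\Lambda(s)=(\mathrm{Reg}_\eta+o(1))h_C(s)^r$ also yields bounded height and is more elementary; the paper's route has the advantage of recovering the $\epsilon$-neighborhood statement and not invoking Silverman as a black box. One small inaccuracy: you write that $\calH_{\underline\lambda}$ has relative codimension $g\geq 2$ making the intersection unlikely, but the paper's bounded-height step only needs codimension $\geq 1$ in $\calB$ (a just-likely intersection); the hypothesis $g\geq 2$ in the conjecture is purely for the finiteness upgrade, and indeed Theorem~\ref{thm_towardszhang} drops it.

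Your Step~2 is the genuine gap, and you are candid about it. The passage from bounded height to finiteness would require a relative Bogomolov-type input --- strictly positive essential minimum for the adelic height governing $h_\Lambda$ --- which is not established in the paper and is not known in this generality. Your proposed line of attack (equidistribution plus Ax--Schanuel under big monodromy or small $r$) is reasonable as a research direction, but as written the proposal is a sketch of where the difficulty lies, not a proof of the conjecture.
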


As observed by Poonen, Conjecture~\ref{conj_zhang} is not true without the dimension assumption, since a general section of an elliptic surface can have infinitely many torsion points. Nevertheless, we are able to obtain the following bounded height result with no restriction on the dimension of abelian varieties in the non-constant case.

\begin{manualtheorem}{B}[Theorem~\ref{thm_towardszhang}]\label{thm_B}
     Let $\pi:\calA\rightarrow C$ be an abelian scheme on a curve $C$ defined over a number field $K$, and assume $\calA/C$ has no constant part. Let $\Lambda\subseteq \calA(C)$ be a finitely generated torsion free subgroup. There is $\epsilon>0$ such that 
    \[
    \{s\in C(K^{\mathrm{alg}})\mid h_{\Lambda}(s)<\epsilon\}
    \]is a set of bounded height. In particular, there are only finitely many points $s\in C(K^{\mathrm{alg}})$ of bounded degree with $h_{\Lambda}(s)<\epsilon$.
\end{manualtheorem}

For the proof, we shall reduce the question by linear algebra to study the intersection
of $\underline{\sigma}=(\sigma_1,\cdots,\sigma_r)\subseteq\calA_C^r$ with a certain \emph{$\epsilon$-height neighborhood} of $\bigcup_{\underline{\lambda}\neq 0}\calH_{\underline{\lambda}}$.

\subsection{Scenario three}
Now assume $S$ is a point so that $A$ is an abelian variety over $K$. Consider a subvariety $X\subseteq A$. For $t\in \NN$, let $A_{\leq t}$ denote the union of all group subschemes $H\subseteq A$ with dimension $\leq t$. One would like to understand when the closed points of 
\[
X\cap A_{\leq t}
\]are contained in the union of a strict Zariski closed subset and a set of bounded height in $X$. Guided by the general principle of unlikely intersection, we should remove those positive dimensional subvarieties $Y\subseteq X$ if $Y$ has codimension $<t$ inside a coset (i.e. translate of a group subvariety). Such $Y$'s are expected to result in unbounded height for dimension reasons, at least when $A$ is a power of elliptic curves as verified by Viada~\cite{Viada_optimal}. The union of all such $Y$'s is called the \emph{$t$-anomalous locus} of $X$, which is shown to be Zariski closed by R\'emond~\cite{Remond_IntersectionIII}. Denote the locally closed subvariety deprived of the $t$-anomalous locus by $X^{\circ,t}$. Fix a N\'eron--Tate height function $\hat h$ on $A$ and let the $\epsilon$-height neighborhood of $A_{\leq t}$ be
\[
    C(\epsilon,A_{\leq t}):=\{x+y\mid x\in A_{\leq t}(K^{\mathrm{alg}}), y\in A(K^{\mathrm{alg}}),\hat h(y)<\epsilon\}.\]
Habegger establishes the following bounded height theorem~\cite{Habegger_BHConAV} on abelian varieties:
\begin{theorem}[Habegger]\label{thm_habegger}
     Let $A$ be an abelian variety over a number field $K$ with a fixed N\'eron--Tate height and let $X$ be a subvariety of $A$.  There exists $\epsilon>0$ such that the closed points of the intersection $X^{\circ,t}\cap C(\epsilon,A_{\leq t})$ form a set of bounded height.
\end{theorem}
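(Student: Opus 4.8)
The plan is to reduce the statement to a single height inequality under quotient homomorphisms, and then to make that inequality uniform over the infinite family of relevant subgroups. After harmless reductions — pass to an irreducible component of $X$, assume $X$ is not contained in a proper coset of $A$, and note that it suffices to exhibit one admissible $\epsilon>0$ (the case $X^{\circ,t}=\varnothing$ being vacuous) — the starting point is this. Suppose $x\in X^{\circ,t}(K^{\mathrm{alg}})$ lies in $C(\epsilon,A_{\leq t})$, so $x=w+z$ with $w$ on a group subscheme $H\subseteq A$ of dimension $\leq t$ and $\hat h(z)<\epsilon$. Set $B:=H^{\circ}$, $d:=\dim B\leq t$, and let $q_B\colon A\to A/B$ be the quotient. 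Since $q_B(w)$ is a torsion point and the N\'eron--Tate pairing annihilates torsion, $\hat h_{A/B}(q_B(x))=\hat h_{A/B}(q_B(z))\leq c\,\hat h(z)<c\epsilon$, where — once the polarizations on the quotients $A/B$ are chosen in a bounded family — the constant $c$ can be taken independent of $B$. So everything reduces to bounding $\hat h_A(x)$ in terms of $\hat h_{A/B}(q_B(x))$, uniformly in $B$.

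Next I would use the removal of the $t$-anomalous locus to get quasi-finiteness of the quotient maps along $X^{\circ,t}$. Fix a connected group subscheme $B$ with $\dim B\leq t$. For $x\in X^{\circ,t}$ the fiber $q_B^{-1}(q_B(x))\cap X$ equals $X\cap(x+B)$; any positive-dimensional component $Y$ of it lies in the coset $x+B$, so $\codim_{x+B}Y=\dim B-\dim Y\leq t-1<t$, hence $Y$ is $t$-anomalous, i.e. disjoint from $X^{\circ,t}$. Thus $x$ is isolated in $X\cap(x+B)$. Moreover $q_B|_X$ is generically finite onto its image: otherwise the generic fiber would be a positive-dimensional subvariety of a dimension-$\leq t$ coset, hence $t$-anomalous, and the union of these fibers — being dense in $X$ — would, using that the $t$-anomalous locus is closed, force $X^{\circ,t}=\varnothing$. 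Passing to the proper morphism $q_B\colon X\to A/B$ and its Stein factorization $X\to V'\to A/B$ with $V'\to A/B$ finite, every $x\in X^{\circ,t}$ maps to a singleton fiber of $X\to V'$, so $X^{\circ,t}$ avoids the proper closed locus where $X\to V'$ fails to be quasi-finite. Standard properties of Weil and N\'eron--Tate heights then give, for each fixed $B$, a constant $c_B$ with
\[
\hat h_A(x)\ \leq\ c_B\bigl(1+\hat h_{A/B}(q_B(x))\bigr)\qquad\text{for all }x\in X^{\circ,t}(K^{\mathrm{alg}}),
\]
and, combined with the first paragraph, $\hat h_A(x)\leq c_B(1+c\epsilon)$ for the points in question.

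The main obstacle is the uniformity of $c_B$ in $B$: the constant a priori depends on $B$, and even the target $A/B$ varies, so one must organize the infinitely many subgroups into a bounded family. Here I would parametrize the connected subgroups of fixed dimension $d\leq t$ by a quasi-projective variety $G_d$ — a locally closed subvariety of the Grassmannian of $2d$-dimensional $\QQ$-subspaces of $H_1(A,\QQ)$ — compactify to a projective $\overline{G_d}$, and build over $\overline{G_d}$ a relative quotient: a family of polarized abelian varieties $\mathcal Q_d\to\overline{G_d}$ with fiber isogenous to $A/B$ over $[B]$, together with a $\overline{G_d}$-rational map $\Phi$ from $X\times\overline{G_d}$ to $\mathcal Q_d$ restricting to $q_B$ on each slice. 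The two displays above become statements about the single morphism $\Phi$ over the open locus where it is defined and fiberwise quasi-finite over $\overline{G_d}$; the spread-out (relative) form of the height machine then produces one inequality $\hat h_A(x)\leq c\,(1+\hat h_{\mathcal Q_d}(\Phi(x,[B])))$ with $c$ independent of $[B]$, and the fiber analysis keeps $X^{\circ,t}\times G_d$ inside the good locus. Taking the maximum of the finitely many constants as $d$ ranges over $0,1,\dots,t$ (and over the finitely many charts needed to trivialize the construction) gives the theorem. I expect the genuine difficulties to be (i) constructing the relative quotient with \emph{compatible} polarizations, so that all the height comparisons are truly uniform, and (ii) controlling the indeterminacy of $\Phi$ and the non-properness of the auxiliary opens while keeping $X^{\circ,t}\times G_d$ off the bad locus — precisely the estimates Habegger carries out by hand in \cite{Habegger_BHConAV}. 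One can instead largely bypass this by using the Ax--Schanuel theorem to pin down the $t$-anomalous (degeneracy) locus intrinsically, and the Yuan--Zhang intersection theory of adelic line bundles to turn proximity to $A_{\leq t}$ directly into a height bound via a Hodge-index--type positivity inequality — the approach developed elsewhere in this paper.
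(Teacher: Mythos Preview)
The paper does not prove Theorem~\ref{thm_habegger} directly: it is cited as Habegger's result and recovered as the special case $S=\Spec K$ of the main Theorem~\ref{thm_main}. The route taken there (which is also Habegger's original one) is \emph{not} to parametrize abelian subvarieties by a Grassmannian, but to fix the finitely many isogeny types of quotients $\calB$ (Lemma~\ref{lem_finite_target}), replace the infinite set of surjections $A\twoheadrightarrow\calB$ by a \emph{compact} set $\calK(A,\calB)\subset\Hom(A,\calB)_\RR^\circ$ of surjective $\RR$-homomorphisms modulo $\End(\calB)_\RR$ (Proposition~\ref{prop_compactness}, which is exactly \cite[Lem.~2]{Habegger_BHConAV}), and exploit the continuity of the intersection numbers $f\mapsto[(f^*\widetilde\calL_\calB)^d]_X$ over this compact set. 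Positivity of these numbers (ensured by $X^{\circ,t}\neq\varnothing$) feeds into Siu--Yuan's bigness theorem to produce a \emph{uniform} height inequality (Proposition~\ref{prop_upper_bound}); combined with the elementary lower bound of Proposition~\ref{prop_lower_bound_for_Bnull} and a Noetherian induction, this gives the theorem. Your final sentence correctly describes the family version, but for $S$ a point the Ax--Schanuel input is unnecessary---positivity is elementary there.

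Your Grassmannian strategy is a genuinely different packaging of the uniformity, and the difficulties you flag in (i)--(ii) are real; but ``the estimates Habegger carries out by hand'' are precisely the $\Hom(A,\calB)_\RR$-compactness and intersection-number estimates above, not a relative-quotient construction over a Grassmannian. On a more local point: in your second paragraph, ``standard properties'' do not give $\hat h_A(x)\le c_B\bigl(1+\hat h_{A/B}(q_B(x))\bigr)$ on \emph{all} of $X^{\circ,t}$ for fixed $B$. The line bundle $q_B^*M|_X$ is only nef and big, so the inequality holds on the complement of the zero locus of a chosen section of $n\,q_B^*M-L$; you have shown $X^{\circ,t}$ avoids the \emph{exceptional locus} of $q_B|_X$, but the section's zero locus can be strictly larger. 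The fix is exactly the Noetherian induction in the proof of Theorem~\ref{thm_main}: apply the argument to each irreducible component $\calX'$ of the leftover, noting that $\calX'^{\circ,t}\supseteq\calX'\cap X^{\circ,t}$, and descend in dimension.
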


Our main result is a family version of Habegger's theorem, implying the three scenarios above, which we now describe. 

\subsection{Main result}
Let $S$ be a normal quasi-projective variety over a number field $K$ and let $\pi:\calA\rightarrow S$ be an abelian scheme, with its relative dimension denoted by $g$. 
For $t\in \NN$, let $\calA_{\leq t}$ denote the union of all \emph{flat} group subschemes $\calH\subseteq \calA$ with relative dimension $\leq t$. 
Consider an embedding of $\calA$ in a projective space. The induced naive height function $h$ on the closed points of $\calA$ is called a \emph{total height}. Fix a fiber-wise N\'eron--Tate height $\hat h$ on $\calA/S$. Define, for $\epsilon>0$, the $\epsilon$-height neighborhood of $\calA_{\leq t}$ as
\[
C(\epsilon,\calA_{\leq t}):=\{x+y\mid x\in \calA_{\leq t}(K^{\mathrm{alg}}), y\in \calA_{\pi(x)}(K^{\mathrm{alg}}), \hat h(y)<\epsilon\max\{1,h(x)\}\}.
\]We are interested in understanding for a subvariety $\calX\subseteq\calA$ when the following intersection
\[
\calX\cap C(\epsilon,\calA_{\leq t})
\]is sparse for sufficiently small $\epsilon$. Now it seems less transparent in this family setting on what is anomalous; for instance, a horizontal constant section could also lead to unbounded height. Nonetheless, the analogue of coset in this mixed setting is called \emph{weakly special subvariety}, defined in Definition~\ref{def_ws}, which is, roughly speaking, a translate of a (not necessarily dominant-over-$S$) group subscheme by a constant section. 

To present the result concisely, we adopt an alternative approach compared to the main body of the paper. 

\begin{defn}
For simplicity, assume that $\End(A)=\End(A_{F^{\mathrm{alg}}})$, and consider a subvariety $\calX$ of $\calA$ which is not contained in any strict group subscheme of $\calA$.\footnote{In particular, $\calX$ is dominant over $S$ by considering the (non-flat) group subscheme given as the union of $\calA|_{\pi(\calX)}$with the identity section.} 

For $t\in\ZZ$, the subvariety $\calX$ is called \emph{$t$-nondegenerate} if, for any abelian subscheme $\calB$ with relative dimension $g_\calB$, the composition 
\[\varphi_{/\calB}:\calA\rightarrow \calA/\calB \rightarrow \gothA_{g-g_\calB}\] where the first map is the quotient, and the second map is a mixed period map to the universal abelian variety, satisfies
     \[\dim\calX- \dim\varphi_{/\calB}(\calX)\leq\max \{0,g_\calB-t\}.\]
Moreover, define the \emph{degeneracy threshold} of $\calX$ to be
\[
\tau(\calX):=\min_{\calB}\left\{g_\calB-\left(\dim\calX-\dim\varphi_{/\calB}(\calX)\right)\right\}
\]where the minimum is taken over all abelian subschemes $\calB$ of $\calA$ with the property that $\dim\calX-\dim\varphi_{/\calB}(\calX)>0$.
It is immediate to check that $\calX$ is $\tau(\calX)$-nondegenerate but $(\tau(\calX)+1)$-degenerate. By taking $\calB=\calA$, we see that $\tau(\calX)\leq g-\dim\calX$.
\end{defn}

We remark that the fibers of $\varphi_{/\calB}$ are weakly special of relative dimension $g_{\calB}$ by Lemma~\ref{lem_weakly_special_char}. Indeed, the $t$-nondegeneracy condition above ensures that, the union of those positive dimensional subvarieties $\calY\subseteq\calX$ which have codimension $<t$ inside a weakly special subvariety, is not Zariski dense in $\calX$. Such $\calY$'s have the potential to lead to unbounded height when intersected with $\calA_{\leq t}$ for dimension reasons. We have the following main result, as a consequence of our main Theorem~\ref{thm_main} and the criterion~\ref{thm_criterion_for_Xdeg_equal_X}:

\begin{manualtheorem}{C}\label{thm_C}
    Let $S$ be a normal quasi-projective variety over a number field $K$, and let $\pi:\calA\rightarrow S$ be an abelian scheme equipped with a fixed fiber-wise N\'eron--Tate height $\hat h$ and a total height $h$. Let $\calX$ be a subvariety of $\calA$ with $\tau(\calX)\geq0$. There exist $\epsilon>0$ and a Zariski dense open subset $\calU\subseteq\calX$ such that the closed points of
    $\calU\cap C(\epsilon,\calA_{\leq \tau(\calX)})$ form a set of bounded total height.  
\end{manualtheorem}

In fact, we may take $\calU$ as the complement of the \emph{degeneracy locus} $\calX^{\mathrm{deg}}(t)$ for $t=\tau(\calX)$, first introduced and studied by Gao \cite[Def.~1.6]{Gao_Betti}, which is the union of those ``$t$-anomalous'' subvarieties $\calY$. For a precise statement, we refer to Therorem~\ref{thm_main}. Additionally, Theorem~\ref{thm_criterion_for_Xdeg_equal_X} establishes the connection between nondegeneracy and the degeneracy locus. When the base $S$ is a point, Theorem~\ref{thm_main} is exactly Habegger's Theorem~\ref{thm_habegger}. This work builds upon and extends his ideas to the relative setting.

\subsection{Previous work and Pink's conjecture}
The study of bounded height results for the intersection of subvarieties with subgroups traces back to 1999, with the work of Bombieri--Masser--Zannier~\cite{BMZ_99}.  They demonstrated that the intersection of a curve $C$ in the algebraic torus $\GG_m^r$ with all algebraic subgroups of codimension $\geq 1$, though not finite, is of bounded height---provided $C$ is not contained within a strict coset. Intersections where the dimensions of the involved varieties complement each other will hereafter be referred to as \emph{just likely}.
Their height upper bound is used in combination with Lehmer-type lower bounds in the same paper to further show that if the intersection is altered to an \emph{unlikely} situation, i.e., where
$C$ intersects algebraic subgroups of codimension $\geq 2$, then the result is finite. 

In subsequent work \cite{BMZ_Anomalous}, Bombieri--Masser--Zannier propose a general bounded height conjecture on $\GG_m^r$, suggesting that the bounded height analogue in the \emph{just likely} setting still holds if $C$ is replaced by a more general variety $X$, excluding its \emph{anomalous locus}. They demonstrated the Zariski-closed nature of the anomalous locus in the same work. Moreover, \cite{BMZ_plane08} showed that in the unlikely setting, the bounded height conjecture leads to finiteness, even in scenarios analogous to the curve case.

The bounded height conjecture was later resolved by Habegger~\cite{Habegger_BHCtori} in an innovative way. In parallel to the case of tori, related questions have been explored for (semi-)abelian varieties. Notably, in a separate paper~\cite{Habegger_BHConAV}, Habegger established the bounded height theorem (Theorem~\ref{thm_habegger}) for abelian varieties. K\"uhne~\cite{Kuhne_BHCsemiAV} extended this result further to cover general semiabelian varieties. 

Now consider the abelian scheme $\calA/S$ as before. For $d\in \NN$, denote by $\calA^{[>d]}$ the union of all codimension $>d$ group subvarieties of the fibers of $\calA\rightarrow S$. A conjecture of Pink in the preprint \cite[Conj.~6.1]{Pink_comb_unpublished},
as an implication of his general conjecture on mixed Shimura varieties in the same paper, predicts the following in the unlikely situation:
\begin{conj}[Pink]\label{conj_pink}
    Consider an abelian scheme $\calA\rightarrow S$ over a number field $K$ and an irreducible closed subvariety $\calX$ of dimension $d$ that is not contained in any strict group subscheme of $\calA$. Then $\calX \cap \calA^{[>d]}$ is not Zariski dense in $\calX$. 
\end{conj}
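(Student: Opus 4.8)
We do not settle Conjecture~\ref{conj_pink} here; instead we outline how the bounded total height Theorem~\ref{thm_main} (equivalently Theorem~\ref{thm_C}) is expected to feed into a proof, following the ``height $+$ counting'' paradigm of Bombieri--Masser--Zannier and Pila--Zannier. Set $d=\dim\calX$. A group subvariety of codimension $>d$ in a fiber of $\calA/S$ has relative dimension $\le g-d-1$, so every point of $\calX\cap\calA^{[>d]}$ lies in (the spreading out of) such a subgroup, hence in $C(\epsilon,\calA_{\le g-d-1})$. If $\tau(\calX)\ge g-d-1$, then $C(\epsilon,\calA_{\le g-d-1})\subseteq C(\epsilon,\calA_{\le\tau(\calX)})$, and Theorem~\ref{thm_main} with $t=\tau(\calX)$ already shows that, off the proper closed subset $\calX^{\mathrm{deg}}(\tau(\calX))$, the closed points of $\calX\cap\calA^{[>d]}$ have bounded total height. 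When $\tau(\calX)<g-d-1$ one first reduces to this case: picking an abelian subscheme $\calB$ realizing the degeneracy threshold, the map $\varphi_{/\calB}\colon\calA\to\gothA_{g-g_\calB}$ contracts $\calX$ by a positive amount, its fibers through $\calX$ are pieces of weakly special subvarieties (Lemma~\ref{lem_weakly_special_char}), and -- exactly as in the ``anomalous locus'' bookkeeping of \cite{BMZ_Anomalous,Remond_IntersectionIII,Gao_Betti} and Theorem~\ref{thm_criterion_for_Xdeg_equal_X} -- Zariski density of $\calX\cap\calA^{[>d]}$ forces Zariski density of $\varphi_{/\calB}(\calX)\cap\gothA_{g-g_\calB}^{[>\dim\varphi_{/\calB}(\calX)]}$, so induction on the relative dimension disposes of the degenerate case. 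I expect this reduction to be routine but technical, and from now on we may assume $\calX$ is $(g-d)$-nondegenerate and the closed points of $\calX\cap\calA^{[>d]}$ have bounded total height off a proper closed subset.

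The decisive new input is a height \emph{lower} bound, the relative analogue of the Galois-orbit estimates at the heart of the Pila--Zannier method: a constant $c>0$ so that for every $x\in\calX$ lying in a group subvariety $H$ of its fiber of codimension $>d$, either $x$ lies in a fixed proper closed subset of $\calX$, or some component of $x$, projected away from $H$ and measured by the fiber-wise N\'eron--Tate height, is at least $c$ times a positive power of the arithmetic complexity of $H$ (the degree of its field of definition together with the order of the torsion it carries). In the arithmetic direction such an inequality should be accessible through a relative Bogomolov-type statement for the biextension metric, provable with the adelic intersection theory of Yuan--Zhang~\cite{YuanZhang_ALB}, combined with Masser--W\"ustholz-style isogeny estimates applied uniformly along $S$. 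Together with the bounded total height of the previous step, this confines the height of any $x$ outside the exceptional locus to a fixed interval and bounds the complexity of the subgroups $H$ that can occur as the field of definition of $x$ grows.

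One then transports the problem to the uniformization of the mixed Shimura variety attached to $\gothA_g$: the points of $\calX\cap\calA^{[>d]}$ lift into the preimage of a countable union of weakly special (linear, $\overline{\QQ}$-rational) subspaces indexed by the possible $H$, and the bounds of the previous two steps become a polynomial bound on the heights of the lattice/rational data labeling these lifts. The Ax--Schanuel theorem, already established and used in this paper, identifies the algebraic part of the resulting definable set, and the Pila--Wilkie counting theorem then forces all but finitely many of the relevant points to lie on positive-dimensional weakly special subvarieties contained in $\calX$; by the $(g-d)$-nondegeneracy arranged above, these are swept up by a proper closed subset of $\calX$. Hence $\calX\cap\calA^{[>d]}$ is not Zariski dense.

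The main obstacle is the height lower bound of the second step. Already when $S$ is a point it subsumes open instances of the Zilber--Pink conjecture for abelian varieties, so in full generality it lies beyond present techniques; the genuine difficulty is to obtain polynomial-in-complexity lower bounds \emph{uniformly} as both the point and its ambient subgroup vary in the family -- in particular to control the fields of definition and the torsion of the subgroups $H\subseteq\calA_s$ as $s$ ranges over $S$. The bounded total height established here is precisely one of the two ingredients this strategy requires, and we expect it to be combined with such lower bounds and Pila--Wilkie counting in future work.
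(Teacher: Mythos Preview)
The paper does not prove Conjecture~\ref{conj_pink}; it is presented as a motivating open problem, and the text explicitly says it is ``a profound and widely open problem in general.'' You are right not to claim a proof and to offer a strategy instead.

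There is, however, a genuine gap already in your first step. You assert that a point of $\calX\cap\calA^{[>d]}$ ``lies in (the spreading out of) such a subgroup, hence in $C(\epsilon,\calA_{\le g-d-1})$.'' But in this paper $\calA_{\le t}$ denotes the union of \emph{flat} group subschemes of $\calA/S$ of relative dimension $\le t$, whereas $\calA^{[>d]}$ is the union, over all $s\in S$, of \emph{all} group subvarieties of the fiber $\calA_s$ of codimension $>d$. A subgroup of a single fiber need not spread out to a flat group subscheme over $S$: for instance, an elliptic curve sitting in one fiber of a family of abelian surfaces whose generic fiber is geometrically simple has no flat extension. The paper makes exactly this distinction in \S\ref{sec_app}, introducing the fiber-wise union $\calA_{(\le t)}\supseteq\calA_{\le t}$ (with strict inclusion in general) and formulating the passage from $\calA_{\le t}$ to $\calA_{(\le t)}$ as the separate open Conjecture~\ref{conj_height}. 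Thus Theorem~\ref{thm_main} does not give a height bound for $\calX\cap\calA^{[>d]}$ off a closed set; it controls only the part coming from flat group subschemes. Your outline therefore presupposes an additional input---either Conjecture~\ref{conj_height} or some mechanism reducing fiber-wise subgroups to flat ones---before the height upper bound you invoke is even available. This is not a technicality: the paper flags it as the essential gap between its main theorem and the full Pink conjecture.
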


Conjecture~\ref{conj_pink} is a profound and widely open problem in general. It is known to imply various arithmetic results including Faltings' theorem~\cite{Faltings_DAonAV} in the special case where $S$ is a point; see Zilber~\cite[Prop.~3]{Zilber_exp} and Pink~\cite[Thm.~5.4]{Pink_comb_published}. Several known cases for the non-constant setting of $\calA/S$ are as follows:
\begin{enumerate}
    \item The relative Manin--Mumford conjecture, which focuses on torsion points. This conjecture was initially investigated by Masser--Zannier~\cite{MasserZannier_torsionEC0} and has recently been proven in full generality by Gao--Habegger~\cite{GH_RMM}.
    \item The case where $S$ is a curve over a number field, and the curve $\calX\subseteq\calA$ intersects with the union of \emph{flat} group subschemes of relative codimension at least $2$. This is a combined result of works by R\'emond~\cite{Remond_IntersectionII}, Habegger--Pila~\cite{HP_ominimality} and a series of papers by Barroero--Capuano~\cite{BC_PowersofEC,BC_productsofEC,BC_CurvesinAbelianScheme}. 
\end{enumerate}
In both cases above, establishing height upper bounds is crucial. The cited works of Masser--Zannier and Barroero--Capuano rely on Silverman's specialization theorem~\cite[Thm.~C]{Silverman_Specialization} as a key input for deriving the height bounds. While in the cited work of Gao--Habegger, the height bound comes from the height inequality of their joint work with Dimitrov~\cite{DimitrovGaoHabegger_UML}. The robust Pila--Zannier method~\cite{PilaZannier_MM} serves as a replacement for Lehmer-type height lower bounds to ensure finiteness. 

The main theorem in this paper studies the \emph{just likely} setting, focusing on the portion of group subschemes arising from the generic fiber of $\calA/S$ as in the above two cases. We will formulate a more general Conjecture~\ref{conj_height} at the conclusion of this paper, complementing Pink's Conjecture~\ref{conj_pink}.

\subsection{Outline of the strategy}
 We now provide an overview of the proof of the main results. %

In \S\ref{sec_abelianschemes}, we introduce the pullback of symmetric $\RR$-line bundles via $\RR$-homomorphisms between two abelian schemes, laying the groundwork for continuity arguments.



 In the first part (\S\S\ref{sec_biagebraic_geometry}-\ref{sec_nondegeneracy}) of the paper, we establish several geometric results using functional transcendence. We begin by defining Gao's degeneracy locus in the complex analytic setting. We include for completeness the proof of the algebraicity of the degeneracy locus~\cite[Thm.~1.8]{Gao_Betti} in a general abelian scheme $\calA/S$, following Gao's idea in his proof of the universal abelian variety case. We establish the finiteness result for \emph{types} of \emph{vertically optimal} subvarieties (Theorem~\ref{thm_finiteness_vertically_optimal}). The algebraicity (Theorem~\ref{thm_degeneracy_Zariski_closed}) follows immediately as a consequence of upper semi-continuity. Along the path, we derive Theorem~\ref{thm_criterion_for_Xdeg_equal_X} as a robust equivalent criterion for (non)degeneracy. 

Furthermore, we apply the Ax--Schanuel theorem to establish a necessary condition (Theorem~\ref{thm_volume_form_vanishing}) for a subvariety $\calX\subseteq\calA$ achieve vanishing of the volume form associated with the pullback of a Betti form by a surjective $\RR$-homomorphism $f:\calA\dashrightarrow \calB$ of abelian schemes. Specifically, if the $t^{\mathrm{th}}$ degeneracy locus of a subvariety $\calX$ is not Zariski dense, we deduce positivity for any $f$ of relative dimension $\leq t$. These positivity results are interpreted as the positivity of the self-intersection number of a \emph{geometric invariant adelic line bundle}. We emphasize that it would be crucial to allow $\RR$-homomorphisms and $\RR$-line bundles, and it seems necessary to use Ax--Schanuel type results. 

In the second part \S\S\ref{sec_adelic}-\ref{sec_proof_main_thm}, we prove the main theorem using intersection theory. While drawing inspiration from Habegger's approach, generalizing his ideas presents notable challenges. These
difficulties stem from addressing the subtleties and complexities involved in extending the methods beyond abelian varieties to this broader framework. To overcome one of these issues, we use the intersection theory of adelic line bundles over quasi-projective varieties, as developed by Yuan--Zhang~\cite{YuanZhang_ALB}. This allows us to avoid unnecessary and unnatural compactifications, which often arise when working on non-complete general abelian schemes. After recalling the invariant adelic line bundles on abelian schemes, we connect the Betti forms to the Monge--Amp\`ere measure.  This connection helps us relate the self-intersection of the invariant adelic line bundle in Corollary~\ref{cor_intersection_to_form}. 

For the proof, the starting observation is that flat group subschemes of relative dimension $\leq t$ are contracted by surjective homomorphisms $\varphi:\calA\rightarrow \calB$ of relative dimension $\leq t$. By Poincar\'e's complete reducibility, there are finitely many possible choices of $\calB$ up to isogeny. Consider one such $\calB$. A key result (Proposition~\ref{prop_compactness}), motivated by Habegger's~\cite[Lem.~2]{Habegger_BHConAV}, states that there exists a compact subspace $\calK(\calA,\calB)$ of the open locus of surjective $\RR$-homomorphisms in $\Hom(\calA,\calB)_\RR$. This subspace includes a set of representatives under the left action of $\End(\calB)_\RR$. Using this compact subspace, we derive uniform upper bounds \eqref{eqn_upperbound_intersection} and lower bounds \eqref{eqn_lowerbound_intersection} for intersection numbers on pullback adelic line bundles by continuity. These bounds on intersections, combined with Siu--Yuan's bigness theorem, gives a \emph{height upper bound} (Proposition~\ref{prop_upper_bound}) for the total height of a general point of $\calX$ under positivity assumption. On the other hand, the total height of a $\calB$-Null point, that is, a point that can be killed by a surjective homomorphism $\calA\rightarrow \calB$, has a lower bound (Proposition~\ref{prop_lower_bound_for_Bnull}) by height properties. When taking the intersection and choosing constants properly, these two height bounds compete each other and give the desired height bound (Theorem~\ref{thm_for_one_quotient}), over a Zariski dense open subset of $\calX$. 


To complete the proof of Theorem~\ref{thm_main}, we need to first show that the degeneracy locus, initially defined over $\CC$, is defined over the base number field. Using the positivity properties of the $t$-nondegenerate subvariety $\calX$, we apply Theorem~\ref{thm_for_one_quotient} to the finitely many possible $\calB$'s. A Noetherian induction argument then enlarges the Zariski open subset to include the complement of the $t^{\text{th}}$ degeneracy locus.

\subsection{Plan of the paper}
In \S\S\ref{sec_biagebraic_geometry}-\ref{sec_nondegeneracy}, we work within the complex analytic category. For the remaining sections, we use the scheme theoretic language.

\begin{itemize}
    \item \S\ref{sec_abelianschemes}: We collect general facts about abelian schemes and make some basic deductions. Pullbacks by $\RR$-homomorphisms are discussed. 
    \item \S\ref{sec_biagebraic_geometry}: We review the setup of bi-algebraic geometry for the universal abelian variety, following \cite{Gao_mixed}. Some notions are slightly rephrased for clarity. We also present the weak Ax–Schanuel theorem and a Finiteness \`a la Bogomolov–Ullmo result of Gao, both of which will be used later.
    \item \S\ref{sec_betti}: Betti forms are constructed carefully. Compared to the available literature, we are also interested in Betti forms associated to general nef line bundles. An integrability property of those Betti forms arising from pullbacks is given at the end.
    \item \S\ref{sec_nondegeneracy}: Gao's notion of degeneracy locus over $\CC$ is defined here. We include a proof of the Zariski closedness for the degeneracy locus on general abelian schemes. Along the way, we obtain the criterion for degeneracy. Finally, the Ax--Schanuel theorem is applied to derive the necessary condition for the vanishing of the volume measure attached to the pullback Betti form.
    \item \S\ref{sec_adelic}: We briefly introduce the language of adelic line bundles on quasi-projective varieties \cite{YuanZhang_ALB}. The properties of intersection theory and heights are discussed. 
    \item \S\ref{sec_homomorphism_approx}: We show the main arithmetic result  by intersection theory. 
    \item \S\ref{sec_proof_main_thm}: The degeneracy loci are showed to be defined over the base field, and the main theorem is proved using all previous results.
    \item \S\ref{sec_app}: The applications to Theorem~\ref{thm_A} and Theorem~\ref{thm_B} are discussed in detail. At the end, we state a conjecture as a potential strengthening of the main theorem.    
\end{itemize}

\subsection*{Acknowledgments}
The author would like to thank Ziyang Gao and Shou-Wu Zhang for constant discussions, for their encouragement and for the comments on the first draft; Dan Abramovich and Joseph Silverman for reading the first draft and for their suggestions; Philipp~Habegger for answering a related question and for his beautiful work~\cite{Habegger_BHConAV} which motivates this result; and Laura~DeMarco for explaining Silverman's proof to the author.



\addtocontents{toc}{\protect\setcounter{tocdepth}{0}}
\section*{\textbf{Notations and conventions}}
\begin{enumerate}[leftmargin=*]
    \item  We work in characteristic $0$ exclusively. Unless otherwise stated, $K$ denotes either a number field or $\CC$. An algebraic closure of $K$ is denoted by $\Ka$.
    \item Schemes are noetherian. Varieties are separated, geometrically irreducible and reduced schemes of finite type over the field. Subvarieties are Zariski closed unless otherwise specified such as locally closed subvarieties, which are open subsets of subvarieties. Images of varieties under a morphism are taken in the scheme-theoretic sense.
    \item An inclusion $X\subseteq Y$ is called \emph{strict} if $X\neq Y$.
    \item We follow the common definition of abelian schemes and group schemes. Abelian schemes are by definition flat over the base. Group schemes may have components that are not dominant over the base. Group subschemes (or subgroup schemes) are assumed to be Zariski closed throughout. Note that each component of \emph{flat} group subschemes of abelian schemes are dominant over the base variety by properness and flatness.

    \item Tensor products of (adelic) line bundles are written additively, e.g. $2\calL:=\calL^{\otimes 2}$. 
    \item Intersection products of adelic line bundles are written multiplicatively with a square bracket and subscript, e.g. $[\overbar L^d]_X$ means the $d$-th self intersection of $\overbar L$ on $X$.
    \item For an abelian group $V$ and a ring $R=\QQ \text{ or }\RR$, we write $V_{R}:=V\otimes_\ZZ R$. For a variety $X$ over $S$ and $T\rightarrow S$, we write $X_{T}:=X\times_S {T}$.
    \item The symbols $\calA,\calB$ are reserved for abelian schemes. The symbols $S,T$ are reserved for the base varieties, which are normal and quasi-projective over the field. 
    \item The symbols $\calX,\calY,\calU$ are reserved for (locally closed) analytic or algebraic subvarieties of abelian schemes. When $\calX$ is equipped with a natural projection map $\pi$, denote the relative dimension by $\rdim \calX:=\dim \calX-\dim\pi(\calX)$.
    \item Finite dimensional real vector spaces such as $\Hom(\calA,\calB)_\RR$ are equipped with the Euclidean topology and a fixed norm $|\cdot|$.
    \item For a complex manifold $Y$, the real (resp. holomorphic) tangent bundle is denoted by $TY$ (resp. $T'Y$).
\end{enumerate}

\addtocontents{toc}{\protect\setcounter{tocdepth}{1}}


\section{Preliminaries on abelian schemes}\label{sec_abelianschemes}
In this section, we define basic terminology and fix notations in the language of schemes.
Let $S$ be a normal variety over a field $K$ of characteristic $0$ with generic point $\eta$. An \emph{abelian scheme} $\pi: \calA\rightarrow S$ is a group scheme which is smooth and proper with geometrically connected fibers. The zero section of $\calA$ is denoted by $e:S\rightarrow \calA$.  The relative dimension of $\calA/S$ is denoted by $g$. For any $l\in\ZZ$, let $[l]:\calA\rightarrow\calA$ be the multiplication by $l$. For $s\in S$, let $\calA_s$ be the fiber of $\calA$ over $s$.

\subsection{Picard group}\label{subsec_NSgp}
The Picard functor $\cPic(\calA/S)$ of $\calA/S$ is a functor from the category of schemes over $S$ to the category of groups defined by
\[
\cPic(\calA/S)(T):=\Pic(\calA_T)/\pi_T^*\Pic(T)
\]for any $S$-scheme $T$. We define the \emph{Picard group} for $\calA/S$ as
\[
\Pic(\calA/S):=\cPic(\calA/S)(S)=\Pic(\calA)/\pi^*\Pic(S).
\]A \emph{rigidified line bundle} on $\calA/S$ is a pair $(\calL,e^*\calL\cong \calO_S)$ with $\calL\in\Pic(\calA)$ such that $e^*\calL\in \Pic(S)$ is trivial and the fixed rigidification $e^*\calL\cong \calO_S$ is an isomorphism of line bundles. We usually leave out the rigidification when there is no ambiguity. An isomorphism of rigidified line bundles is an isomorphism of line bundles that is compatible with the rigidification. One checks immediately:
\[
\Pic(\calA/S)\cong \{\text{Rigidified line bundles on }\calA/S\}/\sim_{\mathrm{isom}}.
\]Thus, we shall regard an element in $\Pic(\calA/S)$ as a rigidified line bundle up to isomorphism. 

Two $\RR$-line bundles $\calL_1,\calL_2\in \Pic(\calA/S)_\RR$ are said to be \emph{numerically equivalent}, denoted as $\calL_1\equiv\calL_2$, if $\deg(\calL_1|_C)=\deg(\calL_2|_C)$ for any curve $C$ in a fiber of $\calA\rightarrow S$. If $\calL\in\Pic(\calA/S)_\RR$ is numerically equivalent to $\calO_\calA$, we say $\calL$ is numerically trivial. The \emph{N\'eron--Severi group} of $\calA/S$ is defined as
\[
\NS(\calA/S):=\Pic(\calA/S)/{\equiv}.
\]

A rigidified line bundle $\calL\in\Pic(\calA/S)$ is called \emph{symmetric} if  $[-1]^*\calL\cong \calL$.  Let $\Pic_0(\calA/S)$ be the subgroup of $\Pic(\calA/S)$ consisting of rigidified symmetric line bundles on $\calA$. Any $\QQ$-line bundle $\calL$ can be written as the sum of a symmetric line bundle $\frac{1}{2}(\calL+[-1]^*\calL)$ with a numerically trivial line bundle $\frac{1}{2}(\calL-[-1]^*\calL)$. So we may equivalently define $\NS(\calA/S)_\QQ$ as the quotient $\Pic_0(\calA/S)_\QQ/\equiv$. 


An $\RR$-line bundle on a projective variety is called ample if it is a positive linear combination of ample line bundles. $\RR$-ampleness is a numerical property; see \cite[Prop.~1.3.13]{Lazarsfeld_PositivityI}. An $\RR$-line bundle $\calL\in\Pic(\calA/S)_\RR$ is defined to be \emph{ample}\footnote{This is usually called ``relatively ample''. However, we believe ``ample'' is more suitable here when discussing relative classes, and is compatible when $S$ is a point.}, resp. \emph{nef}, if $\calL_s:=\calL|_{\calA_s}$ is ample, resp. nef, for any $s\in S$. Since ampleness and nefness are numerical, such definitions also make sense for a class in $\NS(\calA/S)_\RR$. We use the notation $\calL\geq0$ to mean $\calL$ is nef.


According to Raynaud \cite[ Cor.~VIII.7]{Raynaud_BookFA}, a line bundle $\calL\in\Pic(\calA/S)$ is ample if $\calL_s$ is ample for one point $s\in S$. Since $S$ is a normal variety, by a result of Grothendieck \cite[Thm.~XI.1.4]{Raynaud_BookFA}, given any ample line bundle $\calL_\eta\in\Pic(\calA_\eta)$, there is a symmetric line bundle $\calM\in\Pic_0(\calA/S)$ such that $\calM_\eta\equiv 2\calL_\eta$. Then $\calM$ is ample.  In particular, the abelian scheme $\calA/S$ is projective. 

The following proposition summarizes some basic numerical properties of rigidified line bundles on abelian schemes.

\begin{prop}\label{prop_numerical_property}
The following holds true:
\begin{enumerate}
    \item   The specialization map $\spec_s:\NS(\calA/S)\rightarrow \NS(\calA_s)$ is injective for any $s\in S$. Moreover, $\spec_{\eta,\QQ}:\NS(\calA/S)_\QQ\rightarrow\NS(\calA_\eta)_\QQ$ is an isomorphism.

    \item An $\RR$-line bundle $\calL\in\Pic(\calA/S)_\RR$ is ample, resp. nef, if $\calL_s$ is ample, resp. nef for one point $s\in S$.

    \item A numerically trivial $\RR$-line bundle $\calL\in\Pic(\calA/S)_\RR$ can be written as a linear combination of numerically trivial integral line bundles. In other words, there is a natural isomorphism
    \[(\Pic(\calA/S)/\equiv)\otimes_\ZZ\RR=:\NS(\calA/S)_\RR\xrightarrow{\cong} \Pic(\calA/S)_\RR/\equiv.\]
               
    \item An ample $\RR$-line bundle $\calL\in \Pic(\calA/S)_\RR$ can be written as a positive linear combination of ample integral line bundles. 
\end{enumerate}
\end{prop}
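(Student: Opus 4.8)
The plan is to prove the four parts more or less in order, bootstrapping from the injectivity of the specialization map on N\'eron--Severi groups.

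\emph{Part (1).} First I would recall that $\NS$ of an abelian variety is a finitely generated free $\ZZ$-module (torsion-free), and that for an abelian scheme $\calA/S$ with $S$ normal and connected, $\cPic(\calA/S)$ is representable and its identity component is an abelian scheme (the dual $\calA^\vee/S$), while $\NS(\calA/S)$ is the group of components. The injectivity of $\spec_s:\NS(\calA/S)\to\NS(\calA_s)$ follows because a rigidified line bundle $\calL$ on $\calA/S$ that becomes numerically trivial on one fiber $\calA_s$ lies in $\Pic_0$, i.e. in $\calA^\vee(S)$; but a section of $\calA^\vee/S$ vanishing at $s$ and giving a numerically trivial class on $\calA_s$ must be... here I would argue: numerical triviality of $\calL_s$ means $\calL_s\in\Pic^0(\calA_s)$, so the classifying section $S\to\calA^\vee$ meets the zero section at $s$; since the section is a morphism from a connected normal $S$ into the abelian scheme $\calA^\vee/S$, rigidity/the fact that $\calA^\vee(S)$ is unramified over $S$ forces it to be the zero section — hence $\calL\equiv\calO_\calA$. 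For the second assertion, $\spec_{\eta,\QQ}$ is injective by the first part applied to $s=\eta$ after base change, and surjectivity follows since any line bundle on $\calA_\eta$ extends, after multiplying by an integer and adjusting by a $\pi^*$-bundle, to a line bundle on $\calA$ over the generic point — more precisely, using that $S$ is normal one spreads out $\calL_\eta$ to a line bundle over a dense open $U\subseteq S$, and then the discussion before the proposition (Grothendieck, [Raynaud, Thm.~XI.1.4]) shows that a symmetric multiple extends across all of $S$. So every class in $\NS(\calA_\eta)_\QQ$ is hit.

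\emph{Parts (2)--(4).} Part (2) is then immediate: by Raynaud's result cited just above the proposition, an integral line bundle on $\calA/S$ ample on one fiber is ample, and ampleness/nefness being numerical properties (for $\RR$-classes, via [Lazarsfeld, Prop.~1.3.13]), the statement for $\RR$-classes reduces to finitely many integral generators via Part (3). For Part (3), the content is that the natural surjection $(\Pic(\calA/S)/\!\equiv)\otimes\RR \to \Pic(\calA/S)_\RR/\!\equiv$ is injective, equivalently that an $\RR$-linear combination $\sum r_i\calL_i$ of integral line bundles which is numerically trivial can be rewritten as an $\RR$-combination of \emph{numerically trivial} integral bundles. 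Here I would use Part (1): $\spec_\eta$ identifies $\NS(\calA/S)$ with a subgroup of the finitely generated free group $\NS(\calA_\eta)$ of finite index (after $\otimes\QQ$ it is an isomorphism), so the kernel of $\Pic(\calA/S)\to\NS(\calA/S)$, tensored with $\RR$, is exactly the numerically trivial part of $\Pic(\calA/S)_\RR$; flatness of $\RR$ over $\ZZ$ gives the exactness $0\to(\text{num.\ triv.})\otimes\RR\to\Pic(\calA/S)_\RR\to\NS(\calA/S)_\RR\to0$, which is the claim. Part (4) follows by combining (3) with the openness of the ample cone inside $\NS(\calA/S)_\RR$: given an ample $\RR$-class $\calL$, perturb it by a small $\RR$-combination of integral classes so that the result is still ample (by openness, using the numerical characterization of ampleness) and rational, hence a positive rational — so after clearing denominators, positive integral — combination of ample integral bundles; subtracting, the correction term is again ample integral for small enough perturbation. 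Wrapping this up writes $\calL$ as a positive $\RR$-combination of ample integral line bundles.

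\emph{Main obstacle.} I expect the crux to be Part (1), specifically the clean proof that $\spec_s$ is injective on $\NS$ (not just on $\NS_\QQ$) and that $\spec_{\eta,\QQ}$ is surjective — this is where one genuinely uses normality of $S$, the representability of the relative Picard functor, and the extension results of Grothendieck--Raynaud quoted before the statement; the subtlety is handling the torsion and the fact that $\NS(\calA/S)$ might be a proper finite-index subgroup of $\NS(\calA_s)$, so one must be careful that ``injective'' is the right integral statement while ``isomorphism'' only holds rationally. Parts (2)--(4) are then formal consequences of (1) together with standard facts about numerical equivalence and the openness of ampleness, and should not present real difficulty.
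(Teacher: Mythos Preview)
Your approach to (1) via the relative Picard scheme is a legitimate alternative to the paper's, but the execution contains a genuine error. You correctly note that if $\calL_s$ is numerically trivial then the classifying section $S\to\cPic_{\calA/S}$ lands in the identity component $\calA^\vee$ --- and this \emph{already} finishes the proof, since a section of $\calA^\vee$ is fiberwise in $\Pic^0$, hence fiberwise numerically trivial. But you then go astray: you claim this section ``meets the zero section at $s$'' and invoke rigidity to force it to be identically zero. Both steps are wrong: being in $\Pic^0(\calA_s)=\calA^\vee_s$ does not mean equalling $0\in\calA^\vee_s$, and there is no rigidity principle forcing a section of an abelian scheme through the identity at one point to be the identity section (think of any non-torsion section of an elliptic surface). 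What actually needs justification is the step you skip: why the section lands in $\calA^\vee$ at all --- this uses that $S$ is connected and that $\cPic_{\calA/S}/\calA^\vee$ is \'etale over $S$. The paper bypasses the Picard scheme entirely: it shows that if $\calL_s\equiv0$, then for any ample $\calL_0$ and rational $\epsilon>0$ the bundle $\calL+\epsilon\calL_0$ is ample on $\calA_s$, hence ample on $\calA$ by Raynaud, so $\calL$ is nef as a limit; symmetrically $-\calL$ is nef, so $\calL\equiv0$.

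Your sketch of (2) is too vague: ``reduces to finitely many integral generators via Part (3)'' does not explain how fiberwise ampleness of an $\RR$-class propagates. The paper uses openness of the ample cone in $\NS(\calA_s)_\RR$ together with injectivity from (1): since $[\calL_s]$ lies in the open ample cone and in the subspace $\spec_s(\NS(\calA/S))_\RR$, one writes $[\calL_s]$ as a positive combination of classes $[\calL_{i,s}]$ with $\calL_i\in\Pic(\calA/S)$ and $\calL_{i,s}$ ample; by (1) this lifts to $[\calL]=\sum c_i[\calL_i]$, and each $\calL_i$ is ample by Raynaud. Your (3) via flatness of $\RR$ over $\ZZ$ is correct and arguably cleaner than the paper's explicit linear-algebra computation (it implicitly uses (1) to know $\NS(\calA/S)$ is finitely generated free, so that injectivity into the curve-pairing survives $\otimes\RR$); your (4) is close in spirit to the paper's. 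Note the paper's logical order is $(1)\Rightarrow(2)\Rightarrow(3)\Rightarrow(4)$, while you attempt $(1)\Rightarrow(3)\Rightarrow(2)$ without supplying the missing ingredient for (2).
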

\begin{proof}
(1) Assume $\calL\in \Pic(\calA/S)$ with $\calL_s$ numerically trivial. Since $\calA/S$ is projective, there exists an ample $\calL_0\in\Pic(\calA/S)$. For any rational number $\epsilon>0$, the $\QQ$-line bundle $\calL+\epsilon\calL_0$ restricted to the fiber $\calA_s$ is ample. Then $\calL+\epsilon\calL_0$ is ample by the result of Raynaud. So $\calL$, being the limit of $\calL+\epsilon \calL_0$ as $\epsilon\rightarrow 0$, is nef. Similarly, $-\calL$ is nef. Thus $\calL$ is numerically trivial. The ``moreover'' part follows from the extension result of Grothendieck since any line bundle can be expressed as a difference of two ample line bundles.
  
(2) Assume $\calL\in\Pic(\calA/S)_\RR$ such that $\calL_s$ is ample for $s\in S$. Then the N\'eron--Severi class $[\calL_s]$ lies in the open convex ample cone $\Amp(\calA_s)$ of $\calA_s$. Also, $[\calL_s]$ is in the $\RR$-vector subspace $\spec_{s}(\NS(\calA/S))_\RR$. So
  \[
    \Amp(\calA_s)\cap\spec_{s}(\NS(\calA/S))_\RR
  \]is a nonempty open convex subset of $\spec_{s,\RR}(\NS(\calA/S)_\RR)$ containing $[\calL_s]$. By openness, there exists $[\calL_1],[\calL_2]\in \NS(\calA/S)$ such that $[\calL_s]=c_1[\calL_{1,s}]+c_2[\calL_{2,s}]$ for $c_1,c_2>0$. By (1), we have $[\calL]=c_1[\calL_1]+c_2[\calL_2]$. Note that $\calL_1,\calL_2$ is ample by Raynaud's result. Thus $\calL$ is ample.

  For nefness, one can use the same limit argument as in (1). 
  
(3) Assume $\calL=\sum_i c_i \calL_i$ is numerically trivial with $\calL_i\in\Pic(\calA/S)$ and $c_i\in \RR$. By (2), this is equivalent to the condition that $\calL_s$ is numerically trivial for a point $s\in S$. The condition that $\sum_i c_i \calL_{i,s}$ is numerically trivial is given by finitely many integer-coefficient linear homogeneous equations in $c_i$'s, determined by intersecting with a set of generators of the subgroup of $H_2(\calA_s,\ZZ)$ spanned by algebraic $1$-cycles. It follows from linear algebra that the solution space is generated by integer coefficient vectors. Hence we can formally rewrite $\sum_i c_i\calL_{i,s}=\sum_j c_j' \calL_{j,s}'$ where $\calL_{j}'$ is an integer linear combination of $\calL_{i}$ and $\calL_{j,s}'$ is numerically trivial. By (2) again, $\calL_{j}'$ is numerically trivial. Thus $\calL=\sum_j c_j'\calL_j'$ is a linear combination of numerically trivial integral line bundles.

(4) The basic observation is that for any real number $c$ and two line bundles $\calM_1,\calM_2$, letting $c',c''$ be rationals with $c'<c<c''$, there exists $0<t<1$ such that
  \[
    \calM_1+c\calM_2=t(\calM_1+c'\calM_2)+(1-t)(\calM_1+c''\calM_2).
  \] We first use the proof of (2) to express $\calL\equiv c_1\calL_1+c_2\calL_2$ with ample $\calL_1,\calL_2\in\Pic(\calA/S)$ and $c_1,c_2>0$. Now by (3), we can write $\calL-c_1\calL_1-c_2\calL_2$ as a linear combination of numerically trivial integral line bundles. The observation allows us to merge those with $c_1\calL_1$ and conclude. 

\vspace{-4mm}
\end{proof}



\subsection{Polarization and level structure}
The identity component $\cPic^0(\calA/S)$ of the Picard functor $\cPic(\calA/S)$ is representable by an abelian scheme $\calA^\dual/S$, known as the \emph{dual abelian scheme} to $\calA/S$. 
Given $\calL\in \Pic(\calA/S)$, define $\Lambda(\calL):\calA\rightarrow \cPic(\calA/S)$ as
\[
\mu^*\calL -p_1^*\calL -p_2^*\calL \in \cPic(\calA/S)(\calA)=\Pic(\calA\times_S\calA/\calA)
\]where $\mu,p_1,p_2:\calA\times_S\calA\rightarrow \calA$ are respectively addition, first and second projections. It can be easily checked that the image is in the identity component; that is, $\Lambda(\calL)$ is a homomorphism from $\calA$ to $\calA^\dual$.

A \emph{polarization} of an abelian scheme is defined as a homomorphism $\lambda:\calA\rightarrow \calA^\dual$ such that for any geometric point $\bar s$ of $S$, $\lambda_{\bar s}=\Lambda(\calL_{\bar s})$ for some ample line bundle $\calL_{\bar s}$ on $\calA_{\bar s}$. Any ample line bundle $\calL\in \Pic(\calA/S)$ defines a polarization $\Lambda(\calL)$. Conversely, $\lambda$ is not in general of the form $\Lambda(\calL)$. However, $2\lambda=\Lambda(\calL)$ for some ample $\calL\in \Pic(\calA/S)$ by \cite[Prop.~6.10]{Mumford_GIT}. The polarization is called \emph{principal} if it is an isomorphism. In general, the kernel of a polarization is a finite \'etale group subscheme over $S$, which over any geometric point, is isomorphic to 
\[
(\ZZ/d_1\ZZ\times \cdots\times \ZZ/d_g\ZZ)^2
\]for fixed positive integers $d_1|\cdots|d_g$. The diagonal matrix $D:=\diag(d_1,\dots,d_g)$ is called the \emph{type} of the polarization. Observe that polarization types are stable under base change. Because any abelian scheme over the normal base $S$ is projective, polarization always exists in our setting. 

A \emph{level-$n$-structure} on an abelian scheme in this article means a principal level-$n$-structure, i.e., an isomorphism $\alpha:(\ZZ/n\ZZ)^{2g}\xrightarrow{\sim}\calA[n]$, where $n\in \NN$. Its existence thus requires all $n$-torsion points of $\calA_\eta$ to be rational and the converse clearly holds as well. In particular, if $\calA/S$ has a level-$n$-structure, then so does any abelian subscheme $\calB\subseteq\calA$, or the quotient abelian scheme $\calA/\calB$.  We have the following convenient lemma for later use:

\begin{lemma}\label{lem_existlevelstr}
    For an abelian scheme $\calA/S$ and any $n\in\NN$, there exists a finite \'etale cover $S'\rightarrow S$ such that $\calA':=\calA\times_S S'$ has a level-$n$-structure.
\end{lemma}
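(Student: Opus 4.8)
The plan is to produce $S'$ as (a component of) the $S$-scheme parametrizing level-$n$-structures on $\calA/S$. First I would recall that, since everything is in characteristic $0$, the multiplication isogeny $[n]\colon\calA\to\calA$ is finite étale, so its kernel $\calA[n]$ is a finite étale commutative $S$-group scheme, killed by $n$, of constant rank $n^{2g}$ over $S$.

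The key construction is the $S$-scheme $\calI:=\underline{\mathrm{Isom}}_S\big((\ZZ/n\ZZ)^{2g}_S,\,\calA[n]\big)$ of group-scheme isomorphisms. Representability is immediate: a homomorphism from $(\ZZ/n\ZZ)^{2g}$ to a commutative group scheme killed by $n$ is the same as a choice of images of the $2g$ standard generators, so the corresponding $\Hom$-scheme is the fibered power $\calA[n]^{\times_S 2g}$, which is finite over $S$, and $\calI$ is the open-and-closed subscheme on which the universal homomorphism is an isomorphism. The one point that needs an argument is that $\calI\to S$ is finite étale and surjective. The cleanest way I would argue this: the constant group scheme $\underline{\GL_{2g}(\ZZ/n\ZZ)}_S$ acts on $\calI$ by precomposition, and since $\calA[n]$ is finite étale of rank $n^{2g}$ this action is simply transitive on geometric fibers, so $\calI$ is an étale torsor under $\underline{\GL_{2g}(\ZZ/n\ZZ)}_S$; in particular $\calI\to S$ is finite étale, and its geometric fibers are nonempty, hence it is surjective. (Alternatively, this can be checked after an étale base change $S_0\to S$ trivializing $\calA[n]$, over which $\calI$ becomes the nonempty constant $S_0$-scheme $\underline{\GL_{2g}(\ZZ/n\ZZ)}_{S_0}$.) By construction $\calI$ carries a tautological isomorphism $(\ZZ/n\ZZ)^{2g}_{\calI}\xrightarrow{\ \sim\ }\calA[n]_{\calI}$.

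To conclude, $S$ is connected (it is a variety) and $\calI$ is normal (being étale over the normal scheme $S$), so I may take $S'$ to be any connected — equivalently irreducible — component of $\calI$; then $S'\to S$ is again finite étale, and its image is open (étale maps are open) and closed (finite maps are closed) in the connected scheme $S$, hence all of $S$, so $S'\to S$ is a finite étale cover. Restricting the tautological isomorphism to $S'$ gives the desired level-$n$-structure $\alpha\colon(\ZZ/n\ZZ)^{2g}\xrightarrow{\ \sim\ }\calA'[n]$ on $\calA':=\calA\times_S S'$. There is no genuine obstacle here; the only thing to be careful about is that a connected component $S'$ of $\calI$, though normal and connected, may fail to be geometrically irreducible when $K$ is not algebraically closed. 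This is immaterial for the way the lemma is applied — only the finite étale surjection $S'\to S$ is used — and, if one insists on the running convention that base varieties be geometrically irreducible, one simply replaces $K$ by its algebraic closure inside the function field of $S'$.
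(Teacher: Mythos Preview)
Your proof is correct and is the standard moduli-theoretic argument: you directly construct the scheme $\calI$ parametrizing level-$n$-structures as a $\GL_{2g}(\ZZ/n\ZZ)$-torsor over $S$ and take a connected component. The paper instead argues iteratively: as long as $\calA[n]$ has an irreducible component $T$ that is not a section, base-change to $T$; the tautological map $T\to\calA_T$ then furnishes a new $n$-torsion section, and after finitely many steps all of $\calA'[n]$ splits into sections. Your approach is cleaner and one-shot, and makes the universal property of $S'$ transparent; the paper's approach is more hands-on and avoids invoking Isom-schemes or torsors, at the cost of an inductive loop. Both face the same minor issue with the paper's standing convention that base varieties be geometrically irreducible, and your closing remark handles it appropriately.
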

\begin{proof}
    Indeed, let $T$ be an irreducible component of $\calA[n]$ which is not a section. Then $T$ is finite \'etale over $S$, of degree $>1$. Then $\calA_T$ has an $n$-torsion section given by the natural map $T\rightarrow \calA_T$. After repeating this finitely many times, we can get a finite \'etale cover $S'\rightarrow S$, such that all irreducible components of $\calA'[n]$ are sections.
\end{proof}


\subsection{Homomorphisms}\label{subsec_hom}
Let $\calB$ be another abelian scheme over $S$. A homomorphism of abelian schemes $\calA\rightarrow \calB$ restricts to a homomorphism of abelian varieties on the generic fiber $\calA_\eta\rightarrow \calB_\eta$. Conversely, a homomorphism $\calA_\eta\rightarrow \calB_\eta$ spreads out in a unique way to a homomorphism $\calA\rightarrow\calB$ by normality of $S$; cf. \cite[Prop.~I.2.7]{FC_DAV}. Thus there is a natural isomorphism of homomorphism groups
 \[
\Hom(\calA,\calB)\cong \Hom(\calA_\eta,\calB_\eta).
 \]In particular, $\End(\calA)\cong\End(\calA_\eta)$. 
 
 We will sometimes make the following innocuous extra assumption in the later discussion
\begin{equation}\label{eqn_all_endomorphism_appears}
   \End(\calA_\eta)=\End(\calA_{\bar\eta}).
\end{equation}This ensures that all abelian subvarieties of $\calA_{\bar \eta}$ are defined over $K(S)$.
Since $\End(\calA_{\bar\eta})$ is finitely generated, the assumption can be achieved by base changing $\calA_\eta$ to a finite extension of $K(S)$, which corresponds to taking a finite cover of $S$. Alternatively, one can apply a more precise result of Silverberg \cite[Thm.~2.4]{Silverberg_fod} to say it suffices to assume that there exists a level-$n$-structure on $\calA/S$ for some $n\geq3$. By Lem.~\ref{lem_existlevelstr} above, we may achieve \eqref{eqn_all_endomorphism_appears} by base changing to a finite \'etale cover of $S$.

Now suppose $B$ is an abelian subvariety of $\calA_\eta$. Then by Poincar\'e's complete reducibility \cite[Prop.~12.1]{Milne_AV}, there exists another abelian variety $B'$ and an isogeny $\calA_\eta\rightarrow B\times B'$. By composition with the projection and embedding, we get an endomorphism $\varphi_\eta:\calA_\eta\rightarrow\calA_\eta$ with $\varphi_\eta(\calA_\eta)=B$. Consider the corresponding homomorphism $\varphi\in \End(\calA)$. Note that, for any $\varphi\in \End(\calA)$, the image of $\varphi$ is an abelian subscheme of $\calA$; see \cite{ACM_Image} for a very general recent result. Thus $\varphi(\calA)$ is an abelian subscheme of $\calA$ with generic fiber $B$. As an immediate consequence, there is a bijection between
\[
\{\text{abelian subschemes of }\calA\}\leftrightarrow\{\text{abelian subvarieties of }\calA_\eta\}.
\]By complete reducibility of $\calA_\eta$, we have an isogeny decomposition
\[
\calA\sim \calA_1^{r_1}\times_S\cdots\times_S \calA_k^{r_k}
\]where $\calA_i$'s are \emph{simple} abelian subschemes of $\calA$ which are pairwise non-isogenous with $r_1,\cdots,r_k\in\NN\setminus\{0\}$. If we assume \eqref{eqn_all_endomorphism_appears}, then each $\calA_i$ is also geometrically simple. 

A particularly important portion to us is the fixed part (or constant part) of $\calA_\eta$. Denote the function field of $S$ by $F$. 
\begin{defn}
    Let $A$ be an abelian variety over $F$. The \emph{$F/K$-trace} of $A$ is a final object $(\tr_{F/K}(A),\tau)$ in the category of pairs $(B,f)$ where $B$ is an abelian variety over $K$ and $f:B_F\rightarrow A$ is a homomorphism. 
\end{defn}
Since varieties are assumed to be geometrically connected, the extension $F/K$ is primary. 
 The trace exists for the primary extension $F/K$, and in characteristic $0$, the homomorphism $\tau$ is an embedding; cf. \cite[\S6]{Conrad_Trace}. Denote by $\calA_0$ the abelian subscheme of $\calA$ corresponding to $\tau(\tr_{F/K}(\calA_\eta)_F)$. Note that $\calA_0\cong \tr_{F/K}(\calA_\eta)\times_K S$. The following definition is for later use:

\begin{defn}\label{defn_constant_multisection}
    A \emph{constant generic point} is defined to be a closed point $x\in \calA_\eta$ such that there exists a closed point $x'\in \tr_{F/K}(\calA_\eta)$, with base change still denoted by $x'\in \tr_{F/K}(\calA_\eta)_F$, and $N\in \NN\setminus\{0\}$ with $[N]x=\tau(x')$. If $x'=0$, such a constant generic point is called a \emph{torsion generic point}. A \emph{constant (resp. torsion) multi-section} is defined as the Zariski closure in $\calA$ of a constant (resp. torsion) generic point in $\calA_\eta$.
\end{defn}

 If the constant (resp. torsion) generic point is $F$-rational, then the corresponding constant (resp. torsion) multi-section is called a \emph{constant (resp. torsion) section}. 

 For later purpose, we are mostly interested in surjective homomorphisms. Suppose $\varphi:\calA\rightarrow\calB$ is a surjective homomorphism. Then $\calB$ is isogenous to $\calA_1^{r_1'}\times_S\cdots\times_S\calA_k^{r_k'}$ with $0\leq r_i'\leq r_i$. In particular, we get the following lemma which allows us to consider only finitely many targets eventually.

\begin{lemma}\label{lem_finite_target}
    There are only finitely many isogeny classes of abelian schemes $\calB$ over $S$ which admit a surjective homomorphism from $\calA$ to $\calB$.
\end{lemma}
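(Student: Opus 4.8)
The plan is to reduce to the generic fiber and then apply Poincaré complete reducibility twice. By the isomorphism $\Hom(\calA,\calB)\cong\Hom(\calA_\eta,\calB_\eta)$ and the dictionary between abelian subschemes of an abelian scheme over $S$ and abelian subvarieties of its generic fiber (both recorded in \S\ref{subsec_hom}), a surjective homomorphism $\calA\to\calB$ is the same datum as a surjective homomorphism $\calA_\eta\to\calB_\eta$, and $\calB$, $\calB'$ are isogenous over $S$ precisely when $\calB_\eta$, $\calB'_\eta$ are isogenous over $F$. So it suffices to bound the number of $F$-isogeny classes of abelian varieties admitting a surjection from $\calA_\eta$.

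First I would fix the decomposition $\calA_\eta\sim\calA_{1,\eta}^{r_1}\times\cdots\times\calA_{k,\eta}^{r_k}$ with the $\calA_{i,\eta}$ simple and pairwise non-isogenous, as in \S\ref{subsec_hom}. Given a surjection $\calA_\eta\twoheadrightarrow\calB_\eta$, write $\calB_\eta\sim\calB_{1,\eta}^{s_1}\times\cdots\times\calB_{l,\eta}^{s_l}$ with the $\calB_{j,\eta}$ simple and pairwise non-isogenous. The composite $\calA_\eta\twoheadrightarrow\calB_\eta\twoheadrightarrow\calB_{j,\eta}$ is nonzero, and since $\Hom(\calA_\eta,\calB_{j,\eta})_\QQ=\bigoplus_{i}\Hom(\calA_{i,\eta},\calB_{j,\eta})_\QQ^{\,r_i}$, some $\Hom(\calA_{i,\eta},\calB_{j,\eta})$ is nonzero; a nonzero map between simple abelian varieties being an isogeny, $\calB_{j,\eta}$ is isogenous to $\calA_{i(j),\eta}$ for a unique index $i(j)$. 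As the $\calB_{j,\eta}$ are pairwise non-isogenous, $j\mapsto i(j)$ is injective, so $l\le k$.

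Next I would bound the multiplicities $s_j$. Composing $\calA_\eta\twoheadrightarrow\calB_\eta$ with the projection to $\calB_{j,\eta}^{s_j}$ and using $\Hom(\calA_{m,\eta},\calA_{i(j),\eta})=0$ for $m\neq i(j)$, we obtain a surjection $\calA_{i(j),\eta}^{r_{i(j)}}\to\calA_{i(j),\eta}^{s_j}$; comparing dimensions gives $s_j\le r_{i(j)}$. Thus $\calB_\eta$ is isogenous to $\prod_{j=1}^{l}\calA_{i(j),\eta}^{s_j}$ for some injection $j\mapsto i(j)$ of $\{1,\dots,l\}$ into $\{1,\dots,k\}$ and exponents $0\le s_j\le r_{i(j)}$. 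Only finitely many such data exist, hence only finitely many isogeny classes of $\calB$ arise, which proves the lemma.

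The argument has no real obstacle: it is Poincaré reducibility over the field $F$ together with the two already-established facts $\Hom(\calA,\calB)\cong\Hom(\calA_\eta,\calB_\eta)$ and the abelian subscheme/subvariety correspondence. The only minor point requiring care is to make sure the $\Hom$-computations legitimately take place over $F$, which is exactly what those two isomorphisms provide; in particular a "simple abelian subscheme" of $\calA$ behaves, for all Hom-theoretic purposes, like a simple abelian variety over $F$.
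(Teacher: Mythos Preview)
Your proof is correct and follows the same route the paper takes: the sentence immediately preceding the lemma records that any surjective image $\calB$ satisfies $\calB\sim\calA_1^{r_1'}\times_S\cdots\times_S\calA_k^{r_k'}$ with $0\le r_i'\le r_i$, and the lemma is then stated as an immediate consequence. You have simply written out in full the Poincar\'e-reducibility argument that the paper leaves implicit.
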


For the rest of the section, we fix $\calB$ to be an abelian subscheme of $\calA$ from one of the isogeny classes in Lem.~\ref{lem_finite_target}. Let $\Hom(\calA,\calB)^\circ \subseteq \Hom(\calA,\calB)$ be the subset of surjective homomorphisms. Let $\Hom(\calA,\calB)^\circ_\QQ\subseteq\Hom(\calA,\calB)_\QQ$ be the subset of $f$ such that $N f$ is a surjective homomorphism for some integer $N\in\ZZ$. To use continuity arguments later, we also need a more general notion of surjective $\RR$-homomorphisms in $\Hom(\calA,\calB)_\RR$.

Note first that the composition of homomorphisms extends linearly to a pairing
\[
\begin{split}
  \Hom(\calB,\calA)_\RR   \times \Hom(\calA,\calB)_\RR&\longrightarrow \End(\calB)_\RR\\
  (\theta,f)&\longmapsto f\circ \theta
\end{split}
\]Define the subset of \emph{surjective $\RR$-homomorphisms} algebraically as
\[
\Hom(\calA,\calB)_\RR^\circ:=\{f\in\Hom(\calA,\calB)_\RR\mid \exists \theta\in \Hom(\calB,\calA)_\RR\text{ s.t. }f\circ \theta=\id_\calB\}.
\]This is consistent with the previous definition of surjective $\QQ$-homomorphisms due to complete reducibility.




We would like to study the union $\cup\ker\varphi$ for all $\varphi\in \Hom(\calA,\calB)^\circ$, which is equivalently the union of torsion translates of abelian subschemes of $\calA$ of fixed type isogenous to $\calA/\calB$. 
\begin{defn}
   A point $x\in \calA$ is called \emph{$\calB$-null}, if there exists a surjective homomorphism $\varphi:\calA\rightarrow \calB$ which sends $x$ onto the zero section. 
\end{defn}

Equip the finite dimensional vector space $\Hom(\calA,\calB)_\RR$ with the usual Euclidean topology. The following compactness result is one of the key ingredients to our main theorem, motivated by the analogous result of Habegger \cite[Lem.~2]{Habegger_BHConAV} on abelian varieties. We assume $K$ is a number field below for simplicity.
\begin{prop}\label{prop_compactness}
   Assume the hypothesis \eqref{eqn_all_endomorphism_appears}. There exists a compact subset $\calK(\calA,\calB)$ of $\Hom(\calA,\calB)^\circ_\RR$ with the property that for any $\calB$-null point $x\in \calA$, there is a $\QQ$-homomorphism $f\in \calK(\calA,\calB)$ and a nonzero $N\in\NN$ such that $Nf$ kills $x$.
\end{prop}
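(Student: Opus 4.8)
The plan is to reduce the statement to a purely ``linear'' compactness fact about the left action of $\End(\calB)_\RR$ on the open cone $\Hom(\calA,\calB)^\circ_\RR$ of surjective $\RR$-homomorphisms, and then to upgrade from real to rational homomorphisms by a density argument. The input that makes the upgrade work is that \emph{killing a given point is preserved under arbitrary post-composition}: if $x$ is $\calB$-null, witnessed by a surjective (integral) homomorphism $\varphi\colon\calA\to\calB$ with $\varphi(x)=0$, then $(\psi\circ\varphi)(x)=0$ for every $\psi\in\End(\calB)_\RR$, and $\psi\mapsto\psi\circ\varphi$ is an $\RR$-linear map $\End(\calB)_\RR\to\Hom(\calA,\calB)_\RR$ defined over $\QQ$ (so its image $\End(\calB)_\RR\cdot\varphi$ is a $\QQ$-rational subspace in which the points $\psi\circ\varphi$, $\psi\in\End(\calB)_\QQ$, are dense). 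Granting a compact $\calK_0\subseteq\Hom(\calA,\calB)^\circ_\RR$, independent of $\varphi$, that meets every orbit $\End(\calB)_\RR\cdot\varphi$ with $\varphi$ surjective and integral, I would take $\calK(\calA,\calB)$ to be a compact neighborhood of $\calK_0$ inside the open set $\Hom(\calA,\calB)^\circ_\RR$; then for a $\calB$-null $x$ with witness $\varphi$, the intersection of $\End(\calB)_\RR\cdot\varphi$ with the interior of $\calK(\calA,\calB)$ is a nonempty relatively open subset of $\End(\calB)_\RR\cdot\varphi$, so it contains a point $f=\psi\circ\varphi$ with $\psi\in\End(\calB)_\QQ$. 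This $f$ is a $\QQ$-homomorphism lying in $\calK(\calA,\calB)$; it is surjective because $f\in\Hom(\calA,\calB)^\circ_\RR$; and for any $N\in\NN\setminus\{0\}$ with $N\psi\in\End(\calB)$, the homomorphism $Nf=(N\psi)\circ\varphi$ is an honest homomorphism of abelian schemes with $(Nf)(x)=(N\psi)(\varphi(x))=0$.

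To construct $\calK_0$, I would use the hypothesis \eqref{eqn_all_endomorphism_appears} together with Poincar\'e's complete reducibility to write $\calA\sim\prod_i\calA_i^{n_i}$ (plus simple factors not isogenous to any factor of $\calB$), with the $\calA_i$ geometrically simple, and $\calB\sim\prod_i\calA_i^{m_i}$ with $m_i\le n_i$ by the already-quoted restriction on the simple factors of a quotient of $\calA$. Put $D_i:=\End(\calA_i)_\RR$, a division algebra over $\RR$, so $D_i\in\{\RR,\CC,\HH\}$. Since the simple factors of $\calA$ not isogenous to a factor of $\calB$ contribute nothing, one gets compatible identifications $\Hom(\calA,\calB)_\RR\cong\prod_i M_{m_i\times n_i}(D_i)$ and $\End(\calB)_\RR\cong\prod_i M_{m_i}(D_i)$ under which composition is block-wise matrix multiplication, and an element is in $\Hom(\calA,\calB)^\circ_\RR$ exactly when each block has full rank $m_i$ over $D_i$ (in particular $\Hom(\calA,\calB)^\circ_\RR$ is open). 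The quotient of the full-rank blocks in $M_{m_i\times n_i}(D_i)$ by the left action of $\GL_{m_i}(D_i)$ is the (compact) Grassmannian of $(n_i-m_i)$-dimensional $D_i$-subspaces of $D_i^{n_i}$, the subspace being the kernel; explicitly, for a full-rank block $f_i$ the matrix $f_if_i^{\ast}$ ($\ast$ the conjugate transpose) is positive-definite Hermitian, hence has a positive-definite Hermitian square root $P_i\in\GL_{m_i}(D_i)$ by the spectral theorem in each of the three cases, and $P_i^{-1}f_i$ then lies in the compact Stiefel set $\{M\in M_{m_i\times n_i}(D_i)\mid MM^{\ast}=I\}$. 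I would take $\calK_0$ to be the product of these Stiefel sets: it is compact, and for a surjective integral $\varphi=(\varphi_i)_i$ the element $\psi_0:=(P_i^{-1})_i$ of $\End(\calB)_\RR$ satisfies $\psi_0\circ\varphi\in\calK_0$.

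The hard part is the tension between normalization and rationality: the normalizing factor $\psi_0$ lives only in $\End(\calB)_\RR$, so a naive normalization yields merely an $\RR$-homomorphism rather than the rational $f$ required by the statement. The resolution, as sketched above, is that the condition $f(x)=0$ is preserved under \emph{every} post-composition and therefore holds identically on the subspace $\End(\calB)_\RR\cdot\varphi$, which — combined with the openness of the surjective locus $\Hom(\calA,\calB)^\circ_\RR$ and of $\calK(\calA,\calB)$, and the $\QQ$-rationality of that subspace — leaves exactly enough room to perturb $\psi_0$ to a rational $\psi$. Secondary points to handle carefully are: the real structure theory of the semisimple algebras $\End(\calB)_\RR$ and $\Hom(\calA,\calB)_\RR$; the compactness of the possibly quaternionic Stiefel sets and the existence of positive square roots of positive Hermitian quaternionic matrices; and the role of \eqref{eqn_all_endomorphism_appears}, which guarantees that the algebraic and the geometric isogeny decompositions coincide, so that $\{\psi\circ\varphi:\psi\in\End(\calB)_\QQ\}$ really is dense in $\End(\calB)_\RR\cdot\varphi$.
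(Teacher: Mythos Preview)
Your approach is correct in outline and genuinely different from the paper's. The paper does not prove the compactness statement from scratch: it specializes to a fiber $\calA_s$ over a closed point $s\in S(\Ka)$ for which $\End(\calA)\to\End(\calA_s)$ is an isomorphism (this is where hypothesis \eqref{eqn_all_endomorphism_appears} enters, via Masser's specialization theorem), identifies $\Hom(\calA,\calB)^\circ_\RR$ with $\Hom(\calA_s,\calB_s)^\circ_\RR$, and then quotes Habegger's \cite[Lem.~2]{Habegger_BHConAV} as a black box to produce the compact set together with the stronger property that for every surjective $\varphi\in\Hom(\calA,\calB)_\QQ^\circ$ there is $\theta\in\End(\calB)_\QQ^\circ$ with $\theta\circ\varphi\in\calK(\calA,\calB)$. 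Your argument is effectively a self-contained proof of Habegger's lemma via the Stiefel normalization and the rational-density perturbation, and it has the pleasant feature of avoiding the specialization step entirely.

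One point needs correction. The claim that $D_i:=\End(\calA_i)_\RR$ is a division algebra in $\{\RR,\CC,\HH\}$ is false in general: $\End(\calA_i)_\QQ$ is a division algebra over $\QQ$, but after tensoring with $\RR$ one only obtains a semisimple $\RR$-algebra, hence a finite product $\prod_j M_{r_{ij}}(D_{ij}')$ with each $D_{ij}'\in\{\RR,\CC,\HH\}$ (already a simple abelian variety with CM by a quartic CM field $E$ gives $E\otimes_\QQ\RR\cong\CC\times\CC$). This does not break your argument: decomposing one step further, $\Hom(\calA,\calB)_\RR$ and $\End(\calB)_\RR$ split into blocks over honest division algebras $D_{ij}'$, surjectivity becomes full row rank in every block, and your Stiefel-manifold normalization and positive-square-root argument go through verbatim block by block. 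Incidentally, once the argument is written this way, hypothesis \eqref{eqn_all_endomorphism_appears} plays no role --- the density of $\End(\calB)_\QQ$ in $\End(\calB)_\RR$ and the $\QQ$-linearity of $\psi\mapsto\psi\circ\varphi$ are automatic --- so your stated reason for invoking it is not quite right; in the paper's proof its function is solely to make the specialization step work.
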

\begin{proof}
For any $ s\in S(\Ka)$, specialization gives an injection $\End(\calA)\hookrightarrow\End(\calA_s)$. Under the assumption \eqref{eqn_all_endomorphism_appears},  such specialization is indeed an isomorphism for``most'' of $s\in S(\Ka)$ by the main result of \cite{Masser_Specialization} (or by Hodge theory). We still denote such a point by $s\in S(\Ka)$. By our assumption, $\calB\subseteq\calA$. The isomorphism $\End(\calA)\cong\End(\calA_{s})$ also implies that 
$\End(\calB)\cong\End(\calB_{s})$, $\Hom(\calA,\calB)\cong\Hom(\calA_{s},\calB_{s})$ and $\Hom(\calB,\calA)\cong\Hom(\calB_s,\calA_s)$. By the purely algebraic definition of surjective $\RR$-homomorphisms, this means that we can identify
\[
\Hom(\calA,\calB)_\RR^\circ=\Hom(\calA_{s},\calB_{s})_\RR^\circ.
\]Now by \cite[Lem.~2]{Habegger_BHConAV} and its proof, there is a compact set $\calK(\calA_{s},\calB_{s})\subseteq \Hom(\calA_{ s},\calB_{ s})_\RR^\circ$ with the property that for any $f_s\in \Hom(\calA_{s},\calB_{ s})_\QQ^\circ$, there exists $\theta_s\in \End(\calB_{ s})_\QQ^\circ$ such that $\theta_s \circ f_s\in \calK(\calA_s,\calB_s)$. Let $\calK(\calA,\calB)$ be the corresponding set in $\Hom(\calA,\calB)_\RR^\circ$. If $x$ is $\calB$-null, there exists a surjective homomorphism $\varphi:\calA\rightarrow \calB$ which kills $x$. Let $\theta\in \End(\calB)_\QQ^\circ$ with $f:=\theta\circ \varphi\in \calK(\calA,\calB)$. Then for any nonzero $N\in \NN$ such that $N\theta$ is a homomorphism, $Nf=N\theta \circ \varphi$ kills $x$.
\end{proof}

\subsection{Pullback by \texorpdfstring{$\RR$-}{real }homomorphisms}
For any $\varphi\in \Hom(\calA,\calB)$, there is a pullback map $\varphi^*:\Pic(\calB/S)_\RR\rightarrow \Pic(\calA/S)_\RR$ which is a linear map preserving symmetry of line bundles. By abuse of notation, still write $\varphi^*:\Pic_0(\calB/S)_\RR\rightarrow \Pic_0(\calA/S)_\RR$ for the map restricted to symmetric line bundles. First, we extend it to $\QQ$-homomorphisms. Let $f\in \Hom(\calA,\calB)_\QQ$. Suppose $Nf\in \Hom(\calA,\calB)$ for nonzero $N\in \NN$. Define the pullback \[f^*:\Pic_0(\calB/S)_\RR\rightarrow \Pic_0(\calA/S)_\RR\] as $(Nf)^*/N^2$. This is well-defined since $[N]^*\calL=N^2\calL$ for a symmetric line bundle $\calL$ on an abelian scheme. Therefore we have 
\[
    \star_\QQ: \Hom(\calA,\calB)_\QQ\times \Pic_{0}(\calB/S)_\RR \rightarrow \Pic_{0}(\calA/S)_\RR
\]
sending $(f,\calL)$ to $f^*\calL$.

\begin{lemma}\label{lem_pullback_is_quadratic_form}
    The mapping $\star_\QQ$ is a $\QQ$-quadratic form on $\Hom(\calA,\calB)_\QQ$. Specifically, for any $f_1,f_2\in \Hom(\calA,\calB)_\QQ$, $\calL\in \Pic_{0}(\calB/S)_\RR$ and $n\in\QQ$, it is
    \begin{enumerate}
        \item homogeneous, i.e., $(n f_1)^*\calL=n^2 f_1^*\calL$, and
        \item quadratic, i.e., $(f_1+n f_2)^*\calL$ is a quadratic polynomial in $n$ with coefficients from $\Pic_0(\calA/S)$.
    \end{enumerate}
\end{lemma}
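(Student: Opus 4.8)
\emph{Proof sketch.} The plan is to dispose of the homogeneity (1) by a direct computation with $[n]_{\calA}$, and then to obtain (2) by extracting a biadditive form from the theorem of the cube. Since $\star_\QQ$ is $\RR$-linear in the line bundle argument (this is how pullback on $\Pic_0(\calB/S)_\RR$ was defined), it suffices to fix an integral symmetric rigidified $\calL\in\Pic_0(\calB/S)$ and prove both statements for it, the general case following by writing $\calL$ as a finite $\RR$-combination of such.

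For (1): when $f\in\Hom(\calA,\calB)$ is integral and $n\in\ZZ$, one has $nf=f\circ[n]_\calA$, so $(nf)^*\calL=[n]_\calA^*(f^*\calL)=n^2 f^*\calL$, using that $f^*\calL$ is symmetric on $\calA$ (pullback preserves symmetry) and that $[n]^*$ multiplies a symmetric class by $n^2$, as noted above. For general $f\in\Hom(\calA,\calB)_\QQ$ and $n=a/b\in\QQ$, pick $N\in\NN\setminus\{0\}$ with $Nf$ integral; then $bN\cdot nf=a\cdot Nf$ is integral, and unwinding the definition $g^*\calL:=(Mg)^*\calL/M^2$ for $\QQ$-homomorphisms together with the integral case gives $(nf)^*\calL=a^2(Nf)^*\calL/(bN)^2=n^2 f^*\calL$.

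For (2): for integral $\varphi,\psi\in\Hom(\calA,\calB)$ set
\[
E(\varphi,\psi):=(\varphi+\psi)^*\calL-\varphi^*\calL-\psi^*\calL ,
\]
which lies in $\Pic_0(\calA/S)$, since each term is rigidified (as $\varphi+\psi,\varphi,\psi$ are homomorphisms) and symmetric (as $\calL$ is symmetric and $\varphi^*$ preserves symmetry). The crucial input is the theorem of the cube for the abelian scheme $\calB/S$ (see, e.g., \cite{FC_DAV}); pulling the cube relation for three homomorphisms back along $(\varphi_1,\varphi_2,\varphi_3)\colon\calA\to\calB\times_S\calB\times_S\calB$ yields
\[
(\varphi_1+\varphi_2+\varphi_3)^*\calL-\sum_{i<j}(\varphi_i+\varphi_j)^*\calL+\sum_{i=1}^{3}\varphi_i^*\calL=0
\]
in $\Pic_0(\calA/S)$, which rearranges to $E(\varphi_1+\varphi_2,\varphi_3)=E(\varphi_1,\varphi_3)+E(\varphi_2,\varphi_3)$; hence $E$ is biadditive, and symmetric since addition on $\calB$ is commutative. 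Combining with $0^*\calL=0$ (rigidification) and $(-\varphi)^*\calL=\varphi^*\calL$ (as $-\varphi=[-1]_\calB\circ\varphi$ and $\calL$ is symmetric), the identity $0=(\varphi-\varphi)^*\calL=2\varphi^*\calL-E(\varphi,\varphi)$ gives $E(\varphi,\varphi)=2\varphi^*\calL$, so that biadditivity yields, for all $n\in\ZZ$,
\[
(\varphi+n\psi)^*\calL=\varphi^*\calL+n\,E(\varphi,\psi)+n^2\,\psi^*\calL .
\]
To conclude over $\QQ$: given $f_1,f_2\in\Hom(\calA,\calB)_\QQ$, pick $N$ with $\varphi_i:=Nf_i$ integral, and for $n=a/b$ write $f_1+nf_2=(b\varphi_1+a\varphi_2)/(bN)$ with $b\varphi_1+a\varphi_2$ integral; the definition of $\QQ$-pullback together with the displayed integral formula (applied to $b\varphi_1$, $a$, $\varphi_2$, using (1)) gives
\[
(f_1+nf_2)^*\calL=f_1^*\calL+n\cdot\tfrac{1}{N^2}E(\varphi_1,\varphi_2)+n^2\,f_2^*\calL ,
\]
a quadratic polynomial in $n$ with coefficients in $\Pic_0(\calA/S)_\RR$ (in $\Pic_0(\calA/S)_\QQ$ when $\calL$ is integral); setting $f_1=0$ re-proves (1).

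The only non-formal ingredient is the relative theorem of the cube, together with the rigidification bookkeeping needed to keep $E(\varphi,\psi)$ inside $\Pic_0(\calA/S)$ rather than merely $\Pic(\calA/S)$; I do not expect a genuine obstacle beyond that, as the remaining steps are purely linear algebra.
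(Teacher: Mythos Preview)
Your proof is correct and uses the same essential input as the paper (the relative theorem of the cube), but packages the argument differently. The paper proves by induction on $n\in\NN$ the explicit formula
\[
(f_1+n f_2)^*\calL=(n^2-n)f_2^*\calL + n(f_1+f_2)^*\calL - (n-1)f_1^*\calL,
\]
using the cube identity in the inductive step, then handles negative $n$ separately and passes to $\QQ$ by homogeneity. You instead extract the symmetric biadditive form $E(\varphi,\psi)=(\varphi+\psi)^*\calL-\varphi^*\calL-\psi^*\calL$ directly from the cube relation, which immediately gives the polarization identity $(\varphi+n\psi)^*\calL=\varphi^*\calL+nE(\varphi,\psi)+n^2\psi^*\calL$ for all $n\in\ZZ$ in one stroke. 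Your route is arguably cleaner: it isolates the structural reason (a quadratic form is determined by its associated bilinear form) and avoids the separate treatment of negative integers. The paper's explicit formula, on the other hand, is what is actually quoted later in the proof of Proposition~\ref{prop_pullback_extends_to_real} as equation~\eqref{eqn_quadratic_pullback_general}, so its approach is slightly more self-contained for that purpose, though your formula yields the same generalization just as easily.
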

\begin{proof}
        It is clearly homogeneous of degree $2$. We show by induction that
    \begin{equation}\label{eqn_quadratic_pullback}
        (f_1+n f_2)^*\calL=(n^2-n)f_2^*\calL + n(f_1+f_2)^*\calL - (n-1)f_1^*\calL 
    \end{equation}for any $n\in\NN$. It holds trivially for $n=0,1$. Assume it holds for $n$. By the theorem of the cube \cite[Thm.~I.1.3]{FC_DAV}, there is linear equivalence:
   \[
         (f_1+nf_2+f_2)^*\calL= (f_1+nf_2)^*\calL + (nf_2+f_2)^*\calL + (f_1+f_2)^*\calL - f_1^*\calL - (nf_2)^*\calL - f_2^*\calL+ 0^*\calL.
   \]Note that $0^*\calL=\pi^*e^*\calL$ is trivial. Using induction hypothesis and symmetry of $\calL$, it is immediate to verify that
   \[
    (f_1+(n+1)f_2)^*\calL=(n^2+n)f_2^*\calL+(n+1)(f_1+f_2)^*\calL-nf_1^*\calL.
   \]Meanwhile, we have
   \[
    \begin{split}
        (f_1-nf_2)^*\calL=[(f_1+f_2)+(n+1)(-f_2)]^*\calL=(n^2+n)f_2^*\calL+(n+1)f_1^*\calL-n(f_1+f_2)^*\calL.
    \end{split}
   \]Thus \eqref{eqn_quadratic_pullback} holds for any $n\in\ZZ$, or $n\in\QQ$ by homogeneity.
\end{proof}

As a consequence, we have
\begin{prop}\label{prop_pullback_extends_to_real}
    There is a unique map
\begin{equation}\label{eqn_starR}
    \star: \Hom(\calA,\calB)_\RR\times \Pic_{0}(\calB/S)_\RR \rightarrow \Pic_{0}(\calA/S)_\RR
\end{equation}sending $(f,\calL)$ to $f^*\calL$ extending the pullback $\star_\QQ$, which is a real quadratic form in the first input and linear in the second input. Moreover, if $\calL\in \Pic_0(\calB/S)_\RR$ is nef, then $f^*\calL$ is nef for any $f\in \Hom(\calA,\calB)_\RR$. In particular, $\star$ descends to 
\begin{equation}\label{eqn_starR_for_NS}
    \star:\Hom(\calA,\calB)_\RR\times \NS(\calB/S)_\RR\rightarrow \NS(\calA/S)_\RR.
\end{equation}
\end{prop}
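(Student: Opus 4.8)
The plan is to reduce everything to the $\QQ$-structure already in place and then pass to $\RR$ by a single density‑plus‑continuity argument; there is no deep point here, only bookkeeping. For the \emph{existence and uniqueness} of $\star$, I would polarize the quadratic form $\star_\QQ$ of Lemma~\ref{lem_pullback_is_quadratic_form}. Fixing $\calL\in\Pic_0(\calB/S)_\RR$, the theorem of the cube (exactly as used in the proof of Lemma~\ref{lem_pullback_is_quadratic_form}, together with the fact that $0^*\calL$ is trivial) shows that
\[
b_\calL(\varphi,\psi):=(\varphi+\psi)^*\calL-\varphi^*\calL-\psi^*\calL
\]
is a symmetric biadditive pairing $\Hom(\calA,\calB)\times\Hom(\calA,\calB)\to\Pic_0(\calA/S)_\RR$, landing in the symmetric part since each $\varphi^*$ preserves symmetry, and it is $\RR$-linear in $\calL$. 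It therefore extends uniquely to an $\RR$-bilinear form on $\Hom(\calA,\calB)_\RR$, still written $b_\calL$, and I would \emph{define} $f^*\calL:=\tfrac12 b_\calL(f,f)$. For integral $\varphi$ one has $b_\calL(\varphi,\varphi)=(2\varphi)^*\calL-2\varphi^*\calL=2\varphi^*\calL$ (using $[2]^*\calL=4\calL$ by symmetry), so the definition returns $\varphi^*\calL$ and, after rescaling, agrees with $\star_\QQ$ on all of $\Hom(\calA,\calB)_\QQ$; it is visibly quadratic in $f$ and $\RR$-linear in $\calL$. Uniqueness is formal: the polarization of any quadratic extension agrees with that of $\star_\QQ$ on $\Hom(\calA,\calB)_\QQ\times\Hom(\calA,\calB)_\QQ$, hence equals the $\RR$-bilinear extension $b_\calL$, and in characteristic $0$ a quadratic form is recovered from its polarization.

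For the \emph{nefness} assertion I would first dispose of honest homomorphisms and then invoke continuity. If $\varphi\in\Hom(\calA,\calB)$ is an actual homomorphism and $C$ is a curve in a fibre $\calA_s$, the projection formula gives that $\deg\big((\varphi^*\calL)_s|_C\big)$ equals $\deg\big(\calL_s|_{\varphi_s(C)}\big)$ up to the nonnegative degree of $C\to\varphi_s(C)$, and vanishes when $\varphi_s(C)$ is a point; hence $\varphi^*\calL$ is nef (resp.\ numerically trivial) as soon as $\calL$ is, using Proposition~\ref{prop_numerical_property}(2). Rescaling extends this to $f\in\Hom(\calA,\calB)_\QQ$. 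Now $f\mapsto[f^*\calL]$ is the restriction to the diagonal of an $\RR$-bilinear form, so it is a continuous map $\Hom(\calA,\calB)_\RR\to\NS(\calA/S)_\RR$ into a finite‑dimensional real vector space; it carries the dense subgroup $\Hom(\calA,\calB)_\QQ$ into the closed nef cone (resp.\ to $0$) when $\calL$ is nef (resp.\ numerically trivial), so it does so everywhere. The nef case is precisely the ``moreover'' part.

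Finally, the \emph{descent} to~\eqref{eqn_starR_for_NS} follows from the numerically‑trivial case just proved: for each fixed $f$ the $\RR$-linear map $\calL\mapsto[f^*\calL]$ annihilates every numerically trivial class, so it factors through $\NS(\calB/S)_\RR=\Pic_0(\calB/S)_\RR/{\equiv}$, invoking Proposition~\ref{prop_numerical_property}(3) for $\calB$. The only place that needs care, rather than genuine difficulty, is the numerical‑equivalence bookkeeping in these last two paragraphs: that nefness and numerical triviality of $\RR$-classes on $\calA/S$ can be tested on a single fibre and are stable under arbitrary pullback, and that the map whose continuity is used to pass from rational to real $f$ is the one landing in the finite‑dimensional $\NS(\calA/S)_\RR$. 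Everything else is formal polarization and rescaling.
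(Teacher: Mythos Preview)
Your proof is correct and follows essentially the same route as the paper: both polarize the quadratic form $\star_\QQ$ to obtain a biadditive pairing, extend it $\RR$-bilinearly, and recover $f^*\calL$ as half the diagonal; the paper writes this out explicitly as the identity $(\sum_i n_i f_i)^*\calL=\tfrac12\sum_{i,j}n_in_j\bigl[(f_i+f_j)^*\calL-f_i^*\calL-f_j^*\calL\bigr]$ and then lets the $n_i$ be real. For nefness the paper simply asserts that $f^*\calL$ is nef for rational $f$ and invokes continuity, whereas you spell out the projection-formula argument for honest homomorphisms and also supply the descent to $\NS$ explicitly; these are elaborations the paper leaves implicit, but the underlying argument is the same.
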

\begin{proof}
Let $\calL\in\Pic_0(\calB/S)_\RR$. Let $f_1,\cdots,f_r\in\Hom(\calA,\calB)_\QQ$ and $n_1,\cdots,n_r\in \QQ$. A quadratic form equivalently satisfies the following generalization of \eqref{eqn_quadratic_pullback}:
\begin{equation}\label{eqn_quadratic_pullback_general}
    (\sum_{i=1}^r n_i f_i)^*\calL =\frac{1}{2}\sum_{i,j=1}^r n_i n_j [(f_i+f_j)^*\calL-f_i^*\calL-f_j^*\calL].
\end{equation}
Then $\star$ can be uniquely defined by taking $f_1,\cdots,f_r$ as a $\QQ$-basis of the finite dimensional space $\Hom(\calA,\calB)_\QQ$ and allowing $n_i\in \RR$. 
One checks that \eqref{eqn_quadratic_pullback_general} implies $\star$ is a real quadratic form in the first input. The ``moreover'' part follows from continuity, since $f^*\calL$ is nef for any $f\in \Hom(\calA,\calB)_\QQ$.
\end{proof}

\section{Bi-algebraic geometry of \texorpdfstring{$\gothA_g$}{the universal abelian variety}}\label{sec_biagebraic_geometry}
In this section, we present the framework of bi-algebraic geometry for the universal abelian variety and the associated functional transcendence result. The primary goal is to introduce the necessary notations for subsequent sections. For an elegant introduction to bi-algebraic geometry, we refer to the survey by Klingler--Ullmo--Yafaev \cite{KUY_survey}. For a detailed account of the Ax–Schanuel theorem in this context, we refer to Gao’s original paper \cite{Gao_mixed}.

\subsection{Universal abelian variety and its uniformization}\label{subsec_universalAV}
Let $g,n\geq 1$ be integers. Let $D:=\diag(d_1,\dots,d_g)$ denote a poloraization type with positive integers $d_1|\cdots|d_g$. Denote by $\AAA_{g,D,n}$ the moduli space of complex abelian varieties of dimension $g$ of polarization type $D$ with  level-$n$-structures, which admits a uniformization map from the Siegel upper half space
\[
\gothH_g:=\{Z\in M_{g}(\CC) \mid Z=Z^t, \imag(Z)>0\}
\]denoted by $u_{g,D,n}:\gothH_g\rightarrow \AAA_{g,D,n}.$ When $n\geq 3$, the moduli space $\AAA_{g,D,n}$ is a fine moduli space, that is, there exists a universal abelian variety $\pi_{g,D,n}:\gothA_{g,D,n}\rightarrow \AAA_{g,D,n}$; see \cite[Thm.~7.9]{Mumford_GIT} and its remark. We note that $\gothA_{g,D,n}$ and $\AAA_{g,D,n}$ are defined over $\QQ^{\mathrm{alg}}$.

The fiber product in the category of complex analytic varieties gives a diagram
\[
\begin{tikzcd}
    &\calA_{\gothH_g}:=\gothA_{g,D,n}\times_{\AAA_g}\gothH_g  \arrow[r]\arrow[d]  & \gothA_{g,D,n} \arrow[d]\\
    &\gothH_g \arrow[r,"u_g"] & \AAA_{g,D,n}
\end{tikzcd}.
\]The fiber of $\calA_{\gothH_g}$ over $Z\in \gothH_g$ is the complex abelian variety $\CC^g/(D\ZZ^g+Z\ZZ^g)$ with the obvious polarization and level structure. There is a natural uniformization of $\calA_{\gothH_g}$ given by $\CC^g\times\gothH_g \rightarrow \calA_{\gothH_g}$, such that, over $Z\in \gothH_g$, the map reduces to $\exp:\CC^g\rightarrow \CC^g/(D\ZZ^g+Z\ZZ^g)$. The composed map 
\[
u_{g,D,n,a}:\CC^g\times \gothH_g\rightarrow \calA_{\gothH_g}\rightarrow \gothA_{g,D,n}
\]is a uniformization of the universal abelian variety.


The polarization and level structure are not central to our discussion, as our primary focus is on dimension theory which is stable under finite coverings. While some flexibility in polarization is needed to handle abelian subschemes or quotients, the specific choice of polarization is not crucial. Therefore, for the rest of this paper, we \emph{fix $n=n_0\geq 3$ and omit the indices of polarization and level structure from the notation}.

The uniformization maps $u_g$ and $u_{g,a}$ are complex analytic and \emph{transcendental}. However, the targets have natural algebraic structures, as do the sources, which we now describe. For $\gothH_g$, the embedding
\[
\gothH_g\hookrightarrow \{Z\in M_g(\CC) \mid Z=Z^t\}\cong \CC^{g(g+1)/2}
\]realizes $\gothH_g$ as an unbounded semi-algebraic subset of the algebraic variety $\CC^{g(g+1)/2}$. A closed analytic subset $Y\subseteq \gothH_g$ is said to be \emph{irreducible algebraic} if $Y$ is a component of the intersection of an algebraic subvariety of $\CC^{g(g+1)/2}$ with $\gothH_g$. Similarly, we define the algebraic structure on $\CC^g\times \gothH_g$ by inheriting algebraic structure from $\CC^{g}\times \CC^{g(g+1)/2}$. One goal of bi-algebraic geometry is to understand the interplay between the two algebraic structures of the source and the target for such uniformization maps. The closed analytic subsets that are algebraic both on the source and the target, are of particular importance.
\begin{defn}
    A closed analytic subset $U\subseteq \gothH_g$ (resp. $\calU\subseteq \CC^g\times \gothH_g$) is called \emph{bi-algebraic} if $U$ (resp. $\calU$) is irreducible algebraic and $u_g(U)$ (resp. $u_{g,a}(\calU)$) is algebraic. In this case, we also say $u_g(U)\subseteq \AAA_g$ (resp. $u_{g,a}(\calU)\subseteq\gothA_g$) is bi-algebraic.
\end{defn}

\subsection{Weakly special subvarieties}\label{subsec_weaklyspecial}
 The relevant bi-algebraic subvarieties for $u_g$ (resp. $u_{g,a}$) are characterized in Ullmo--Yafaev \cite{UY_special} (resp. Gao \cite{Gao_APZ,Gao_towards}). Here, let us give a description of the bi-algebraic subvarieties for the universal abelian variety. 
 
 Let $\pi:\calA\rightarrow S$ be an abelian scheme over a normal complex algebraic variety $S$. We also assume $\calA$ has a level-$n_0$-structure and hence \eqref{eqn_all_endomorphism_appears}. Since $\CC(S)/\CC$ is a primary extension, the trace $(\tr_{\CC(S)/\CC}(\calA_\eta),\tau)$ exists and constant multi-sections as defined in Def.~\ref{defn_constant_multisection} make sense.

\begin{defn}\label{def_ws}
    Let $\calY\subseteq \calA$ be a subvariety.
    \begin{enumerate}
        \item We say $\calY$ is \emph{generically weakly special} (resp. \emph{generically special}) if $\calY=\calB+\sigma$ where $\calB\subseteq\calA$ is an abelian subscheme and $\sigma$ is a constant (resp. torsion) multi-section. 
        \item We say $\calY$ is  \emph{weakly special} (resp. \emph{special}), if $\calY$ is generically weakly special (resp. generically special) as a subvariety of the abelian scheme $\calA_{\pi(\calY)}$.
    \end{enumerate}
\end{defn}

Our definition of weakly special subvarieties differs slightly from the definition of weakly special subvarieties in the context of mixed Shimura varieties by Pink, but is in conformity with Klingler’s convention~\cite{Klingler_Hodge} in the context of variation of mixed Hodge structures; see Prop.~\ref{prop_bialgebraic_weaklyspecial}. Note that generically weakly special subvariety is called ``generically special subvariety of sg type'' in Gao~\cite[Def.~1.5]{Gao_Betti}.

\begin{lemma}\label{lem_ws_under_base_change}
       Let $\varphi:\calA'\rightarrow \calA$ be a base change corresponding to $S'\rightarrow S$. We have\begin{enumerate}
           \item If $\calY'$ is weakly special in $\calA'$, then $\varphi(\calY')$ is weakly special in $\calA$.
           \item If $\calY$ is weakly special in $\calA$, then any irreducible component of $\iota^{-1}(\calY)$ is weakly special in $\calA'$.
       \end{enumerate} 
   \end{lemma}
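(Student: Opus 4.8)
The plan is to prove part (2) (the preimage statement, where the excerpt's $\iota^{-1}(\calY)$ should read $\varphi^{-1}(\calY)$) and part (1) (the image statement) in turn, working directly from Definition~\ref{def_ws}; at the end I indicate a shorter route through Proposition~\ref{prop_bialgebraic_weaklyspecial}. Write $\varphi_S\colon S'\to S$ for the map of bases, $F,F'$ for the function fields of $S,S'$, and $\eta,\eta'$ for the generic points; then $\calA'=\calA\times_S S'$ and the restriction of $\varphi$ to a fiber $\calA'_{s'}$ is the canonical isomorphism onto $\calA_{\varphi_S(s')}$, so $\varphi$ is fiberwise a group isomorphism. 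Since weakly specialness of a subvariety is tested on its generic fiber over the base of its image, the first step is to replace $S$ by the normalization of $\overline{\pi(\varphi(\calY'))}$ (resp.\ $\overline{\pi(\calY)}$) and $S'$ by the induced base change; $\varphi$ then restricts to a base change of the same type, nothing is lost, and one may assume $\calY'$ (resp.\ $\calY$) is dominant over its base, hence generically weakly special, say $\calY'=\calB'+\sigma'$ (resp.\ $\calY=\calB+\sigma$). Crucially, $\calA$ and $\calA'$ retain a level-$n_0$-structure, so \eqref{eqn_all_endomorphism_appears} holds for both; using \eqref{eqn_all_endomorphism_appears} together with the level-$n_0$-structure (an automorphism of an abelian variety acting trivially on $n_0$-torsion, $n_0\ge 3$, is trivial), the geometric constant part of $\calA_{\bar\eta}$ descends to a genuine constant abelian subscheme $\calA_0\subseteq\calA$; and since $\calA_{\bar\eta}=\calA'_{\overline{\eta'}}$ this forces $\calA'_0=\calA_0\times_S S'$. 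Hence a closed point of $\calA_\eta$ (resp.\ $\calA'_{\eta'}$) is a constant — resp.\ torsion — generic point precisely when a positive multiple of it lies in $(\calA_0)_\eta$ (resp.\ $(\calA_0)_{\eta'}$) — resp.\ vanishes.

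Two compatibilities then carry the argument. (a) \emph{Abelian subschemes:} every component of $\calB\times_S S'$ is an abelian subscheme of $\calA'$ (a smooth proper group scheme with geometrically connected fibers); conversely $\varphi(\calB')$ is an abelian subscheme of $\calA$, being irreducible, dominant over $S$, with generic fiber the scheme-theoretic image of the abelian subvariety $\calB'_{\eta'}\subseteq(\calA_\eta)_{F'}$ under the homomorphism $(\calA_\eta)_{F'}\to\calA_\eta$ — a connected reduced closed subgroup, hence an abelian subvariety — so $\varphi(\calB')$ is the abelian subscheme through it (this uses the dictionary of \S\ref{subsec_hom}). (b) \emph{Constant and torsion multi-sections:} using $\calA'_0=\calA_0\times_S S'$ and that $[N]$ is \'etale, any closed point $x'\in\calA'_{\eta'}$ with $[N]x'\in(\calA_0)_{\eta'}$ is already the base change of a closed point $Q\in\calA_\eta$ with $[N]Q\in(\calA_0)_\eta$ — all $N$-division points of a base-changed point descend — so $\varphi$ identifies the constant (resp.\ torsion) generic points of $\calA'$ lying over $\eta$ with those of $\calA$, carries $\sigma'$ to the constant (resp.\ torsion) multi-section $\overline{\{Q\}}$, and dually $\sigma\times_S S'$ is a union of constant (resp.\ torsion) multi-sections of $\calA'$.

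Granting (a) and (b), the lemma follows. Because $\varphi$ is fiberwise an isomorphism it respects fiberwise addition, so $\varphi^{-1}(\calY)=(\calB\times_S S')+(\sigma\times_S S')$ and $\varphi(\calY')=\varphi(\calB')+\varphi(\sigma')$ (for the latter one also uses rigidity of abelian subschemes, which makes $\calB'$ constant along each fiber of $\varphi_S$). For (2): each irreducible component of $(\calB\times_S S')+(\sigma\times_S S')$ is the sum of a component of $\calB\times_S S'$ (an abelian subscheme) and a component of $\sigma\times_S S'$ (a constant multi-section) — a dimension count, using that translates of a fixed abelian subscheme over an irreducible base are equal or disjoint, shows these are exactly the components — hence is generically weakly special, hence weakly special in $\calA'$. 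For (1): by (a) and (b), $\varphi(\calY')$ is (abelian subscheme of $\calA$) $+$ (constant multi-section of $\calA$), hence generically weakly special, hence weakly special in $\calA$. The ``special'' variants follow verbatim with ``constant'' replaced by ``torsion''.

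The step I expect to be the main obstacle is compatibility (b), i.e.\ the descent of the geometric constant part recorded in the first paragraph: this is exactly where the standing level-$n_0$ hypothesis of \S\ref{subsec_weaklyspecial} is genuinely used, and it cannot be weakened to \eqref{eqn_all_endomorphism_appears} alone — a quadratic twist of a constant abelian scheme satisfies \eqref{eqn_all_endomorphism_appears} yet becomes constant over a degree-$2$ cover of the base, acquiring constant sections whose images downstairs are non-constant multi-sections, which would violate (1). An alternative that absorbs this subtlety into the setup is to invoke Proposition~\ref{prop_bialgebraic_weaklyspecial} to replace ``weakly special'' by ``bi-algebraic''; since the moduli map of $\calA'$ factors through that of $\calA$, the uniformizing space attached to $\calA'$ is the base change along $\varphi_S$ of the one attached to $\calA$, and both (1) and (2) then reduce to the preservation of bi-algebraicity under image and preimage along an algebraic morphism of uniformizing spaces.
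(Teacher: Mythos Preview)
Your proof is correct and structurally matches the paper's: dismiss (2) as immediate from the definition (you flesh it out), and for (1) reduce to $\calY'=\calB'+\sigma'$ generically weakly special over a dominant $S'\to S$, then treat $\calB'$ and $\sigma'$ separately. For the abelian-subscheme part the paper appeals to Chow's theorem \cite[Cor.~3.21]{Conrad_Trace} to descend $B'\subseteq A_{F'}$ to an abelian subvariety of $A_{F^{\mathrm{alg}}}$, and then to $A$ via \eqref{eqn_all_endomorphism_appears}; this is cleaner than your scheme-theoretic-image argument, and you should drop the word ``homomorphism'' for the projection $(\calA_\eta)_{F'}\to\calA_\eta$, which is a morphism of schemes but not a group homomorphism over a common base --- the conclusion still holds because \eqref{eqn_all_endomorphism_appears} forces $\End(A_{F'})=\End(A)$, hence $B'=B_{F'}$. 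For the constant multi-section the paper writes only ``by a similar argument, the fixed part of $A'$ is mapped to the fixed part of $A$''; your explicit descent $\calA'_0=\calA_0\times_S S'$ via the standing level-$n_0$ hypothesis, together with your quadratic-twist example showing that \eqref{eqn_all_endomorphism_appears} alone would not suffice, usefully unpacks what the paper elides.
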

   \begin{proof}
       (2) is clear by definition. We prove (1). Suppose without loss of generality that $S'\rightarrow S$ is dominant and $\calY'=\calB'+\sigma'$ is generically weakly special in $\calA'$. Let $A,A',B',Y'$ be the respective generic fibers of $\calA,\calA',\calB',\calY'$. By Chow's theorem \cite[Cor.~3.21]{Conrad_Trace}, the abelian subvariety $B'\times_{\CC(S)}\CC(S)^{\mathrm{alg}}$ is the base change of an abelian subvariety $B_1\subseteq A_{\CC(S)^{\mathrm{alg}}}$. By assumption \eqref{eqn_all_endomorphism_appears}, $B_1=B_{\CC(S)^{\mathrm{alg}}}$ for some abelian variety $B\subseteq A$. Let $\calB$ be the abelian subscheme of $\calA$ with generic fiber $B$. By a similar argument, the fixed part of $A'$ is mapped to the fixed part of $A$ and the constant generic point of $\sigma'$ is mapped to a constant generic point, which gives a constant multi-section $\sigma$. Then $\varphi(\calY')=\calB+\sigma$ is weakly special.
   \end{proof}
   

Before going on, let us introduce the following terminology to avoid mentioning the specific choice of polarization and level structure:
\begin{defn}
    Let $g=\dim(\calA/S)$. A \emph{moduli sieve} (or \emph{mixed period map}) for $\calA/S$ is a morphism $\calA\rightarrow \gothA_g=\gothA_{g,D,n_0}$ induced from the period map $S\rightarrow \AAA_{g,D,n_0}$.
\end{defn}

The following lemma describes weakly special subvarieties as those that can be contracted by quotients followed by moduli sieves.
\begin{lemma}\label{lem_weakly_special_char}
    Assume $\calA/S$ has a level-$n_0$-structure. 
    \begin{enumerate}
        \item Let $\calB\subseteq \calA$ be an abelian subscheme with $\dim(\calB/S)=g'$. Let $\calA/\calB\rightarrow \gothA_{g-g'}$ be a moduli sieve of the quotient abelian scheme.  Then an irreducible component $\calY$ of any fiber of the composition $\calA\rightarrow \calA/\calB\rightarrow \gothA_{g-g'}$ is weakly special of relative dimension $g'$. 
        \item Conversely, if $\calY\subseteq\calA$ is a weakly special subvariety, then there exist abelian subschemes $\calB_1\subseteq \calA_1\subseteq\calA|_{\pi(\calY)}$ together with a positive integer $N$, such that $[N]\calY\subseteq\calA_1$ and $[N]\calY$ is an irreducible component of a fiber of the composition $\calA_1\rightarrow \calA_1/\calB_1\rightarrow \gothA_{g_1-g_1'}$, where $g_1,g_1'$ are the relative dimensions of $\calA_1,\calB_1$, respectively, and $\calA_1/\calB_1\rightarrow \gothA_{g_1-g_1'}$ is a moduli sieve.
    \end{enumerate}
\end{lemma}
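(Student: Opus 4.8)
\section*{Proof proposal for Lemma~\ref{lem_weakly_special_char}}

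The plan is to treat the two implications separately, in both cases working over the generic point $\eta'$ of $S':=\pi(\calY)$ with function field $F':=\CC(S')$ and taking Zariski closures at the end; in particular the possible non-normality of $\pi(\calY)$ is harmless, since replacing $S'$ by its normalization changes neither $F'$ nor the property of being weakly special. For (1), recall that since we have fixed a level-$n_0$-structure, $\AAA_{g-g'}$ is a fine moduli space, so a moduli sieve identifies $\calC:=\calA/\calB$ with $S\times_{\AAA_{g-g'}}\gothA_{g-g'}$, the sieve being the second projection. Hence a fibre of $\calC\to\gothA_{g-g'}$ over a point $z$, lying over $[E]\in\AAA_{g-g'}$ and thus over a point $z\in E=\gothA_{g-g'}|_{[E]}$, equals $S_{[E]}\times\{z\}$, where $S_{[E]}$ is the fibre of the period map $S\to\AAA_{g-g'}$; over each irreducible component $S'$ of $S_{[E]}$ the restriction $\calC|_{S'}$ is the constant abelian scheme $E\times S'$ and $S'\times\{z\}$ is a constant section. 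An irreducible component $\calY$ of a fibre of $\calA\to\calC\to\gothA_{g-g'}$ is therefore a component of $\varphi^{-1}(S'\times\{z\})$ for $\varphi\colon\calA\to\calC$; this preimage is a $\calB|_{S'}$-torsor, hence smooth over $S'$ with geometrically connected fibres and so irreducible, whence $\calY=\varphi^{-1}(S'\times\{z\})$ and $\rdim\calY=\dim\calB_{\eta'}=g'$.

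It remains to show this $\calY$ is weakly special, i.e.\ that the coset $\calY_{\eta'}=\varphi_{\eta'}^{-1}(z)=\calB_{\eta'}+y$ (any $y$ with $\varphi_{\eta'}(y)=z\in E(\CC)\subseteq\calC_{\eta'}(F')$) can be recentred at a constant generic point. Here I use the trace: by Poincar\'e complete reducibility over $F'$ there is an abelian subvariety $\calD_{\eta'}\subseteq\calA_{\eta'}$ complementary to $\calB_{\eta'}$, and then $\calD_{\eta'}\to\calC_{\eta'}$ is an isogeny, so $\calD_{\eta'}$ is isogenous to the constant abelian variety $E_{F'}$. Given an isogeny $E_{F'}\to\calD_{\eta'}$, the universal property of $(\tr_{F'/\CC}(\calD_{\eta'}),\tau_\calD)$ forces $\tau_\calD$ to be surjective; since $\tr_{F'/\CC}(\calD_{\eta'})\hookrightarrow\tr_{F'/\CC}(\calA_{\eta'})$ compatibly with the embeddings into $\calA_{\eta'}$, the constant part $\tau_\calA(\tr_{F'/\CC}(\calA_{\eta'})_{F'})$ of $\calA_{\eta'}$ contains $\operatorname{im}\tau_\calD$, which $\varphi_{\eta'}$ maps onto $\calC_{\eta'}$. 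Thus some $y''$ in the constant part of $\calA_{\eta'}$ satisfies $\varphi_{\eta'}(y'')=z$, so $\calY_{\eta'}=\calB_{\eta'}+y''$ with $y''$ a constant generic point, and $\calY=\calB|_{S'}+\overline{\{y''\}}$ is generically weakly special in $\calA|_{S'}$.

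For (2), write $\calY=\calB_0+\sigma$ over $S'=\pi(\calY)$ with $\calB_0\subseteq\calA|_{S'}$ an abelian subscheme and $\sigma$ a constant multi-section, say the closure of $x_{\eta'}$ with $[N]x_{\eta'}=\tau_\calA(x')$, $x'\in\tr_{F'/\CC}(\calA_{\eta'})$. Then $[N]\calY=\calB_0+[N]\sigma$ and $[N]\sigma$ lies in the constant part $\calA_0'$ of $\calA|_{S'}$; set $\calB_1:=\calB_0$, $\calA_1:=\calB_0+\calA_0'\subseteq\calA|_{S'}$, so $[N]\calY\subseteq\calA_1$ and $g_1':=\rdim\calB_1=\rdim\calB_0=\rdim\calY$. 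An abelian subscheme of a constant abelian scheme is again constant (endomorphisms of an abelian variety over the algebraically closed field $\CC$ do not grow under field extension), so $\calA_1/\calB_1$, a quotient of the constant $\calA_0'$ by a constant abelian subscheme, is constant; hence the period map of $\calA_1/\calB_1$ is constant and a moduli sieve $\calA_1/\calB_1\to\gothA_{g_1-g_1'}$ has fibres exactly the constant sections $S'\times\{w\}$. The image of $[N]\calY$ under $\calA_1\to\calA_1/\calB_1\to\gothA_{g_1-g_1'}$ is the image of the constant multi-section $[N]\sigma$, a single point $w$; so $[N]\calY$ lies in the fibre over $w$, which is a $\calB_1$-torsor over $S'$, hence irreducible of dimension $\dim S'+g_1'=\dim[N]\calY$. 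Therefore $[N]\calY$ equals that fibre, in particular is a component of it, completing the proof.

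I expect the main obstacle to be the lifting statement used in (1): that the constant part of $\calA_{\eta'}$ surjects onto $\calC_{\eta'}$ whenever $\calC_{\eta'}$ is constant (equivalently, that a constant section of $\calC|_{S'}$ lifts to a constant multi-section of $\calA|_{S'}$). This is \emph{not} formal from functoriality of the trace, as the trace fails to be right exact; it genuinely needs Poincar\'e complete reducibility to exhibit a complementary factor isogenous to $\calC_{\eta'}$, together with the universal property of the trace. The remaining inputs---the fine-moduli description of moduli sieves, irreducibility of torsors under abelian schemes over irreducible bases, and constancy of abelian subschemes and quotients of constant abelian schemes---are routine.
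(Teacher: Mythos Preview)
Your proposal is correct and follows essentially the same strategy as the paper. For (1), both arguments use Poincar\'e complete reducibility to lift the constant section of $\calA/\calB$ over $S'$ to a constant (multi-)section of $\calA$: the paper does this by producing a homomorphic quasi-section $\varphi:\calA/\calB\to\calA$ at the scheme level and pushing the constant section through it, whereas you work at the generic point and route the lift through the $F'/\CC$-trace, explicitly exhibiting the lifted point as $\tau_\calA$ of a $\CC$-point; your version is a bit more careful about \emph{why} the lift is a constant generic point (you should spell out that the preimage $y''$ is chosen as $\tau$ of a $\CC$-point of $\tr(\calD)$ rather than merely a point of the constant part, since an arbitrary $F'$-point of the constant abelian subvariety need not be constant in the sense of Definition~\ref{defn_constant_multisection}). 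For (2), both set $\calB_1$ to be the abelian-subscheme part of $\calY$ and take $\calA_1$ so that $\calA_1/\calB_1$ is a constant abelian scheme; the paper uses the constant abelian subscheme generated by $\sigma_c$, while you take the whole constant part $\calA_0'$, which is slightly larger but works identically.
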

\begin{proof}
    Indeed, we have the following diagram
    \[
    \begin{tikzcd}
        \calA \arrow[r,"q"]\arrow[rd]  & \calA/\calB \arrow[r,"\iota_a"]\arrow[d] & \gothA_{g-g'}\arrow[d]\\
          & S  \arrow[r,"\iota"] & \AAA_{g-g'}
    \end{tikzcd}.
    \]Note that for any $t\in\iota(S)$, an irreducible component  $T$ of $\iota^{-1}(t)\subseteq S$ gives rise to an isotrivial abelian scheme $\calA/\calB\times_S T\rightarrow  T$. Suppose $x\in \gothA_{g-g'}$ is a point over $t$. Then $\iota_a^{-1}(x)|_T$ is a constant section of the above isotrivial abelian scheme. By Poincar\'e's complete reducibility \eqref{subsec_hom}, there exists a homomorphism $\varphi:\calA/\calB\rightarrow \calA$ such that $q\varphi=\id_{\calA/\calB}$.
    Then $q^{-1}(\iota_a^{-1}(x)|_T)$ is the translate of $\calB\times_S T$ by the constant section $\varphi(\iota_a^{-1}(x)|_T)$. Therefore, any irreducible component of $q^{-1}(\iota_a^{-1}(x)|_T)$, which is also an irreducible component of $(\iota_a\circ q)^{-1}(x)$, is weakly special. We stress that it is in general \emph{not} generically weakly special.

    Conversely, if $\calY\subseteq\calA$ is weakly special, by definition, there exists a positive integer $N$ such that $[N]\calY$ is a translate of an abelian subscheme $\calB_1\subseteq \calA|_{\pi(\calY)}$ by a constant section $\sigma_c$. Let $\calA_1$ be the abelian scheme which is the sum of $\calB_1$ and the constant abelian subscheme in $\calA|_{\pi(\calY)}$ generated by $\sigma_c$. Then it is clear that $\calA_1,\calB_1,N$ satisfy the requirement.
\end{proof}

Back to the universal abelian variety, we have the following characterization by Gao:

\begin{prop}\label{prop_bialgebraic_weaklyspecial}
    The bi-algebraic subvarieties of $\gothA_g$ are exactly those weakly special subvarieties of $\gothA_g/\AAA_g$ which dominate bi-algebraic subvarieties of $\AAA_g$. Moreover, if $S\subseteq\AAA_g$ and $\calA=\gothA_{g,S}:=\gothA_g|_S$, then the weakly special subvarieties of $\calA/S$ come from restriction of bi-algebraic subvarieties, i.e., 
    \[
    \{\calY\subseteq\calA\mid \calY  \text{ is weakly special}\}=\{\calY\subseteq\calA\mid \calY \text{ is an irreducible component of }\calY^{\mathrm{biZar}}\cap\calA\}.
    \]
\end{prop}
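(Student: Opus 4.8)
The plan is to deduce the statement from Gao's classification of bi-algebraic subvarieties of the universal abelian variety, regarded as a connected mixed Shimura variety of Kuga type \cite{Gao_APZ,Gao_towards}, after reconciling that classification with Definition~\ref{def_ws}. I first record the inputs. The spaces $\gothH_g$, $\CC^g\times\gothH_g$, $\AAA_g$ and $\gothA_g$ are themselves bi-algebraic, and the family of bi-algebraic subvarieties of a fixed variety is stable under taking irreducible components of intersections; consequently every subvariety $\calY$ of $\gothA_g$ has a well-defined smallest bi-algebraic subvariety $\calY^{\mathrm{biZar}}$ containing it, which one realizes concretely as the image under $u_{g,a}$ of the Zariski closure in $\CC^g\times\gothH_g$ of a single analytic component of $u_{g,a}^{-1}(\calY)$ (algebraic by the weak Ax--Lindemann theorem). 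Gao's structure theorem then identifies the bi-algebraic subvarieties of $\gothA_g$ with the weakly special subvarieties in the sense of sub-mixed-Shimura data, and any such subvariety $Z$ necessarily dominates a bi-algebraic (equivalently, weakly special) subvariety of $\AAA_g$.

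The core of the argument is the dictionary between the two notions of weakly special. Let $Z\subseteq\gothA_g$ be bi-algebraic, put $S':=\pi(Z)$, and pass to the generic point $\eta'$ of $S'$. Unwinding the mixed-Shimura description of $Z$ (an orbit under the real points of the group attached to a sub-mixed-Shimura datum of $\gothA_g$), one sees that the unipotent part of that datum cuts out an abelian subscheme $\calB'\subseteq\gothA_{g,S'}$, while the remaining ``constant'' direction corresponds, after multiplying by a suitable $N\in\NN$, to a section of $\gothA_{g,S'}$ landing in the fibre over the $\CC(S')/\CC$-trace, that is, to a constant multi-section $\sigma'$ as in Definition~\ref{defn_constant_multisection}. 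Thus $[N]Z=\calB'+\sigma'$ over $S'$, so $Z$ is generically weakly special over $S'$, hence weakly special in the sense of Definition~\ref{def_ws}. Conversely, if $\calY=\calB+\sigma$ over $\pi(\calY)$ is weakly special and $S_0\subseteq\AAA_g$ is the smallest bi-algebraic subvariety over which both $\calB$ and $\sigma$ extend (to an abelian subscheme and a constant multi-section of $\gothA_{g,S_0}$), then $(\calB,\sigma)$ define a sub-mixed-Shimura datum of exactly the above shape; $u_{g,a}^{-1}(\calB+\sigma)$ is then a union of the corresponding translates of $\RR$-linear subsets of $\CC^g\times\gothH_g$ over the bi-algebraic preimage of $S_0$, so $\calB+\sigma$ is bi-algebraic by Ax--Lindemann, and restricting to $\pi(\calY)$ exhibits $\calY$ as an irreducible component of $(\calB+\sigma)\cap\calA$. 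This proves the first assertion.

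For the second assertion, fix $S\subseteq\AAA_g$ and $\calA=\gothA_{g,S}$. If $\calY\subseteq\calA$ is weakly special in $\calA/S$, then $\calY^{\mathrm{biZar}}$ is bi-algebraic, hence weakly special in $\gothA_g/\AAA_g$ by the first assertion, say $\calY^{\mathrm{biZar}}=\calB'+\sigma'$ over $S':=\pi(\calY^{\mathrm{biZar}})\supseteq\pi(\calY)$; by minimality of $\calY^{\mathrm{biZar}}$, the pair $(\calB',\sigma')$ restricts over $\pi(\calY)$ to the data defining $\calY$ --- here one uses that the $\CC(\cdot)/\CC$-trace only grows upon passing to a smaller base, so that $\sigma'$ restricts to a constant multi-section over $\pi(\calY)$ --- and therefore $\calY$ is an irreducible component of $\calY^{\mathrm{biZar}}\cap\calA$. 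Conversely, if $\calZ\subseteq\gothA_g$ is bi-algebraic, it is weakly special over some bi-algebraic $S''\subseteq\AAA_g$ by the first assertion, and Lemma~\ref{lem_ws_under_base_change}(2) applied to the base change $\calA|_{S\cap S''}\to\gothA_{g,S''}$, again with the compatibility of the trace under base change, shows that every irreducible component of $\calZ\cap\calA$ is weakly special in $\calA/S$. Taking $\calZ=\calY^{\mathrm{biZar}}$ gives the asserted equality of sets.

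The main obstacle is the dictionary in the second paragraph. Pink's notion of a weakly special subvariety of $\gothA_g$ is intrinsic to the mixed Shimura variety --- phrased via sub-mixed-Shimura data, equivalently via the monodromy of the ambient variation of Hodge structure --- and one must extract from it the concrete ``abelian subscheme plus constant multi-section'' picture over a weakly special base; this comparison is precisely what Gao's papers supply. Once it is available, the remaining ingredients (stability of bi-algebraicity under intersection, behaviour of the trace under base change, and the fact that projections of weakly special subvarieties are weakly special) are routine given \S\ref{subsec_hom} and Lemma~\ref{lem_ws_under_base_change}.
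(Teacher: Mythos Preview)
Your approach is essentially the same as the paper's: both deduce the statement from Gao's classification of bi-algebraic subvarieties of $\gothA_g$ in \cite{Gao_APZ,Gao_towards}, after reconciling Pink's mixed-Shimura notion of weakly special with Definition~\ref{def_ws}. The paper's proof is simply a pair of citations---the first assertion is \cite[Prop.~1.1]{Gao_APZ} together with the equivalence of ``weakly special'' and ``bi-algebraic'' from \cite[Cor.~8.3]{Gao_towards}, and the second assertion is \cite[Prop.~3.3]{Gao_APZ}---whereas you have unpacked what those citations contain, especially the dictionary in your second paragraph.
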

\begin{proof}
   The first statement follows from \cite[Prop.~1.1]{Gao_APZ}. Note that weakly special subvariety has a different meaning in the loc. cit. and is equivalent to bi-algebraic subvariety by \cite[Cor.~8.3]{Gao_towards}.
   The second is \cite[Prop.~3.3]{Gao_APZ}.
\end{proof}


\subsection{Ax--Schanuel theorem}
One of the most powerful tools in studying the bi-algebraic geometry is a functional transcendence result called the \emph{Ax--Schanuel} theorem. Here, we recall the theorem in the context of universal abelian variety.

First we note that arbitrary intersection of algebraic subvarieties is algebraic. It follows formally that any intersection of bi-algebraic subvarieties is bi-algebraic. 
\begin{defn}
    Suppose $\calU\subseteq \CC^g\times \gothH_g$ (resp. $\calY\subseteq \gothA_g$) is a closed analytic irreducible subset. 
    \begin{enumerate}
        \item The \emph{Zariski closure} $\calU^{\mathrm{Zar}}\subseteq \CC^g\times \gothH_g$ (resp. $\calY^{\mathrm{Zar}}\subseteq \gothA_g$) is defined to be the smallest algebraic subvariety containing $\calU$ (resp. $\calY$).
        \item The \emph{bi-algebraic closure} $\calU^{\mathrm{biZar}}\subseteq \CC^g\times \gothH_g$ (resp. $\calY^{\mathrm{biZar}}\subseteq \gothA_g$) is defined to be the smallest bi-algebraic subvariety containing $\calU$ (resp. $\calY$).
    \end{enumerate} 
\end{defn}

The following is a weak version  \cite[Thm.~3.5]{Gao_mixed} of the Ax--Schanuel theorem which is sufficient for our purpose:

\begin{theorem}\label{thm_weakas}
    Let $u_{g,a}:\CC^g\times \gothH_g\rightarrow \gothA_g$ be the uniformization map, and let $\calU\subseteq \CC^g\times\gothH_g$ be a closed analytic irreducible subset. Then
    \[
    \dim \calU^{\mathrm{Zar}} -\dim \calU \geq \dim u_{g,a}(\calU)^{\mathrm{biZar}}-\dim u_{g,a}(\calU)^\mathrm{Zar}.
    \]
\end{theorem}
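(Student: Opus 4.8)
The statement in question is Theorem~\ref{thm_weakas}, the weak Ax--Schanuel inequality
\[
\dim \calU^{\mathrm{Zar}} -\dim \calU \geq \dim u_{g,a}(\calU)^{\mathrm{biZar}}-\dim u_{g,a}(\calU)^{\mathrm{Zar}},
\]
which the excerpt explicitly cites from Gao~\cite[Thm.~3.5]{Gao_mixed}. Since this is quoted as an external input rather than proved in this paper, my plan is to explain how one reduces it to the (full, differential-geometric) Ax--Schanuel theorem for the universal abelian variety, which concerns the intersection of an algebraic subvariety of $\CC^g\times\gothH_g$ with the graph of the uniformization map.

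\textbf{Step 1: Restate via the graph.} Consider the complex-analytic graph $\Delta:=\{(z,u_{g,a}(z)) : z\in\CC^g\times\gothH_g\}$ inside the mixed space $(\CC^g\times\gothH_g)\times\gothA_g$. The full Ax--Schanuel theorem states that for any irreducible algebraic subvariety $W$ of the product and any irreducible analytic component $Z$ of $W\cap\Delta$,
\[
\dim Z \leq \dim W - \dim\gothA_g + \dim\langle Z\rangle_{\mathrm{biZar}},
\]
where $\langle Z\rangle_{\mathrm{biZar}}$ is the smallest bi-algebraic subvariety of $\gothA_g$ whose preimage graph contains $Z$ (i.e.\ the bi-algebraic closure of the projection of $Z$ to $\gothA_g$). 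First I would set $W:=\calU^{\mathrm{Zar}}\times u_{g,a}(\calU)^{\mathrm{Zar}}$ and let $Z$ be the component of $W\cap\Delta$ containing (the graph of) $\calU$; note $Z$ projects isomorphically onto $\calU$ on the source side, so $\dim Z=\dim\calU$, and its image in $\gothA_g$ is $u_{g,a}(\calU)$, so $\langle Z\rangle_{\mathrm{biZar}}=u_{g,a}(\calU)^{\mathrm{biZar}}$.

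\textbf{Step 2: Plug in and rearrange.} With these choices, $\dim W=\dim\calU^{\mathrm{Zar}}+\dim u_{g,a}(\calU)^{\mathrm{Zar}}$, so the full Ax--Schanuel inequality reads
\[
\dim\calU \leq \dim\calU^{\mathrm{Zar}} + \dim u_{g,a}(\calU)^{\mathrm{Zar}} - \dim\gothA_g + \dim u_{g,a}(\calU)^{\mathrm{biZar}}.
\]
Rearranging gives
\[
\dim\calU^{\mathrm{Zar}} - \dim\calU \geq \dim\gothA_g - \dim u_{g,a}(\calU)^{\mathrm{Zar}} + \big(\dim u_{g,a}(\calU)^{\mathrm{biZar}} - \dim\gothA_g\big),
\]
which is \emph{weaker} than what is claimed — I have an extra $\dim\gothA_g - \dim u_{g,a}(\calU)^{\mathrm{Zar}}$ on the right, which is $\geq 0$ but in the wrong direction for a clean deduction. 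So the naive product choice is not tight enough; the correct argument instead takes $W$ to be the \emph{graph of the uniformization restricted over} $u_{g,a}(\calU)^{\mathrm{Zar}}$, or more precisely works inside the ambient $(\CC^g\times\gothH_g)\times u_{g,a}(\calU)^{\mathrm{Zar}}$ and uses that $Z$ is a component of the intersection of $\calU^{\mathrm{Zar}}\times u_{g,a}(\calU)^{\mathrm{Zar}}$ with the graph of $u_{g,a}$ restricted there. The point is to replace $\dim\gothA_g$ by $\dim u_{g,a}(\calU)^{\mathrm{Zar}}$ in the codimension count, which is exactly the content of the ``weak'' (fibered) version: one applies Ax--Schanuel relative to the algebraic subvariety $u_{g,a}(\calU)^{\mathrm{Zar}}$ of $\gothA_g$ rather than to $\gothA_g$ itself. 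Carrying this out, the codimension-of-$W$ term becomes $\dim\calU^{\mathrm{Zar}}$ and the ambient dimension term becomes $\dim u_{g,a}(\calU)^{\mathrm{Zar}}$, yielding precisely $\dim\calU^{\mathrm{Zar}}-\dim\calU\geq \dim u_{g,a}(\calU)^{\mathrm{biZar}}-\dim u_{g,a}(\calU)^{\mathrm{Zar}}$.

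\textbf{Main obstacle.} The genuinely hard content — the full Ax--Schanuel theorem for $\gothA_g$ — is not something I would reprove; it rests on Gao's extension to mixed Shimura varieties of the Pila--Tsimerman and Mok--Pila--Tsimerman methods (monodromy of the relevant local systems, the definability/o-minimality input, and the Ax--Lindemann component of the argument). Given the excerpt's framing (``we recall the theorem''), the honest proof here is simply: \emph{this is Theorem~3.5 of~\cite{Gao_mixed}}, possibly with the one-paragraph reduction above showing that the ``weak'' fibered form used in this paper follows from the graph formulation by choosing $W=\calU^{\mathrm{Zar}}\times u_{g,a}(\calU)^{\mathrm{Zar}}$ and intersecting with the graph of $u_{g,a}$ over $u_{g,a}(\calU)^{\mathrm{Zar}}$. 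The subtle point to get right in that reduction is the bookkeeping of which ambient dimension enters the codimension estimate, i.e.\ ensuring one uses the form of Ax--Schanuel relative to an algebraic subvariety of the target, so that $\dim u_{g,a}(\calU)^{\mathrm{Zar}}$ (not $\dim\gothA_g$) appears — as I flagged in Step 2, the careless choice of ambient space gives only a weaker inequality.
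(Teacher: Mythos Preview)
The paper gives no proof of this theorem at all: it is simply quoted as \cite[Thm.~3.5]{Gao_mixed}, and you correctly identify this. So there is nothing to compare your proposal to on the paper's side; your reduction sketch is entirely supplementary.

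That said, your Step~2 contains a self-inflicted confusion. Writing $A=\dim\gothA_g$, $B=\dim u_{g,a}(\calU)^{\mathrm{Zar}}$, $C=\dim u_{g,a}(\calU)^{\mathrm{biZar}}$, your displayed right-hand side is
\[
A-B+(C-A)=C-B,
\]
which is \emph{exactly} the right-hand side of the weak Ax--Schanuel inequality you are trying to prove. There is no ``extra $A-B$'' term; it cancels against the $-A$ in the parenthesis. So if the graph form of Ax--Schanuel you wrote in Step~1 is taken as input, your naive choice $W=\calU^{\mathrm{Zar}}\times u_{g,a}(\calU)^{\mathrm{Zar}}$ already gives the weak form with no further work, and the ``fix'' you propose (working relative to $u_{g,a}(\calU)^{\mathrm{Zar}}$) is unnecessary. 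The genuine content, as you say, lies in the full Ax--Schanuel itself, which neither you nor the paper attempt to reprove.
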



 Since $\pi_g(u_{g,a}(\calU)^{\mathrm{biZar}})$ contains $u_g(p(\calU^{\mathrm{Zar}}))$, where $p:\CC^g\times\gothH_g\rightarrow \gothH_g$ is the projection, we immediately have the following consequence:
\begin{cor}\label{cor_AxSchanuelVertical}
    Let $u_{g,a}:\CC^g\times \gothH_g\rightarrow \gothA_g$ be the uniformization map, and let $\calU\subseteq \CC^g\times\gothH_g$ be a closed analytic irreducible subset. Then
    \[
    \rdim \calU^{\mathrm{Zar}} -\dim \calU \geq \rdim u_{g,a}(\calU)^{\mathrm{biZar}}-\dim u_{g,a}(\calU)^\mathrm{Zar}.
    \]
\end{cor}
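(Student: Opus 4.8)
The plan is to deduce the relative statement from the absolute Ax--Schanuel inequality (Theorem~\ref{thm_weakas}) purely by bookkeeping of dimensions of projections. Write $p:\CC^g\times\gothH_g\to\gothH_g$ for the projection on the source and $\pi_g:\gothA_g\to\AAA_g$ for the structure map of the universal abelian variety; recall that the uniformization $u_{g,a}$ sits over the uniformization $u_g:\gothH_g\to\AAA_g$, so $\pi_g\circ u_{g,a}=u_g\circ p$. By definition $\rdim$ measures the dimension of a variety minus the dimension of its image under the relevant projection; thus I must compare $\dim p(\calU^{\mathrm{Zar}})$ with $\dim\pi_g\bigl(u_{g,a}(\calU)^{\mathrm{biZar}}\bigr)$, and $\dim\pi_g\bigl(u_{g,a}(\calU)^{\mathrm{Zar}}\bigr)$ with $\dim p(\calU^{\mathrm{Zar}})$ again, in order to subtract the projection contributions from both sides of Theorem~\ref{thm_weakas}.

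First I would establish the key containment: $\pi_g\bigl(u_{g,a}(\calU)^{\mathrm{biZar}}\bigr)\supseteq u_g\bigl(p(\calU^{\mathrm{Zar}})\bigr)$, as already noted in the excerpt. Indeed $p(\calU^{\mathrm{Zar}})$ is algebraic in $\gothH_g$ and its $u_g$-image is an algebraic subvariety of $\AAA_g$ (this uses that $p(\calU^{\mathrm{Zar}})$ is, up to components, the source-algebraic set whose $u_g$-image computes $u_{g,a}(\calU)^{\mathrm{Zar}}$'s projection, and that the biZar closure on $\gothA_g$ dominates a bi-algebraic subvariety of $\AAA_g$ containing the relevant period image by Proposition~\ref{prop_bialgebraic_weaklyspecial}); hence it is contained in the bi-algebraic closure's projection. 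Since $u_g$ has discrete fibers, $\dim u_g(p(\calU^{\mathrm{Zar}}))=\dim p(\calU^{\mathrm{Zar}})$, giving
\[
\dim\pi_g\bigl(u_{g,a}(\calU)^{\mathrm{biZar}}\bigr)\geq \dim p(\calU^{\mathrm{Zar}}).
\]
On the other side, $u_{g,a}(\calU)^{\mathrm{Zar}}$ projects under $\pi_g$ into $u_g(p(\calU))^{\mathrm{Zar}}\subseteq u_g(p(\calU^{\mathrm{Zar}}))$ (the latter being already algebraic), so
\[
\dim\pi_g\bigl(u_{g,a}(\calU)^{\mathrm{Zar}}\bigr)\leq \dim p(\calU^{\mathrm{Zar}}).
\]

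Now I would feed these two inequalities into Theorem~\ref{thm_weakas}. Subtracting $\dim p(\calU^{\mathrm{Zar}})$ from the left-hand side of Theorem~\ref{thm_weakas} can only decrease it, i.e. $\rdim\calU^{\mathrm{Zar}}=\dim\calU^{\mathrm{Zar}}-\dim p(\calU^{\mathrm{Zar}})\leq \dim\calU^{\mathrm{Zar}}-\dim\pi_g(u_{g,a}(\calU)^{\mathrm{Zar}})$ is \emph{not} quite what I want — rather I want to bound $\rdim\calU^{\mathrm{Zar}}-\dim\calU$ from below. So the cleaner route: from Theorem~\ref{thm_weakas},
\[
\dim\calU^{\mathrm{Zar}}-\dim\calU\ \geq\ \dim u_{g,a}(\calU)^{\mathrm{biZar}}-\dim u_{g,a}(\calU)^{\mathrm{Zar}}.
\]
Subtract $\dim p(\calU^{\mathrm{Zar}})$ from the left. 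Using $\dim p(\calU^{\mathrm{Zar}})\leq \dim\pi_g(u_{g,a}(\calU)^{\mathrm{biZar}})$ and $\dim p(\calU^{\mathrm{Zar}})\geq \dim\pi_g(u_{g,a}(\calU)^{\mathrm{Zar}})$ we get
\[
\rdim\calU^{\mathrm{Zar}}-\dim\calU\ \geq\ \bigl(\dim u_{g,a}(\calU)^{\mathrm{biZar}}-\dim\pi_g(u_{g,a}(\calU)^{\mathrm{biZar}})\bigr)-\bigl(\dim u_{g,a}(\calU)^{\mathrm{Zar}}-\dim\pi_g(u_{g,a}(\calU)^{\mathrm{Zar}})\bigr),
\]
which is exactly $\rdim u_{g,a}(\calU)^{\mathrm{biZar}}-\rdim u_{g,a}(\calU)^{\mathrm{Zar}}$; and since the target statement is phrased with $\dim u_{g,a}(\calU)^{\mathrm{Zar}}$ rather than $\rdim$, one final application of $\rdim\leq\dim$ on the subtrahend finishes it.

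The main obstacle I anticipate is not the dimension arithmetic but justifying the containment $\pi_g(u_{g,a}(\calU)^{\mathrm{biZar}})\supseteq u_g(p(\calU^{\mathrm{Zar}}))$ rigorously — i.e. checking that the projection of the bi-algebraic closure really contains the $u_g$-image of the source-Zariski-closure's projection. This requires knowing that $u_g(p(\calU^{\mathrm{Zar}}))$ is algebraic (so that it is eligible to be a "lower bound" for a bi-algebraic closure), which in turn is a special case of the theory: $p(\calU^{\mathrm{Zar}})$ is the source-Zariski closure of $p(\calU)$, and by the weak Ax--Schanuel / bi-algebraicity statements for $\AAA_g$ alone (a consequence of Proposition~\ref{prop_bialgebraic_weaklyspecial} together with Theorem~\ref{thm_weakas} applied to the purely $\gothH_g$ situation), its image is algebraic; alternatively this is folded into the statement of Theorem~\ref{thm_weakas} itself once one observes $\pi_g(u_{g,a}(\calU)^{\mathrm{Zar}})=u_g(p(\calU))^{\mathrm{Zar}}$ up to components, so no independent input is strictly needed. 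I would make this precise by a short lemma that the Zariski closure and projection commute up to taking images of components, which is standard.
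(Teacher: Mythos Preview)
Your approach is the paper's: subtract a projection dimension from both sides of Theorem~\ref{thm_weakas} using the inequality $\dim\pi_g(u_{g,a}(\calU)^{\mathrm{biZar}})\geq\dim p(\calU^{\mathrm{Zar}})$ coming from the stated containment. Two points deserve correction, though.

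First, your second inequality $\dim\pi_g(u_{g,a}(\calU)^{\mathrm{Zar}})\leq\dim p(\calU^{\mathrm{Zar}})$ is unnecessary. From Theorem~\ref{thm_weakas}, subtract $\dim p(\calU^{\mathrm{Zar}})$ from both sides; the right-hand side becomes $\dim u_{g,a}(\calU)^{\mathrm{biZar}}-\dim p(\calU^{\mathrm{Zar}})-\dim u_{g,a}(\calU)^{\mathrm{Zar}}$, and a single use of the key containment already gives $\geq \rdim u_{g,a}(\calU)^{\mathrm{biZar}}-\dim u_{g,a}(\calU)^{\mathrm{Zar}}$, which is exactly the target. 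You instead passed through $\rdim u_{g,a}(\calU)^{\mathrm{Zar}}$ and then had to weaken it back via $\rdim\leq\dim$ --- an avoidable detour.

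Second, and more substantively, your proposed justification of the containment $\pi_g(u_{g,a}(\calU)^{\mathrm{biZar}})\supseteq u_g(p(\calU^{\mathrm{Zar}}))$ is wrong: $u_g(p(\calU^{\mathrm{Zar}}))$ is \emph{not} algebraic in general (a generic algebraic curve in $\gothH_g$ typically has Zariski-dense image in $\AAA_g$), so it cannot serve as a ``lower bound'' in the way you suggest. The argument goes the other direction. By Proposition~\ref{prop_bialgebraic_weaklyspecial}, the projection $\pi_g(u_{g,a}(\calU)^{\mathrm{biZar}})$ is itself bi-algebraic in $\AAA_g$; hence its preimage under $u_g$ is algebraic in $\gothH_g$. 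This preimage contains $p(\calU)$ (from $\pi_g\circ u_{g,a}=u_g\circ p$), therefore contains the Zariski closure $p(\calU)^{\mathrm{Zar}}$, which dominates $p(\calU^{\mathrm{Zar}})$; applying $u_g$ yields the containment. No algebraicity of $u_g(p(\calU^{\mathrm{Zar}}))$ is needed or available.
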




\subsection{Finiteness \`a la Bogomolov--Ullmo}
The \emph{weakly defect} of a locally closed subvariety $\calY\subseteq \gothA_g$ is defined as 
\[
\delta_{ws}(\calY):=\dim \calY^{\mathrm{biZar}} -\dim \calY.
\]Let $\calX\subseteq\gothA_g$ be a locally closed subvariety. A subvariety $\calY\subseteq\calX$ is called \emph{weakly optimal} in $\calX$ if, for any subvariety $\calY'\subseteq\calX$ containing $\calY$, one has $\delta_{ws}(\calY')>\delta_{ws}(\calY)$.

Later we need the following Finiteness \`a la Bogomolov--Ullmo\footnote{Analogous results are also termed ``Geometric Zilber--Pink'' in the literature.} result of Gao for weakly optimal subvarieties, which is itself an application of Ax--Schanuel, reformulated into our language as follows:
 
 \begin{theorem}\label{thm_GZP}
     Suppose $\calX\subseteq\gothA_g$ is a locally closed subvariety. There exists a finite set $\Sigma=\Sigma(\calX)$ of triples $(\calA_1,\calB_1,N)$ with  abelian schemes $\calB_1\subseteq\calA_1\subseteq\gothA_g|_{\pi(\calA_1)}$ and a positive integer $N$ with the following property. 
     
     For any weakly optimal subvariety $\calZ$ of $\calX$, there is a triple $(\calA_1,\calB_1,N)\in\Sigma(\calX)$ such that $[N]\calZ\subseteq\calA_1$ and $([N]\calZ)^{\mathrm{biZar}}$ is an irreducible component of a fiber of the composition
     \[
    \calA_1\rightarrow \calA_1/\calB_1\rightarrow \gothA_{g_1-g_1'}
     \] where the first map is quotient and the second is a moduli sieve with $g_1,g_1'$ being the relative dimensions of $\calA_1,\calB_1$. 
 \end{theorem}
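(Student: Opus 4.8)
The plan is to deduce this statement from the weak Ax–Schanuel theorem (Theorem~\ref{thm_weakas}, or rather its vertical corollary~\ref{cor_AxSchanuelVertical}) combined with the characterization of weakly special subvarieties in Lemma~\ref{lem_weakly_special_char} and Proposition~\ref{prop_bialgebraic_weaklyspecial}, essentially repackaging Gao's argument from \cite{Gao_Betti,Gao_towards} into the present notation. First I would observe that for a weakly optimal $\calZ\subseteq\calX$, minimality of the weakly defect $\delta_{ws}$ forces $\calZ^{\mathrm{biZar}}$ to be, in a precise sense, ``smoothed out'' — more precisely, one shows that $\calZ$ is an irreducible component of $\calZ^{\mathrm{biZar}}\cap\calX$ (otherwise a strictly larger subvariety of $\calX$ inside the same bi-algebraic closure would have smaller or equal weakly defect, contradicting optimality). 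So the content of the theorem is really a finiteness statement about the bi-algebraic subvarieties $\calZ^{\mathrm{biZar}}$ that can arise this way, together with the shape of such bi-algebraic subvarieties.

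Next I would invoke Proposition~\ref{prop_bialgebraic_weaklyspecial}: every bi-algebraic subvariety of $\gothA_g$ is weakly special and dominates a bi-algebraic subvariety of $\AAA_g$; and by Lemma~\ref{lem_weakly_special_char}(2), after multiplication by a suitable integer $N$, such a weakly special $[N]\calZ^{\mathrm{biZar}}$ sits inside an abelian subscheme $\calA_1\subseteq\gothA_g|_{\pi(\calA_1)}$ and is a component of a fiber of a composition $\calA_1\to\calA_1/\calB_1\to\gothA_{g_1-g_1'}$ for an abelian subscheme $\calB_1\subseteq\calA_1$ and a moduli sieve. The finiteness of the set $\Sigma(\calX)$ of triples $(\calA_1,\calB_1,N)$ is then the crux: one bounds the pair $(\calA_1,\calB_1)$ using that $\calA_1$ is determined up to isogeny by finitely many possibilities (Lemma~\ref{lem_finite_target}-type reasoning applied to the generic fiber, since abelian subvarieties of a fixed abelian variety form finitely many isogeny classes and, with the level structure normalization, the relevant bi-algebraic loci in $\AAA_g$ form a finite/discrete family as one varies over weakly optimal $\calZ$). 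The uniform bound on $N$ comes from boundedness of the degree of the isogeny needed to realize $[N]\calZ$ inside $\calA_1$, which in turn is controlled by Ax–Schanuel applied to the uniformization: $\delta_{ws}$ takes values in a bounded range (between $0$ and $\dim\gothA_g-\dim\calX$) and, crucially, the weakly optimal subvarieties of $\calX$ fall into finitely many ``types'' because their bi-algebraic closures, pulled back to $\CC^g\times\gothH_g$, are cut out by linear-algebraic data (sub-$\QQ$-vector spaces / sub-Hodge-structures) of bounded complexity — this is exactly where Corollary~\ref{cor_AxSchanuelVertical} is used, to bound $\rdim\calZ^{\mathrm{biZar}}$ and hence pin down $\calB_1$.

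I expect the main obstacle to be the finiteness of $\Sigma(\calX)$ itself — concretely, showing that as $\calZ$ ranges over \emph{all} weakly optimal subvarieties of $\calX$, only finitely many bi-algebraic closures (up to the isogeny recorded by $N$) occur. The naive difficulty is that there are infinitely many weakly special subvarieties of $\gothA_g$ (e.g. infinitely many torsion translates, and infinitely many constant sections), so one must use the \emph{optimality} hypothesis decisively: a weakly optimal $\calZ\subseteq\calX$ cannot have its bi-algebraic closure be ``too small'', and Ax–Schanuel forces the defect $\delta_{ws}(\calZ)$ to drop whenever $\calZ^{\mathrm{biZar}}$ is a strict sub-weakly-special of a larger one also meeting $\calX$ properly; combined with the noetherianity of $\calX$ and the fact that the ``monodromy'' constrains $\calB_1$ to depend only on the generic Mumford–Tate / connected algebraic monodromy datum of $\calX$ (of which there are finitely many relevant ones), one gets the finite list. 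I would structure the write-up so that the Ax–Schanuel input is isolated in one lemma (bounding defects and relative dimensions), the structure-of-weakly-special input in another (Lemma~\ref{lem_weakly_special_char}), and then assemble $\Sigma(\calX)$ by a noetherian induction on $\calX$, which is the standard Bogomolov–Ullmo packaging.
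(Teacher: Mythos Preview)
The paper's proof takes a very different route from your proposal: it does not attempt to derive the finiteness from the weak Ax--Schanuel theorem at all. Instead it directly invokes Gao's finiteness theorem \cite[Thm.~1.4]{Gao_mixed}, which is stated in the language of connected mixed Shimura subdata $(Q,\calY^+)$ together with normal subgroups $H\trianglelefteq Q$, and then spends the remainder of the proof translating each pair $((Q,\calY^+),H)$ into a triple $(\calA_1,\calB_1,N)$ via \cite[Prop.~5.6 and \S5.4]{Gao_Betti}. The finiteness itself is entirely imported as a black box; the paper's only contribution at this point is the dictionary between the Shimura-theoretic and abelian-scheme descriptions.

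Your proposal, by contrast, attempts to reprove Gao's finiteness result from the tools assembled in the paper. The structural observations you make are correct and are indeed ingredients in Gao's own argument: a weakly optimal $\calZ$ is a component of $\calZ^{\mathrm{biZar}}\cap\calX$, and Lemma~\ref{lem_weakly_special_char}(2) gives the shape of each bi-algebraic closure. But the step you flag as the main obstacle---the actual finiteness of $\Sigma(\calX)$---is not resolved by your sketch. Concretely: (i) Lemma~\ref{lem_finite_target} bounds isogeny classes of quotients of a \emph{fixed} abelian scheme, whereas here $\calA_1$ lives over a variable bi-algebraic base $\pi(\calA_1)\subseteq\AAA_g$, of which there are infinitely many, so something further is needed to constrain the base; (ii) no mechanism is given for bounding $N$---the torsion order is not controlled by the range of $\delta_{ws}$; (iii) Corollary~\ref{cor_AxSchanuelVertical} alone does not yield the finiteness---Gao's actual proof uses the full mixed Ax--Schanuel together with an o-minimal definability argument to cut the family of candidate weakly specials down to finitely many types. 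So while your strategy is in the right spirit, carrying it out would amount to reproducing a substantial portion of \cite{Gao_mixed}; the paper sidesteps this by citation.
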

 \begin{proof}
      The result \cite[Thm.~1.4]{Gao_mixed} (or \cite[Thm.~3.2.4]{Gao_HDR}) claims that there are finitely many pairs \[((Q,\calY^+),H)\in \Sigma'=\Sigma'(\calX)\]where $(Q,\calY^+)$ is a connected mixed Shimura subdatum and $H$ is a normal subgroup of $Q$ with semi-simple reductive part, such that any weakly optimal subvariety $\calZ$ is equal to $u_{g,a}(H(\RR)^+\tilde y)$ for one of the triples with some $\tilde y\in \calY^+$. By \cite[Prop.~5.6]{Gao_Betti}, a connected mixed Shimura subdatum $(Q,\calY^+)$ corresponds to a torsion translate of an abelian subscheme $\calA_1$ over a special subvariety of $\AAA_g$. Let $N\in \NN$ be an integer which kills the torsion.   By \S5.4 of loc. cit., $H$ gives rise to an abelian subscheme $\calB_1\subseteq\calA_1$ such that the quotient of $(Q,\calY^+)$ by $H$ corresponds to a moduli sieve of the quotient $\calA_1/\calB_1$. The fact that $\calZ^{\mathrm{biZar}}=u_{g,a}(H(\RR)^+\tilde y)$ means that $([N]\calZ)^{\mathrm{biZar}}$ is an irreducible component of a fiber of $\calA_1\rightarrow \calA_1/\calB_1\rightarrow \gothA_{g_1-g_1'}$. So we are done.
 \end{proof}

 

\section{Betti foliation and Betti form}\label{sec_betti}

In this section, we define the notions of Betti foliation and Betti forms, and establish basic relations between them. A good supplement to our exposition is \cite[\S2]{CGHX_GBC}. Here, $S$ is a smooth complex quasi-projective variety viewed as a complex manifold, and $\pi:\calA\rightarrow S$ is an abelian scheme.

\subsection{Betti map and Betti foliation}
  Let $ \Delta\subseteq S$ be a simply connected analytic open neighborhood of $s\in S$. The local system $\VV_\ZZ:=(R^1\pi_*\ZZ)^\dual$ is trivial on $\Delta$ and there are trivializations $\VV_{\ZZ,\Delta}\cong \ZZ^{2g}\times \Delta$, and $\calV_{\Delta}\cong \CC^{2g}\times \Delta$ as holomorphic vector bundles. 
Denote the induced Hodge filtration on $\CC^{2g}\times \Delta$ by $\calF'^{\bullet}$. The composition map
 \[
 \RR^{2g}\times \Delta\rightarrow \CC^{2g}\times \Delta\rightarrow \CC^{2g}\times \Delta/\calF'^{0}
 \]is a real analytic diffeomorphism which restricts to a group isomorphism over any $s\in \Delta$. Taking the quotient by $\ZZ^{2g}\times \Delta$, it induces a real analytic diffeomorphism
 \begin{equation}\label{eqn_betti_map_real_analytic_diffeo}
      \TT^{2g}\times \Delta\cong \calA_\Delta
 \end{equation}
 which restricts to a real Lie group isomorphism of tori $\TT^{2g}\cong\calA_s$ for any $s\in \Delta$. The composition of its inverse with the projection 
 \[
 \beta_\Delta:\calA_\Delta\cong \TT^{2g}\times \Delta \rightarrow \TT^{2g}
 \]is called a \emph{Betti map} associated to the data $(\calA/S,\Delta)$. The Betti map $\beta_\Delta$ is uniquely determined up to an action of $\GL_{2g}(\ZZ)$ on the target. To further remove the ambiguity, we can use the isomorphism $\beta_\Delta|_{\calA_{s}}:\calA_s\rightarrow \TT^{2g}$ to define the composition
 \[
 \beta_{\Delta,s}:\calA_\Delta\xrightarrow{\beta_\Delta} \TT^{2g}\xrightarrow {(\beta_{\Delta}|_{\calA_s})^{-1}}  \calA_s
 \]which is called the \emph{Betti map} associated to the data $(\calA/S,\Delta,s)$. 
 
 The Betti map is real analytic but in general not holomorphic. On the other hand, any fiber of $\beta_\Delta$, being the image of 
 \[
    \{c\}\times \Delta\subseteq \CC^{2g}\times \Delta\rightarrow (\ZZ^{2g}\times \Delta)\backslash(\CC^{2g}\times \Delta)/\calF'^0
 \]for some $c\in \CC^{2g}$, is complex analytic, and gives rise to a local complex analytic foliation on $\calA_\Delta$. Local foliation patches together to a global foliation $\calF_{\text{Betti}}$ on $\calA$, which is called the \emph{Betti foliation}. A Betti leaf is a path-connected piece of the Betti foliation, and a path-connected component in the Betti foliation is called a maximal Betti leaf. We denote the maximal Betti leaf passing through $x$ by $\calF_{\text{Betti},x}$. For instance, a constant multi-section is a finite union of maximal Betti leaves. In general, a Betti leaf can be dense in $\calA$ under the analytic topology. The Betti foliation induces a splitting of the holomorphic tangent space
 \begin{equation}\label{eqn_splitting_tangent}
        T_x'\calA= T_x'\calF_{\text{Betti}} \oplus T_x'\calA_s
 \end{equation}
 at a point $x\in \calA_s$ with $s\in S$. Pullback via the identity section, the splitting gives
\begin{equation}\label{eqn_splitting_tangent_Lie_algebra}
    T'\calA|_S:=e^*T'\calA=T' S\oplus \Lie(\calA/S)
\end{equation}
where $\Lie(\calA/S)$ represents the \emph{Lie algebra} of $\calA/S$.

\subsection{An interlude to complex geometry}\label{subsec_complex_geometry}

Before defining Betti forms, let us quickly review a few concepts from complex geometry. Let $Y$ be a complex manifold and let $\omega$ be a real $C^\infty$ $(1, 1)$-form on $Y$. One way of thinking about the real $C^\infty$ (1,1)-form $\omega$ is as an alternating $\RR$-bilinear pairing $\omega_y:T_{y}Y \otimes_\RR T_{y}Y\rightarrow \RR$ on the real tangent bundle $T Y$ of $Y$, varying smoothly in $y\in Y$,  which satisfies $\omega_{y}(Jv,Jv')=\omega_y(v,v')$ for any $v,v'\in T_{y} Y$ with $J:T Y\rightarrow T Y$ the almost complex structure. If we view $\omega$ as a $C^\infty$ map of vector bundles $TY\rightarrow T^*Y$, then its kernel $\ker\omega$ is given by
\[
(\ker\omega)_y:=\ker\omega_y:=\{v\in T_y Y: \omega_y(v,v')=0\text{ for any }v'\in T_yY\}.
\]Clearly, $\ker \omega$ is stable under $J$. The kernel $\ker\omega$ is a vector bundle if and only if the rank of $\ker\omega$ is locally constant.  Assume moreover $\omega$ is semipositive, i.e., $\omega(v,Jv)\geq0$ for any $v\in TY$. Then 
\[
\ker\omega_y=\{v\in T_{y} Y: \omega(v,Jv)=0\}.
\]Indeed, for the less obvious direction with $\omega(v,Jv)=0$, notice that for $v'\in T_{y} Y$, we have
\[
\omega_y(v+kv',J(v+kv'))=k^2\omega_y(v',Jv')+2k\omega_y(v,Jv')\geq 0 \quad (\forall k\in \RR)
\]from which we derive $\omega_y(v,Jv')=0$. If $Z\subseteq Y$ is a complex submanifold and $z\in Z$, then the volume form $\omega_z^{\wedge \dim Z}|_Z\neq 0$ if and only if $\omega_z$ is positive on $T_z Z$, if and only if $ \ker\omega_z \cap T_z Z=\{0\}$.  

The next thing we recall is the first Chern class of line bundles. It is well-known that there is a first Chern class map for holomorphic line bundles
\[
c_1: \Pic(Y)= H^1(Y,\calO_Y^*)\rightarrow H^2(Y,\ZZ).
\]There is an analogous map for $C^\infty$ complex line bundles. The exact sequence 
\[
0\rightarrow \ZZ\rightarrow \calR_Y\xrightarrow{\exp} \calR_Y^*\rightarrow 0
\] where $\calR_Y$ (resp. $\calR_Y^*$) is the sheaf of $C^\infty$ complex functions (resp. nonzero $C^\infty$ complex functions), gives rise to an injective homomorphism 
\begin{equation}\label{eqn_chern_class_for_complex_line_bundle}
c_1': H^1(Y,\calR_Y^*)\hookrightarrow H^2(Y,\ZZ)
\end{equation}such that $c_1$ factors through $c_1'$; cf. \cite[pp.~140]{GH_principles}. Here $H^1(Y,\calR_Y^*)$ is canonically identified as the group of $C^\infty$ complex line bundles $L$ up to isomorphism using transition functions. Via the de Rham theory, we also regard the first Chern class $c_1'(L)$ as an integral class of real $2$-forms in $H^2(Y,\RR)\cong H^2_{\mathrm{dR}}(Y,\RR)$; a representative in the class can be given by $\frac{\sqrt{-1}}{2\pi}\Theta(D)$, called the \emph{Chern form} of $(L,D)$, where $\Theta(D)$ is the curvature form of a hermitian connection $D$ with respect to a hermitian metric on $L$; see \cite[\S\,V.9]{Demailly_Complex}.

\subsection{Betti form}\label{subsec_Bettiform}

Now let $\calL\in \Pic(\calA/S)$. For $s\in S$, there is a unique translation-invariant closed $(1,1)$-form $\omega_s$ on $\calA_s$ representing the Chern class $c_1(\calL_s)$ on $\calA_s$. The \emph{Betti form} associated to $\calL$ is the $(1,1)$-form  $\omega=\omega(\calL)$ on $\calA$ such that
\begin{enumerate}
    \item for any $s\in S$, the restriction $\omega|_{T\calA_s}=\omega_s$, and
    \item the form $\omega$ is rigidified along the Betti foliation, namely, $\omega|_{T\calF_{\text{Betti}}}=0$.
\end{enumerate}
This uniquely determines $\omega$ since it specifies the pairing $\omega_x:T_x\calA\times T_x\calA\rightarrow \RR$ at any point $x\in\calA$ by the splitting \eqref{eqn_splitting_tangent}. 

To see the continuous structure of Betti forms, we give a local construction. Let $\beta_{\Delta,s_0}:\calA_\Delta \rightarrow \calA_{s_0}$ be a Betti map and let $\omega_\Delta:=\beta_{\Delta,s_0}^*\omega_{s_0}$. By definition, $\omega_\Delta$ satisfies (2) as fibers of $\beta_{\Delta,s_0}$ are the Betti leaves.  To see (1), we regard $\calL_\Delta$ as a $C^\infty$ complex line bundle on $\calA_\Delta$, and consider the corresponding line bundle $\calL_{\Delta}'$ on $\TT^{2g}\times \Delta$ via the $C^\infty$ diffeomorphism \eqref{eqn_betti_map_real_analytic_diffeo}. Given any hermitian metric on $\calL'_\Delta$ and a hermitian connection $D$, the association of Chern forms on $\calL'_\Delta$ restricted to $\TT^{2g}\times \{s\}$ induces a continuous map
\[
\Delta\rightarrow  H^2_{\mathrm{dR}}(\TT^{2g},\RR)\cong H^2(\TT^{2g},\RR).
\] It factors through the lattice $H^2(\TT^{2g},\ZZ)$ and hence must be constant. This implies that for any $s\in \Delta$, the invariant form $\omega_s$ when viewed on $\TT^{2g}$ is independent of $s$. In particular, $\omega_{\Delta}|_{T\calA_s}=\omega_s$ for any $s\in \Delta$. Patching together by uniqueness, we get the desired Betti form. 

By definition, the Betti form of $\calL$ only depends on its class $[\calL]\in\NS(\calA/S)$. Linearity allows us to extend the definition to $\NS(\calA/S)_\RR$. It is not hard to establish the following properties of Betti forms.

\begin{prop}\label{prop_bettiform_property}
    The association of Betti forms 
    \[
    \omega:\NS(\calA/S)_\RR\rightarrow \Gamma(\Lambda^2 T^*\calA)
    \] 
    is an injective linear map. For $[\calL]\in \NS(\calA/S)_\RR$, we have
    \begin{enumerate}
        \item $\omega(\calL)$ is a  closed, real $C^\infty$ $(1,1)$-form,
        \item $\omega(\calL)$ is semipositive if $\calL$ is nef,
        \item $[N]^*\omega(\calL)=N^2\omega(\calL)$,
        \item $\ker\omega(\calL)$ is a $C^\infty$ complex vector bundle on $\calA$.
    \end{enumerate}
\end{prop}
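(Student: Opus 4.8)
The plan is to reduce everything to the explicit local model $\omega(\calL)|_{\calA_\Delta}=\beta_{\Delta,s_0}^{*}\omega_{s_0}$ recorded above, together with the ($J$-stable) splitting $T_x'\calA=T_x'\calF_{\text{Betti}}\oplus T_x'\calA_s$ of \eqref{eqn_splitting_tangent}. Since $\beta_{\Delta,s_0}$ factors as the real-analytic projection $\beta_\Delta$ onto the fixed torus $\TT^{2g}$ followed by a group isomorphism, $\omega(\calL)|_{\calA_\Delta}$ is the $\beta_\Delta$-pullback of an invariant (constant-coefficient) $2$-form on $\TT^{2g}$; from this, closedness, realness and $C^\infty$-ness in (1) are immediate, as is (3): $[N]$ is $[N]\times\id$ on $\TT^{2g}\times\Delta$, so it preserves $\calF_{\text{Betti}}$ and acts by $N^{2}$ on $H^{2}(\calA_s,\RR)=\Lambda^{2}H^{1}(\calA_s,\RR)$, whence $[N]^{*}\omega_s$ and $N^{2}\omega_s$ are invariant representatives of the same class, hence equal, and uniqueness of the Betti form propagates this to $\calA$. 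For the $(1,1)$-type assertion in (1) one must not attempt to pull back the type along the non-holomorphic map $\beta_\Delta$; instead I would observe that $d\beta_{\Delta,s_0}$ annihilates $T\calF_{\text{Betti}}$, so $T\calF_{\text{Betti}}\subseteq\ker\omega(\calL)$ and $\omega(\calL)(v,w)=\omega_s(v_2,w_2)$ for $v=v_1+v_2$, $w=w_1+w_2$ split along $T\calF_{\text{Betti}}\oplus T\calA_s$, while on the vertical part $\omega_s$ is $J$-invariant because it is the unique \emph{invariant} representative of the \emph{algebraic} class $c_1(\calL_s)\in H^{1,1}$. Linearity follows since $c_1$ and "take the invariant representative" are additive; injectivity follows since $\omega(\calL)=0$ forces each $c_1(\calL_s)=0$, hence $[\calL_s]=0$ in $\NS(\calA_s)_\RR$, hence $\spec_s([\calL])=0$, so $[\calL]=0$ by Proposition~\ref{prop_numerical_property}(1) and flatness of $\RR$ over $\ZZ$.

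For (2): if $\calL$ is nef then $\calL_s$ is nef for every $s$ by Proposition~\ref{prop_numerical_property}(2), so its invariant representative $\omega_s$ is positive semidefinite (ample classes give positive-definite invariant forms, the nef cone is their closure, and passing to the invariant representative is continuous and linear). Then, with $v=v_1+v_2$ along the splitting, the inclusion $T\calF_{\text{Betti}}\subseteq\ker\omega(\calL)$ gives $\omega(\calL)(v,Jv)=\omega_s(v_2,Jv_2)\ge0$, which is semipositivity.

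The main obstacle is (4). By \S\ref{subsec_complex_geometry} it suffices to show $\ker\omega(\calL)$ has locally constant rank (it is automatically $J$-stable, hence a complex subbundle once it is a subbundle, and the constant-rank kernel of a $C^{\infty}$ bundle map $T\calA\to T^{*}\calA$ is a $C^{\infty}$ subbundle). The splitting computation gives, at $x\in\calA_s$,
\[
\ker\omega(\calL)_x=T_x\calF_{\text{Betti}}\oplus\ker\omega_s ,
\]
with $\ker\omega_s\subseteq T_x\calA_s$ the kernel of the invariant form. The first summand has constant real dimension $2\dim S$ (Betti leaves are graphs over the smooth base $S$). For the second, the key claim is that $\dim\ker\omega_s$ does not vary with $s$: over the simply connected $\Delta$ one trivializes $R^{2}\pi_*\RR$, under which $c_1(\calL_s)$ is a flat — hence constant — section $\alpha$ of $\Lambda^{2}(\RR^{2g})^{*}$, whose unique invariant representative is $\alpha$ itself, so $\omega_s$ is this fixed form; and the rank of a fixed alternating $2$-form is insensitive to the varying complex structure $J_s$ (for $\alpha$ of type $(1,1)$ with respect to $J_s$, the kernel of the bilinear form $\alpha$ coincides with the kernel of the Hermitian form $\alpha(\cdot,J_s\cdot)$, since $J_s$ is invertible). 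Hence $\dim\ker\omega(\calL)_x=2\dim S+\dim\ker\alpha$ is locally constant, which finishes the proof. The subtlety to get right is exactly this last identification — that the moving Hodge structure does not change the rank of the (after trivialization, constant) fiberwise representative — which is precisely the content of the earlier remark that $\omega_s$, viewed on $\TT^{2g}$, is independent of $s$.
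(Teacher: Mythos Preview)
Your proposal is correct and follows the same approach as the paper, which simply says ``Injectivity of $\omega$ follows from the fact that the Chern class $c_1(\calL_s)=0$ if and only if $\calL_s$ is numerically trivial. All the listed properties are immediate consequences of the local description of $\omega(\calL)$.'' You have spelled out in full the details that the paper leaves as ``immediate,'' including the crucial points that $T\calF_{\text{Betti}}\subseteq\ker\omega(\calL)$ (not merely $\omega|_{T\calF_{\text{Betti}}}=0$), that the $(1,1)$-type must be argued via the $J$-stable splitting rather than pulled back along the non-holomorphic $\beta_{\Delta,s_0}$, and that constancy of rank in (4) comes from the fiberwise form being a \emph{fixed} alternating form on $\RR^{2g}$ under the Betti trivialization. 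One trivial remark: in (2) you do not need to invoke Proposition~\ref{prop_numerical_property}(2), since nefness of $\calL_s$ for every $s$ is the definition of nefness in $\Pic(\calA/S)_\RR$.
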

\begin{proof}
    Injectivity of $\omega$ follows from the fact that the Chern class $c_1(\calL_s)=0$ if and only if $\calL_s$ is numerically trivial. All the listed properties are immediate consequences of the local description of $\omega(\calL)$.  
\end{proof}


Next, we discuss pullback of Betti forms under $\RR$-homomorphisms.
One concrete way to understand $\RR$-homomorphisms in the complex analytic setting is via Hodge theory. Let $\VV_\ZZ=\VV_{\ZZ,\calA}:=(R^1\pi_{*}\ZZ)^\dual$ be the $\ZVHS$ induced by $\pi:\calA\rightarrow S$. Let $\calV$ be the associated holomorphic vector bundle to $\VV_\CC$ with Hodge filtration $F^\bullet \calV$. Then we can identify 
\[\Lie(\calA/S)=\calV/F^0\calV\text{ and }\calA=\VV_\ZZ\backslash\calV/F^0\calV.\] The quotient map $\exp:\Lie(\calA/S)\rightarrow \calA$ is called the exponential map. 

Now an $\RR$-homomorphism $f:\calA\dashrightarrow \calB$ can be regarded as a map of $\RVHS$
\[
f_\RR:\VV_{\RR,\calA}\rightarrow \VV_{\RR,\calB}.
\]Note that $f$ is integral if and only if $f_\RR$ maps  $\VV_{\ZZ,\calA}$ to $V_{\ZZ,\calB}$. 
Taking into account the Hodge filtration, we have a commutative diagram
\[
\begin{tikzcd}
& \VV_{\RR,\calA}  \arrow[r,"\sim"] \arrow[d,"f_\RR"]    &   \Lie(\calA/S)   \arrow[d,"df"] \arrow[r,"\exp"]  & \calA \arrow[d,dashed,"f"]\\
&\VV_{\RR,\calB} \arrow[r,"\sim"]                &    \Lie(\calB/S) \arrow[r,"\exp"] & \calB 
\end{tikzcd}
\]This defines a functorial embedding
\begin{equation}\label{eqn_lie_alg_realization}
d:\Hom(\calA,\calB)_\RR\hookrightarrow \Hom(\Lie(\calA/S), \Lie(\calB/S))
\end{equation}into the group of global sections of the $\Hom$-bundle. If $f$ is surjective, then $df$ is surjective since a left inverse exists by functoriality. 

Alternatively, we can regard $df$ as the essential component of the differential map of $f$ on tangent spaces over the identity sections. Namely, under the decomposition \eqref{eqn_splitting_tangent_Lie_algebra}, the differential of $f$ is identity on $T'S$ and $df$ on the Lie algebras.

Define the pullback of a Betti form $\omega(\calL)$ for $[\calL]\in \NS(\calB/S)_\RR$ by $f\in\Hom(\calA,\calB)_\RR$ as 
\[
f^*\omega(\calL):=\omega(f^*\calL)
\]with $f^*\calL$ defined in \eqref{eqn_starR_for_NS}. This is consistent with the usual pullback of differential forms when $f$ is integral. By continuity we have $(df)^*\exp^*\omega(\calL)=\exp^*f^*\omega(\calL)$.

\begin{defn}
    An \emph{analytic subgroup} $\calH/S$ of $\calA/S$ is defined as the image of a flat $C^{\infty}$ complex vector subbundle $\calW\subseteq \Lie(\calA/S)$ on $S$ under $\exp:\Lie(\calA/S)\rightarrow \calA$.
\end{defn}

\begin{prop}\label{prop_integrable_via_analytic_subgroup}
    Let $[\calL]\in \NS(\calB/S)_\RR$ be ample, let $f\in \Hom(\calA,\calB)_\RR^\circ$. There exists an analytic subgroup $\calH$ such that for any $x\in \calA$, the  complex manifold $\calH_x$ obtained from translating $\calH$ by $\calF_{\text{Betti},x}$ is an integral manifold of $\ker f^*\omega(\calL)$ at $x$. Indeed, $\calH$ is given by $\exp(\ker(df))$, where $df:\Lie(\calA/S)\rightarrow \Lie(\calB/S)$ is the lift of $f$ by \eqref{eqn_lie_alg_realization}.
\end{prop}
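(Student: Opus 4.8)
The plan is to construct $\calH$ and the manifolds $\calH_x$ explicitly and then to check, at every point of $\calH_x$, that the tangent space agrees with the kernel of the Betti form $f^*\omega(\calL)=\omega(f^*\calL)$; the ampleness of $\calL$ will be used precisely in that kernel computation.

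First I would set $\calW:=\ker(df)\subseteq\Lie(\calA/S)$, where $df$ is the section of the $\Hom$-bundle provided by \eqref{eqn_lie_alg_realization}, and check that $\calH:=\exp(\calW)$ is an analytic subgroup. Since $f\in\Hom(\calA,\calB)_\RR^\circ$ there is $\theta$ with $f\circ\theta=\id_\calB$, so $df\circ d\theta=\id_{\Lie(\calB/S)}$; hence $df$ is fibrewise surjective of constant rank and $\calW$ is a $C^\infty$ complex subbundle of complex rank $g-g_\calB$. It is moreover flat: writing $f$ as an $\RR$-linear combination of integral homomorphisms shows $f$ induces a morphism of local systems $f_\RR\colon\VV_{\RR,\calA}\to\VV_{\RR,\calB}$, whose kernel is a sub-local system (of constant rank, $f_\RR$ being fibrewise surjective), and under the $C^\infty$ identifications in the diagram preceding \eqref{eqn_lie_alg_realization} the bundle $\calW$ corresponds to $\ker(f_\RR)$. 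So $\calH=\exp(\ker(df))$ is an analytic subgroup; in particular $\calH\cap\calA_s$ is the connected analytic subgroup of $\calA_s$ with Lie algebra $\ker(df_s)$, where $df_s$ denotes the differential of the fibrewise $\RR$-homomorphism $f_s\colon\calA_s\to\calB_s$.

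Next I would compute $\ker\bigl(f^*\omega(\calL)\bigr)$ pointwise. Fix $x\in\calA_s$ and a simply connected $\Delta\ni s$. By the local description of Betti forms, $f^*\omega(\calL)$ restricted to $\calA_\Delta$ equals $\beta_{\Delta,s}^*\bigl(\omega^{f^*\calL}_s\bigr)$, where $\omega^{f^*\calL}_s$ is the translation-invariant $(1,1)$-form on $\calA_s$ representing $c_1\bigl((f^*\calL)_s\bigr)$ and $\beta_{\Delta,s}$ is the Betti map based at $s$. Its differential annihilates exactly $T_x'\calF_{\text{Betti}}$ (the Betti leaves being the fibres of $\beta_{\Delta,s}$) and is the identity on $T_x'\calA_s$; so the two summands of the splitting \eqref{eqn_splitting_tangent} pair to zero under $f^*\omega(\calL)$, and
\[
\ker\bigl(f^*\omega(\calL)\bigr)_x = T_x'\calF_{\text{Betti}} \oplus \ker\!\bigl(\omega^{f^*\calL}_s\bigr)_x .
\]
Since $(f^*\calL)_s=f_s^*(\calL_s)$ by the definition of the pullback on $\NS(\calA/S)_\RR$, one has $\omega^{f^*\calL}_s=(df_s)^*\omega^{\calL}_s$, with $\omega^{\calL}_s$ the invariant representative of $c_1(\calL_s)$; as $\calL$ is ample, $\omega^{\calL}_s$ is positive definite, so the kernel of its pullback under the $\CC$-linear map $df_s$ is exactly $\ker(df_s)$, and by invariance $\ker(\omega^{f^*\calL}_s)_x$ is the translate of $\ker(df_s)$ to $x$ in $\calA_s$.

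Finally I would identify this with $T_x'\calH_x$. By construction $\calH_x$, viewed as an immersed complex submanifold, is swept out by translating $\calH$ fibrewise along the leaf $\calF_{\text{Betti},x}$; at $x$ its tangent space is $T_x'\calF_{\text{Betti}}$ (the leaf direction) together with the translate to $x$ of $\ker(df_s)=\Lie(\calH\cap\calA_s)$ (the fibre direction), which is precisely the right-hand side of the displayed formula. Running the same computation at an arbitrary point $x'$ of $\calH_x$ — again a point of a translated leaf plus a point of the fibrewise analytic subgroup — gives $T_{x'}'\calH_x=\ker\bigl(f^*\omega(\calL)\bigr)_{x'}$, so $\calH_x$ is an integral manifold of $\ker f^*\omega(\calL)$ through $x$. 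I expect the kernel computation of the previous paragraph to be the main obstacle: one must pass from the Betti form on $\calA_\Delta$ to a single invariant form on one fibre (so that $\ker$ splits along \eqref{eqn_splitting_tangent}) and then rule out any fibre-direction kernel beyond $\ker(df_s)$ — this is exactly where ampleness of $\calL$ is essential, and where one must keep track that $f$ is only an $\RR$-homomorphism, so that $f_s^*$, $df_s$, and the identity $c_1(f_s^*\calL_s)=(df_s)^*\omega^{\calL}_s$ are all read off from the quadratic-form extension of pullback in \S\ref{sec_abelianschemes}.
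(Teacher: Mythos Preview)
Your proposal is correct and follows essentially the same approach as the paper's proof: build $\calW=\ker(df)$ as a flat $C^\infty$ complex subbundle via surjectivity of $df$ and the local-system map $f_\RR$, set $\calH=\exp\calW$, and then use ampleness of $\calL$ together with the splitting \eqref{eqn_splitting_tangent} to identify $\ker f^*\omega(\calL)$ with $T'\calF_{\text{Betti}}\oplus(\text{translate of }\ker df)$. The only cosmetic difference is that the paper invokes translation-invariance of the Betti form to reduce the tangent-space comparison to points on the zero section (where it reads $\ker f^*\omega(\calL)|_S=T'S\oplus\calW$), whereas you carry out the same computation directly at a general point $x$ via the local description $\beta_{\Delta,s}^*\omega_s^{f^*\calL}$.
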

\begin{proof}
Let $\calW:=\ker(df)$. Then $\calW$ is a $C^\infty$ complex vector bundle since $df$ has constant rank by surjectivity. Moreover, $\calW$ is flat by the identification $\calW\cong \ker(f_\RR)$ forgetting the complex structure, as $f_\RR$ is a map of local systems. We check that the analytic subgroup $\calH:=\exp\calW$ has the required property. Because of the translation-invariance of Betti form, it suffices to compare the tangent space of $\calH$ and $\ker f^*\omega(\calL)$ over the zero section of $\calA$.

Note first $\ker\omega(\calL)|_S=T'S$ on the identity section of $\calB$ by ampleness. Then 
$\ker f^*\omega(\calL)|_S$ consists of the tangent vectors which are mapped to $T'S$ by the differential of $f$. But the differential of $f$ is identity on $T'S$ and $df$ on the Lie algebras under \eqref{eqn_splitting_tangent_Lie_algebra}. Hence
\[
\ker f^*\omega(\calL)|_S=T'S\oplus \calW
\]is identical to the tangent space of $\calW$ or $\calH$ on the zero section, and we are done.
\end{proof}

\section{Nondegeneracy}\label{sec_nondegeneracy}
We continue the discussion within the complex analytic category.
Let $\calA, \calB$ be abelian schemes over a normal complex quasi-projective variety $S$ of relative dimension $g,g'$, and let $f\in \Hom(\calA,\calB)^\circ_\RR$. 
Denote by $\omega:=\omega(f^*\calL_\calB)$ the Betti form of the pullback of a fixed ample line bundle $\calL_\calB\in\Pic_0(\calB/S)$. Let $\calX\subseteq\calA$ be a locally closed algebraic subvariety of dimension $d$. The wedge product $\omega^{\wedge d}|_\calX$ is a volume form on $\calX$. Since $\omega$ is semipositive, the volume form induces a nonnegative measure.


The main objective of this section is to provide a geometric criterion,  utilizing the Ax--Schanuel theorem, for determining when this measure is nontrivial, i.e., when the following nondegeneracy condition holds on $\calX$:
\begin{equation}
    \omega^{\wedge d}|_\calX \not\equiv 0.
\end{equation}

For simplicity, we assume $\calA$ has a level-$n_0$-structure and hence \eqref{eqn_all_endomorphism_appears}.

\subsection{Degeneracy loci}
We defined earlier in Def.~\ref{def_ws} the notion of a weakly special (resp. special) subvariety of $\calA/S$. Arbitrary intersection of weakly special (resp. special) subvarieties remains weakly special (resp. special). 

\begin{defn}
  Let $\calY\subseteq \calA$ be a locally closed subvariety.
  \begin{enumerate}
      \item The \emph{weakly special closure} $\langle \calY \rangle$ (resp. \emph{special closure}) of $\calY$ is defined as the smallest weakly special (resp. special) subvariety containing $\calY$. 
      \item The \emph{vertical defect} $\delta_v(\calY)$ is defined as the difference of the relative dimension of its weakly special closure $\langle\calY\rangle$ with the dimension of $\calY$. That is,
\[\delta_v(\calY):=\rdim \langle\calY\rangle -\dim\calY.\]
  \end{enumerate} 
\end{defn}

We have the following lemma:
\begin{lemma}\label{lem_relative_dim_of_ws_closure_under_moduli_sieve}
    Let $\iota_a:\calA\rightarrow \gothA_g$ be a moduli sieve. For a subvariety $\calY\subseteq \calA$,  we have
    \[
    \rdim \langle \calY\rangle =\rdim\langle \iota(\calY)\rangle.
    \]
\end{lemma}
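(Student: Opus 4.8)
The plan is to relate the weakly special closure $\langle\calY\rangle$ in $\calA$ to the weakly special closure $\langle\iota_a(\calY)\rangle$ in $\gothA_g$, using the fact that a moduli sieve $\iota_a:\calA\to\gothA_g$ is, by construction, induced from the period map $S\to\AAA_g$ and is therefore generically finite onto its image (in particular injective on a Zariski dense open subset, and a finite-to-one map on fibers of $\pi$). Here I am implicitly using the level-$n_0$-structure hypothesis standing in this section, so that $\calA=\gothA_{g,S'}$ after possibly passing to a finite cover $S'\to S$; by Proposition~\ref{prop_bialgebraic_weaklyspecial}, the weakly special subvarieties of $\calA/S$ are exactly the irreducible components of $\calZ^{\mathrm{biZar}}\cap\calA$ for bi-algebraic $\calZ\subseteq\gothA_g$, so the notion of ``weakly special closure'' in $\calA$ is inherited from the bi-algebraic closure in $\gothA_g$.

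\textbf{Key steps, in order.} First, reduce to the case where $\calA$ itself sits inside $\gothA_g$ as $\gothA_g|_S$ for $S\subseteq\AAA_g$: both sides of the claimed equality are insensitive to replacing $S$ by a finite cover (weakly special subvarieties pull back and push forward to weakly special subvarieties by Lemma~\ref{lem_ws_under_base_change}, and relative dimensions of subvarieties are preserved under finite covers), so we may assume $\iota_a$ is the inclusion of a restricted universal family; in that reduced situation the statement becomes that $\rdim\langle\calY\rangle$ computed in $\gothA_g|_S$ equals $\rdim\langle\calY\rangle$ computed in $\gothA_g$, which follows because by Proposition~\ref{prop_bialgebraic_weaklyspecial} the weakly special subvarieties of $\gothA_g|_S$ are precisely the components of $\calZ^{\mathrm{biZar}}\cap(\gothA_g|_S)$ with $\calZ\ni\calY$ bi-algebraic, and taking the component through $\calY$ does not change the relative dimension of the bi-algebraic closure. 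Second, for the general (non-inclusion) moduli sieve, observe $\iota_a$ is a finite morphism on each fiber $\calA_s\to\gothA_{g,\iota(s)}$ (an isomorphism onto its image after the level-structure identification), and is finite-to-one over a dense open of $\iota(S)$; hence $\dim\iota_a(\calY)=\dim\calY$ and $\dim\pi_g(\iota_a(\calY))=\dim\iota(\pi(\calY))=\dim\pi(\calY)$, giving $\rdim\iota_a(\calY)=\rdim\calY$. Third, show $\iota_a(\langle\calY\rangle)$ is weakly special in $\gothA_g$ containing $\iota_a(\calY)$ — this is exactly the image-of-weakly-special part of Lemma~\ref{lem_ws_under_base_change}(1) adapted to a moduli sieve rather than a base change, combined with the characterization via bi-algebraic closures — so $\langle\iota_a(\calY)\rangle\subseteq\iota_a(\langle\calY\rangle)$; conversely the component of $\iota_a^{-1}(\langle\iota_a(\calY)\rangle)$ through $\calY$ is weakly special in $\calA$ (Lemma~\ref{lem_ws_under_base_change}(2) type argument / Proposition~\ref{prop_bialgebraic_weaklyspecial}) and contains $\calY$, giving the reverse inclusion up to taking components. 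Finally, apply the relative-dimension invariance from the second step to the two closures: $\rdim\langle\calY\rangle=\rdim\iota_a(\langle\calY\rangle)$ and $\iota_a(\langle\calY\rangle)$ has the same relative dimension as $\langle\iota_a(\calY)\rangle$ since one is a finite-to-one image of the other lying over the same base, yielding the equality.

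\textbf{Main obstacle.} The delicate point is the bookkeeping between ``weakly special subvariety'' and ``irreducible component of a bi-algebraic subvariety restricted to $\calA$'': the weakly special closure is defined as a smallest containing weakly special subvariety, but $\iota_a$ need not send the component through $\calY$ to the component through $\iota_a(\calY)$ cleanly, and $\iota_a(\langle\calY\rangle)$ could a priori be strictly larger than $\langle\iota_a(\calY)\rangle$. Resolving this requires checking carefully that passing to irreducible components — on either side — changes neither the dimension nor the relative dimension of the ambient bi-algebraic subvariety (bi-algebraic subvarieties of $\gothA_g$ are themselves universal families of abelian varieties over bi-algebraic subvarieties of $\AAA_g$ by Proposition~\ref{prop_bialgebraic_weaklyspecial}, so all their irreducible components share the same relative dimension), and that the generic finiteness of the period map $S\to\AAA_g$ underlying $\iota_a$ forces $\iota_a$ to preserve relative dimensions of all subvarieties, not merely dense ones. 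Once these two facts are in hand the equality is formal.
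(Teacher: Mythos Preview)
Your overall strategy---apply Lemma~\ref{lem_ws_under_base_change} in both directions to bound $\rdim\langle\calY\rangle$ above and below by $\rdim\langle\iota_a(\calY)\rangle$---is exactly the paper's two-line proof. However, you have built the argument on a false intermediate claim. You assert that the moduli sieve $\iota_a$ is ``generically finite onto its image'' because the period map $S\to\AAA_g$ is; but the period map need not be generically finite at all. If $\calA/S$ is isotrivial (for instance a constant family $A_0\times S$), the period map is constant and $\iota_a$ has fibers of dimension $\dim S$. Your ``second step'' then concludes $\dim\iota_a(\calY)=\dim\calY$ and $\dim\iota(\pi(\calY))=\dim\pi(\calY)$, both of which fail here, and your final step's appeal to a ``finite-to-one image'' fails as well.

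The point you are missing is that relative dimension is preserved for a reason that has nothing to do with finiteness of $\iota$: the moduli sieve is literally a base change (the square over $S\to\AAA_g$ is Cartesian), so $\iota_a$ restricts to an isomorphism on every fiber $\calA_s\to\gothA_{g,\iota(s)}$. Hence for a weakly special $\calZ=\calB+\sigma$ one has $\rdim\iota_a(\calZ)=\dim(\calB/S)=\rdim\calZ$, and likewise any irreducible component of $\iota_a^{-1}$ of a weakly special subvariety has the same relative dimension as what it sits over. The paper simply says: the pullback of $\langle\iota_a(\calY)\rangle$ is weakly special containing $\calY$, giving $\le$; and Lemma~\ref{lem_ws_under_base_change}(1) applied to $\langle\calY\rangle$ gives $\ge$. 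Your reduction to $\calA=\gothA_g|_S$ and the discussion of irreducible components of bi-algebraic subvarieties are unnecessary detours once you drop the generic-finiteness crutch.
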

\begin{proof}
  The pullback of $\langle\iota(\calY)\rangle$ is weakly special and contains $\calY$. So 
  \[
    \rdim \langle \calY\rangle \leq \rdim\langle \iota(\calY)\rangle.
    \]The reverse inequality follows from Lem.~\ref{lem_ws_under_base_change}.
\end{proof}

The following definition is introduced by Gao \cite{Gao_Betti} to study Betti maps.
\begin{defn}\label{def_degeneracy_over_complex}
  For $t\in\ZZ$, the \emph{$t^{\text{th}}$ degeneracy locus} of a locally closed subvariety $\calX\subseteq\calA$, denoted by $\calX^{\mathrm{deg}}(t)$, is defined to be the union of all positive dimensional subvarieties $\calY \subseteq \calX$ with $\delta_v(\calY)<t$.
\end{defn}

A subvariety $\calZ\subseteq\calX$ is said to be \emph{vertically optimal} in $\calX$ if, for any subvariety $\calY\subseteq\calX$ containing $\calZ$, one has $\delta_v(\calY)>\delta_v(\calZ)$. Note that the degeneracy locus $\calX^{\mathrm{deg}}(t)$ can be viewed as the union of positive dimensional \emph{vertically optimal} subvarieties with vertical defect $<t$.

As a possibly infinite union of subvarieties, the degeneracy locus is indeed algebraic by \cite[Thm.~1.8]{Gao_Betti}. Gao's proof works for subvarieties of $\gothA_g$. For reader's convenience, we give a proof below following a similar idea. First, we make a convenient definition.

\begin{defn}
    Let $\calY\subseteq\calA$ be a locally closed subvariety. A \emph{type} of $\calY$ is a triple $(\calA_1,\calB_1,N)$ that consists of abelian subschemes $\calB_1\subseteq\calA_1\subseteq\calA|_{\pi(\calY)}$ and $N\in \NN$ such that
    \begin{enumerate}
        \item The special closure of $\calY$ is $\calA_1+\sigma_1$ for a torsion multi-section $\sigma_1\subseteq \calA|_{\pi(\calY)}$.
        \item The weakly special closure $\langle \calY\rangle=\calB_1+\sigma_1'$ for a constant multi-section $\sigma_1'\subseteq \calA|_{\pi(\calY)}$.
        \item The integer $N$ kills the torsion $\sigma_1$, i.e., $N\sigma_1=0$.
    \end{enumerate}
    In this case, we shall say $\calY$ \emph{belongs to the type} $(\calA_1,\calB_1,N)$.
\end{defn}
The idea is to first generalize the finiteness result in Thm.~\ref{thm_GZP} to finiteness of types of vertically optimal subvarieties.

\begin{theorem}\label{thm_finiteness_vertically_optimal}
    Let $\calX\subseteq\calA$ be a locally closed subvariety. There exists a finite set $\Sigma(\calX)$ of types such that every vertically optimal subvariety $\calZ$ of $\calX$ belongs to a type in $\Sigma(\calX)$.
\end{theorem}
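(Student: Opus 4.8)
The plan is to deduce the statement from Gao's finiteness theorem for \emph{weakly} optimal subvarieties (Theorem~\ref{thm_GZP}), available on the universal abelian variety, by reducing $\calA/S$ to $\gothA_g$ and observing that vertical optimality forces weak optimality. Concretely, I would fix a moduli sieve $\iota_a\colon\calA\to\gothA_g$ and pass to the image $\iota_a(\calX)$, a finite union of locally closed subvarieties of $\gothA_g$; since $\iota_a$ is an isomorphism on each fibre it respects relative dimensions, and Lemmas~\ref{lem_ws_under_base_change}, \ref{lem_weakly_special_char} and~\ref{lem_relative_dim_of_ws_closure_under_moduli_sieve} let one transport (weakly) special closures back and forth between $\calA$ and $\gothA_g$ with $\rdim\langle\,\cdot\,\rangle$ preserved. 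Consequently, once we know that every vertically optimal subvariety of $\iota_a(\calX)$ belongs to one of finitely many types, the same follows for $\calX$, each type pulling back to finitely many (using finiteness of the generic fibres of $S\to\AAA_g$, the extra argument indicated below where they fail to be finite, and the bounded torsion). Thus one may assume $\calX\subseteq\gothA_g$. Here the key point is that for any subvariety $\calY\subseteq\gothA_g$ the bi-algebraic closure $\calY^{\mathrm{biZar}}$ is the fibrewise weakly special closure of $\calY$ over the bi-algebraic closure $\langle\pi_g(\calY)\rangle$ of its image in $\AAA_g$ (Proposition~\ref{prop_bialgebraic_weaklyspecial}), so that
\[
\delta_{ws}(\calY)=\delta_v(\calY)+\dim\langle\pi_g(\calY)\rangle .
\]
Both summands are non-decreasing in $\calY$ and $\delta_v$ is integer-valued and strictly increasing under $\calZ\subsetneq\calY\subseteq\calX$ when $\calZ$ is vertically optimal; hence every vertically optimal $\calZ$ is weakly optimal in $\calX$.

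With this in place, I would invoke Theorem~\ref{thm_GZP}: a vertically optimal $\calZ\subseteq\calX$, being weakly optimal, produces a triple $(\calA_1,\calB_1,N)$ from the \emph{finite} set of that theorem, with $[N]\calZ\subseteq\calA_1$ and $([N]\calZ)^{\mathrm{biZar}}$ an irreducible component of a fibre of $\calA_1\to\calA_1/\calB_1\to\gothA_{g_1-g_1'}$. By Lemma~\ref{lem_weakly_special_char}(1) this fibre component is weakly special, and comparing relative dimensions (as above) shows it has the same abelian part as the weakly special closure $\langle[N]\calZ\rangle$; pinning this part down as $\calB_1$ restricted over $\pi(\calZ)$, and untwisting by $[N]$, then exhibits the special closure of $\calZ$ as $\calA_1+\sigma_1$ for a torsion multisection killed by $N$, and its weakly special closure as $\calB_1+\sigma_1'$ for a constant multisection. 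Thus $\calZ$ belongs to a type assembled from a triple in the finite set and an integer bounded by $N$; only finitely many such types occur, which by the first step yields the required finite $\Sigma(\calX)$.

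The step I expect to be the main obstacle is the reduction along $\iota_a$ when the period map $S\to\AAA_g$ is not generically finite: there $\iota_a$ contracts positive-dimensional loci, over which $\calA$ is isotrivial, and one must verify that these contracted directions are governed by the $F/K$-trace — so that a vertically optimal subvariety of $\calX$ either maps to a vertically optimal (hence weakly optimal) subvariety of $\iota_a(\calX)$ with a fibre of bounded ``type'', or is already weakly special of a shape forced by the trace decomposition — while simultaneously keeping the torsion integer $N$ uniformly bounded. Assembling this bookkeeping, rather than any single new hard input, is where the real work lies; the rest is formal once Theorems~\ref{thm_weakas} and~\ref{thm_GZP} are granted.
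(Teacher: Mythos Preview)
Your overall strategy matches the paper's: reduce to $\gothA_g$ via a moduli sieve, show that vertically optimal implies weakly optimal there, and invoke Theorem~\ref{thm_GZP}. Your argument on $\gothA_g$ is correct and essentially identical to the paper's; your identity $\delta_{ws}(\calY)=\delta_v(\calY)+\dim\pi_g(\calY)^{\mathrm{biZar}}$ is exactly what underlies the paper's two displayed inequalities, and the monotonicity of the base term gives the implication.

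Where you diverge is in the reduction step, and your suggested mechanism is a red herring. The contracted directions of $\iota_a$ are \emph{not} governed by the $F/K$-trace: over a positive-dimensional fibre of $S\to\AAA_g$ the family is isotrivial but need have no nonzero trace (e.g.\ a twist), so trying to peel off a constant factor will not work. What is true, and what the paper exploits, is purely a dimension statement: by Lemma~\ref{lem_relative_dim_of_ws_closure_under_moduli_sieve} one has $\rdim\langle\calY\rangle=\rdim\langle\iota_a(\calY)\rangle$, so if $\iota_a|_\calX$ has \emph{constant} fibre dimension $r$ and $\calY$ is a component of a $\iota_a|_\calX$-preimage, then $\delta_v(\calY)=\delta_v(\iota_a(\calY))-r$. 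The paper then uses Chevalley's upper semi-continuity to stratify $\calX$ into locally closed pieces on which the fibre dimension of $\iota_a|_\calX$ is constant, handles each piece by showing vertically optimal subvarieties there are exactly components of $\iota_a|_\calX$-preimages of vertically optimal subvarieties of the image, and glues the strata by a straightforward Noetherian induction (a vertically optimal $\calZ\subseteq\calX$ is either contained in the closed stratum or meets the open one, where it is vertically optimal). No trace decomposition and no separate bookkeeping of torsion is needed; the types simply pull back along $\iota_a$ componentwise.
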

\begin{proof}
$\boxed{\text{Case }\calA=\gothA_g}$ Claim: vertically optimal implies weakly optimal. Assuming the claim, the result follows directly from Thm.~\ref{thm_GZP}. Indeed, if $\calZ\subseteq\calX$ is vertically optimal, and let $\calY\subseteq\calX$ contain $\calZ$, by definition and Prop.~\ref{prop_bialgebraic_weaklyspecial}, we have
    \[
    \rdim\calY^{\mathrm{biZar}}-\dim \calY>\rdim\calZ^{\mathrm{biZar}}-\dim \calZ.
    \]
    Since $\pi(\calZ^{\mathrm{biZar}})=\pi(\calZ)^{\mathrm{biZar}}$ and similarly for $\calY'$, we derive from $\pi(\calY)\supseteq\pi(\calZ)$ that 
    \[
    \dim \pi(\calY^{\mathrm{biZar}})\geq\dim \pi(\calZ^{\mathrm{biZar}}).
    \]
    Combining the two inequalities, we see that $\delta_{\mathrm{ws}}(\calY)>\delta_{\mathrm{ws}}(\calZ)$.

 $\boxed{\text{General Case}}$
    Claim: if $\calX=\calU\sqcup\calX_1$ with $\calU$ Zariski open, and the result holds for $\calU,\calX_1$, then we can take $\Sigma(\calX):=\Sigma(\calU)\cup\Sigma(\calX_1)$. Indeed, if $\calZ$ is a vertically optimal subvariety of $\calX$, then $\calZ$ must be Zariski closed. If $\calZ\subseteq\calX_1$, then $\calZ$ is vertically optimal in $\calX_1$ by definition. Otherwise, $\calZ\cap \calU$ is vertically optimal in $\calU$. So the union $\Sigma(\calU)\cup\Sigma(\calX_1)$ takes into account all possible vertically optimal subvarieties of $\calX$.

    Now take a moduli sieve $\iota:\calA\rightarrow \gothA_{g}$. By Chevalley's upper semi-continuity theorem~\cite[Thm.~13.1.3]{EGA_IV3}, there exists a nonempty open subset of $\calU_0\subseteq\iota(\calX)$ such that for any scheme-theoretic point $y\in \calU_0$, the fiber of $\iota|_{\calX}$ over $y$ has pure dimension $\dim \calX-\dim\iota(\calX)$. By the claim, it suffices to work on $\calU:=\iota|_{\calX}^{-1}(\calU_0)$ using Noetherian induction. Without loss of generality, we assume $\iota|_{\calX}$ has pure dimension $r$ over any point of $\iota(\calX)$. In particular, if $\calY'$ is a subvariety of $\iota(\calX)$ and $\calY$ is an irreducible component of $\iota|_{\calX}^{-1}(\calY')$, then using Lem.~\ref{lem_relative_dim_of_ws_closure_under_moduli_sieve},
    \[
    \delta_v(\calY)=\rdim\langle \calY\rangle -\dim\calY =\rdim\langle \calY'\rangle-\dim\calY'-r=\delta_v(\calY')-r.
    \]

    Next, we observe that if $\calZ$ is vertically optimal in $\calX$, then $\calZ$ must be a component of $\iota|_{\calX}^{-1}(\iota(\calZ))$. Indeed, let $\calY$ be a component of $\iota|_{\calX}^{-1}(\iota(\calZ))$ containing $\calZ$. By Lem.~\ref{lem_relative_dim_of_ws_closure_under_moduli_sieve}, 
    \[
    \rdim\langle \calZ \rangle =\rdim\langle \iota(\calZ) \rangle =\langle \calY\rangle.
    \]Since $\delta_v(\calY)\geq \delta_v(\calZ)$, we must have $\dim \calY=\dim\calZ$ and $\calY=\calZ$.

    As a consequence, the vertically optimal subvarieties of $\calX$ corresponds to those of $\iota(\calX)$ in the following way:
     \begin{enumerate}
         \item If $\calZ$ is vertically optimal in $\calX$, then $\iota(\calZ)$ is vertically optimal in $\iota(\calX)$.
         \item If $\calZ'$ is vertically optimal in $\iota(\calX)$, then any irreducible component of $\iota|_{\calX}^{-1}(\calZ')$ is vertically optimal in $\iota(\calX)$.
     \end{enumerate}

Finally we construct $\Sigma(\calX)$ by pulling back the triples in the set $\Sigma(\iota(\calX))$ which is defined in the case $\calA=\gothA_g$. More precisely, let $\Sigma(\calX)$ be the set of $(\calA_1,\calB_1,N)$ with the property that $(\iota(\calA_1),\iota(\calB_1),N)\in \Sigma(\iota(\calX))$ and $\calA_1$ is a component of $\iota^{-1}(\iota(\calA_1))$. Then it is immediate to check that $\Sigma(\calX)$ has the required property. 
\end{proof}

With the above result, we can conclude the proof as in \cite[\S7.2]{Gao_Betti}.

\begin{theorem}\label{thm_degeneracy_Zariski_closed}
 For $t\in \ZZ$, the degeneracy locus $\calX^{\mathrm{deg}}(t)$ for a locally closed subvariety $\calX\subseteq\calA$ is Zariski closed in $\calX$. 

\end{theorem}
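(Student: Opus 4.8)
I would deduce this from the finiteness of types, Theorem~\ref{thm_finiteness_vertically_optimal}, along the lines of \cite[\S7.2]{Gao_Betti}. By the remark following Definition~\ref{def_degeneracy_over_complex}, $\calX^{\mathrm{deg}}(t)$ is the union of all positive-dimensional vertically optimal subvarieties $\calZ\subseteq\calX$ with $\delta_v(\calZ)<t$; by Theorem~\ref{thm_finiteness_vertically_optimal}, each such $\calZ$ belongs to one of the finitely many types in $\Sigma(\calX)$. So it is enough to show that, for each fixed type $\theta=(\calA_1,\calB_1,N)\in\Sigma(\calX)$, the locus $Z_\theta$ swept out by those positive-dimensional vertically optimal $\calZ\subseteq\calX$ with $\delta_v(\calZ)<t$ that belong to $\theta$ is Zariski closed; then $\calX^{\mathrm{deg}}(t)=\bigcup_{\theta}Z_\theta$ is closed.

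Fix $\theta$ and set $g_1=\rdim\calA_1$, $g_1'=\rdim\calB_1$. Since $[N]\colon\calA\to\calA$ is finite (indeed étale, as we are in characteristic $0$) and $\calA_1$ is closed, the subvariety $\calX'_\theta:=[N]^{-1}(\calA_1)\cap\calX$ is Zariski closed in $\calX$, and there is a morphism of varieties
\[
\Psi_\theta\colon\calX'_\theta\ \xrightarrow{\ [N]\ }\ \calA_1\ \longrightarrow\ \calA_1/\calB_1\ \longrightarrow\ \gothA_{g_1-g_1'},
\]
the last two maps being the quotient and a moduli sieve. The heart of the argument is two observations. First, any irreducible component $\calW$ of any fibre of $\Psi_\theta$ satisfies $\delta_v(\calW)\le g_1'-\dim\calW$: indeed $[N]\calW$ lies in an irreducible component of a fibre of $\calA_1\to\calA_1/\calB_1\to\gothA_{g_1-g_1'}$, which is weakly special of relative dimension $g_1'$ by Lemma~\ref{lem_weakly_special_char}(1), so $\rdim\langle[N]\calW\rangle\le g_1'$, and since the isogeny $[N]$ preserves $\delta_v$ we get $\delta_v(\calW)=\delta_v([N]\calW)=\rdim\langle[N]\calW\rangle-\dim\calW\le g_1'-\dim\calW$. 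Second, if $\calZ\subseteq\calX$ is a positive-dimensional vertically optimal subvariety of type $\theta$ with $\delta_v(\calZ)<t$, then --- using the definition of type, the first observation, and vertical optimality --- one finds that $\calZ\subseteq\calX'_\theta$, that $\Psi_\theta(\calZ)$ is a single point $y$, that $\calZ$ is an \emph{entire} irreducible component of $\Psi_\theta^{-1}(y)$, and that $\dim\calZ=g_1'-\delta_v(\calZ)\ge g_1'-t+1$ (the relation $\dim\calZ=g_1'-\delta_v(\calZ)$ comes from $\rdim\langle\calZ\rangle=g_1'$, which is part of the definition of type).

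With these in hand, set
\[
E^{(\theta)}:=\bigl\{\,x\in\calX'_\theta\ :\ \dim_x\Psi_\theta^{-1}(\Psi_\theta(x))\ \ge\ \max(1,\ g_1'-t+1)\,\bigr\}.
\]
By Chevalley's upper semi-continuity of fibre dimension \cite[Thm.~13.1.3]{EGA_IV3}, $E^{(\theta)}$ is Zariski closed in $\calX'_\theta$, hence in $\calX$. The second observation gives $Z_\theta\subseteq E^{(\theta)}$; the first gives $E^{(\theta)}\subseteq\calX^{\mathrm{deg}}(t)$, because every $x\in E^{(\theta)}$ lies on a positive-dimensional irreducible component $\calW$ of its $\Psi_\theta$-fibre with $\delta_v(\calW)\le g_1'-\dim\calW\le t-1<t$, so $\calW\subseteq\calX^{\mathrm{deg}}(t)$. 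Hence
\[
\calX^{\mathrm{deg}}(t)=\bigcup_{\theta\in\Sigma(\calX)}Z_\theta\ \subseteq\ \bigcup_{\theta\in\Sigma(\calX)}E^{(\theta)}\ \subseteq\ \calX^{\mathrm{deg}}(t),
\]
so $\calX^{\mathrm{deg}}(t)=\bigcup_{\theta\in\Sigma(\calX)}E^{(\theta)}$ is a finite union of Zariski closed subsets of $\calX$. The substantive input is Theorem~\ref{thm_finiteness_vertically_optimal}; the main remaining difficulty is the bookkeeping in the second observation --- checking that a vertically optimal $\calZ$ of type $\theta$ occupies a full component of the corresponding fibre of $\Psi_\theta$ rather than sitting properly inside a larger one --- which is precisely where vertical optimality and the invariance of $\delta_v$ under the isogeny $[N]$ are used.
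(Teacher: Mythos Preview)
Your proof is correct and follows essentially the same route as the paper's own argument. Your set $E^{(\theta)}$ coincides with the paper's $Z(\calA_1,\calB_1,N)$ (the condition $\ge\max(1,g_1'-t+1)$ is the same as the paper's $>\max(0,g_1'-t)$), and the two inclusions $Z_\theta\subseteq E^{(\theta)}\subseteq\calX^{\mathrm{deg}}(t)$ are exactly what the paper establishes, though you spell out more explicitly the use of vertical optimality to show that $\calZ$ fills a full fibre component.
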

\begin{proof}

Take a type $(\calA_1,\calB_1,N)\in \Sigma(\calX)$ as defined in Thm.~\ref{thm_finiteness_vertically_optimal}. Consider the composition
\[
\varphi:[N]^{-1}\calA_1\xrightarrow{[N]} \calA_1\rightarrow \calA_1/\calB_1 \rightarrow \gothA_{g_1-g'_1}
\]where $[N]^{-1}\calA_1$ is the preimage of $\calA_1$ under $[N]:\calA\rightarrow \calA$, $g_1,g_1'$ are the respective relative dimensions of $\calA_1,\calB_1$, and the last morphism is a moduli sieve. Irreducible components of fibers of $\varphi$ are weakly special of relative dimension $g_1'$ by Lem.~\ref{lem_weakly_special_char}. Denote the restriction of $\varphi$ onto $\calX\cap[N]^{-1}\calA_1$ as $\varphi|_{\calX}$. Let 
\[
Z=Z(\calA_1,\calB_1,N):=\{x\in \calX\cap[N]^{-1}\calA_1 \mid \dim_x \varphi|_{\calX}^{-1}(\varphi(x))>\max \{0, g_1'-t\}\}
\] which is Zariski closed by Chevalley's upper semi-continuity theorem \cite[Thm.~13.1.3]{EGA_IV3}. Then $Z\subseteq \calX^{\mathrm{deg}}(t)$ since the vertical defect of any (positive dimensional) irreducible component of fiber $\varphi|_{\calX}^{-1}(\varphi(x))$ is less than
\[
g_1'-\max\{0,g_1'-t\}\leq \min\{g_1',t\}\leq t.
\]
Now the union of such $Z$'s for the finitely many triples $(\calA_1,\calB_1,N)\in \Sigma(\calX)$ is also Zariski closed, and is indeed $\calX^{\mathrm{deg}}(t)$ since all vertically optimal subvarieties have been taken into account.
\end{proof}

Along the path, we obtain the following natural generalization of Gao's criterion \cite[Thm.~8.1]{Gao_Betti} for the case $t\leq 0$ and \cite[Thm.~2.4]{Gao_ICCM} for the case $\calA=\gothA_g$.

\begin{theorem}\label{thm_criterion_for_Xdeg_equal_X}
    Suppose $\calX$ is not contained in a strict group subscheme of $\calA$. The following are equivalent:
    \begin{enumerate}
        \item $\calX=\calX^{\deg}(t)$.
        \item There exists an abelian subscheme $\calB$ of relative dimension $g'$ such that the composition
    \[
    \varphi: \calA\rightarrow \calA/\calB \rightarrow \gothA_{g-g'}
    \]where $\calA\rightarrow\calA/\calB$ is the quotient map and $\calA/\calB\rightarrow \gothA_{g-g'}$ is a moduli sieve, satisfies 
    \[\dim\varphi(\calX)<\dim \calX -\max \{0,g'-t\}.\]
    \end{enumerate}
\end{theorem}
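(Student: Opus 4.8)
The plan is to relate the global condition $\calX = \calX^{\mathrm{deg}}(t)$ to the existence of a single ``worst'' abelian subscheme $\calB$, by running the same machinery that produced Theorem~\ref{thm_degeneracy_Zariski_closed} and then extracting the extremal case. For the direction $(2)\Rightarrow(1)$, assume such a $\calB$ exists, and let $\varphi\colon \calA\to\calA/\calB\to\gothA_{g-g'}$ be the composition. Consider a general point $x\in\calX$ and the irreducible component $\calY$ of $\varphi|_\calX^{-1}(\varphi(x))$ through $x$. Since the generic fiber dimension of $\varphi|_\calX$ is $\dim\calX - \dim\varphi(\calX) > \max\{0,g'-t\}$, and the fibers of $\varphi$ are weakly special of relative dimension $g'$ by Lemma~\ref{lem_weakly_special_char}, such a $\calY$ is positive dimensional and satisfies
\[
\delta_v(\calY) = \rdim\langle\calY\rangle - \dim\calY \leq g' - \big(\dim\calX - \dim\varphi(\calX)\big) < g' - \max\{0,g'-t\} \leq t.
\]
(Here $\rdim\langle\calY\rangle \le g'$ since $\langle\calY\rangle$ is contained in a fiber of $\varphi$ up to the constant-multisection business, using $\calX$ not contained in a strict group subscheme so that the relevant weakly special closure really sits inside a translate of $\calB$ after a quotient.) Hence a Zariski dense subset of $\calX$ lies in $\calX^{\mathrm{deg}}(t)$, and since $\calX^{\mathrm{deg}}(t)$ is Zariski closed by Theorem~\ref{thm_degeneracy_Zariski_closed}, we conclude $\calX=\calX^{\mathrm{deg}}(t)$.

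For the direction $(1)\Rightarrow(2)$, suppose $\calX = \calX^{\mathrm{deg}}(t)$. Running the proof of Theorem~\ref{thm_degeneracy_Zariski_closed}, $\calX^{\mathrm{deg}}(t) = \bigcup Z(\calA_1,\calB_1,N)$ over the finite set $\Sigma(\calX)$ of types from Theorem~\ref{thm_finiteness_vertically_optimal}. Since $\calX$ is irreducible and equals this finite union, $\calX = Z(\calA_1,\calB_1,N)$ for some single type $(\calA_1,\calB_1,N)$. Unwinding the definition of $Z$, this says that for \emph{every} $x\in\calX\cap[N]^{-1}\calA_1$ the fiber of the map $\varphi'\colon [N]^{-1}\calA_1 \xrightarrow{[N]}\calA_1\to\calA_1/\calB_1\to\gothA_{g_1-g_1'}$ restricted to $\calX$ has dimension $>\max\{0,g_1'-t\}$ at $x$; in particular $\calX\subseteq[N]^{-1}\calA_1$, and $\dim\calX - \dim\varphi'(\calX) > \max\{0,g_1'-t\}$. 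Now I would argue that, because $\calX$ is not contained in a strict group subscheme, we may replace $\calA_1$ by all of $\calA$: the inclusion $[N]^{-1}\calA_1\supseteq\calX$ being an equality forces (modulo the group-subscheme hypothesis) $\calA_1=\calA$, and then $[N]^{-1}\calA_1=\calA$ and $\varphi'$ factors as $\calA\xrightarrow{[N]}\calA\to\calA/\calB_1\to\gothA_{g-g_1'}$; since $[N]$ is an isogeny it does not change fiber dimensions of the restriction to $\calX$, so with $\calB := \calB_1$ and $g' := g_1'$ we obtain the map $\varphi\colon\calA\to\calA/\calB\to\gothA_{g-g'}$ with $\dim\varphi(\calX) = \dim\varphi'(\calX) < \dim\calX - \max\{0,g'-t\}$, which is exactly $(2)$.

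The main obstacle I anticipate is the bookkeeping around the group-subscheme hypothesis in the $(1)\Rightarrow(2)$ step: one must be careful that the type $(\calA_1,\calB_1,N)$ produced by Theorem~\ref{thm_finiteness_vertically_optimal} genuinely has $\calA_1=\calA$ and $N$ harmless, rather than $\calX$ hiding inside a proper $[N]^{-1}\calA_1$. The hypothesis that $\calX$ is not contained in a strict group subscheme is precisely what rules this out — $[N]^{-1}\calA_1$ is a group subscheme (possibly non-flat, but still Zariski closed), so $\calX\subseteq[N]^{-1}\calA_1$ plus irreducibility forces $[N]^{-1}\calA_1 = \calA$, hence $\calA_1=\calA$. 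A secondary subtlety is the identification $\rdim\langle\calY\rangle\le g'$ used in $(2)\Rightarrow(1)$: here one invokes that the fibers of $\varphi$ are weakly special of relative dimension exactly $g'$ (Lemma~\ref{lem_weakly_special_char}) so that any $\calY$ contained in such a fiber has $\langle\calY\rangle$ of relative dimension at most $g'$. Once these two points are handled cleanly the rest is a matter of chasing the definitions of $\delta_v$, $Z(\calA_1,\calB_1,N)$, and $\calX^{\mathrm{deg}}(t)$, together with the already-established Zariski-closedness.
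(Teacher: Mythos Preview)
Your proposal is correct and follows essentially the same route as the paper: for $(1)\Rightarrow(2)$ you both pick a single type $(\calA_1,\calB_1,N)$ with $Z(\calA_1,\calB_1,N)=\calX$, use the group-subscheme hypothesis to force $\calA_1=\calA$, and read off the fiber-dimension inequality; for $(2)\Rightarrow(1)$ you both use Lemma~\ref{lem_weakly_special_char} to bound $\rdim\langle\calY\rangle\le g'$. The only minor difference is that in $(2)\Rightarrow(1)$ the paper observes (via upper semi-continuity) that \emph{every} fiber of $\varphi|_\calX$ has dimension $>\max\{0,g'-t\}$, so $\calX\subseteq\calX^{\mathrm{deg}}(t)$ directly, whereas you argue only for general $x$ and then invoke Theorem~\ref{thm_degeneracy_Zariski_closed}; also, the group-subscheme hypothesis is not needed in that direction, so your parenthetical invoking it there is superfluous.
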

\begin{proof}
 For sufficiency, note that irreducible components of the fibers of $\varphi$ are weakly special of relative dimension $g'$ by Lem.~\ref{lem_weakly_special_char}. By assumption, any irreducible component $\calY$ of a fiber of $\varphi|_{\calX}$ has dimension $\dim \calY>\max\{0,g'-t\}$. Thus, $\calY$ is positive dimensional and
    \[
    \delta_v(\calY)=\dim\langle \calY\rangle -\dim \calY<g'-\max\{0,g'-t\}=\min\{g',t\}\leq t,
    \]which means $\calY\subseteq \calX^{\mathrm{deg}}(t)$.

    To prove necessity, by the proof of Thm.~\ref{thm_degeneracy_Zariski_closed}, we can express $\calX^{\deg}(t)$ as a finite union of Zariski closed sets of the form $Z(\calA_1,\calB_1,N)$. If $\calX^{\deg}(t)=\calX$, then there is one $(\calA_1,\calB,N)\in \Sigma(\calX)$ with $Z(\calA_1,\calB,N)=\calX$. In particular, we have $\calX\subseteq[N]^{-1}\calA_1$, which implies $\calA_1=\calA$ by assumption. Moreover,  by definition of $Z(\calA,\calB,N)$, the fiber dimension of $\varphi|_\calX$ is greater than $\max \{0,g'-t\}$ at any point $x\in \calX$. Hence we have 
    \[\dim \varphi(\calX)<\dim \calX -\max \{0,g'-t\}. \]
    Thus we are done.
\end{proof}

We remark that the first assumption above ensures that $\calX$ is necessarily dominant over $S$, since the union of $\calA|_{\pi(\calX)}$ with the zero section of $\calA$ is a group subscheme of $\calA$. On the other hand, such assumption is minor as in Pink's Conjecture~\ref{conj_pink}. In defining the degeneracy locus, the choice of the ambient abelian scheme is immaterial, and we may replace  $\calA$ with the smallest group subscheme of $\calA|_{\pi(\calX)}$ containing $\calX$. This will yield an abelian subscheme $\calA'$ of $\calA|_{\pi(\calX)}$ if we further replace $\calX$ with $[N]\calX$ for some positive integer $N$. Then Thm.~\ref{thm_criterion_for_Xdeg_equal_X} applies in the setting $[N]\calX\subseteq\calA'$.






\subsection{The criterion}

Let $\widetilde\calX\subseteq \Lie(\calA/S)$ be a connected component of $\exp^{-1}(\calX)$. We have the following diagram
\[
\begin{tikzcd}
     &\widetilde\calX\subseteq \Lie(\calA/S)  \arrow[r,"df"] \arrow[d,"\exp"] & \Lie(\calB/S)\arrow[d,"\exp"]\\
     & \calX\subseteq \calA \arrow[r,dashrightarrow,"f"]  & \calB
\end{tikzcd}
\]Let $\beta:\calB_\Delta\rightarrow \TT^{2g'}$ be a Betti map on $\calB$ associated to the simply connected open neighborhood  $\Delta\subseteq S$. Consider the composition
\[
\psi_\Delta: \Lie(\calA_\Delta/\Delta)
\xrightarrow{df}\Lie(\calB_\Delta/\Delta)\xrightarrow{\exp}\calB_\Delta\xrightarrow{\beta_\Delta}\TT^{2g'}.
\]Note that $\psi_\Delta$ has complex analytic fibers.

\begin{lemma}\label{lemma_volume_vs_immersion}
   The volume form $\omega^{\wedge d}|_\calX$ is nonzero at $x\in \calX^{\mathrm{sm}}\cap \calA_\Delta$ if and only if 
   \[
   \psi_\Delta|_{\widetilde\calX}:\widetilde\calX\longrightarrow \TT^{2g'}
    \]
   is an immersion at $\tilde x$, where $\tilde x\in \widetilde\calX$ is a point above $x$.
\end{lemma}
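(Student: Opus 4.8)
The plan is to push everything through the exponential map $\exp_\calA:\Lie(\calA/S)\to\calA$, which is a local biholomorphism, so that both conditions become statements about the same point $\tilde x$ and the question reduces to comparing two subspaces of $T_{\tilde x}\widetilde\calX$: the kernel of $\exp_\calA^*\omega$ and the kernel of $d\psi_\Delta$.

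First I would translate the volume-form condition into a kernel condition. Since $\calL_\calB$ is ample, $f^*\calL_\calB$ is nef by Proposition~\ref{prop_pullback_extends_to_real}, so $\omega=\omega(f^*\calL_\calB)$ is a semipositive real $(1,1)$-form by Proposition~\ref{prop_bettiform_property}(2). By the discussion in \S\ref{subsec_complex_geometry}, at a smooth point $x\in\calX$ the top wedge $\omega^{\wedge d}|_\calX$ is nonzero iff $\omega_x$ is positive on $T_x\calX$ iff $\ker\omega_x\cap T_x\calX=\{0\}$. Since $x\in\calX^{\mathrm{sm}}$ and $\exp_\calA$ is a local biholomorphism of the ambient manifolds, $\exp_\calA^{-1}(\calX)$ is smooth near $\tilde x$ and agrees there with its connected component $\widetilde\calX$; hence $\exp_\calA$ restricts to a local biholomorphism $\widetilde\calX\to\calX$ near $\tilde x$ carrying $T_{\tilde x}\widetilde\calX$ onto $T_x\calX$ and $\ker(\exp_\calA^*\omega)_{\tilde x}$ onto $\ker\omega_x$. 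On the other side, $\psi_\Delta|_{\widetilde\calX}$ is an immersion at $\tilde x$ iff $\ker(d\psi_\Delta)_{\tilde x}\cap T_{\tilde x}\widetilde\calX=\{0\}$. So the whole lemma follows once we prove the equality of subspaces $\ker(\exp_\calA^*\omega)_{\tilde x}=\ker(d\psi_\Delta)_{\tilde x}$ inside $T_{\tilde x}\Lie(\calA/S)$.

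To identify each side, set $\eta:=\exp_\calB^*\omega(\calL_\calB)$. Using the identity $\exp_\calA^*\omega=\exp_\calA^*f^*\omega(\calL_\calB)=(df)^*\exp_\calB^*\omega(\calL_\calB)$ recorded in \S\ref{subsec_Bettiform}, the left-hand kernel is $\ker\bigl((df)^*\eta\bigr)_{\tilde x}$. Because $\calL_\calB$ is ample, the invariant form on any fibre $\calB_s$ representing $c_1(\calL_{\calB,s})$ is positive, so writing $\omega(\calL_\calB)=\beta^*\omega_{s_0}$ locally via a Betti map $\beta$ gives $\ker\omega(\calL_\calB)_y=T_y\calF_{\text{Betti}}(\calB)$ for all $y$, hence $\ker\eta=(d\exp_\calB)^{-1}(T\calF_{\text{Betti}}(\calB))$ as $\exp_\calB$ is a local biholomorphism. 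The same description, applied to the Betti map $\beta_\Delta$ whose fibres are the Betti leaves, gives $\ker d\beta_\Delta=T\calF_{\text{Betti}}(\calB)$, and therefore $\ker d(\beta_\Delta\circ\exp_\calB)=(d\exp_\calB)^{-1}(T\calF_{\text{Betti}}(\calB))=\ker\eta$; consequently $\ker(d\psi_\Delta)_{\tilde x}=\{v:\ d(df)(v)\in\ker\eta\}$.

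It remains to match these. One inclusion is automatic: if $d(df)(v)\in\ker\eta$ then in particular $\eta(d(df)v,d(df)v')=0$ for all $v'$, so $\ker(d\psi_\Delta)_{\tilde x}\subseteq\ker\bigl((df)^*\eta\bigr)_{\tilde x}$. The reverse inclusion is the place where surjectivity of $f$ is essential, and I expect it to be the one genuinely delicate point. Since $f$ is a surjective $\RR$-homomorphism, $df:\Lie(\calA/S)\to\Lie(\calB/S)$ is surjective as a bundle map; as it covers $\mathrm{id}_S$, a short exact-sequence argument on total tangent spaces shows that the differential $d(df)$ is surjective at every point. Hence $\eta(d(df)v,\cdot)=0$ on the image of $d(df)$ already forces $\eta(d(df)v,\cdot)=0$ everywhere, i.e. $d(df)v\in\ker\eta$, i.e. $v\in\ker(d\psi_\Delta)_{\tilde x}$. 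This yields $\ker(\exp_\calA^*\omega)_{\tilde x}=\ker(d\psi_\Delta)_{\tilde x}$, and combining with the two reductions above completes the proof; everything outside this surjectivity step is routine bookkeeping with local biholomorphisms and the local description of Betti forms.
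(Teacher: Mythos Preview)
Your argument is correct and follows essentially the same approach as the paper. The paper's proof asserts in one line that ``$\ker(d\psi_\Delta)_{\tilde x}$ is exactly the lift of $\ker\omega_x$'' and then immediately intersects with $T_{\tilde x}\widetilde\calX$; you supply the justification for that assertion by unwinding through $\eta=\exp_\calB^*\omega(\calL_\calB)$ and identifying both kernels with $(d(df))^{-1}(\ker\eta)$, noting correctly that the nontrivial inclusion uses surjectivity of $d(df)$ (which the paper's compressed proof leaves implicit).
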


Recall that a $C^\infty$-map $\varphi:\calM\rightarrow \calN$ between two differentiable manifolds is an immersion at $p$ if the differential $d\varphi_p:T_p\calM\rightarrow T_{\varphi(p)}\calN$ is injective.

 \begin{proof}
 Note that $\ker (d\psi_\Delta)_{\tilde x}$ is exactly the lift of $\ker\omega_x$. Thus
     \[
    \ker(d\psi_\Delta|_{\widetilde\calX})_{\tilde x}=\ker (d\psi_\Delta)_{\tilde x}\cap T_{\tilde x}\widetilde\calX
     \]is isomorphic to $\ker\omega_x\cap T_x\calX$. The equivalence follows by the discussion in \S\ref{subsec_complex_geometry}, since we know $\omega^{\wedge d}_x|_\calX\neq 0$ precisely when $\ker\omega_x\cap T_x\calX=0.$
\end{proof}

Now we can give a necessary condition for the vanishing of the volume form.
\begin{theorem}\label{thm_volume_form_vanishing}
   Let $\calX\subseteq \calA$ be a subvariety of dimension $d$, and $\omega=\omega(f^*\calL_\calB)$ as above. If $\omega^{\wedge d}|_\calX\equiv 0$, then $\calX^{\mathrm{deg}}(g-g')=\calX$.
\end{theorem}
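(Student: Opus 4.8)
The plan is to relate the vanishing of the volume form to the failure of $\psi_\Delta|_{\widetilde\calX}$ to be an immersion (Lemma~\ref{lemma_volume_vs_immersion}), then to feed this into the Ax--Schanuel theorem to extract a dimension drop for a moduli sieve, and finally to invoke the criterion Theorem~\ref{thm_criterion_for_Xdeg_equal_X}. The key point is that $\psi_\Delta$ has complex analytic fibers, so the locus where it fails to be an immersion along $\widetilde\calX$ carries positive-dimensional analytic leaves, and these leaves are exactly the kind of object whose Zariski closures Ax--Schanuel constrains.

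\textbf{Key steps.} First I would work at a smooth point $x\in\calX^{\mathrm{sm}}$ lying over a simply connected $\Delta$, with $\tilde x\in\widetilde\calX$ a lift. Since $\omega^{\wedge d}|_\calX\equiv 0$, Lemma~\ref{lemma_volume_vs_immersion} says $\psi_\Delta|_{\widetilde\calX}$ is nowhere an immersion on $\widetilde\calX$; equivalently, $\ker\omega_x\cap T_x\calX\neq 0$ everywhere, and by Proposition~\ref{prop_bettiform_property}(4) together with Proposition~\ref{prop_integrable_via_analytic_subgroup} the distribution $\ker f^*\omega(\calL_\calB)$ is an integrable $C^\infty$ complex subbundle whose integral manifolds through $x$ are the translates $\calH_x$ of the analytic subgroup $\calH=\exp(\ker df)$ by the Betti leaves. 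So the nonvanishing of $\ker\omega_x\cap T_x\calX$ means $\calX$ is, near $x$, tangent to these leaves; intersecting $\calX$ with such a leaf $\calH_x$ produces a positive-dimensional analytic set $\calW\subseteq\calX$. Next, I would lift to the uniformization $u_{g,a}:\CC^g\times\gothH_g\to\gothA_g$ via a chosen moduli sieve $\iota_a:\calA\to\gothA_g$: the leaves $\calH_x$ are images of fibers of the linear-type map $\psi_\Delta$, which lift to algebraic (indeed linear) subsets of $\CC^g\times\gothH_g$, so the preimage $\widetilde\calW$ of a suitable component of $\iota_a(\calW)$ sits inside an algebraic subset of controlled dimension. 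Applying Corollary~\ref{cor_AxSchanuelVertical} to $\widetilde\calW$ then forces $\rdim u_{g,a}(\widetilde\calW)^{\mathrm{biZar}}$ to be small relative to $\dim u_{g,a}(\widetilde\calW)^{\mathrm{Zar}}$, i.e.\ the bi-algebraic (= weakly special, by Proposition~\ref{prop_bialgebraic_weaklyspecial}) closure of the image of $\calW$ has small relative dimension. Translating back through Lemma~\ref{lem_relative_dim_of_ws_closure_under_moduli_sieve}, this says $\delta_v$ of generic positive-dimensional leaves on $\calX$ is bounded by $g-g'$, so every such leaf lies in $\calX^{\mathrm{deg}}(g-g')$; since these leaves cover a Zariski dense subset of $\calX$ (the vanishing holds everywhere), and the degeneracy locus is Zariski closed by Theorem~\ref{thm_degeneracy_Zariski_closed}, we get $\calX^{\mathrm{deg}}(g-g')=\calX$.

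\textbf{Packaging via the criterion.} Rather than chasing leaves directly, the cleaner route is: the everywhere-vanishing of $\omega^{\wedge d}|_\calX$ together with Lemma~\ref{lemma_volume_vs_immersion} shows that the generic fiber of $\psi_\Delta|_{\widetilde\calX}$ is positive-dimensional, hence $\dim\psi_\Delta(\widetilde\calX)<\dim\widetilde\calX=d$. Now $\psi_\Delta$ factors through $\exp\circ df:\Lie(\calA/S)\dashrightarrow\calB$ followed by the Betti map, so its fibers are unions of translates of $\ker df$; composing $f$ with a moduli sieve of $\calB$, one shows the generic fiber of the composition $\calA\to\calB\to\gothA_{g'}\to$ (or more precisely of the relevant $\varphi:\calA\to\calA/\calB'\to\gothA_{g-g'}$ for the abelian subscheme $\calB'=\ker f$ up to isogeny, of relative dimension $g-g'$) restricted to $\calX$ has dimension $>\max\{0,(g-g')-(g-g')\}=0$... — more carefully, one extracts from the fiber-dimension count exactly the inequality $\dim\varphi(\calX)<\dim\calX-\max\{0,(g-g')-(g-g')\}$ needed in Theorem~\ref{thm_criterion_for_Xdeg_equal_X}(2) with $t=g-g'$, after reducing to the case where $\calX$ is not contained in a strict group subscheme (the general case follows by replacing $\calA$ with the smallest group subscheme containing $[N]\calX$, as in the remark after that theorem). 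Then Theorem~\ref{thm_criterion_for_Xdeg_equal_X} yields $\calX=\calX^{\mathrm{deg}}(g-g')$.

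\textbf{Main obstacle.} The delicate point is the passage from the analytic, metric statement ``$\ker\omega_x\cap T_x\calX\neq0$ for all $x$'' to the algebraic statement about $\dim\varphi(\calX)$: one must check that the fibers of the transcendental map $\psi_\Delta$ genuinely lift to \emph{algebraic} subsets on the source side of the uniformization (so that Ax--Schanuel applies), and that the bi-algebraic closure produced is actually weakly special with the right relative dimension $g-g'$ — this is where Proposition~\ref{prop_integrable_via_analytic_subgroup} (identifying the kernel foliation with $\exp(\ker df)$ translated by Betti leaves) and the linearity of $df$ are essential. A secondary subtlety is ensuring the positive-dimensional leaves are Zariski dense in $\calX$ rather than concentrated on a proper closed subset; this is automatic because the hypothesis $\omega^{\wedge d}|_\calX\equiv 0$ is global, giving a positive-dimensional kernel at \emph{every} smooth point.
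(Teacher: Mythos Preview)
Your first approach (the ``Key steps'' paragraph) is essentially the paper's proof: at a smooth point $x$ take a positive-dimensional component $\widetilde F$ of the fiber of $\psi_\Delta|_{\widetilde\calX}$, lift to $\CC^g\times\gothH_g$, apply Corollary~\ref{cor_AxSchanuelVertical}, and conclude $\delta_v(F^{\mathrm{Zar}})<g-g'$, hence $x\in\calX^{\mathrm{deg}}(g-g')$; then close up via Theorem~\ref{thm_degeneracy_Zariski_closed}. The one ingredient you correctly flag as the ``main obstacle'' but do not supply is exactly the paper's algebraicity lemma: the lifted kernel bundle $\widetilde{\calW}:=\ker(df)$ over $\widetilde S\subseteq\gothH_g$ is (the restriction of) an algebraic subvariety of $\CC^g\times\gothH_g$, proved there via o-minimal Chow (definability from flatness of $\calW$ plus algebraicity of $\widetilde S$, after first extending $f$ to the bi-algebraic closure of $S$). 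This is what pins down $\widetilde F^{\mathrm{Zar}}\subseteq\widetilde{\calW}+\sigma$ and hence $\rdim\widetilde F^{\mathrm{Zar}}\le g-g'$; without it Corollary~\ref{cor_AxSchanuelVertical} gives nothing. So your first route is correct modulo this lemma, and matches the paper. The paper also includes a reduction step from a general $\calA/S$ to the case $S\subseteq\AAA_g$ via a moduli sieve, which you should supply.

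Your ``cleaner route'' through Theorem~\ref{thm_criterion_for_Xdeg_equal_X}, however, does not work as stated. Two problems. First, $f$ is only an $\RR$-homomorphism, so $\ker f$ need not be (isogenous to) an abelian subscheme and there is no algebraic quotient $\calA\to\calA/\calB'$ to feed into the criterion. Second and more fundamentally, even when $f$ is rational, the fibers of the transcendental map $\psi_\Delta$ (translates of $\exp(\ker df)$ by a single Betti leaf over $\Delta$) are \emph{not} the fibers of the algebraic map $\varphi:\calA\to\calA/\calB'\to\gothA_{g'}$ (translates of $\calB'$ over an entire isotrivial locus). Deducing ``$\varphi|_\calX$ has positive-dimensional generic fiber'' from ``$\psi_\Delta|_{\widetilde\calX}$ has positive-dimensional generic fiber'' is precisely the transcendence step that Ax--Schanuel is doing; it cannot be absorbed into Theorem~\ref{thm_criterion_for_Xdeg_equal_X}. (Already in the baby case $\calB=\calA$, $f=\id$, this is the nontrivial implication ``Betti rank drops on $\calX$ $\Rightarrow$ moduli sieve drops dimension on $\calX$''.) So stick with the leaf-by-leaf argument.
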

\begin{proof}
First assume $S\subseteq \AAA_g$ and $\calA=\gothA_g|_S$. Let $\calW:=\ker(df)$, and let $\widetilde\calW$ be the pullback of $\calW$ to $\widetilde S$, where $\widetilde S$ is a component of $u_g^{-1}(S)$.  We need the following algebraicity lemma:

\begin{lemma}\label{lemma_algebraicity}
    If $S$ is bi-algebraic, then the flat $C^\infty$ complex vector bundle $\widetilde\calW$ viewed as a subset of $\CC^g\times \gothH_g$ is algebraic. In general, the vector bundle $\widetilde\calW$ is the restriction onto $\widetilde S$ of an algebraic vector bundle of same rank.
\end{lemma}
\begin{proof}
   
    For the first statement, by o-minimal Chow's theorem \cite[Cor.~4.5]{PS_Analytic09}, it suffices to show that $\widetilde\calW$ is definable in $\RR_{\mathrm{an,exp}}$ (or any o-minimal structure).  This follows from the algebraicity of $\widetilde S$ and local triviality of $\widetilde\calW$ by Prop.~\ref{prop_integrable_via_analytic_subgroup} over the  simply connected definable open subsets of $\widetilde S$.  For the second statement, Prop.~\ref{prop_bialgebraic_weaklyspecial} implies that the endomorphism groups of $\calA$ and $\calA^{\mathrm{biZar}}$ coincide. In particular, the $\RR$-homomorphism $\calA\dashrightarrow \calB$ can be extended to an $\RR$-homomorphism of abelian schemes $f_1:\calA_1\dashrightarrow \calB_1$ over $S^{\mathrm{biZar}}$ and we have 
    \[
    \calW=\ker(df)= \ker (df_1)|_S.
    \]The conclusion follows.
\end{proof}

    For $x\in \calX^{\mathrm{sm}}$, the fiber of $\psi_\Delta|_{\widetilde\calX}$ over $\psi_\Delta(\tilde x)$ is positive dimensional by Lem.~\ref{lemma_volume_vs_immersion}, and complex analytic. Let $\widetilde F$ be a connected component of the fiber containing $\tilde x$ and $F:=\exp(\widetilde F)\subseteq \calA$.  We lift $F$ to an analytic irreducible subset of $\CC^g\times \gothH_g$ by lifting $\Delta\subseteq S$ to $\widetilde\Delta\subseteq\widetilde S$, and by slight abuse of notation, still denote it by $\widetilde F$. By  Cor.~\ref{cor_AxSchanuelVertical}, we get
    \[
    \delta_{v}(F^{\mathrm{Zar}})\leq \rdim\widetilde{F}^{Zar} -\dim \widetilde{F}.
    \]But $\widetilde F$ is contained in $\widetilde \calW+\sigma$ for some horizontal section $\sigma=(r_0,s)\in \RR^{2g}\times \widetilde S\cong \CC^g\times \widetilde S$ with $r_0$ fixed. Since $\widetilde\calW$ is algebraic by Lem.~\ref{lemma_algebraicity}, we get $\widetilde{F}^{Zar}\subseteq \widetilde \calW+\sigma$. This implies
    \[
    \rdim\widetilde{F}^{Zar}\leq \rank \calW=g-g'.
    \]Thus, $\delta_v(F^{\mathrm{Zar}})<g-g'$ and $x\in F^{\mathrm{Zar}}\subseteq \calX^{\mathrm{deg}}(g-g')$. Therefore $\calX^{\mathrm{sm}}\subseteq\calX^{\mathrm{deg}}(g-g')$ and by Thm.~\ref{thm_degeneracy_Zariski_closed}, we conclude that $\calX^{\mathrm{deg}}(g-g')=\calX$.

For a general abelian scheme $\calA/S$, take a moduli sieve $\iota:\calA\rightarrow \gothA_g$. If $\iota|_{\calX}$ is not generically finite, then $\calX^{\mathrm{deg}}(0)=\calX$ and hence $\calX^{\mathrm{deg}}(g-g')=\calX$. Assume without loss of generality that $\iota|_{\calX}$ is quasi-finite. Consider the corresponding abelian schemes $\iota(\calA),\iota(\calB)$. It is easy to see that the particular choice of ample $\calL_\calB$ does not matter. So we take a relatively ample $\calL_{\iota(\calB)}$ on $\iota(\calB)$ and use $\calL_\calB:=\iota^*\calL_{\iota(\calB)}$. Then the corresponding Betti form on $\iota(\calA)$ pulls back to the Betti form $\omega$ on $\calA$. The measure $\omega^{\wedge d}|_{\calX}\equiv 0$ implies the same for the corresponding measure on $\iota(\calX)$. So we get $\iota(\calX)^{\mathrm{deg}}(g-g')=\iota(\calX)$. As in the proof of Thm.~\ref{thm_finiteness_vertically_optimal}, we have that for a vertically optimal $\calZ'$ of $\iota(\calX)$, any irreducible component of $\iota|_{\calX}^{-1}(\calZ')$ is vertically optimal of the same vertical defect. From this we conclude that $\calX^{\mathrm{deg}}(g-g')=\calX$.
\end{proof}

\section{Adelic line bundles on abelian schemes}\label{sec_adelic}
In this section, we give a brief account of adelic line bundles in the sense of Yuan--Zhang \cite{YuanZhang_ALB}, with a focus on abelian schemes, to introduce the notations for the sequel.  We refer to their foundational paper for definitions and more details.

\subsection{Overview}
The Yuan--Zhang theory, which builds upon classical Arakelov theory, provides a unified framework for intersection theory on line bundles over general quasi-projective varieties defined over a field. It begins with the category of metrized line bundles on the Berkovich analytic space associated with a quasi-projective variety over a field equipped with a set of valuations, and then trims this eventually to the subcategory of \emph{integrable adelic} line bundles. Specifically, metrized line bundles that arise directly from geometric models—such as hermitian line bundles on projective arithmetic varieties—are referred to as \emph{model adelic} line bundles. In general, \emph{adelic} line bundles are conceived as limits of model adelic line bundles, with the limits taken in an appropriate boundary topology. The most important subcategory of \emph{integrable adelic} line bundles, consists of those adelic line bundles that are applicable in intersection theory. These are defined as the difference between two nef adelic line bundles, analogous to the construction of Lebesgue integrals, where nef adelic line bundles play a role similar to that of non-negative functions. Furthermore, the association of the category of adelic line bundles to the quasi-projective variety has natural functoriality under morphisms of the underlying varieties, maintaining compatibility with all relevant substructures.

Now let us delve into more details. Let $\bar K:=(K,\Sigma)$ denote a pair consisting of a field $K$ and a set $\Sigma$ of valuations on $K$ in one of the following cases:
\begin{enumerate}
    \item \textbf{geometric}: $K$ is a field and $\Sigma$ consists of one element, the trivial valuation on $K$.
    \item \textbf{archimedean local}: $K$ is the field $\CC$ of complex numbers and $\Sigma$ consists of one element, the usual norm on $\CC$.
    \item \textbf{arithmetic}: $K$ is a number field and $\Sigma$ is the set of all normalized valuations of $K$, including the trivial valuation.
\end{enumerate}
 Let $X$ be a quasi-projective variety over the field $K$ and let $L$ be a usual line bundle on $X$. For a pair $\bar K=(K,\Sigma)$ as above, there is an associated Berkovich analytic space $X^{\mathrm{an}}=X^{\mathrm{an}}_{\Sigma}$ and the natural extension of $L$ to $L^{\mathrm{an}}$ on $X^{\mathrm{an}}$. A metrized line bundle is a pair $(L,\lVert\cdot\rVert)$ with a continuous metric $\lVert\cdot\rVert$ on $L^{\mathrm{an}}$.
We shall not give a precise definition of adelic line bundles here. Roughly speaking, an adelic line bundle extending $L$ is a metrized line bundle which can be regarded as a suitable completion of $L$ at various places in $\Sigma$. If there is no ambiguity, such an extension is denoted by $\overbar{L}$. The group of isomorphism classes of adelic line bundles is denoted by $\hPic(X/\bar K)$, with group law given by tensor products and written additively. The subset consisting of nef (resp. integrable) adelic line bundles is denoted by $\hPic_{\mathrm{nef}}(X/\bar K)$ (resp. $\hPic_{\mathrm{int}}(X/\bar K)$). Given a morphism $f:X\rightarrow Y$ over $K$, the pullback map $f^*: \hPic(Y/\bar K)\rightarrow \hPic(X/\bar K)$ is naturally defined and preserves nefness and integrability. 
 
 There are also natural relations among three cases above. To introduce these, we need to be more careful with the notation for $\bar K$. By slight abuse of notation, we simplify the pair $(K,\Sigma)$ in three cases as $K$ (geometric), $\calO_\CC$ (archimedean local), $\calO_K$ (arithmetic), respectively. 
 There are natural \emph{localization} maps from arithmetic to archimedean local ($K=\text{number field}$ and $\sigma:K\rightarrow \CC$ is an embedding)
\[
\hPic(X/\calO_K)\xrightarrow{\text{loc.}} \hPic(X_\sigma/\calO_{\CC})
\]and from arithmetic/archimedean local to geometric ($K=\text{number field/}\CC$)
\[
\hPic(X/\calO_K)\xrightarrow{\text{loc.}} \hPic(X/K)
\]given by forgetting the metrics over the unrelevant places. There is a \emph{base change} map in the geometric setting ($K=\text{number field}$ and $\sigma:K\rightarrow \CC$ is an embedding)
\[
\hPic(X/K)\rightarrow \hPic(X_\sigma/\CC).
\]In the rest of the article, we shall only use the notation $\bar K$ if a uniform treatment is possible.

Intersection theory is developed in the geometric and arithmetic contexts:
\begin{enumerate}
    \item \textbf{geometric}: In this case, adelic line bundles arise as limits of ordinary line bundles on projective models of $X$ over $K$. The intersection number is obtained via this limiting process. This framework generalizes the traditional intersection theory of line bundles in algebraic geometry to a quasi-projective setting.
    
    \item \textbf{arithmetic}: Here, adelic line bundles are limits of hermitian line bundles on projective models of $X$ over $\calO_K$. The intersection number is similarly derived through a limiting process. This theory extends Arakelov's intersection theory of arithmetic line bundles to a quasi-projective context.
\end{enumerate} 
Denote the dimension of a relevant model of $X$ by $d$, whence in the geometric (resp. arithmetic) case, we have $d=\dim X$ (resp. $d=\dim X+1$). We call $d$ the \emph{model dimension} of $X$. By extending $\RR$-linearly, \emph{intersection theory} gives a symmetric multi-linear pairing 
\begin{equation}
    \begin{split}
        [\cdot]_X:\hPic_{\mathrm{int}}(X/\bar K)^d_\RR &\longrightarrow \RR\\
        (\overbar{L}_1,\cdots,\overbar{L}_d) &\longmapsto [\overbar{L}_1\cdots \overbar{L}_d]_X.
    \end{split}
\end{equation}
The pairing satisfies the following basic properties:
\begin{enumerate}
    \item If  $\overbar{L}_1,\cdots,\overbar{L}_d\in \hPic_{\mathrm{nef}}(X/\bar K)$, then $[\overbar{L}_1\cdots \overbar{L}_d]_X\geq 0$.
    \item When $K$ is a number field and $\sigma:K\rightarrow \CC$ is an embedding, geometric intersection theory on $\hPic_{\mathrm{int}}(X/K)$ agrees with that on $\hPic_{\mathrm{int}}(X_\sigma/\CC)$ through base change. 
    \item  \emph{Projection formula} \cite[Prop.~4.1.2]{YuanZhang_ALB}: Let $f:X\rightarrow Y$ be a dominant morphism of quasi-projective varieties over $K$. Let $\overbar{L}_1,\cdots,\overbar{L}_d\in \hPic_{\mathrm{int}}(Y/\bar K)$. Then 
    \begin{equation}\label{eqn_proj_formula}
        [f^*\overbar{L}_1\cdots f^*\overbar{L}_d]_X=\deg f\cdot [\overbar{L}_1\cdots \overbar{L}_d]_Y.
    \end{equation} Here, if $f$ is generically finite, $\deg f$ is the degree of the extension between the function fields of $X$ and $Y$; otherwise, $\deg f=0$. 
\end{enumerate}
When $f:X\rightarrow Y$ is clear in the context, and $\overbar{L}_1,\cdots, \overbar{L}_d\in \hPic_{\mathrm{int}}(Y/\bar K)$, we shall write $[\overbar{L}_1\cdots \overbar{L}_d]_X$ for $[f^*\overbar{L}_1\cdots f^*\overbar{L}_d]_X$.

\begin{defn}
   Let $K$ be a number field. The \emph{height function} associated to an arithmetic adelic line bundle $\overbar{L}\in \hPic_{\mathrm{int}}(X/\calO_K)_\RR$  is  a function  $h_{\overbar{L}}:X(\Ka)\rightarrow \RR$ defined by
\[
 h_{\overbar{L}}(x):=\frac{[\overbar{L}]_{x'}}{[\kappa(x'):K]}
\]where $x\in X(\Ka)$ is an algebraic point and $x'\in X$ is the scheme-theoretic point of $x$.
\end{defn}

For instance, on a projective space $\PP^N$ over a number field $K$, the naive adelic line bundle $\overbar{\calO(1)}$ is the adelic metric on $\calO(1)$ whose associated height function $h_{\overbar{\calO(1)}}$ is the usual naive height function.


\subsection{Volume and bigness}
We work in the geometric/arithmetic setting for this subsection. Let $\overbar{L}\in\hPic(X/\bar K)$ and let $\lVert\cdot\rVert$ be the associated metric on $L^{\mathrm{an}}$.

\begin{defn}
 The set of \emph{effective sections}, denoted by $\widehat{H}^0(X,\overbar{L})$, is defined to be the subset of global sections $s\in H^0(X, L)$ such that $\lVert s^{\mathrm{an}}(x)\rVert\leq 1$ for any $x\in X^{\mathrm{an}}$, where $s^{\mathrm{an}}$ is the section on $L^{\mathrm{an}}$ corresponding to $s$. Define the quantity
    \begin{equation*}
        \hat h^0(X,\overbar{L}):=\begin{cases}
        \dim_K \widehat H^0(X,\overbar{L}) & \text{in geometric setting}\\
            \log\#\widehat H^0(X,\overbar{L}) & \text{in arithmetic setting}
        \end{cases}
    \end{equation*}
\end{defn}

The existence of nontrivial effective sections has the following implication on height.

\begin{lemma}\label{lem_effective_sec_implies_nonnegative_generically}
    Let $K$ be a number field and let $\overbar{L}\in\hPic_{\mathrm{int}}(X/\calO_K)$. If there is a nonzero effective section $s\in \widehat H^0(X,m\overbar{L})$ for nonzero $m\in\NN$, then $h_{\overbar{L}}(x)\geq 0$ for any $x\in X(\Ka)$ with $s(x)\neq 0$.
\end{lemma}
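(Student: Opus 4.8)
The plan is to unwind the definition of the height function in terms of the arithmetic intersection pairing and to use positivity of intersection numbers of nef adelic line bundles together with the effectivity of the section. First I would reduce to the case where $x$ is a closed point of $X$: writing $x'\in X$ for the scheme-theoretic point underlying the algebraic point $x$ and $Y=\overline{\{x'\}}$ for its Zariski closure, $Y$ is an integral closed subvariety of $X$ with function field $\kappa(x')$, and by definition
\[
h_{\overbar L}(x)=\frac{[\overbar L]_{x'}}{[\kappa(x'):K]}=\frac{1}{[\kappa(x'):K]}\,[\overbar L|_{Y}]_{Y},
\]
so it suffices to show $[\overbar L|_Y]_Y\ge 0$, i.e.\ we may rename $Y$ as $X$ and assume $X$ is a point (over a number field), with $\overbar L$ an integrable adelic line bundle on it and a nonzero effective section $s\in\widehat H^0(X,m\overbar L)$ that does not vanish at $x$. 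Here $m\ge 1$.

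Next I would invoke the fundamental inequality relating arithmetic intersection (= height, in the $0$-dimensional case) to the existence of small sections, which is part of the Yuan--Zhang machinery recalled in \S\ref{sec_adelic}: if $\overbar M\in\hPic_{\mathrm{int}}(X/\calO_K)$ admits a nonzero effective section $s$ which is nonvanishing at a point $x$, then $h_{\overbar M}(x)\ge 0$. Concretely, on the arithmetic model the existence of $s\in\widehat H^0(X,\overbar M)$ with $s(x)\ne 0$ means $\lVert s(x)\rVert\le 1$ at every archimedean place and the section is integral at every finite place, so the arithmetic degree of $\overbar M$ restricted to the arithmetic curve $\overline{\{x\}}$, which computes $[\kappa(x):K]\,h_{\overbar M}(x)$, is $\sum_v -\log\lVert s(x)\rVert_v\ge 0$ by the product-formula/adelic-metric bookkeeping. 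Applying this with $\overbar M=m\overbar L$ gives $m\,h_{\overbar L}(x)=h_{m\overbar L}(x)\ge 0$, and dividing by $m>0$ yields $h_{\overbar L}(x)\ge 0$.

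The main obstacle is purely bookkeeping: one must make sure that the naive ``$\sum_v -\log\lVert s(x)\rVert_v\ge 0$'' argument is carried out correctly inside the adelic (limit-of-models) formalism of Yuan--Zhang, i.e.\ that effectivity of $s$ as an adelic section at \emph{all} places (including the non-archimedean ones, where $\lVert\cdot\rVert$ is a limit of model metrics) still forces each local term to be nonnegative, and that the intersection number $[m\overbar L]_{\overline{\{x\}}}$ genuinely equals $\sum_v(-\log\lVert s(x)\rVert_v)$. This is exactly the content of the arithmetic Hilbert--Samuel / ``effective section $\Rightarrow$ nonnegative height'' statement in \cite{YuanZhang_ALB}, so I would cite it directly rather than reprove it; there is no genuine difficulty beyond quoting the right proposition and checking the degree normalization.
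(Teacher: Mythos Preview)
Your proposal is correct and follows essentially the same approach as the paper: compute $m\,h_{\overbar L}(x)$ as a sum $\sum_v -\log\lVert s(y)\rVert$ over places and observe each term is nonnegative because $s$ is effective. The paper carries this out in one line directly, whereas you add an unnecessary reduction step (for $x\in X(\Ka)$ the scheme-theoretic point $x'$ is already closed, so $Y=\{x'\}$) and are overly cautious about the bookkeeping---no citation to an ``effective section $\Rightarrow$ nonnegative height'' theorem in \cite{YuanZhang_ALB} is needed, and this is unrelated to arithmetic Hilbert--Samuel; the computation is immediate from the definition of the intersection number on a zero-dimensional variety.
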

\begin{proof}
    Let $x\in X(\Ka)$ be a point with $s(x)\neq0$ and let $x'\in X$ be the scheme-theoretic point of $x$. By definition, we have
    \[
    mh_{\overbar{L}}(x)=\frac{[m\overbar{L}]_{x'}}{[\kappa(x'):K]}=\frac{1}{[\kappa(x'):K]}\sum_{v\in \Sigma_K}\sum_{y\in x'_v}-[\kappa(y):K_v]\log\lVert s(y)\rVert
    \]where $\Sigma_K$ denotes all the places of $K$ and $x'_v$ is the finite set of points of $X^{\mathrm{an}}$ given by the image of $x'\times_{\Spec K} \Spec K_v$ in $X_{K_v}$. Then we have $h_{\overbar{L}}(x)\geq0$ since $\lVert s(y) \rVert\leq 1$.
\end{proof} 
 
 \begin{defn}
 The \emph{volume} of $\overbar{L}\in \hPic(X/\bar K)$ is defined as
     \[
\widehat \vol(X,\overbar{L}):=\lim_{m\rightarrow \infty}\frac{d!}{m^d}\widehat h^0(X,m\overbar{L})
\]
where the limit exists by \cite[Thm. 5.2.1]{YuanZhang_ALB}.  More generally, the volume of a $\QQ$-adelic line bundle $\overbar L\in \hPic(X/\bar K)_\QQ$ is defined as $\frac{1}{N^d}\widehat \vol(X,N\overbar{L})$ for any $N\in \NN$ such that $N\overbar{L}\in \hPic(X/\bar K)$.

If $\widehat \vol(X,\overbar{L})>0$, we say $\overbar{L}$ is \emph{big}.
\end{defn}
 


For \emph{integrable} adelic line bundles, volume is closely related to intersection theory in the following way by \cite[Thm. 5.2.2]{YuanZhang_ALB}:
\begin{theorem}\label{thm_hilbert_samuel_bigness}
 For $\overbar{L},\overbar M\in\hPic_{\mathrm{nef}}(X/\bar K)_\QQ$, the following holds:
    \begin{enumerate}
        \item Arithmetic Hilbert--Samuel theorem: $\widehat\vol(X,\overbar{L})=[\overbar{L}^{d}]_X$.
        \item Siu--Yuan's Bigness theorem: $\widehat\vol(X,\overbar{L}-\overbar M)\geq [\overbar{L}^{d}]_X-d\left[\overbar{L}^{d-1}\overbar M\right]_X$.
    \end{enumerate}
\end{theorem}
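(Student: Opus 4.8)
Since this is a recollection of \cite[Thm.~5.2.2]{YuanZhang_ALB}, the plan is to reduce both assertions to their classical analogues on \emph{projective} models and then pass to the limit in the boundary topology that defines adelic line bundles. First I would note that, by the definition of the volume of a $\QQ$-adelic line bundle ($\widehat\vol(X,\overbar L)=\tfrac{1}{N^d}\widehat\vol(X,N\overbar L)$) and the $\RR$-multilinearity of the intersection pairing, it is enough to treat $\overbar L,\overbar M\in\hPic_{\mathrm{nef}}(X/\bar K)$ integral. By the very construction of $\hPic_{\mathrm{nef}}$, such a bundle is a limit, in the boundary topology, of a sequence of \emph{model} nef adelic line bundles $\overbar L^{(n)}$ coming from projective models $X^{(n)}\to X$ — nef line bundles on projective models over $K$ in the geometric case, and nef hermitian line bundles on projective arithmetic models over $\OCp$ in the arithmetic case (allowing the harmless small perturbations that occur on the boundary). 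I would arrange the same for $\overbar M$.

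On a fixed projective model, assertion (1) is classical: in the geometric case it says that the top self-intersection of a nef line bundle is the leading coefficient of the Hilbert polynomial, and in the arithmetic case it is the arithmetic Hilbert--Samuel theorem of Gillet--Soul\'e (refined by Abbes--Bouche and S.~Zhang) for ample hermitian line bundles, extended to the nef case by the standard perturbation $\overbar L\rightsquigarrow \overbar L+\epsilon\overbar A$ with $\overbar A$ ample and a continuity argument in $\epsilon$. Likewise, assertion (2) on a projective model is Siu's inequality in the geometric case and Yuan's arithmetic bigness (arithmetic Siu) inequality in the arithmetic case. So on each $X^{(n)}$ we have $\widehat\vol(X^{(n)},\overbar L^{(n)})=[(\overbar L^{(n)})^d]_{X^{(n)}}$ and $\widehat\vol(X^{(n)},\overbar L^{(n)}-\overbar M^{(n)})\geq [(\overbar L^{(n)})^d]_{X^{(n)}}-d[(\overbar L^{(n)})^{d-1}\overbar M^{(n)}]_{X^{(n)}}$.

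The final step is to let $n\to\infty$. Here I would invoke the continuity, with respect to the boundary topology, of the volume functional $\overbar L\mapsto\widehat\vol(X,\overbar L)$ (this is precisely \cite[Thm.~5.2.1]{YuanZhang_ALB}, which also guarantees the limit defining $\widehat\vol$ exists) and of the intersection pairing $(\overbar L_1,\dots,\overbar L_d)\mapsto[\overbar L_1\cdots\overbar L_d]_X$ on $\hPic_{\mathrm{nef}}$ and $\hPic_{\mathrm{int}}$, together with the compatibility of both with the model-theoretic quantities on the $X^{(n)}$. Passing to the limit in the displayed identity and inequality then yields (1) and (2) for $\overbar L$ and $\overbar M$; for (2) one also uses that $\overbar L-\overbar M$ lies in $\hPic_{\mathrm{int}}$ so that $\widehat\vol(X,\overbar L-\overbar M)$ is defined.

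\textbf{Main obstacle.} The delicate point is entirely in this last step: establishing that $\widehat\vol$ and the intersection numbers are genuinely continuous (not merely semicontinuous in the unhelpful direction) in the boundary topology over a \emph{quasi-projective} base requires the uniform estimates on $\widehat h^0$ along compactifying models that form the technical core of \cite[\S5]{YuanZhang_ALB}; without them the limiting argument does not close. A secondary subtlety, already present on projective models, is the passage from ample to nef in the arithmetic Hilbert--Samuel theorem, where positivity of the \emph{metric} (not just of the underlying line bundle) must be dealt with by perturbation.
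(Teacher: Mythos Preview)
The paper does not prove this theorem at all: it is stated as a black-box citation of \cite[Thm.~5.2.2]{YuanZhang_ALB}, with no argument given. Your sketch is therefore not comparable to anything in the paper, but it is a faithful outline of how Yuan--Zhang themselves establish the result---reduction to model adelic line bundles on projective compactifications, classical Hilbert--Samuel/Siu--Yuan there, and passage to the limit via the continuity of volume and intersection numbers in the boundary topology---and you have correctly flagged the genuine technical content (the uniform $\hat h^0$ estimates underlying continuity) as the main obstacle.
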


\begin{cor}
     Let $f:X\rightarrow Y$ be a dominant morphism of quasi-projective varieties over $K$. For $\overbar{L}\in\hPic_{\mathrm{nef}}(Y/\bar K)$, the pullback $f^*\overbar{L}$ is big if and only if $f$ is generically finite and $\overbar{L}$ is big on $Y$.
\end{cor}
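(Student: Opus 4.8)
The plan is to derive the statement from the single formula
\[
\widehat\vol\bigl(X,f^*\overbar L\bigr)=(\deg f)\cdot\widehat\vol\bigl(Y,\overbar L\bigr),
\]
valid whenever $f$ is generically finite, together with the vanishing $\widehat\vol(X,f^*\overbar L)=0$ when $f$ is not generically finite. Recall that, by definition, an adelic line bundle is big exactly when its volume is positive, so it suffices to compute these volumes. The starting observation is that $f^*\overbar L\in\hPic_{\mathrm{nef}}(X/\bar K)$, since pullback preserves nefness; write $d$ for the model dimension of $X$.

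Suppose first that $f$ is generically finite. Then $\dim X=\dim Y$, so $d$ is simultaneously the model dimension of $X$ and of $Y$. Applying the arithmetic Hilbert--Samuel theorem (Theorem~\ref{thm_hilbert_samuel_bigness}(1)) to the nef classes $f^*\overbar L$ on $X$ and $\overbar L$ on $Y$, and then the projection formula~\eqref{eqn_proj_formula}, one gets
\[
\widehat\vol\bigl(X,f^*\overbar L\bigr)=\bigl[(f^*\overbar L)^{d}\bigr]_X=(\deg f)\,\bigl[\overbar L^{d}\bigr]_Y=(\deg f)\,\widehat\vol\bigl(Y,\overbar L\bigr).
\]
Since $1\le\deg f<\infty$, this shows that $f^*\overbar L$ is big if and only if $\overbar L$ is big.

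Now suppose $f$ is not generically finite. Then Hilbert--Samuel together with the projection formula~\eqref{eqn_proj_formula} give
\[
\widehat\vol\bigl(X,f^*\overbar L\bigr)=\bigl[(f^*\overbar L)^{d}\bigr]_X=0,
\]
because the coefficient $\deg f$ in~\eqref{eqn_proj_formula} vanishes for such $f$; hence $f^*\overbar L$ is not big. Combining the two cases completes the argument: if $f^*\overbar L$ is big, the second case forces $f$ to be generically finite, and then the first case forces $\overbar L$ to be big; conversely, if $f$ is generically finite and $\overbar L$ is big, the first case gives that $f^*\overbar L$ is big.

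I expect the only delicate point to be the appeal to the projection formula in the non-generically-finite case, where $d$ (the model dimension of $X$) exceeds the model dimension of $Y$: the content there is precisely that the top self-intersection of a class pulled back along a morphism with positive-dimensional generic fibre vanishes, which is exactly the ``$\deg f=0$'' clause of~\eqref{eqn_proj_formula}. Everything else is a straightforward combination of the Hilbert--Samuel identity and the projection formula already recorded, with no further input needed.
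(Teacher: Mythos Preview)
Your proof is correct and follows exactly the approach indicated in the paper: the paper's proof is the single sentence ``This follows directly from arithmetic Hilbert--Samuel and the projection formula~\eqref{eqn_proj_formula},'' and you have simply written out those two steps in detail.
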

\begin{proof}
    This follows directly from arithmetic Hilbert--Samuel and the projection formula \eqref{eqn_proj_formula}. 
\end{proof}

\subsection{Invariant metrics}
Now let $\calA/S$ be an abelian scheme over a quasi-projective normal variety $S$ over $K$. We are interested in certain metric on symmetric line bundles that is \emph{invariant} under the group structure. The starting point is the following existence result

\begin{theorem}\label{thm_invariant_alb}
    For any $\calL\in\Pic_{0}(\calA/S)$, there is a unique integrable adelic line bundle $\overbar{\calL}\in\hPic_{\mathrm{int}}(\calA/\bar K)$ extending $\calL$ such that $[l]^*{\overbar{\calL}}\cong l^2\overbar{\calL}$. If $\calL$ is ample, then $\overbar{\calL}$ is nef as an adelic line bundle. 
\end{theorem}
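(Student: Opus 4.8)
The plan is Tate's limiting argument transposed to the Yuan--Zhang formalism, with nefness built into the choice of starting bundle. \emph{Uniqueness first.} If $\overbar{\calL}_1$ and $\overbar{\calL}_2$ both extend $\calL$ and satisfy $[l]^*(\,\cdot\,)\cong l^2(\,\cdot\,)$ for a fixed $l\ge 2$, their difference $\overbar{\calM}:=\overbar{\calL}_1-\overbar{\calL}_2$ has underlying line bundle $\calO_{\calA}$, hence is a metric on the trivial bundle given by a bounded continuous function $g$ on $\calA^{\mathrm{an}}$ (boundedness being built into the definition of an adelic line bundle). The relation $[l]^*\overbar{\calM}\cong l^2\overbar{\calM}$ reads $g\circ[l]=l^2 g$, so iterating gives $|g(x)|\le l^{-2k}\sup|g|$ for every $k$; thus $g\equiv 0$ and $\overbar{\calL}_1=\overbar{\calL}_2$.

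\emph{Existence.} Since $\calL$ is symmetric, the theorem of the cube gives a rigidified isomorphism $[l]^*\calL\cong l^2\calL$ for every $l$. Choose any adelic extension $\overbar{\calL}_0\in\hPic(\calA/\bar K)$ of $\calL$ --- e.g.\ a model metric attached to an extension of $\calL$ to a projective $\calO_K$-model of $\calA$, which is integrable as a difference of two nef model adelic line bundles. Put $\overbar{\calL}_n:=4^{-n}[2^n]^*\overbar{\calL}_0$, each extending $\calL$. Then $\overbar{\calL}_{n+1}-\overbar{\calL}_n=4^{-n}[2^n]^*\bigl(\tfrac14[2]^*\overbar{\calL}_0-\overbar{\calL}_0\bigr)$ has trivial underlying bundle, hence is a bounded function of sup norm $\le 4^{-n}\bigl\|\tfrac14[2]^*\overbar{\calL}_0-\overbar{\calL}_0\bigr\|$. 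So $(\overbar{\calL}_n)_n$ is Cauchy for the topology defining $\hPic(\calA/\bar K)$ and converges to some $\overbar{\calL}\in\hPic_{\mathrm{int}}(\calA/\bar K)$ extending $\calL$. Passing to the limit in $[2]^*\overbar{\calL}_n=4\,\overbar{\calL}_{n+1}$ yields $[2]^*\overbar{\calL}=4\,\overbar{\calL}$; and for general $l$, writing $[l]^*\overbar{\calL}_0=l^2\overbar{\calL}_0+\overbar{\phi}_l$ with $\overbar{\phi}_l$ a bounded function, one gets $[l]^*\overbar{\calL}_n=l^2\overbar{\calL}_n+4^{-n}[2^n]^*\overbar{\phi}_l$, whence $[l]^*\overbar{\calL}=l^2\overbar{\calL}$ in the limit.

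\emph{Nefness.} Assume $\calL$ is (relatively) ample. Fix an ample line bundle $\calO_S(1)$ on the quasi-projective base $S$; by the standard behaviour of relative ampleness, $\calL\otimes\pi^*\calO_S(n)$ is ample on $\calA$ for $n\gg 0$. Fix such an $n$ and let $\overbar{\calN}\in\hPic_{\mathrm{nef}}(\calA/\bar K)$ be an ample model adelic line bundle extending it. Since $\pi\circ[2^m]=\pi$, we have $4^{-m}[2^m]^*\overbar{\calN}=4^{-m}[2^m]^*\overbar{\calL}'_0+4^{-m}n\,\pi^*\overbar{\calO_S(1)}$ with $\overbar{\calL}'_0:=\overbar{\calN}-n\pi^*\overbar{\calO_S(1)}$ an integrable extension of $\calL$; as $\overbar{\calL}'_0$ differs from $\overbar{\calL}_0$ by a bounded function whose $4^{-m}[2^m]^*$-pullbacks tend to $0$, and as the second summand also tends to $0$, we get $4^{-m}[2^m]^*\overbar{\calN}\to\overbar{\calL}$. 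But each $4^{-m}[2^m]^*\overbar{\calN}$ is nef ($[2^m]$ is finite, and pullback and positive scaling preserve nefness), so $\overbar{\calL}$ is nef, $\hPic_{\mathrm{nef}}$ being closed in $\hPic$.

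The delicate points all lie in the Yuan--Zhang foundations rather than in the arithmetic: that adelic line bundles with trivial underlying bundle are precisely the bounded model-limit functions, that the telescoping sequence actually converges inside $\hPic$, and that integrability and nefness survive the limit. Granting these, what remains is the classical Tate telescoping trick; I expect the verification that the limiting sequence stays nef (equivalently, that $\hPic_{\mathrm{nef}}$ is closed and stable under $4^{-m}[2^m]^*$) to be the step needing the most care.
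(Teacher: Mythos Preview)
Your approach is Tate's limiting argument, which is precisely what the paper invokes by citing Yuan--Zhang \cite[Thm.~6.1.1, 6.1.3]{YuanZhang_ALB}; the paper itself does not reproduce the construction. So the strategy matches.

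There is, however, a genuine gap in your treatment of the boundary topology. You assert that an adelic line bundle with trivial underlying bundle on $\calA$ ``is a metric on the trivial bundle given by a bounded continuous function $g$ on $\calA^{\mathrm{an}}$ (boundedness being built into the definition),'' and you run both the uniqueness argument and the Cauchy estimate using sup norms. This is correct when the ambient variety is projective, but it is false in the quasi-projective setting that is the entire point of the Yuan--Zhang formalism. Here $\hPic(\calA/\bar K)$ is a completion with respect to the \emph{boundary topology}, and the kernel of the forgetful map to $\Pic(\calA)$ contains, for instance, classes coming from boundary divisors on projective compactifications of $\calA$; these correspond to Green functions that blow up toward the boundary and are not bounded on $\calA^{\mathrm{an}}$. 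Consequently your uniqueness step ($g\circ[l]=l^2g$ with $g$ bounded forces $g\equiv 0$) and your telescoping bound (differences have sup norm $\le 4^{-n}\cdot\text{const}$) do not apply as written.

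What is salvageable is the \emph{shape} of the argument: one must show that $\overbar{\calL}\mapsto 4^{-1}[2]^*\overbar{\calL}$ contracts in the boundary topology. Making this precise requires tracking how $[2]$ interacts with boundary divisors on a chosen projective model, and this is exactly where the substance of the Yuan--Zhang proof lies. You flag the foundations as ``delicate points,'' but your stated formulation of the first of these (``adelic line bundles with trivial underlying bundle are precisely the bounded model-limit functions'') is itself the misconception that needs correcting.
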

\begin{proof}
     In the geometric/arithmetic case, the existence and uniqueness are stated in \cite[Thm.~6.1.3]{YuanZhang_ALB} with $\epsilon=1$. Its proof by Tate's limiting argument, and the second statement is given in Thm.~6.1.1 of loc. cit. Localization gives the archimedean local case. 
\end{proof}

By extending linearly, we can associate a unique invariant integrable $\RR$-adelic line bundle to any symmetric $\RR$-line bundle $\calL\in\Pic_{0}(\calA/S)_\RR$. If $\calB/S$ is another abelian scheme and $f\in \Hom(\calA,\calB)_\RR$, define $f^*\overbar{\calL}:=\overbar{f^*\calL}$. The following lemma concerning continuity of intersection numbers with respect to $\RR$-pullbacks will be useful to us:


\begin{lemma}\label{lem_continuity_intersection_pullback}
    Let $\calX\subseteq \calA$ be a subvariety of model dimension $d$. Let $\calL_1,\cdots,\calL_i\in \Pic_{0}(\calB/S)_\RR$ and $\overbar \calM_1,\cdots,\overbar \calM_j\in \hPic_{\mathrm{int}}(\calA/\bar K)_\RR$ with $i+j=d$. In the geometric/arithmetic setting, the function given by intersection numbers
    \[
    \begin{split}
        \Hom(\calA,\calB)_{\RR}&\longrightarrow \RR\\
        f&\longmapsto [(f^*\overbar\calL_1)\cdots (f^*\overbar\calL_i) \overbar\calM_1\cdots\overbar \calM_j]_\calX
    \end{split}
    \]is continuous and homogeneous of degree $2i$.
\end{lemma}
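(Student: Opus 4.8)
The plan is to show that $f\mapsto f^*\overline{\calL}$ is a quadratic polynomial map from $\Hom(\calA,\calB)_\RR$ into a \emph{fixed} finite-dimensional $\RR$-subspace of $\hPic_{\mathrm{int}}(\calA/\bar K)_\RR$, and then to observe that postcomposing with the $\RR$-multilinear intersection pairing $[\,\cdot\,]_\calX$ (extended to $\hPic_{\mathrm{int}}(\calX/\bar K)_\RR$ by restriction along $\calX\hookrightarrow\calA$, which preserves integrability) produces an honest polynomial function in the coordinates of $f$. Continuity and homogeneity of degree $2i$ both fall out of this immediately.

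First I would record homogeneity directly: for $n\in\RR$, Proposition~\ref{prop_pullback_extends_to_real} says $\star$ is a real quadratic form in its first variable, so $(nf)^*\calL_m=n^2\,f^*\calL_m$ in $\Pic_0(\calA/S)_\RR$ for each $m$. Since the invariant adelic extension $\Pic_0(\calA/S)_\RR\to\hPic_{\mathrm{int}}(\calA/\bar K)_\RR$ of Theorem~\ref{thm_invariant_alb} is $\RR$-linear (additivity is forced by its uniqueness), this gives $(nf)^*\overline{\calL}_m=n^2\,f^*\overline{\calL}_m$, and then multilinearity of $[\,\cdot\,]_\calX$ yields the factor $n^{2i}$. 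Next, for continuity, I would fix a $\QQ$-basis $f_1,\dots,f_r$ of $\Hom(\calA,\calB)_\QQ$, which is also an $\RR$-basis of $\Hom(\calA,\calB)_\RR$, so that $f=\sum_k n_k f_k$ identifies $\Hom(\calA,\calB)_\RR$ homeomorphically with $\RR^r$. Applying the $\RR$-linear invariant extension to the quadratic-form identity~\eqref{eqn_quadratic_pullback_general} expresses $f^*\overline{\calL}_m=\tfrac12\sum_{k,l}n_k n_l\,\overline{\calM}^{(m)}_{kl}$, where $\overline{\calM}^{(m)}_{kl}:=\overline{(f_k+f_l)^*\calL_m}-\overline{f_k^*\calL_m}-\overline{f_l^*\calL_m}\in\hPic_{\mathrm{int}}(\calA/\bar K)_\RR$ is a fixed adelic line bundle independent of $f$. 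Substituting these into the intersection number and expanding by multilinearity of $[\,\cdot\,]_\calX$ presents it as a finite $\RR$-linear combination of the monomials $n_{k_1}n_{l_1}\cdots n_{k_i}n_{l_i}$ with fixed scalar coefficients $\tfrac1{2^i}[\overline{\calM}^{(1)}_{k_1 l_1}\cdots\overline{\calM}^{(i)}_{k_i l_i}\overline{\calM}_1\cdots\overline{\calM}_j]_\calX\in\RR$; this is a polynomial in $(n_1,\dots,n_r)$, homogeneous of degree $2i$, hence continuous, which also re-proves homogeneity.

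The only points requiring care — and the "hard part", such as it is — are: (i) that $\calL\mapsto\overline{\calL}$ is additive, so that identities in $\Pic_0(\calA/S)_\RR$ transport to identities among invariant adelic line bundles (immediate from uniqueness in Theorem~\ref{thm_invariant_alb}); and (ii) that each $\overline{\calM}^{(m)}_{kl}$ genuinely lies in $\hPic_{\mathrm{int}}$, so that the relevant intersection numbers are defined — this holds because $\Pic_0(\calA/S)_\RR$ is spanned by ample symmetric classes, whose invariant extensions are nef by Theorem~\ref{thm_invariant_alb}, and $\hPic_{\mathrm{int}}$ is closed under $\RR$-linear combinations and under pullback. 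Once these are in place, the statement is reduced to formal multilinear algebra: the composite of a polynomial (in fact quadratic) map into a finite-dimensional real vector space with a multilinear form on that space is a polynomial in the coordinates, and polynomials are continuous.
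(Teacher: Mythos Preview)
Your proof is correct and follows essentially the same approach as the paper: use the quadratic-form identity \eqref{eqn_quadratic_pullback_general} to see that all the $f^*\overbar{\calL}_m$ lie in a fixed finite-dimensional subspace of $\hPic_{\mathrm{int}}(\calA/\bar K)_\RR$, then combine this with the multilinearity of the intersection pairing and the homogeneity $(nf)^*=n^2 f^*$. The paper states this tersely (continuity of the multilinear pairing on a finite-dimensional subspace, plus continuity of $\star$), whereas you spell out the resulting polynomial in coordinates explicitly; your careful remarks on additivity of $\calL\mapsto\overbar{\calL}$ and integrability are valid but slightly more than needed, since Theorem~\ref{thm_invariant_alb} already asserts integrability directly.
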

\begin{proof}
The intersection pairings are continuous by definition when restricted to the subspace spanned by $\{f^*\overbar\calL_i,\overbar\calM_j:f,i,j\}$, which is finite dimensional by \eqref{eqn_quadratic_pullback_general}. So the result follows from the continuity of the pullback map as a quadratic form \eqref{eqn_starR}. It is homogeneous of degree $2i$ since $(nf)^*\overbar\calL_i=n^2 f^*\overbar\calL_i$ for any $n\in \RR$.
\end{proof}

For later purpose, we use the following convention to avoid ambiguity:
\begin{enumerate}
    \item \textbf{geometric}: the invariant adelic line bundle is denoted by $\widetilde{\calL}\in \hPic(\calA/K)_\RR$.
    \item \textbf{archimedean local/arithmetic}: the invariant adelic line bundle is denoted by $\overbar{\calL} \in \hPic(\calA/\calO_K)_\RR$.
\end{enumerate}

In the arithmetic case, the height $h_{\overbar{\calL}}$ is the fiber-wise \emph{N\'eron--Tate height} associated to $\calL$, and can be equivalently defined by \emph{Tate's limit argument}. Specifically, if $\calL\in \Pic_0(\calA/S)$ is ample, embedding $\calA$ in some projective space $\PP^N$ such that $[\calO(1)|_{\calA}]=[\calL]^n\in\Pic_0(\calA/S)$ for some $n\in \NN$, then
\[
n\cdot h_{\overbar\calL}(x)=\lim_{l\rightarrow \infty}l^{-2}\cdot h_{\PP^N}([l]x)
\]where $h_{\PP^N}$ is the naive height and $x\in \calA(\Ka)$. Extending linearly gives the definition for general $\calL$. We have the following convenient lemma

\begin{lemma}\label{lem_nef_equals_nonnegative_height}
    For $\calL\in\Pic_{0}(\calA/S)_\RR$, $\calL\geq 0$ if and only if $h_{\overbar\calL}\geq0$. 
\end{lemma}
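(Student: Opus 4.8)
The plan is to prove the two implications separately. For $(\Rightarrow)$ I will approximate a nef class from the interior of the ample cone and use Theorem~\ref{thm_invariant_alb} together with the positivity of intersection numbers of nef adelic line bundles; for $(\Leftarrow)$ I will restrict to a fibre and reduce to the classical comparison, on a single abelian variety over a number field, between nefness of a symmetric $\RR$-line bundle and non-negativity of its N\'eron--Tate quadratic form.

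For $(\Rightarrow)$, assume $\calL\geq 0$ and fix an ample symmetric line bundle $\calM_0\in\Pic_0(\calA/S)$, which exists since $\calA/S$ is projective. For every rational $\epsilon>0$ the class $\calL+\epsilon\calM_0\in\Pic_0(\calA/S)_\RR$ is ample, because the relative ample cone is open and adding a nef class to an ample class keeps it in the interior. By Proposition~\ref{prop_numerical_property}(4), after replacing each ample integral line bundle produced there by its symmetrisation $\tfrac12(\calM'+[-1]^*\calM')$ (harmless, since $\calL+\epsilon\calM_0$ is itself symmetric), we may write $\calL+\epsilon\calM_0=\sum_j c_j\calM_j$ with $c_j>0$ and each $\calM_j\in\Pic_0(\calA/S)_\QQ$ ample and symmetric. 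By Theorem~\ref{thm_invariant_alb} each invariant adelic bundle $\overbar{\calM_j}$ is nef, hence so is $\overbar{\calL+\epsilon\calM_0}=\sum_j c_j\overbar{\calM_j}$. For any $x\in\calA(\Ka)$ with scheme-theoretic point $x'$, the number $[\overbar{\calL+\epsilon\calM_0}]_{x'}$ is an intersection number of a nef adelic line bundle, hence non-negative, so $h_{\overbar{\calL+\epsilon\calM_0}}(x)\geq 0$. By linearity of the height in the (invariant) adelic bundle,
\[
h_{\overbar{\calL}}(x)=h_{\overbar{\calL+\epsilon\calM_0}}(x)-\epsilon\, h_{\overbar{\calM_0}}(x)\geq -\epsilon\,h_{\overbar{\calM_0}}(x),
\]
and letting $\epsilon\to 0$ (with $x$, hence $h_{\overbar{\calM_0}}(x)$, fixed) gives $h_{\overbar{\calL}}(x)\geq 0$.

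For $(\Leftarrow)$, suppose $h_{\overbar{\calL}}\geq 0$ but $\calL$ is not nef. By Proposition~\ref{prop_numerical_property}(2), $\calL_s$ then fails to be nef for every $s\in S$; fix $s\in S(\Ka)$, so $\calA_s$ is an abelian variety over a number field and $\calL_s$ is a symmetric non-nef $\RR$-line bundle on it. Since $h_{\overbar{\calL}}$ restricts on $\calA_s(\Ka)$ to the fibre-wise N\'eron--Tate height of $\calL_s$, it suffices to contradict $h_{\overbar{\calL}}\geq 0$ by producing, on $\calA_s$, a point of negative N\'eron--Tate height; that is, to know the following: on an abelian variety $A$ over a number field, a symmetric $\RR$-line bundle $\calM$ that is not nef admits $x\in A(\Ka)$ with $\hat h_\calM(x)<0$. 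I would deduce this from the Rosati involution: fixing an ample symmetric polarisation $\calM_0$ and setting $r_\calM:=\Lambda(\calM_0)^{-1}\circ\Lambda(\calM)\in\End(A)_\QQ$, one has $\hat h_\calM(x)=\langle r_\calM x,x\rangle$ for the N\'eron--Tate pairing $\langle\cdot,\cdot\rangle$ attached to $\calM_0$, with respect to which $r_\calM$ is self-adjoint; and $\calM$ is nef if and only if $r_\calM$ is totally non-negative for the positive involution. Since $A(\Ka)\otimes\QQ$ is a faithful $\End(A)_\QQ$-module carrying a positive-definite pairing compatible with the involution, failure of total non-negativity of $r_\calM$ forces $\langle r_\calM x,x\rangle<0$ for some $x\in A(\Ka)$. (Alternatively this can be quoted from the standard theory of the N\'eron--Tate height.)

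The main obstacle is precisely this last input. The rest is routine manipulation of the invariant adelic line bundles recorded above; the subtlety is that $\RR$-coefficients are allowed, so the ``negative direction'' of $\calM$ need not be cut out by an abelian subvariety, and a naive reduction to proper subvarieties fails (e.g.\ on a simple abelian surface). This is why the argument passes through the faithfully realised action of the Rosati-symmetric endomorphisms on the Mordell--Weil group rather than through restriction to subvarieties.
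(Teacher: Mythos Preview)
Your forward direction is essentially identical to the paper's.

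For the converse, the paper takes a much more elementary route: if $\calL$ is not nef there is a curve $C$ lying in some fibre $\calA_s$ with $\deg(\calL|_C)<0$, so $-\calL|_C$ is ample on the projective curve $C$; since $\overbar\calL|_C$ is an integrable adelic extension of $\calL|_C$, the function $h_{\overbar\calL}|_C$ is a Weil height for $\calL|_C$ up to $O(1)$, and is therefore unbounded below. Your remark that ``a naive reduction to proper subvarieties fails (e.g.\ on a simple abelian surface)'' conflates \emph{subvarieties} with \emph{abelian subvarieties}: the curve $C$ need have no group-theoretic meaning whatsoever, so simplicity of $\calA_s$ is irrelevant. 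This short argument bypasses the Rosati machinery entirely.

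Your Rosati-involution approach is correct in outline, and it does yield the sharper statement that the N\'eron--Tate quadratic form $\hat h_{\calL_s}$ is positive semidefinite on $\calA_s(\Ka)\otimes\RR$ iff $\calL_s$ is nef. But the step you flag as ``the main obstacle'' --- that a Rosati-symmetric $r_{\calM}$ which is not totally non-negative yields $\langle r_{\calM}x,x\rangle<0$ for some $x\in A(\Ka)$ --- genuinely requires the structure theory of the $\End(A)_\QQ$-module $A(\Ka)_\QQ$ together with positivity of the Rosati involution; it is true, but it is a much heavier input than the one-line curve argument above.
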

\begin{proof}

If $\calL$ is  nef, taking an ample $\calL_0\in\Pic_0(\calA/S)$, then $\calL_\epsilon:=\calL+\epsilon \calL_0$ is ample for any $\epsilon>0$, and hence can be written as a positive linear combination of ample integral line bundles by Prop.~\ref{prop_numerical_property} (4). Then $ h_{\overbar\calL_\epsilon}\geq 0$. Letting $\epsilon\rightarrow 0$, we get $h_{\overbar\calL}\geq 0$.

Conversely, if $\calL$ is not nef, then there is a curve $C$ on a fiber such that $\deg(\calL|_C)<0$. Then the height $ h_{\overbar\calL}|_C= h_{\overbar{\calL}|_C}$ on $C$ can be arbitrarily negative since $-\calL|_C$ is ample.
\end{proof}

As a consequence, if $\calL\equiv\calL'$ for $\calL'\in\Pic_0(\calA/S)_\RR$, then $\hat h_\calL=\hat h_{\calL'}$. In other words, the association of N\'eron--Tate height function to $\Pic_0(\calA/S)_\RR$ factors through $\NS(\calA/S)_\RR$.

In the next lemma, we show that a total height function on $\calA$ dominates other height functions in a suitable sense.

\begin{lemma}
     Let $\iota:\calA\hookrightarrow \PP^{N}$ be an embedding and denote the naive height function restricted to $\calA$ by $h$. For $\overbar{\calM}\in \hPic_{\mathrm{int}}(\calA/\calO_K)$, there exists $c>0$ such that $h_{\overbar{\calM}}\leq c\max\{1,h\}$.
\end{lemma}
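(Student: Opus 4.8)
\emph{Overall strategy.} The plan is to reduce to the case of a nef adelic line bundle and then compare it against the big adelic line bundle $\overbar{L}_0:=\iota^*\overbar{\calO(1)}$, where $\overbar{\calO(1)}$ is the naive (Fubini--Study) adelic line bundle on $\PP^N$, so that $h=h_{\overbar{L}_0}$. For the reduction: since $\overbar{\calM}$ is integrable, write $\overbar{\calM}=\overbar{\calM}_1-\overbar{\calM}_2$ with $\overbar{\calM}_1,\overbar{\calM}_2\in\hPic_{\mathrm{nef}}(\calA/\calO_K)$. Applying property~(1) of the intersection pairing at the scheme-theoretic point $x'$ of any $x\in\calA(\Ka)$ (a model-dimension-one situation) gives $[\overbar{\calM}_2]_{x'}\geq 0$, hence $h_{\overbar{\calM}_2}\geq 0$ and $h_{\overbar{\calM}}\leq h_{\overbar{\calM}_1}$; likewise $h=h_{\overbar{L}_0}\geq 0$ since $\overbar{L}_0$ is nef. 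It thus suffices to bound $h_{\overbar N}$ by some $c\max\{1,h\}$ for an arbitrary nef $\overbar N\in\hPic_{\mathrm{nef}}(\calA/\calO_K)$, which we then apply with $\overbar N=\overbar{\calM}_1$.

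Next I would show a bound valid off a proper closed subvariety. The bundle $\overbar{L}_0$ is big: if $\overline{\calA}\subseteq\PP^N$ denotes the Zariski closure of $\iota(\calA)$, then $\overbar{\calO(1)}|_{\overline{\calA}}$ is arithmetically ample, hence of positive volume, and restriction of effective sections along the dense open immersion $\calA\hookrightarrow\overline{\calA}$ is injective, so $\widehat{\vol}(\calA,\overbar{L}_0)\geq\widehat{\vol}(\overline{\calA},\overbar{\calO(1)}|_{\overline{\calA}})>0$. Let $d$ be the model dimension of $\calA$. For $m\in\NN$ the bundles $m\overbar{L}_0$ and $\overbar N$ are nef, so Siu--Yuan's bigness theorem and arithmetic Hilbert--Samuel (Theorem~\ref{thm_hilbert_samuel_bigness}) give
\[
\widehat{\vol}(\calA,\,m\overbar{L}_0-\overbar N)\ \geq\ [(m\overbar{L}_0)^d]_\calA-d\,[(m\overbar{L}_0)^{d-1}\overbar N]_\calA\ =\ m^{d-1}\bigl(m\,\widehat{\vol}(\calA,\overbar{L}_0)-d\,[\overbar{L}_0^{\,d-1}\overbar N]_\calA\bigr),
\]
which is positive for $m$ large (depending on $\overbar N$); fix such an $m$. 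Then $m\overbar{L}_0-\overbar N$ is a big integrable adelic line bundle, so $k(m\overbar{L}_0-\overbar N)$ carries a nonzero effective section $s$ for some $k\in\NN$, and Lemma~\ref{lem_effective_sec_implies_nonnegative_generically} gives $h_{m\overbar{L}_0-\overbar N}(x)\geq 0$, i.e. $h_{\overbar N}(x)\leq m\,h(x)\leq m\max\{1,h(x)\}$, for every $x\in\calA(\Ka)$ with $s(x)\neq 0$, that is, outside the proper closed subvariety $\divv(s)\subsetneq\calA$.

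Finally I would clear the exceptional locus by Noetherian induction. For each irreducible component $Z$ of $\divv(s)$, the restriction $\iota|_Z:Z\hookrightarrow\PP^N$ is again an embedding with naive height $h|_Z$, and $\overbar{\calM}|_Z$ is again integrable (pullback preserves nefness), with $h_{\overbar{\calM}|_Z}=h_{\overbar{\calM}}|_{Z(\Ka)}$ and $h_{\overbar{L}_0|_Z}=h|_{Z(\Ka)}$, since the intersection number at a point depends only on the bundle near that point. As $\dim Z<\dim\calA$, Noetherian induction supplies $c_Z>0$ with $h_{\overbar{\calM}}|_Z\leq c_Z\max\{1,h|_Z\}$ (the base case $\dim\calA=0$ being immediate). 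Setting $c:=\max\{m,c_{Z_1},\dots,c_{Z_r}\}$ over the finitely many components and noting that every point of $\calA(\Ka)$ lies either off $\divv(s)$ or on some $Z_i$, one obtains $h_{\overbar{\calM}}\leq c\max\{1,h\}$.

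The main obstacle is the positivity input: one needs $\widehat{\vol}(\calA,\overbar{L}_0)>0$ so that Siu--Yuan forces $m\overbar{L}_0-\overbar N$ to be big. If instead one knew that $m\overbar{L}_0-\overbar N$ is \emph{nef} for $m\gg 0$, the conclusion would be immediate from $h\geq 0$ for nef bundles, with no exceptional locus at all; lacking that, the effective-section argument only controls $h_{\overbar{\calM}}$ away from a hypersurface, and one must run the Noetherian induction and check that the embedding, nefness, integrability, and compatibility of heights with restriction all persist when passing to closed subvarieties.
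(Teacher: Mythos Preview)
Your argument is correct but takes a genuinely different route from the paper. The paper works directly on a projective model: it realizes both $\iota^*\overbar{\calO(1)}$ and $\overbar{\calM}$ on a common projective compactification $\calA_0\supseteq\calA$, writes the divisor of $\iota^*\calO(1)$ there as $D+E$ with $D$ ample and $E$ effective supported on the boundary $\calA_0\setminus\calA$, and observes that $D-\epsilon D_1$ is ample for small $\epsilon>0$ (where $D_1$ is a divisor for $\calM$). Then $\calO(D-\epsilon D_1+E)$ has base locus contained in the boundary, so $h-\epsilon h_{\overbar{\calM}}$ is bounded below by a constant on all of $\calA$ in one stroke. This avoids both the Siu--Yuan machinery and the Noetherian induction: because the paper can arrange the zero locus of the auxiliary section to lie \emph{entirely outside} $\calA$, there is no residual closed set to handle.

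Your approach stays within the adelic formalism (no explicit model needed) and instead leverages Theorem~\ref{thm_hilbert_samuel_bigness} to force bigness of $m\overbar{L}_0-\overbar N$; the price is that the resulting effective section may vanish inside $\calA$, which you then clear by induction on dimension. One small point worth making explicit: the induction hypothesis is applied to closed subvarieties $Z\subseteq\calA$ that are not themselves abelian schemes, so you are really proving (and should state) the lemma for an arbitrary quasi-projective variety embedded in $\PP^N$; your argument indeed never uses the group structure, so this is harmless.
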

\begin{proof}
    We give a proof in the case where $\overbar{\calM}$ is model adelic line bundle. General case follows by a limit argument. Let $\calA_0$ be the generic fiber of a projective model of $\calA$ over $\calO_K$ where both $\iota^*\overbar{\calO(1)}$ and $\overbar\calM$ are realized. The divisor on the generic fiber for $\iota^*\overbar{\calO(1)}$ can be taken as $D+E$ where $D$ is ample and $E$ is an effective divisor supported away from $\calA$. Let $D_1$ be a divisor for $\overbar\calM$. Then for sufficiently small $\epsilon>0$, $D-\epsilon D_1$ is ample. In particular, the base locus of $\calO(D-\epsilon D_1+E)$ is in the complement of $\calA$. Thus, 
    $h-\epsilon h_{\overbar{\calM}}$ is bounded below by a constant on $\calA$. This concludes the proof.
\end{proof}

As an immediate corollary, we see that the choice of total height functions is irrelevant for our purpose of bounded height results.
\begin{cor}\label{cor_choice_total_height}
    For $i=1,2$, let $h_{i}$ be the restricted naive height functions on $\calA$ induced by embedding into projective spaces. There exists $c>0$ such that
    \[
    c^{-1}\max\{1,h_1\}\leq \max\{1,h_2\}\leq c\max\{1,h_1\}.
    \]
\end{cor}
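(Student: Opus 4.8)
The plan is to deduce this immediately from the preceding lemma. For $i=1,2$, let $\iota_i:\calA\hookrightarrow \PP^{N_i}$ be the embedding inducing the restricted naive height $h_i$, and set $\overbar{\calM}_i:=\iota_i^*\overbar{\calO(1)}$, the pullback of the naive adelic line bundle on $\PP^{N_i}$. Since $\overbar{\calO(1)}$ is a model (in particular nef, hence integrable) adelic line bundle on $\PP^{N_i}/\calO_K$ whose height function is the naive height, functoriality of pullback and of the height construction give $\overbar{\calM}_i\in\hPic_{\mathrm{int}}(\calA/\calO_K)$ with $h_{\overbar{\calM}_i}=h_i$ on $\calA(\Ka)$.

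Now apply the preceding lemma twice. Taking the embedding $\iota_1$ (so that ``$h$'' there is $h_1$) and the integrable bundle $\overbar{\calM}=\overbar{\calM}_2$ produces a constant $c_1>0$ with $h_2=h_{\overbar{\calM}_2}\leq c_1\max\{1,h_1\}$. Symmetrically, taking $\iota_2$ and $\overbar{\calM}=\overbar{\calM}_1$ produces $c_2>0$ with $h_1\leq c_2\max\{1,h_2\}$.

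It remains to convert these one-sided bounds into the symmetric statement. The naive height on projective space is non-negative, so $h_1,h_2\geq 0$ and $\max\{1,h_i\}\geq 1$. From $h_2\leq c_1\max\{1,h_1\}$ together with $1\leq\max\{1,h_1\}$ we get $\max\{1,h_2\}\leq\max\{1,c_1\}\max\{1,h_1\}$, and likewise $\max\{1,h_1\}\leq\max\{1,c_2\}\max\{1,h_2\}$. Setting $c:=\max\{1,c_1,c_2\}$ gives $c^{-1}\max\{1,h_1\}\leq\max\{1,h_2\}\leq c\max\{1,h_1\}$, as desired.

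The argument is essentially immediate once the preceding lemma is in hand; the only point requiring a moment's care is checking that $\overbar{\calM}_i$ really is an integrable adelic line bundle on $\calA/\calO_K$ with associated height exactly $h_i$, which follows from the description of the naive adelic metric on $\calO(1)$ recalled earlier together with compatibility of pullback with heights. There is no substantial obstacle.
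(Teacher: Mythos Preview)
Your proof is correct and is exactly the argument the paper intends: the corollary is stated as an ``immediate corollary'' of the preceding lemma with no separate proof given, and your symmetric application of that lemma to $\overbar{\calM}_i=\iota_i^*\overbar{\calO(1)}$ is precisely what is meant.
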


Moreover, we can equivalently use a sum of the fiber-wise N\'eron--Tate height with an ample height on the base as the total height, if we use a result of Silverman--Tate~\cite[Thm.~A]{Silverman_Specialization}.

\subsection{Monge--Amp\`ere measure}\label{subsec_measure}
We work over $K=\CC$ in this subsection. Classically, when $X$ is a complex projective variety of dimension $d$ and $\overbar{L}_1,\cdots,\overbar{L}_d$ are smooth hermitian line bundles, the intersection number $[L_1\cdots L_d]_X$ is equal to the integral over $X^{\mathrm{an}}$ of $c_1(\overbar{L}_1)\wedge\cdots\wedge c_1(\overbar{L}_d)$, where $c_1(\bar L)$ denotes the Chern form of $\bar L$, as a special case of the fact that intersection of cycles in homology is Poincar\'e dual to wedge product in cohomology.
In general for quasi-projective $X$, there is a canonical Radon measure associated to any $(\overbar{L}_1,\cdots,\overbar{L}_d)\in \hPic_{\mathrm{int}}(X/\calO_\CC)^d$, called the \emph{Monge--Amp\`ere measure}, on the complex analytic variety $X^{\mathrm{an}}$, denoted by $c_1(\overbar{L}_1)\cdots c_1(\overbar{L}_d)$, which can be defined by a limit process using \cite[Thm.~2.1]{BT_Capacity}.  A basic property is that the integral of $c_1(\overbar{L}_1)\cdots c_1(\overbar{L}_d)$ over any strict Zariski closed subvariety of $X^{\mathrm{an}}$ is $0$. We refer to \cite[\S3.6]{YuanZhang_ALB} for more details and a discussion of the more general \emph{Chambert-Loir measure} over any complete field with non-trivial valuation.

The following allows us to compute the geometric intersection number via integration by the Monge--Amp\`ere measure:
\begin{theorem}\label{thm_Chernsformula}
Let $\overbar{ L}_1,\cdots,\overbar{ L}_d\in \hPic_{\mathrm{int}}(X/\calO_\CC)$ and let $\widetilde L_1,\cdots,\widetilde L_d\in\tPic_{\mathrm{int}}(X/\CC)$ be the localization image. Then
    \[
   [\widetilde L_1\cdots\widetilde L_d]_X= \int_{X^{\mathrm{an}}}c_1(\overbar{L}_1)\cdots c_1(\overbar{L}_d).
    \]
\end{theorem}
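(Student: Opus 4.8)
The plan is to reduce to the case of model adelic line bundles on a projective model of $X$ and then invoke the classical de Rham description of intersection numbers. As a first step, I would note that both sides of the asserted identity are symmetric and multilinear in $(\overbar L_1,\dots,\overbar L_d)$: the left side by the definition of the intersection pairing, and the right side because the Monge--Amp\`ere operator $(\overbar L_1,\dots,\overbar L_d)\mapsto c_1(\overbar L_1)\cdots c_1(\overbar L_d)$ is multilinear by the Bedford--Taylor theory recalled in \cite[\S3.6]{YuanZhang_ALB} (see also \cite{BT_Capacity}). Since every integrable adelic line bundle is a difference of two nef ones, this reduces the problem to the case where each $\overbar L_i$ is nef.

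Next, a nef adelic line bundle $\overbar L_i\in\hPic_{\mathrm{nef}}(X/\calO_\CC)$ is, by construction, a limit in the boundary topology of \cite{YuanZhang_ALB} of model adelic line bundles $\overbar L_{i,n}$ arising from projective models $X_n$ of $X$ over $\CC$ together with smooth semipositive hermitian metrics; its localization $\widetilde L_i\in\tPic_{\mathrm{int}}(X/\CC)$ is then the limit of the associated geometric model adelic line bundles $\widetilde L_{i,n}$. By the very definition of the geometric intersection number, $[\widetilde L_1\cdots\widetilde L_d]_X=\lim_n[\widetilde L_{1,n}\cdots\widetilde L_{d,n}]_X$, and by the construction of the Monge--Amp\`ere measure as a limit of smooth Chern forms together with the stability of its total mass, $\int_{X^{\mathrm{an}}}c_1(\overbar L_1)\cdots c_1(\overbar L_d)=\lim_n\int_{X^{\mathrm{an}}}c_1(\overbar L_{1,n})\wedge\cdots\wedge c_1(\overbar L_{d,n})$. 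Hence it suffices to prove the identity when all $\overbar L_i$ are model adelic line bundles.

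In that situation I may assume $X$ is a dense open subset of an irreducible projective variety $Y$ over $\CC$ of dimension $d$, that each $\widetilde L_i$ is the restriction of a line bundle $M_i$ on $Y$, and that each $\overbar L_i$ carries a smooth hermitian metric with Chern form $c_1(\overbar L_i)$. Then $[\widetilde L_1\cdots\widetilde L_d]_X=(M_1\cdots M_d)_Y$ by the definition of the geometric intersection number on models, and the classical Chern--Weil / de Rham identity---applied on a resolution of singularities of $Y$ if necessary; cf.\ \cite{GH_principles,Demailly_Complex}---yields $(M_1\cdots M_d)_Y=\int_{Y^{\mathrm{an}}}c_1(\overbar L_1)\wedge\cdots\wedge c_1(\overbar L_d)$. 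Finally, $Y^{\mathrm{an}}\setminus X^{\mathrm{an}}$ is a strict Zariski closed subset and hence has measure zero for this smooth $(d,d)$-form (equivalently, the Monge--Amp\`ere measure gives no mass to strict subvarieties, as recalled before the theorem), so $\int_{Y^{\mathrm{an}}}=\int_{X^{\mathrm{an}}}$, and the formula follows.

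The step I expect to be the main obstacle is the limit passage in the second paragraph: one has to know that both the geometric intersection pairing and the total mass of the Monge--Amp\`ere measure are continuous in the boundary topology on $\hPic_{\mathrm{nef}}$, so that the two displayed limits exist and can be matched model by model. Both continuity facts are part of the foundations laid down in \cite{YuanZhang_ALB}, so the real work here is to cite them in the right form; the remaining ingredients---multilinearity, the model-case Chern--Weil computation, and the measure-zero observation---are standard.
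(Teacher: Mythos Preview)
The paper does not actually prove this theorem: it simply cites Guo \cite{Guo_IF} for the general quasi-projective statement and \cite[Lem.~5.5.4]{YuanZhang_ALB} for the special case where everything is localized from a number field. Your outline---reduce to nef by multilinearity, approximate by model adelic line bundles, invoke the classical Chern--Weil identity on a projective compactification, and discard the boundary---is the natural strategy and is presumably close to what those references carry out.

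You have correctly identified the one genuinely non-formal point. On the non-compact space $X^{\mathrm{an}}$, weak convergence of the approximating Monge--Amp\`ere measures does \emph{not} by itself imply convergence of their total masses, so the equality $\int_{X^{\mathrm{an}}}c_1(\overbar L_1)\cdots c_1(\overbar L_d)=\lim_n\int_{X^{\mathrm{an}}}c_1(\overbar L_{1,n})\cdots c_1(\overbar L_{d,n})$ needs an argument beyond ``part of the foundations''. One way to close this is to work on a fixed projective compactification $Y$ (dominating all the models in the approximating sequence after pullback), where the measures live on the compact space $Y^{\mathrm{an}}$ and total masses do pass to the limit; one then has to know that the limit measure on $Y^{\mathrm{an}}$ puts no mass on $Y\setminus X$, which is exactly the ``basic property'' recalled before the theorem but now needed for the limit object, not just for smooth forms. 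Establishing this no-mass-on-the-boundary statement for the limit Monge--Amp\`ere measure on a quasi-projective variety is the real content, and is what the cited work of Guo supplies. So your plan is sound, but the sentence ``both continuity facts are part of the foundations'' understates the work: that continuity, in the form you need it, \emph{is} the theorem.
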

\begin{proof}
   For the current general version, see Guo \cite{Guo_IF}. See also \cite[Lem. 5.5.4]{YuanZhang_ALB} for the special case when the involved adelic line bundles are localized from a number field. 
\end{proof}

Now let $\calL_1,\cdots,\calL_d\in\Pic_{0}(\calA/S)$. Let $\calX\subseteq\calA$ be a subvariety of dimension $d$. The Monge--Amp\`ere measure $c_1(\overbar{\calL_d}|_\calX)\cdots c_1(\overbar{\calL_d}|_\calX)$ has a particularly nice description in terms of the Betti forms. 

 \begin{theorem}\label{thm_MAmeasure_Betti}
      For $\calL_1,\cdots,\calL_d\in \Pic_0(\calA/S)$ and a subvariety $\calX\subseteq\calA$ of dimension $d$, 
      \[
    c_1(\overbar{\calL_1}|_\calX)\cdots c_1(\overbar{\calL_d}|_\calX)=\omega(\calL_1)\wedge \cdots\wedge \omega(\calL_d)
      \]as measures on $\calX^{\mathrm{an}}$.
 \end{theorem}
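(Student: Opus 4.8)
The plan is to reduce the identity of measures to the pointwise identity of forms $c_1(\overbar{\calL})=\omega(\calL)$ on all of $\calA^{\mathrm{an}}$ (the first Chern form of the invariant metric equals the Betti form), and then to appeal to the standard fact that, for smooth semipositive metrics, the Monge--Amp\`ere measure is computed by wedging Chern forms. For the reduction, note that both sides of the asserted identity are symmetric and multilinear in $(\calL_1,\dots,\calL_d)$: the Monge--Amp\`ere (Chambert-Loir) measure is multilinear in the integrable adelic line bundles by construction, and $\omega(\cdot)$ is linear on $\NS(\calA/S)_\RR$ by Prop.~\ref{prop_bettiform_property}. Since every $\calL\in\Pic_0(\calA/S)$ is a difference of two ample symmetric line bundles (add a large multiple of a fixed ample one; cf.\ Prop.~\ref{prop_numerical_property}), it suffices to prove the identity when each $\calL_i$ is ample, in which case $\overbar{\calL_i}$ is nef (Thm.~\ref{thm_invariant_alb}) and $\omega(\calL_i)$ is semipositive (Prop.~\ref{prop_bettiform_property}).

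\textbf{Step 1: $c_1(\overbar{\calL})=\omega(\calL)$ on $\calA^{\mathrm{an}}$.} The invariant metric on $\calL\in\Pic_0(\calA/S)$ is $C^\infty$ --- this is classical, e.g.\ from the explicit formula for the canonical metric on $\calA_\Delta$ in the analytic uniformization in terms of the (holomorphic, positive-definite-imaginary-part) period matrix --- so $c_1(\overbar{\calL})$ is a genuine smooth $(1,1)$-form. Recall from \S\ref{subsec_Bettiform} that $\omega(\calL)$ is the unique $(1,1)$-form which restricts on each fibre $\calA_s$ to the translation-invariant representative $\omega_s$ of $c_1(\calL_s)$ and which vanishes whenever one argument is tangent to the Betti foliation, this pair of conditions determining it via the splitting \eqref{eqn_splitting_tangent}. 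The fibrewise condition holds for $c_1(\overbar{\calL})$: by uniqueness in Thm.~\ref{thm_invariant_alb} applied to the abelian variety $\calA_s$ over $\CC$, the restriction $\overbar{\calL}|_{\calA_s}$ is the invariant adelic line bundle on $\calA_s$, whose Chern form is $\omega_s$. For vanishing along the Betti directions, work locally on $\calA_\Delta\cong\TT^{2g}\times\Delta$ via \eqref{eqn_betti_map_real_analytic_diffeo} with $\Delta$ simply connected; since that identification is fibrewise a group isomorphism, $[l]$ acts by $(c,s)\mapsto(lc,s)$. Evaluating $c_1(\overbar{\calL})$ at a point $(c,s_0)$ on tangent vectors taken from $T_{s_0}\Delta$ and from the constant framing of $T\TT^{2g}$, the homogeneity $[l]^*c_1(\overbar{\calL})=l^2c_1(\overbar{\calL})$ forces the resulting coefficient function $h$ on $\TT^{2g}$ to satisfy $h(lc)=l^{k}h(c)$ for some integer $k\geq1$ as soon as at least one argument is tangent to $\Delta$; since $h$ is bounded on the compact torus $\TT^{2g}$, letting $l\to\infty$ gives $h\equiv0$. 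Thus $c_1(\overbar{\calL})$ has exactly the two defining properties of $\omega(\calL)$, hence equals it.

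\textbf{Step 2: passing to $\calX$, and the main obstacle.} Restricting along $\calX^{\mathrm{sm}}\hookrightarrow\calA$, Step 1 gives $c_1(\overbar{\calL_i}|_\calX)=\omega(\calL_i)|_{\calX^{\mathrm{sm}}}$ as smooth semipositive forms on the complex manifold $\calX^{\mathrm{sm}}$. By Bedford--Taylor theory the Monge--Amp\`ere measure of smooth semipositive metrics coincides on $\calX^{\mathrm{sm}}$ with the $d$-fold wedge of the Chern forms, and it puts no mass on the strict Zariski closed subset $\calX^{\mathrm{sing}}$ (recalled in \S\ref{subsec_measure}); the smooth form $\omega(\calL_1)\wedge\cdots\wedge\omega(\calL_d)$, extended by zero across $\calX^{\mathrm{sing}}$, has the same property. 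Hence $c_1(\overbar{\calL_1}|_\calX)\cdots c_1(\overbar{\calL_d}|_\calX)=\omega(\calL_1)\wedge\cdots\wedge\omega(\calL_d)$ as measures on $\calX^{\mathrm{an}}$, and undoing the reduction by multilinearity finishes the proof. The only real content lies in Step 1: one must be a little careful that the invariant metric is genuinely $C^\infty$ (so that $c_1(\overbar{\calL})$ is a form, not merely a positive current) and that the homogeneity-plus-boundedness argument in the Betti trivialization kills not only the purely horizontal component of $c_1(\overbar{\calL})$ but also the mixed horizontal--vertical ones; the reduction and Step 2 are then formal.
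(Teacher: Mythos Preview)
Your proof is correct and reaches the same conclusion as the paper, but via a somewhat different route. The paper argues from the \emph{definition} of the Monge--Amp\`ere measure as a weak limit: it runs Tate's iteration $[2]^*(\calL,\lVert\cdot\rVert_{k-1})\cong(\calL,\lVert\cdot\rVert_k)^{\otimes4}$ starting from an arbitrary smooth metric, notes that the potentials $g_{i,k}=-\log\lVert s_i\rVert_{i,k}^2$ are psh and converge locally uniformly, and then invokes Demailly's continuity theorem \cite[Cor.~2.6]{Demailly_MAoperator} to pass the wedge product through the limit; only at the very end does it identify the limiting Chern form as $\omega(\calL_i)$, in one line (``the unique invariant form''). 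You instead front-load that identification, proving $c_1(\overbar{\calL})=\omega(\calL)$ directly by a clean homogeneity--plus--compactness argument in the Betti trivialization, and then bypass the Demailly limit entirely by appealing to smoothness of the canonical metric so that Bedford--Taylor reduces to an honest wedge of forms.

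What each approach buys: your argument is more elementary and self-contained once one grants smoothness of the canonical metric (which, as you note, follows from the explicit formula in the period coordinates), and your verification of the Betti-rigidity condition for $c_1(\overbar{\calL})$ via $h(lc)=l^kh(c)$ on the compact torus is a genuinely nice point that the paper leaves implicit. The paper's approach, on the other hand, stays closer to the limiting definition of the Monge--Amp\`ere measure in the Yuan--Zhang framework and would in principle survive with less regularity than $C^\infty$; it also makes transparent that the statement is really about commuting a limit with a wedge.
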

 \begin{proof}
     The result is stated without proof in \cite[\S6.2.2]{YuanZhang_ALB}. We assume that $\calL_i$'s are ample; the general case follows by linearity. We assume also that $S$ is smooth since the measure is zero on the strict Zariski closed  non-smooth part. The idea is to go back to the construction of the invariant metrics by Tate's limiting argument. Start with any hermitian metric $\lVert \cdot\rVert_{i,0}$ on $\calL_i$ and construct inductively $\lVert \cdot \rVert_{i,k}$ on $\calL_i$ for $k\geq 1$ by the following rule:
     \[
    [2]^*(\calL_i,\lVert\cdot\rVert_{i,k-1})\cong (\calL_i,\lVert \cdot \rVert_{i,k})^{\otimes 4}.
     \]Then $(\calL_i,\lVert \cdot \rVert_{i,k})$ tends to $\calL_i$ with the invariant metric $\lVert \cdot \rVert$. Take nonzero rational sections $s_i$ of $\calL_i$. By ampleness, $g_{i,k}:=-\log\lVert s_i \rVert_{i,k}^2$ are sequences of pluri-subharmonic (psh) functions converging locally uniformly to $g_i:=-\log\lVert s_i \rVert^2$. By \cite[Cor.~2.6]{Demailly_MAoperator}, we know the weak limit
     \[
    c_1(\overbar{\calL_1}|_\calX)\cdots c_1(\overbar{\calL_d}|_\calX):=\lim_{k\rightarrow \infty}\frac{\sqrt{-1}}{2\pi}\partial\bar\partial(g_{1,k}) \wedge\cdots\wedge \frac{\sqrt{-1}}{2\pi}\partial\bar\partial(g_{d,k})
     \]as a measure on $\calX^{\mathrm{an}}$ can be given by the form
     \[
        \frac{\sqrt{-1}}{2\pi}\partial\bar\partial(g_{1}) \wedge\cdots\wedge \frac{\sqrt{-1}}{2\pi}\partial\bar\partial(g_{d})
     \]restricted to $\calX^{\mathrm{an}}$. On the other hand, the Chern form $\frac{\sqrt{-1}}{2\pi}\partial\bar\partial(g_{i})$ of $(\calL_i,\lVert\cdot\rVert)$ is the unique invariant form $\omega(\calL_i)$. So we are done.
    \end{proof}

\begin{cor}\label{cor_intersection_to_form}
    For a nef $\calL\in \Pic_0(\calA/S)_\RR$ and a subvariety $\calX\subseteq\calA$ of dimension $d$, the self-intersection number $[\widetilde\calL^d]_\calX>0$ if and only if $\omega(\calL)^{\wedge d}|_{\calX^{\mathrm{an}}}\not \equiv 0$. 
\end{cor}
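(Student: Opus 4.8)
The plan is to chain together Theorem~\ref{thm_Chernsformula} and Theorem~\ref{thm_MAmeasure_Betti} to identify the self-intersection number with the integral of the Betti volume form, and then to use the semipositivity of that form.

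First I would check that $[\widetilde\calL^d]_\calX$ makes sense: since $\calL\in\Pic_0(\calA/S)_\RR$ is nef, picking an ample $\calL_0\in\Pic_0(\calA/S)$ we may write $\calL=(\calL+\calL_0)-\calL_0$ with $\calL+\calL_0$ again ample, so by Theorem~\ref{thm_invariant_alb} and linearity the invariant adelic line bundle $\widetilde\calL\in\hPic(\calA/\CC)_\RR$ is a difference of nef invariant adelic line bundles, hence integrable. Both Theorem~\ref{thm_Chernsformula} and Theorem~\ref{thm_MAmeasure_Betti} are stated for tuples of integral classes, but each extends by $\RR$-multilinearity: intersection numbers and Monge--Amp\`ere measures are multilinear in the adelic arguments, while $\omega$ is $\RR$-linear on $\NS(\calA/S)_\RR$ by Proposition~\ref{prop_bettiform_property}, so the two sides of Theorem~\ref{thm_MAmeasure_Betti}, being $\RR$-multilinear symmetric functions of $(\calL_1,\dots,\calL_d)$ that agree on the integral lattice, agree on all of $\Pic_0(\calA/S)_\RR^d$. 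Taking all arguments equal to $\calL$ and combining with the localization statement of Theorem~\ref{thm_Chernsformula} (the archimedean-local invariant bundle $\overbar\calL$ localizes to $\widetilde\calL$), this yields
\[
[\widetilde\calL^d]_\calX=\int_{\calX^{\mathrm{an}}}c_1(\overbar\calL|_\calX)^{d}=\int_{\calX^{\mathrm{an}}}\omega(\calL)^{\wedge d}.
\]

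Next, because $\calL$ is nef, Proposition~\ref{prop_bettiform_property}(2) gives that $\omega(\calL)$ is a semipositive $(1,1)$-form, so by the discussion in \S\ref{subsec_complex_geometry} its top exterior power restricted to the smooth locus $\calX^{\mathrm{sm,an}}$ is a nonnegative multiple of a fixed positive volume form, hence a nonnegative Radon measure there; and the complement $\calX^{\mathrm{an}}\setminus\calX^{\mathrm{sm,an}}$ is a strict Zariski closed subset, hence Monge--Amp\`ere null. Thus the integral above is $\geq 0$. Moreover, if $\omega(\calL)^{\wedge d}|_{\calX^{\mathrm{an}}}\not\equiv 0$ then it is nonzero at some smooth point, hence (by continuity and nonnegativity) strictly positive on a neighborhood, forcing the integral to be strictly positive; conversely if the form vanishes identically the integral is $0$. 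Comparing with the displayed identity gives $[\widetilde\calL^d]_\calX>0$ if and only if $\omega(\calL)^{\wedge d}|_{\calX^{\mathrm{an}}}\not\equiv 0$.

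There is no substantial obstacle; the statement is essentially a repackaging of the two cited theorems. The only points demanding care are the passage from integral to $\RR$-coefficients (so that Theorems~\ref{thm_Chernsformula} and~\ref{thm_MAmeasure_Betti} apply to the $\RR$-class $\calL$), and the reduction to the smooth locus, where one uses that the Monge--Amp\`ere measure does not charge strict Zariski closed subsets together with the fact that $\calX^{\mathrm{an}}$, being the analytification of an irreducible variety, has a dense connected smooth locus.
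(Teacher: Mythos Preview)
Your proposal is correct and follows the same approach as the paper: combine Theorems~\ref{thm_Chernsformula} and~\ref{thm_MAmeasure_Betti} to get $[\widetilde\calL^d]_\calX=\int_{\calX^{\mathrm{an}}}\omega(\calL)^{\wedge d}$, then invoke semipositivity from nefness. You have simply spelled out a few details (integrability of $\widetilde\calL$, the $\RR$-multilinear extension, reduction to the smooth locus) that the paper's two-line proof leaves implicit.
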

\begin{proof}
    According to Thm.~\ref{thm_Chernsformula} and Thm.~\ref{thm_MAmeasure_Betti}, we have
    \[
    [\widetilde\calL^d]_\calX=\int_{\calX^{\mathrm{an}}}\omega(\calL)^{\wedge d}|_{\calX^{\mathrm{an}}}.
    \]By nefness of $\calL$, we know $\omega(\calL)^{\wedge d}|_{\calX^{\mathrm{an}}}$ is semipositive. Hence the above is positive if and only if the form is nontrivial as a measure.
\end{proof}

\section{Homomorphism approximation}\label{sec_homomorphism_approx}
In this section, $\calA,\calB$ are abelian schemes, of respective relative dimension $g,g'$, over a normal quasi-projective variety $S$ defined over a number field $K$. We fix ample line bundles $\calL_\calA\in\Pic_{0}(\calA/S)$ and $\calL_\calB\in \Pic_{0}(\calB/S)$. Fix $\overbar M_S\in \hPic_{\mathrm{nef}}(S/\calO_K)$ and let $\overbar\calM:=\overbar\calL_\calA+\pi^*\overbar M_S\in \hPic_{\mathrm{nef}}(\calA/\calO_K)$.  In particular, $h_{\overbar\calM}(x)= h_{\overbar\calL_\calA}(x)+h_{\overbar M_S}(\pi(x))$, for any $x\in\calA(\Ka)$. We refer to $ h_{\overbar\calL_\calA}$ as the \emph{relative height} and $h_{\overbar\calM}$ as the \emph{total height}.

\begin{defn}
Let $0<\epsilon<\frac{1}{3}$. The \emph{$\epsilon$-neighborhood} of a point $x\in \calA(\Ka)$ is defined by
\[
C(\epsilon,x):=\left\{x+y\in \calA(\Ka) \mid h_{\overbar\calL_\calA}(y)< \epsilon \max\{1,h_{\overbar{\calM}}(x) \} \right\}.
\]The $\epsilon$-neighborhood of a subset $\Sigma\subseteq\calA$ is defined by  $C(\epsilon,\Sigma):=\bigcup_{x\in \Sigma(\Ka)}C(\epsilon,x).$
\end{defn}

The set of all $\calB$-null $\Ka$-points in $\calA$ is denoted as $\Null(\calB)$. The main result of this section is

\begin{theorem}\label{thm_for_one_quotient}
    Let $\calX\subseteq\calA$ be a subvariety of dimension $d$. Assume that $[(f^*\widetilde\calL_\calB)^d]_\calX>0$ for any $f\in \Hom(\calA,\calB)^\circ_\RR$. Then there exist $\epsilon,c>0$ and an open dense subset $\calU\subseteq\calX$ such that for any $p\in \calU(\Ka)\cap C(\epsilon,\Null(\calB))$, its total height $h_{\overbar\calM}(p)\leq c$.
\end{theorem}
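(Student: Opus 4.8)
The plan is to derive the bound by pitting a uniform \emph{height upper bound} for points of $\calX$ in a Zariski dense open set against a \emph{height lower bound} for $\calB$-null points, the two competing as $\epsilon$ shrinks. First I would invoke Proposition~\ref{prop_compactness} to replace the a priori infinite supply of killing homomorphisms by a bounded one: there is a compact set $\calK = \calK(\calA,\calB) \subseteq \Hom(\calA,\calB)^\circ_\RR$ such that each $\calB$-null point $x$ is killed by $Nf$ for some $\QQ$-homomorphism $f \in \calK$ and some $N \geq 1$. Since $\calL_\calB$ is symmetric this forces $\widehat h_{\calL_\calB}(f(x)) = 0$, so if $p = x+y$ with $y$ in the fiber over $\pi(x)$ then $h_{f^*\overbar{\calL}_\calB}(p) = \widehat h_{\calL_\calB}(f(y))$. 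Using continuity of $f \mapsto f^*[\calL_\calB] \in \NS(\calA/S)_\RR$ (Proposition~\ref{prop_pullback_extends_to_real}), compactness of $\calK$, and openness of the ample cone, I would fix $c_1 > 0$ so that $c_1\calL_\calA - f^*\calL_\calB$ is nef for all $f \in \calK$; then Lemma~\ref{lem_nef_equals_nonnegative_height} gives the uniform estimate $\widehat h_{\calL_\calB}(f(z)) \leq c_1\, \widehat h_{\calL_\calA}(z)$ for all $z \in \calA(\Ka)$ and $f \in \calK$.

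The heart of the matter is the height upper bound. By Lemma~\ref{lem_continuity_intersection_pullback} all the arithmetic and geometric intersection numbers of the shape $f \mapsto [(f^*\overbar{\calL}_\calB)^{i}\,\overbar{\calM}^{j}]_\calX$ vary continuously with $f$, hence are bounded on the compact set $\calK$, while the hypothesis $[(f^*\widetilde{\calL}_\calB)^{d}]_\calX > 0$ --- equivalently, by Corollary~\ref{cor_intersection_to_form}, nonvanishing of the Betti volume form $\omega(f^*\calL_\calB)^{\wedge d}|_{\calX^{\mathrm{an}}}$ --- is a continuous positive function of $f$, hence bounded below by some $\delta > 0$ on $\calK$. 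Feeding these uniform bounds into Siu--Yuan's bigness theorem (Theorem~\ref{thm_hilbert_samuel_bigness}(2)), applied to an auxiliary $\QQ$-adelic line bundle essentially of the shape $a\, f^*\overbar{\calL}_\calB - \overbar{\calM}$ --- after incorporating a controlled amount of the ample $\overbar{\calL}_\calA$ so that the relevant arithmetic self-intersection on $\calX$ is strictly positive --- one should obtain, for $a$ large and independent of $f \in \calK$, that this bundle has positive volume on $\calX$, hence a nonzero effective section, and then Lemma~\ref{lem_effective_sec_implies_nonnegative_generically} yields $h_{\overbar{\calM}}(p) \leq a\, h_{f^*\overbar{\calL}_\calB}(p) + c_0$ off the zero locus of that section. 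The target statement of this step is: there are $a, c_0 > 0$ and a Zariski dense open $\calU \subseteq \calX$ with $h_{\overbar{\calM}}(p) \leq a\, h_{f^*\overbar{\calL}_\calB}(p) + c_0$ for every $p \in \calU(\Ka)$ and every $f \in \calK$.

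The proof then closes by balancing constants. Let $p \in \calU(\Ka) \cap C(\epsilon, \Null(\calB))$; write $p = x+y$ with $x \in \Null(\calB)$ and $\widehat h_{\calL_\calA}(y) < \epsilon\max\{1, h_{\overbar{\calM}}(x)\}$, and pick $f \in \calK$ killing $x$. The upper bound together with the first paragraph gives
\[ h_{\overbar{\calM}}(p) \leq a\, h_{f^*\overbar{\calL}_\calB}(p) + c_0 = a\, \widehat h_{\calL_\calB}(f(y)) + c_0 \leq a c_1\, \widehat h_{\calL_\calA}(y) + c_0 < a c_1 \epsilon\, \max\{1, h_{\overbar{\calM}}(x)\} + c_0. \]
On the other hand $\pi(x) = \pi(p)$, and Cauchy--Schwarz for the positive semidefinite N\'eron--Tate form $\widehat h_{\calL_\calA}$ gives $\widehat h_{\calL_\calA}(x) = \widehat h_{\calL_\calA}(p - y) \leq 2\widehat h_{\calL_\calA}(p) + 2\widehat h_{\calL_\calA}(y)$, so (using $h_{\overbar M_S} \geq 0$, which holds since $\overbar M_S$ is nef) $h_{\overbar{\calM}}(x) \leq 2 h_{\overbar{\calM}}(p) + 2\widehat h_{\calL_\calA}(y) \leq 2 h_{\overbar{\calM}}(p) + 2\epsilon\max\{1, h_{\overbar{\calM}}(x)\}$. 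If $h_{\overbar{\calM}}(x) \leq 1$ the first display already bounds $h_{\overbar{\calM}}(p)$; otherwise, for $\epsilon < \tfrac14$ the last inequality yields $h_{\overbar{\calM}}(x) \leq \tfrac{2}{1-2\epsilon} h_{\overbar{\calM}}(p)$, and substituting into the first display gives $h_{\overbar{\calM}}(p) \leq \tfrac{2ac_1\epsilon}{1-2\epsilon} h_{\overbar{\calM}}(p) + c_0$; shrinking $\epsilon$ so that $\tfrac{2ac_1\epsilon}{1-2\epsilon} \leq \tfrac12$ forces $h_{\overbar{\calM}}(p) \leq 2c_0$. In all cases $h_{\overbar{\calM}}(p) \leq c$ for a constant $c$ depending only on $a, c_0, c_1, \epsilon$.

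I expect the main obstacle to be the uniform height upper bound of the second paragraph --- extracting genuine \emph{arithmetic} bigness of an auxiliary adelic line bundle on $\calX$ from the purely \emph{geometric} positivity $[(f^*\widetilde{\calL}_\calB)^d]_\calX > 0$. This is delicate because the arithmetic self-intersection $[(f^*\overbar{\calL}_\calB)^{d+1}]_\calX$ need not be positive (it vanishes, for instance, when $f(\calX)$ is a torsion translate of a subgroup of $\calB$), so a naive application of Siu--Yuan to $a\, f^*\overbar{\calL}_\calB - \overbar{\calM}$ does not immediately work and one must carefully exploit the ample part of $\overbar{\calM}$; compounding this, the construction must produce constants $a, c_0$ and an exceptional locus $\calX \setminus \calU$ that are uniform over the whole compact family $\calK$. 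Once this supporting proposition (together with the height lower bound for $\calB$-null points implicit in the first paragraph) is in place, the rest is the routine bookkeeping carried out above.
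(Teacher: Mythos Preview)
Your overall architecture is right and matches the paper: a height upper bound from Siu--Yuan bigness competes against a height lower bound coming from the compact set $\calK(\calA,\calB)$, and the two combine once $\epsilon$ is small. But there are two genuine gaps, both in the second paragraph.

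\textbf{The uniform open set.} Your target statement asks for a single Zariski dense open $\calU\subseteq\calX$ on which $h_{\overbar\calM}(p)\leq a\,h_{f^*\overbar\calL_\calB}(p)+c_0$ holds for \emph{every} $f\in\calK$. Siu--Yuan gives, for each fixed $f$, an effective section whose zero locus must be removed, but you have no control over how these zero loci move with $f$; an infinite intersection of dense opens has no reason to be dense. The paper's maneuver is to avoid this entirely: one fixes a small $\delta>0$ and replaces $\calK$ by a \emph{finite} $\delta$-net $\calK_{N_\delta}\subseteq\calK\cap\Hom(\calA,\calB)_\QQ$, so that $\calU:=\bigcap_{f\in\calK_{N_\delta}}\calU_f$ is a finite intersection and hence dense open. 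The price is that the lower bound must be weakened: for a $\calB$-null point $x$ one no longer has $h_{f^*\overbar\calL_\calB}(x)=0$ for some $f$ in the finite set, but only $h_{f^*\overbar\calL_\calB}(x)\leq\lambda^2\delta^2\,h_{\overbar\calL_\calA}(x)$ (Prop.~\ref{prop_lower_bound_for_Bnull}), obtained by approximating the true killer $g\in\calK$ by $f\in\calK_{N_\delta}$ with $|f-g|<\delta$ and using fact~(3) from \S\ref{subsec_hom}. The final balancing then chooses $\delta$ and $\epsilon$ together so that $12\lambda^2(\delta^2+\epsilon\gamma^2)<\alpha$. This finite approximation is the idea you are missing.

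\textbf{The auxiliary bundle.} To extract arithmetic bigness from the geometric hypothesis $[(f^*\widetilde\calL_\calB)^d]_\calX>0$, adding ``a controlled amount of the ample $\overbar\calL_\calA$'' does not work: $\overbar\calL_\calA$ is itself an invariant adelic line bundle, so the arithmetic self-intersections $[(f^*\overbar\calL_\calB)^i\overbar\calL_\calA^{d+1-i}]_\calX$ need not be positive (e.g.\ they vanish on the zero section). The paper instead adds $\beta\overbar\calN$, where $\overbar\calN$ is the pullback to $\calA$ of a degree-$1$ hermitian line bundle on $\Spec\calO_K$. The point is the identity $[\overbar\calL_1\cdots\overbar\calL_d\cdot\overbar\calN]_\calX=[\widetilde\calL_1\cdots\widetilde\calL_d]_\calX$, which converts exactly one arithmetic slot into a geometric one, so that the leading term in $[(f^*\overbar\calL_\calB+\beta\overbar\calN)^{d+1}]_\calX$ is $(d+1)\beta[(f^*\widetilde\calL_\calB)^d]_\calX\geq(d+1)\beta c_3>0$, uniformly in $f\in\calK$.
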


Note that we can and will assume \eqref{eqn_all_endomorphism_appears} without loss of generality by possibly passing to an \'etale cover of $S$ using Lem.~\ref{lem_existlevelstr}.


\subsection{Height lower bound}
In this subsection, we shall establish a lower bound for the total height of a point $p\in \calA(\Ka)$ that is close to be $\calB$-null. We need to approximate the general $\QQ$-homomorphisms in the compact set $\calK(\calA,\calB)$ from Prop.~\ref{prop_compactness} by finitely many $\QQ$-homomorphisms. Denote the subset of $\calK(\calA,\calB)$ consisting of $\QQ$-homomorphisms by
\[
\calK_\NN(\calA,\calB):=\calK(\calA,\calB)\cap\Hom(\calA,\calB)_\QQ.
\]Denote the finite subset consisting of $\QQ$-homomorphisms with denominator $N\in \NN$ by
\[
\calK_N(\calA,\calB):=\calK(\calA,\calB)\cap N^{-1}\Hom(\calA,\calB)=\{f\in\calK(\calA,\calB)\mid Nf\in \Hom(\calA,\calB)\}.
\]

Fix any (equivalent) norm $|\cdot|$ on the finite dimensional vector space $\Hom(\calA,\calB)_\RR$. We observe three basic facts:
\begin{enumerate}
    \item By compactness, there is $\gamma>0$ such that $\gamma^{-1}\leq |f|\leq \gamma$ for any $f\in \calK(\calA,\calB)$.
    \item Let $\delta>0$. By compactness,
    there exists $N_\delta\in \NN$ such that the $\delta$-neighborhood of the set $\calK_{N_\delta}(\calA,\calB)$ covers $\calK(\calA,\calB)$.
    
    \item  There exists $\lambda>0$, such that $f^*\calL_\calB \leq \lambda^2|f|^2\calL_\calA $ for any $f\in \Hom(\calA,\calB)_\RR$. Here, we use the notation $\calL_1\leq\calL_2$ to mean that $\calL_2-\calL_1$ is nef. 
\end{enumerate}
We give a proof of (3). By continuity of \eqref{eqn_starR_for_NS}, there exists an open neighborhood $\Delta$ of $0\in \Hom(\calA,\calB)_\RR$ such that $[\calL_\calA-f^*\calL_\calB]\in \Amp(\calA/S)\subseteq\NS(\calA/S)_\RR$ for any $f\in \Delta$.
Take $\lambda>0$ such that $ \Delta$ contains the ball centered at $0$ of radius $\lambda^{-1}$. Then $(\lambda |f|)^{-1}f\in \bar \Delta$ for any $f\in\Hom(\calA,\calB)_\RR$ and the required property follows from quadraticity.

We shall fix the meaning of $\gamma,\lambda,\delta,N_\delta$ as above for the rest of the section and the exact choice of $\delta$ will be given later.

\begin{prop}\label{prop_lower_bound_for_Bnull}
    Let $x\in \calA(\Ka)$ be a $\calB$-null point. There exists $f\in \calK_{N_\delta}(\calA,\calB)$ such that
    $h_{f^*\overbar\calL_\calB}(x)\leq \lambda^2\delta^2 h_{\overbar\calL_\calA}(x)$.
\end{prop}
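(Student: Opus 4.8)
The plan is to write the approximating homomorphism as $f=f_0+g$, where $f_0$ is a $\QQ$-homomorphism in the compact set $\calK(\calA,\calB)$ killing $x$ and $g$ is a small error. Concretely, by Proposition~\ref{prop_compactness} there is $f_0\in\calK_\NN(\calA,\calB)$ and a nonzero $N\in\NN$ with $(Nf_0)(x)=0$. Since $Nf_0\in\Hom(\calA,\calB)$ is a genuine homomorphism, $(Nf_0)^*\overbar\calL_\calB$ is the invariant adelic line bundle of $(Nf_0)^*\calL_\calB=N^2f_0^*\calL_\calB$, and functoriality of heights together with the vanishing of the N\'eron--Tate height of the symmetric line bundle $\calL_\calB$ along the zero section gives
\[
h_{f_0^*\overbar\calL_\calB}(x)=\tfrac{1}{N^2}h_{\overbar\calL_\calB}\big((Nf_0)(x)\big)=0.
\]
Then I would invoke fact (2) above to pick $f\in\calK_{N_\delta}(\calA,\calB)$ with $|f-f_0|\leq\delta$, and set $g:=f-f_0$, so $|g|\leq\delta$.

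The key point is that, for the fixed point $x$, the function $q_x\colon f'\mapsto h_{(f')^*\overbar\calL_\calB}(x)$ is a \emph{positive semidefinite} quadratic form on $\Hom(\calA,\calB)_\RR$. Quadraticity is inherited from Proposition~\ref{prop_pullback_extends_to_real}, which says $f'\mapsto (f')^*\calL_\calB$ is a real quadratic form valued in $\Pic_0(\calA/S)_\RR$, composed with the linear map $\calL\mapsto h_{\overbar\calL}$; semipositivity holds because $(f')^*\calL_\calB$ is nef for every $f'$ (Proposition~\ref{prop_pullback_extends_to_real}), hence $h_{(f')^*\overbar\calL_\calB}\geq 0$ by Lemma~\ref{lem_nef_equals_nonnegative_height}. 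A positive semidefinite quadratic form that vanishes at a vector also has the associated symmetric bilinear form vanish against that vector — this is exactly the elementary expansion-and-sign argument recalled in \S\ref{subsec_complex_geometry}. Applying this with $q_x(f_0)=0$, one gets $B_{q_x}(f_0,\cdot)\equiv 0$, whence
\[
h_{f^*\overbar\calL_\calB}(x)=q_x(f_0+g)=q_x(f_0)+2B_{q_x}(f_0,g)+q_x(g)=q_x(g)=h_{g^*\overbar\calL_\calB}(x).
\]

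Finally I would apply fact (3) to $g$: the class $\lambda^2|g|^2\calL_\calA-g^*\calL_\calB$ is nef, so Lemma~\ref{lem_nef_equals_nonnegative_height} and $h_{\overbar\calL_\calA}(x)\geq 0$ give
\[
h_{g^*\overbar\calL_\calB}(x)\leq \lambda^2|g|^2\,h_{\overbar\calL_\calA}(x)\leq \lambda^2\delta^2\,h_{\overbar\calL_\calA}(x),
\]
and combining with the previous display proves the proposition. Most of this is bookkeeping with the formalism of \S\ref{subsec_hom} and \S\ref{sec_adelic}; the one genuinely delicate step is the reduction $h_{f^*\overbar\calL_\calB}(x)=h_{g^*\overbar\calL_\calB}(x)$, which requires knowing that $f'\mapsto h_{(f')^*\overbar\calL_\calB}(x)$ is a bona fide positive semidefinite quadratic form — this is precisely where the quadratic-form structure of $\RR$-pullbacks (Proposition~\ref{prop_pullback_extends_to_real}) and the nef-to-nonnegative-height dictionary (Lemma~\ref{lem_nef_equals_nonnegative_height}) are indispensable.
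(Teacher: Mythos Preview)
Your proof is correct and follows the same overall plan as the paper: find $f_0\in\calK_\NN(\calA,\calB)$ killing $x$, approximate it by some $f\in\calK_{N_\delta}(\calA,\calB)$, and bound the height via the small difference $f-f_0$ using fact~(3) and Lemma~\ref{lem_nef_equals_nonnegative_height}. The one technical difference lies in the reduction step $h_{f^*\overbar\calL_\calB}(x)=h_{(f-f_0)^*\overbar\calL_\calB}(x)$. The paper clears denominators: setting $\varphi:=NN_\delta(f-f_0)\in\Hom(\calA,\calB)$, it uses the pointwise identity $\varphi(x)=(NN_\delta f)(x)$ in $\calB$ (since $NN_\delta f_0$ kills $x$) to obtain $(NN_\delta)^2h_{f^*\overbar\calL_\calB}(x)=h_{\overbar\calL_\calB}(\varphi(x))=h_{\varphi^*\overbar\calL_\calB}(x)$, then bounds the right-hand side. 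You instead remain at the level of $\RR$-homomorphisms and exploit that $q_x(f'):=h_{(f')^*\overbar\calL_\calB}(x)$ is a positive semidefinite quadratic form vanishing at $f_0$, whence its associated bilinear form kills $f_0$ and $q_x(f)=q_x(f-f_0)$ immediately. Your route is a touch more conceptual and never needs $f-f_0$ to be a genuine morphism; the paper's route is more concrete and does not invoke the PSD mechanism.
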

\begin{proof}
    Let $g\in \calK_\NN(\calA,\calB)$ with $Ng\in \Hom(\calA,\calB)$ such that $(Ng) (x)=0$. Let $f\in \calK_{N_\delta}(\calA,\calB)$ such that $|f-g|\leq \delta$. Note that $NN_\delta$ is a common denominator for $f,g$. Write $\varphi:=NN_\delta(f-g)$. Then $\varphi(x)=(NN_\delta f)(x)$ and $|\varphi|<NN_\delta \delta$. By the fact (3) above, we have
    \[
    \varphi^*\calL_\calB\leq \lambda^2 |\varphi|^2\calL_\calA \leq \lambda^2 (NN_\delta \delta)^2\calL_\calA.
    \]This implies by Lem.~\ref{lem_nef_equals_nonnegative_height} that $h_{\varphi^*\overbar\calL_\calB}(x)\leq \lambda^2(NN_\delta \delta)^2 h_{\overbar\calL_\calA}(x).$
   Since 
    \[
    (NN_\delta)^2 h_{f^*\overbar\calL_\calB}(x)=h_{\overbar\calL_\calB}\left((NN_\delta f)(x)\right)=h_{\overbar\calL_\calB}(\varphi(x))= h_{\varphi^*\overbar\calL_\calB}(x)\leq \lambda^2(NN_\delta \delta)^2 h_{\overbar\calL_\calA}(x),
    \] we are done by cancelling $(NN_\delta)^2$ on both sides.
\end{proof}

 We can relax the condition on $x$ and get a height bound for points that are merely close to be $\calB$-null. Fix $0<\epsilon<\frac{1}{3}$ to be determined later.

\begin{cor}\label{cor_lower_bound_for_close_Bnull}
    Let $x\in \calA(\Ka)$ be a $\calB$-null point. Let $p=x+y\in C(\epsilon,x)$. There exists $f\in \calK_{N_\delta}(\calA,\calB)$ such that
    $ h_{f^*\overbar\calL_\calB}(p)\leq 12\lambda^2 (\delta^2+\epsilon\gamma^2)\left( 1 +  h_{\overbar \calM}(p)\right)$.
\end{cor}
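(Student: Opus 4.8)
The plan is to transport the bound of Proposition~\ref{prop_lower_bound_for_Bnull} from the $\calB$-null point $x$ to the nearby point $p=x+y$, exploiting that Néron--Tate heights restrict to nonnegative quadratic forms along fibers. First I would apply Proposition~\ref{prop_lower_bound_for_Bnull} to $x$ to obtain $f\in\calK_{N_\delta}(\calA,\calB)$ with $h_{f^*\overbar\calL_\calB}(x)\leq\lambda^2\delta^2\,h_{\overbar\calL_\calA}(x)$. Since $f^*\calL_\calB$ and $\calL_\calA$ are symmetric, on the common fiber $\calA_{\pi(x)}$ of $x$, $y$ and $p$ both $h_{f^*\overbar\calL_\calB}$ and $h_{\overbar\calL_\calA}$ are nonnegative quadratic forms, so each satisfies the parallelogram inequality $q(u+v)\leq 2q(u)+2q(v)$. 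Hence $h_{f^*\overbar\calL_\calB}(p)\leq 2h_{f^*\overbar\calL_\calB}(x)+2h_{f^*\overbar\calL_\calB}(y)$; moreover, by fact~(1) ($|f|\leq\gamma$ on $\calK(\calA,\calB)$), fact~(3), and Lemma~\ref{lem_nef_equals_nonnegative_height} applied to the nef class $\lambda^2\gamma^2\calL_\calA-f^*\calL_\calB$, one has $h_{f^*\overbar\calL_\calB}(y)\leq\lambda^2\gamma^2\,h_{\overbar\calL_\calA}(y)$. Combining, $h_{f^*\overbar\calL_\calB}(p)\leq 2\lambda^2\delta^2\,h_{\overbar\calL_\calA}(x)+2\lambda^2\gamma^2\,h_{\overbar\calL_\calA}(y)$.

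Next I would feed in the hypothesis $p\in C(\epsilon,x)$, namely $h_{\overbar\calL_\calA}(y)<\epsilon\max\{1,h_{\overbar\calM}(x)\}$. Setting $b:=\max\{1,h_{\overbar\calM}(x)\}$ and using $h_{\overbar\calL_\calA}(x)\leq h_{\overbar\calM}(x)\leq b$ (legitimate since $\overbar M_S$ is nef, so $h_{\overbar M_S}\geq 0$), the previous estimate becomes $h_{f^*\overbar\calL_\calB}(p)\leq 2\lambda^2(\delta^2+\epsilon\gamma^2)\,b$. The remaining task is to compare $b$ with $a:=\max\{1,h_{\overbar\calM}(p)\}$. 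Since $h_{\pi^*\overbar M_S}$ is constant along fibers and $\pi(x)=\pi(p)$, one has $h_{\overbar\calM}(x)-h_{\overbar\calM}(p)=h_{\overbar\calL_\calA}(x)-h_{\overbar\calL_\calA}(p)$; bounding $h_{\overbar\calL_\calA}(x)=h_{\overbar\calL_\calA}(p-y)\leq 2h_{\overbar\calL_\calA}(p)+2h_{\overbar\calL_\calA}(y)$ and using $h_{\overbar\calL_\calA}(p)\leq h_{\overbar\calM}(p)$ and $h_{\overbar\calL_\calA}(y)<\epsilon b$ gives $h_{\overbar\calM}(x)\leq 2h_{\overbar\calM}(p)+2\epsilon b\leq 2a+2\epsilon b$; since also $1\leq 2a$, this yields $b\leq 2a+2\epsilon b$, hence $b\leq\frac{2a}{1-2\epsilon}<6a$ because $\epsilon<\frac{1}{3}$.

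Substituting $b<6a$ back in gives $h_{f^*\overbar\calL_\calB}(p)<12\lambda^2(\delta^2+\epsilon\gamma^2)\,a\leq 12\lambda^2(\delta^2+\epsilon\gamma^2)(1+h_{\overbar\calM}(p))$, which is the desired conclusion. I do not expect a genuine obstacle here beyond careful bookkeeping; the points to watch are that Néron--Tate heights are \emph{quadratic} rather than linear (each splitting $u+v$ costs a factor $2$), that the base contribution $h_{\overbar M_S}\circ\pi$ is identical at $x$ and $p$ and nonnegative (so it never interferes with the fiberwise quadratic estimates), and that the numerical constant $6$---and hence $12$---is exactly what solving $b\leq 2a+2\epsilon b$ for $b$ produces under the hypothesis $\epsilon<\frac{1}{3}$.
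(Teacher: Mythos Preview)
Your proof is correct and follows essentially the same approach as the paper: both apply Proposition~\ref{prop_lower_bound_for_Bnull}, use the parallelogram inequality for the N\'eron--Tate heights together with facts~(1) and~(3), and exploit $\epsilon<\tfrac{1}{3}$ to obtain the factor~$6$. The only difference is bookkeeping---the paper bounds $h_{\overbar\calL_\calA}(x)$ and $h_{\overbar\calL_\calA}(y)$ separately in terms of $h_{\overbar\calM}(p)+1$, whereas you first bound everything by $b=\max\{1,h_{\overbar\calM}(x)\}$ and then convert $b$ to $a=\max\{1,h_{\overbar\calM}(p)\}$ via $b\leq\frac{2a}{1-2\epsilon}$.
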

\begin{proof}
    Take $f$ as in Prop.~\ref{prop_lower_bound_for_Bnull}. We apply the quadraticity of N\'eron--Tate heights multiple times. First of all, we have
    \[
    h_{\overbar\calL_\calA}(x)\leq 2  h_{\overbar\calL_\calA}(p)+ 2  h_{\overbar\calL_\calA}(y)\leq 2 h_{\overbar\calL_\calA}(p)+ 2\epsilon + 2\epsilon h_{\overbar\calM}(x).
    \]Since $\epsilon<\frac{1}{3}$, we deduce that
    \begin{equation}\label{eqn_htx}
    h_{\overbar\calL_\calA}(x)<6  h_{\overbar\calL_\calA}(p)+ 2 + 2 h_{\overbar M_S}(\pi(p))\leq 6(h_{\bar\calM}(p)+1).
    \end{equation}By definition, we also get
    \begin{equation}\label{eqn_hty}
    h_{\overbar\calL_\calA}(y)<\epsilon\left(6  h_{\overbar\calL_\calA}(p)+ 2 + 3h_{\overbar M_S}(\pi(p))\right) <6\epsilon(h_{\bar\calM}(p)+1).
    \end{equation}
 Now, by Prop.~\ref{prop_lower_bound_for_Bnull} and Facts (1) and (3) we have
    \[
    \begin{split}
         h_{f^*\overbar\calL_\calB}(p)&\leq 2 h_{f^*\overbar\calL_\calB}(x)+2 h_{f^*\overbar\calL_\calB}(y)\\
         &\leq 2\lambda^2\delta^2 h_{\overbar\calL_\calA}(x)+2\lambda^2\gamma^2 h_{\overbar\calL_\calA}(y).
    \end{split}
    \]Combining it with the other two inequalities \eqref{eqn_htx} \eqref{eqn_hty}, we are done.
\end{proof}

\subsection{Height upper bound}
Let $\calX$ be a subvariety of $\calA$ of dimension $d$. We are going to establish a height upper bound in this subsection for the total height of a general point on $\calX$. By Lem.~\ref{lem_continuity_intersection_pullback}, we derive two immediate consequences on intersection numbers:
\begin{enumerate}
    \item There exists $c_1,c_2>0$ such that for any $f\in \calK(\calA,\calB)$, we have
    \begin{equation}\label{eqn_upperbound_intersection}
        [(f^*\overbar\calL_\calB)^d\cdot \overbar\calM]_\calX\leq c_1 \text{ and } [(f^*\widetilde\calL_\calB)^{d-1}\cdot\widetilde\calM]_\calX\leq c_2.
    \end{equation}
    \item Assuming that $[(f^*\widetilde\calL_\calB)^d]_\calX>0$ for any $f\in \Hom(\calA,\calB)^\circ_\RR$, there exists $c_3>0$ such that for any $f\in \calK(\calA,\calB)$, we have
    \begin{equation}\label{eqn_lowerbound_intersection}
        [(f^*\widetilde\calL_\calB)^d]_\calX\geq c_3.
    \end{equation}
\end{enumerate}
Using these, we have the following uniform height inequality:

\begin{prop}\label{prop_upper_bound}
   Let $\calX\subseteq\calA$ be a subvariety of dimension $d$.  Assume that $[(f^*\widetilde\calL_\calB)^d]_\calX>0$ for any $f\in \Hom(\calA,\calB)^\circ_\RR$. Then there exist $\alpha,\beta>0$ such that for any $f\in \calK_\NN(\calA,\calB)$, there is a dense open subset $\calU_f\subseteq\calX$ satisfying
   \[
    h_{f^*\overbar\calL_\calB}(p)\geq \alpha h_{\overbar \calM}(p)-\beta
   \]for any $p\in \calU_f(\Ka)$.
\end{prop}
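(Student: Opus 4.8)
The plan is to deduce the inequality from the arithmetic Siu--Yuan bigness theorem (Theorem~\ref{thm_hilbert_samuel_bigness}(2)) applied to a carefully chosen pair of nef $\QQ$-adelic line bundles on $\calX$, followed by Lemma~\ref{lem_effective_sec_implies_nonnegative_generically} to turn a nonzero effective section into a pointwise height bound. The conceptual obstacle is that the only positivity input we have is the \emph{geometric} self-intersection bound $[(f^*\widetilde\calL_\calB)^d]_\calX>0$ (which is exactly the nondegeneracy coming from Corollary~\ref{cor_intersection_to_form} and Ax--Schanuel), whereas the conclusion is arithmetic; moreover no uniform positive lower bound for the \emph{arithmetic} self-intersection $[(f^*\overbar\calL_\calB)^{d+1}]_\calX$ is available (and invoking a Bogomolov-type bound would be undesirable). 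The device that bridges this — and explains why \eqref{eqn_upperbound_intersection} also records a geometric bound — is to insert a constant adelic line bundle $\overbar\calO(c_0)$, i.e.\ the trivial line bundle equipped with the flat metric $\|1\|\equiv e^{-c_0}$: it is nef for $c_0\geq 0$, satisfies $h_{\overbar\calO(c_0)}\equiv c_0$, has vanishing higher self-powers in the intersection pairing, and $[\overbar\calO(c_0)\cdot\overbar N_1\cdots\overbar N_d]_\calX=c_0\,[\widetilde N_1\cdots\widetilde N_d]_\calX$.

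Concretely, I would fix rational numbers $\alpha,c_0>0$ to be pinned down later and apply Theorem~\ref{thm_hilbert_samuel_bigness}(2) with $\overbar L:=f^*\overbar\calL_\calB+\overbar\calO(c_0)$ and $\overbar M:=\alpha\overbar\calM$, both nef: $f^*\overbar\calL_\calB$ is nef by Theorem~\ref{thm_invariant_alb} and a limit argument, since $f^*\calL_\calB$ is nef by Proposition~\ref{prop_pullback_extends_to_real}; $\overbar\calO(c_0)$ is nef; and $\overbar\calM$ is nef. Expanding the two intersection numbers — only the terms without $\overbar\calO(c_0)$ and those linear in it survive — and using $[(f^*\overbar\calL_\calB)^{d+1}]_\calX\geq 0$ (nefness), one gets
\[
\widehat\vol\bigl(\calX,\ f^*\overbar\calL_\calB+\overbar\calO(c_0)-\alpha\overbar\calM\bigr)\ \geq\ (d+1)c_0\,[(f^*\widetilde\calL_\calB)^d]_\calX-(d+1)\alpha\,[(f^*\overbar\calL_\calB)^d\overbar\calM]_\calX-(d+1)\alpha\,d\,c_0\,[(f^*\widetilde\calL_\calB)^{d-1}\widetilde\calM]_\calX.
\]
By \eqref{eqn_lowerbound_intersection} and \eqref{eqn_upperbound_intersection}, valid uniformly over $f\in\calK_\NN(\calA,\calB)\subseteq\calK(\calA,\calB)$, the right-hand side is at least $(d+1)\bigl(c_0(c_3-\alpha d c_2)-\alpha c_1\bigr)$. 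Choosing $\alpha:=c_3/(2dc_2)$, so that $c_3-\alpha d c_2=c_3/2$, and then any rational $c_0>c_1/(dc_2)$, this lower bound becomes a fixed positive constant independent of $f$. Hence $f^*\overbar\calL_\calB+\overbar\calO(c_0)-\alpha\overbar\calM$ is big on $\calX$, so for some $m\geq 1$ (depending on $f$) there is a nonzero effective section $s\in\widehat H^0\bigl(\calX,\ m(f^*\overbar\calL_\calB+\overbar\calO(c_0)-\alpha\overbar\calM)\bigr)$.

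Finally, set $\calU_f:=\calX\setminus\{s=0\}$, a dense Zariski open subset since $s\neq 0$ and $\calX$ is irreducible. Lemma~\ref{lem_effective_sec_implies_nonnegative_generically} gives $h_{f^*\overbar\calL_\calB+\overbar\calO(c_0)-\alpha\overbar\calM}(p)\geq 0$ for every $p\in\calU_f(\Ka)$; since heights are additive in the adelic line bundle and $h_{\overbar\calO(c_0)}\equiv c_0$, this rearranges to $h_{f^*\overbar\calL_\calB}(p)\geq\alpha\,h_{\overbar\calM}(p)-c_0$, which is the claim with $\beta:=c_0$. The main work underlying this argument is the chain of uniform intersection bounds \eqref{eqn_upperbound_intersection} and \eqref{eqn_lowerbound_intersection} — resting on the hypothesis through Corollary~\ref{cor_intersection_to_form}, the continuity of $f\mapsto$ intersection number from Lemma~\ref{lem_continuity_intersection_pullback}, and the compactness of $\calK(\calA,\calB)$ from Proposition~\ref{prop_compactness} — together with the observation that the constant twist $\overbar\calO(c_0)$ is precisely what converts geometric positivity into arithmetic bigness while keeping all constants uniform in $f$.
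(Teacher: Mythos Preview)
Your proof is correct and essentially identical to the paper's: the paper introduces a hermitian line bundle $\overbar\calN$ on $\Spec\calO_K$ of arithmetic degree $1$ and works with $\overbar\calL':=f^*\overbar\calL_\calB+\beta\overbar\calN$, which is exactly your $f^*\overbar\calL_\calB+\overbar\calO(c_0)$ with $c_0=\beta$, and the remainder of the argument---Siu--Yuan bigness, the expansion using \eqref{eqn_upperbound_intersection}--\eqref{eqn_lowerbound_intersection}, and Lemma~\ref{lem_effective_sec_implies_nonnegative_generically}---matches step for step. (One cosmetic point: your specific choice $\alpha=c_3/(2dc_2)$ need not be rational; just take any rational $\alpha$ slightly smaller, as the paper does.)
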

\begin{proof}
    Take any rational numbers $\beta>0$ and 
    \[
    0<\alpha<\frac{c_3\beta}{d(c_1+dc_2\beta)}.
    \]Let $f\in \calK_\NN(\calA,\calB)$. Let $\overbar \calN$ be a hermitian line bundle on $\Spec\calO_K$ with $\widehat\deg(\overbar \calN)=1$. By slight abuse of notation, we view $\overbar \calN$ as an adelic line bundle on $\calA$ via pullback. One basic property of $\overbar\calN$ is that if $\overbar\calL_1,\cdots,\overbar\calL_d\in \hPic_{\mathrm{int}}(\calA/\calO_K)$, then
    \begin{equation}\label{eqn_intersect_arith_to_geom}
    [\overbar\calL_1\cdots \overbar\calL_d\cdot\overbar\calN]_\calX=[\widetilde \calL_1\cdots\widetilde \calL_d]_\calX
    \end{equation}  where $\widetilde\calL_i$ is the localization of $\overbar\calL_i$ in $\hPic_{\mathrm{int}}(\calA/K)$; cf. \cite[Lem.~4.4.4]{YuanZhang_ALB}.
    
    Let $\overbar\calL':=f^*\overbar\calL_\calB+\beta\overbar\calN\in \hPic_{\mathrm{nef}}(\calA/\calO_K)_\QQ$. We claim that 
    \begin{equation}\label{eqn_volume_positive}
        \widehat \vol(\calX, \overbar\calL'-\alpha \overbar\calM)>0.
    \end{equation}
    As a consequence, it implies the existence of a nonzero $\QQ$-effective section $s_f$ of $\overbar\calL'-\alpha \overbar\calM$, whence by Lem.~\ref{lem_effective_sec_implies_nonnegative_generically}, we have 
    \[
    0\leq h_{\overbar\calL'-\alpha \overbar\calM}(p) =  h_{f^*\overbar \calL_\calB}(p)-\alpha h_{\overbar\calM}(p)+h_{\beta\overbar\calN}(p)=h_{f^*\overbar \calL_\calB}(p)-\alpha h_{\overbar\calM}(p) +\beta
    \] 
    for any $p\in \calU_f(\Ka)$ with $\calU_f:=\calX\setminus\divv(s_f)$, as required. 

    To prove the claim \eqref{eqn_volume_positive}, we apply first Siu--Yuan's bigness theorem \ref{thm_hilbert_samuel_bigness}:
    \begin{equation}\label{eqn_application_bigness}
         \widehat \vol(\calX, \overbar\calL'-\alpha \overbar\calM) \geq [\overbar\calL'^{d+1}]_\calX - (d+1)\alpha\left[\overbar\calL'^d\overbar\calM\right]_\calX.
    \end{equation}Note that using \eqref{eqn_intersect_arith_to_geom} with the lower bound \eqref{eqn_lowerbound_intersection}, we have
    \begin{equation*}
    \begin{split}
        [\overbar\calL'^{d+1}]_\calX&=[(f^*\overbar\calL_\calB)^{d+1}]_\calX+(d+1)\beta\left[(f^*\overbar\calL_\calB)^d\cdot\overbar\calN\right]_\calX
        \\&=[(f^*\overbar\calL_\calB)^{d+1}]_\calX+(d+1)\beta\left[(f^*\widetilde\calL_\calB)^d\right]_\calX\\
        &\geq (d+1)\beta c_3.
    \end{split}
    \end{equation*}
    Meanwhile, for a similar reason, we have by \eqref{eqn_upperbound_intersection} that
    \begin{equation*}
        \begin{split}
           [(\overbar\calL')^d\cdot\overbar\calM]_\calX &=[(f^*\overbar\calL_\calB)^d\overbar\calM]_\calX+d\beta\left[(f^*\overbar\calL_\calB)^{d-1}\cdot\overbar\calN\cdot\overbar\calM\right]_\calX\\
           &= [(f^*\overbar\calL_\calB)^d\overbar\calM]_\calX+d\beta\left[(f^*\widetilde\calL_\calB)^{d-1}\cdot\widetilde\calM\right]_\calX\\
           &\leq c_1+ d c_2 \beta.
        \end{split}
    \end{equation*}
   Combine the inequalities into \eqref{eqn_application_bigness} and we are done.
\end{proof}

\subsection{Proof of Thm.~\ref{thm_for_one_quotient}}
Now we are ready to prove the main result of this section. The idea is to let $\delta,\epsilon$ tend to $0$ so that the height bounds in Cor.~\ref{cor_lower_bound_for_close_Bnull} and Prop.~\ref{prop_upper_bound} lead to constraints on the total height of $p$ in the intersection. Specifically, pick sufficiently small $\delta, \epsilon>0$ such that 
\[
\alpha':=12\lambda^2(\delta^2+\epsilon\gamma^2)<\alpha.
\]Since $\calK_{N_{\delta}}(\calA,\calB)$ is a finite set, let 
\[
\calU:=\cap_{f\in \calK_{N_{\delta}}(\calA,\calB)}\calU_f\subseteq \calX.
\]Then for any $p\in \calU(\Ka)\cap C(\epsilon,\Null(\calB))$, we have
\[
\alpha h_{\overbar\calM}(p)-\beta \leq h_{f^*\overbar\calL_\calB}(p)\leq \alpha'h_{\overbar\calM}(p)+\alpha'.
\]Therefore, we get $h_{\overbar\calM}(p)\leq c$ with $c=(\alpha'+\beta)/(\alpha-\alpha').$ Hence, we are done.

\section{Proof of the main theorem}\label{sec_proof_main_thm}
In this section, we assemble various parts to finish the proof of the main theorem. Let $S$ be a normal quasi-projective geometrically connected variety over a number field $K$ and let $\calA/S$ be an abelian scheme of relative dimension $g$. Let $\calX\subseteq\calA$ be a subvariety of dimension $d$. Fix an embedding $K^{\mathrm{alg}}\subseteq \CC$.

\subsection{Descent of degeneracy loci}
The following is a consequence of Chow's theorem. 

\begin{prop}\label{prop_descent}
    Assume $\calA/S$ has a level-$n_0$-structure. For $t\in \ZZ$, the degeneracy locus $\calX_\CC^{\mathrm{deg}}(t)$ is defined over $K$. Namely, there exists a Zariski closed subset $\calX^{\mathrm{deg}}(t)\subseteq\calX$ such that $\calX_\CC^{\mathrm{deg}}(t)=\calX^{\mathrm{deg}}(t)_\CC$.
\end{prop}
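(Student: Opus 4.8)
The plan is to establish that the Zariski closed set $\calX_\CC^{\mathrm{deg}}(t)$ is stable under the action of $\Aut(\CC/K)$, and then conclude by standard Galois descent. Since $\calX$ is a finite type $K$-variety and $\calX_\CC^{\mathrm{deg}}(t)\subseteq\calX_\CC$ is Zariski closed by Theorem~\ref{thm_degeneracy_Zariski_closed}, it descends (first to $K^{\mathrm{alg}}$, then to $K$, as $K$ is perfect) once we know $\bigl(\calX_\CC^{\mathrm{deg}}(t)\bigr)^\sigma=\calX_\CC^{\mathrm{deg}}(t)$ for every $\sigma\in\Aut(\CC/K)$: concretely, the corresponding point of the Hilbert scheme of a projective closure of $\calX$ over $K$ is then $\Aut(\CC/K)$-fixed, hence a $K$-point, which cuts out the required $\calX^{\mathrm{deg}}(t)$.

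To verify the stability, recall from Definition~\ref{def_degeneracy_over_complex} that $\calX_\CC^{\mathrm{deg}}(t)$ is the union of the positive dimensional subvarieties $\calY\subseteq\calX_\CC$ with $\delta_v(\calY)<t$. As $\calX$ is defined over $K$, the assignment $\calY\mapsto\calY^\sigma$ is a dimension preserving bijection on subvarieties of $\calX_\CC$, so it is enough to show that $\delta_v$ is $\sigma$-invariant, i.e. $\rdim\langle\calY^\sigma\rangle=\rdim\langle\calY\rangle$. Since $\langle\cdot\rangle$ is the smallest weakly special subvariety containing its argument, this follows once we prove that $\sigma$ preserves the class of weakly special subvarieties of $\calA_\CC$: then $\langle\calY\rangle^\sigma$ is weakly special and contains $\calY^\sigma$, so $\langle\calY^\sigma\rangle\subseteq\langle\calY\rangle^\sigma$, and applying the same statement to $\sigma^{-1}$ gives the reverse inclusion, hence equality of relative dimensions.

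For the preservation of weakly special subvarieties I would use the characterization in Lemma~\ref{lem_weakly_special_char}: after multiplication by a suitable integer $N$, a weakly special subvariety is exactly an irreducible component of a fiber of a composition $\calA_1\to\calA_1/\calB_1\to\gothA_{g_1-g_1'}$, where $\calB_1\subseteq\calA_1\subseteq\calA_\CC|_{\pi(\calY)}$ are abelian subschemes and the last map is a moduli sieve. Now $\sigma$ carries abelian subschemes of $\calA_\CC$ to abelian subschemes and quotient maps to quotient maps, since the defining conditions are geometric; and, because $\gothA_g$ is a fine moduli space defined over $\QQ^{\mathrm{alg}}$, the $\sigma$-twist of the classifying morphism $S_\CC\to\AAA_g$ classifies the $\sigma$-twisted family, so $\sigma$ sends a moduli sieve for $\calA_1/\calB_1$ to one for $\calA_1^\sigma/\calB_1^\sigma$. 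Transporting the data of Lemma~\ref{lem_weakly_special_char} through $\sigma$ therefore exhibits $[N]\calY^\sigma$ as such a component, so $\calY^\sigma$ is weakly special, as wanted.

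The main obstacle is the bookkeeping hidden in ``$\sigma$ preserves weakly special subvarieties'', specifically that constant multi-sections in the sense of Definition~\ref{defn_constant_multisection} are carried to constant multi-sections: this is where Chow's theorem \cite{Conrad_Trace} enters, guaranteeing that the $F/K$-trace and the correspondence between abelian subschemes of $\calA_\CC$ and abelian subvarieties of its generic fiber transform coherently under $\sigma$ (descent of abelian subvarieties along primary extensions), while the constant part $\calA_{0,\CC}$ is the maximal isotrivial abelian subscheme of $\calA_\CC$ and a $\sigma$-twist of an isotrivial abelian scheme is again isotrivial. Under the running hypothesis \eqref{eqn_all_endomorphism_appears} the relevant abelian subschemes are rigid enough that these compatibilities become routine once set up; one should also note that the auxiliary choice of polarization and level structure used to realize the moduli sieves is immaterial, since everything in sight depends only on dimensions.
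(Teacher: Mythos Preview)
Your approach is correct and shares the same endgame as the paper's (invariance of the vertical defect under field automorphisms, then Galois descent), but the route differs in a useful way. The paper first invokes the finiteness of types (Theorem~\ref{thm_finiteness_vertically_optimal}) and the explicit description of $\calX_\CC^{\mathrm{deg}}(t)$ as a finite union of the sets $Z(\calA_1,\calB_1,N)$ from the proof of Theorem~\ref{thm_degeneracy_Zariski_closed}; then Chow's theorem plus assumption~\eqref{eqn_all_endomorphism_appears} forces each $\calA_1,\calB_1$ to be defined over $K$, so each $Z$ (and hence the degeneracy locus) is automatically defined over $K^{\mathrm{alg}}$; finally only the small group $\Gal(K^{\mathrm{alg}}/K)$ is needed for the last step. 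You instead bypass the finiteness theorem entirely and argue directly that the class of weakly special subvarieties of $\calA_\CC$ is stable under all of $\Aut(\CC/K)$, whence $\delta_v$ is invariant and the locus descends in one shot. Your version is more self-contained (it does not feed back through Theorems~\ref{thm_finiteness_vertically_optimal}--\ref{thm_degeneracy_Zariski_closed}), at the price of having to check carefully that constant multi-sections and moduli sieves transform correctly under the full $\Aut(\CC/K)$; the paper's version gets the descent to $K^{\mathrm{alg}}$ essentially for free from the structural work already done, and then only needs the easier invariance over the number field. Both are clean; the paper's is shorter given what precedes it.
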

\begin{proof}
    By Thm.~\ref{thm_finiteness_vertically_optimal}, there exists a finite set $\Sigma(\calX_\CC)$ of triples $(\calA_1,\calB_1,N)$. By Chow's Theorem \cite[Cor.~3.21]{Conrad_Trace} for the primary extension $\CC(S)/K^{\mathrm{alg}}(S)$, the generic fibers of abelian schemes $\calA_1,\calB_1$ are defined over $K^{\mathrm{alg}}(S)$. Meanwhile by assumption, all abelian subvarieties of the generic fiber of $\calA$ are defined over $K(S)$, whence all abelian subschemes of $\calA$ are defined over $K$; cf. \S\ref{subsec_hom}. Thus in the proof of Thm.~\ref{thm_degeneracy_Zariski_closed}, each set $Z(\calA_1,\calB_1,N)$ is defined over $K^{\mathrm{alg}}$ since the quotient map is defined over $K$ and the moduli sieve is defined over a field containing the reflex field and $K$, whence $K^{\mathrm{alg}}$. So $\calX^{\mathrm{deg}}(t)$ is defined over $K^{\mathrm{alg}}$. Notice that if $\calY\subseteq \calA_{K^{\mathrm{alg}}}$ and $\sigma\in\Gal(K^{\mathrm{alg}}/K)$, then $\sigma(\calY)$ and $\calY$ have the same vertical defect. So $\calX^{\mathrm{deg}}(t)$ is stable under the action of $\Gal(K^{\mathrm{alg}}/K)$. This concludes the proof.
\end{proof}

We make the general definition of degeneracy loci over a number field as follows.

\begin{defn}\label{def_degeneracy_over_number_field}
Take a finite \'etale cover $S'\rightarrow S$ by Lem.~\ref{lem_existlevelstr} such that the base change $\calA'\rightarrow S'$ has a level-$n_0$-structure. Let $\calX\subseteq\calA$ be a subvariety and $t\in\ZZ$.
\begin{enumerate}
    \item The $t^{\mathrm{th}}$ degeneracy locus $\calX^{\mathrm{deg}}(t)$ is defined as the image of $\calX'^{\mathrm{deg}}(t)$, given in Prop.~\ref{prop_descent}, under the natural map $\calA'\rightarrow \calA$.  It is independent of the choice of the cover.
    \item  The subvariety $\calX$ is said to be \emph{nondegenerate} (resp. \emph{$t$-nondegenerate}), if $\calX^{\mathrm{deg}}(0)\neq\calX$ (resp. $\calX^{\mathrm{deg}}(t)\neq\calX$).
\end{enumerate}
\end{defn}

Define $\tau(\calX)$ to be the largest integer $t$ such that $\calX$ is $t$-nondegenerate. If the vertical defect of $\calX$ is $t_0$, then $\tau(\calX)\leq t_0$. When the equality holds, we say $\calX$ is \emph{optimally nondegenerate}.

\subsection{Main theorem and its proof}
We can now reap the fruits of our labor from previous sections. Combining Thm.~\ref{thm_criterion_for_Xdeg_equal_X} and Cor.~\ref{cor_intersection_to_form}, we obtain the positivity of geometric self-intersection number of a class of invariant adelic line bundles asssuming nondegeneracy:
\begin{theorem}\label{thm_geometric_intersection_with_nondegenracy}
Let $t\in\NN$. Given any
\begin{enumerate}
    \item  abelian scheme $\calB/S$ of relative dimension $g'\geq g-t$,
    \item  ample $\calL_\calB\in\Pic_0(\calB/S)$, and
    \item  $f\in \Hom(\calA,\calB)_\RR^{\circ}$, i.e., a surjective $\RR$-homomorphism $f:\calA\dashrightarrow \calB$.
\end{enumerate}
If $\calX\subseteq\calA$ is a $t$-nondegenerate subvariety of dimension $d$, then $[(f^*\widetilde\calL_\calB)^d]_{\calX}>0$. 
\end{theorem}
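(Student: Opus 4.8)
The plan is to reduce the positivity of the geometric self-intersection number to the nonvanishing of a Betti volume form on $\calX$, and then to contradict $t$-nondegeneracy by means of the Ax--Schanuel input packaged in Theorem~\ref{thm_volume_form_vanishing}.

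As a preliminary I would pass to a finite \'etale cover of $S$ so that $\calA/S$ acquires a level-$n_0$-structure (hence \eqref{eqn_all_endomorphism_appears}), just as in the reduction for Theorem~\ref{thm_for_one_quotient}: by Definition~\ref{def_degeneracy_over_number_field} this does not change whether $\calX$ is $t$-nondegenerate, and by the projection formula applied to the finite \'etale map $\calX\times_S S'\to\calX$ it changes each relevant self-intersection number only by a positive factor, so if $\calX$ is $t$-nondegenerate then some irreducible component of $\calX\times_S S'$ is again $t$-nondegenerate and it suffices to treat that component. Fixing the embedding $K^{\mathrm{alg}}\subseteq\CC$, I would then base change to $\CC$ and henceforth write $\calA$, $\calX$, $f$, $\calL_\calB$ for the resulting complex objects: geometric intersection numbers are preserved under base change, and by Proposition~\ref{prop_descent} the hypothesis becomes $\calX^{\mathrm{deg}}(t)\neq\calX$ in the complex-analytic sense of Definition~\ref{def_degeneracy_over_complex}. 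Since $\calL_\calB$ is ample it is nef, and by Proposition~\ref{prop_pullback_extends_to_real} the $\RR$-line bundle $f^*\calL_\calB\in\Pic_0(\calA/S)_\RR$ is nef; so Corollary~\ref{cor_intersection_to_form} gives
\[
[(f^*\widetilde\calL_\calB)^d]_\calX>0\qquad\Longleftrightarrow\qquad \omega(f^*\calL_\calB)^{\wedge d}\big|_{\calX^{\mathrm{an}}}\not\equiv 0 ,
\]
and it suffices to establish this nonvanishing.

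Suppose instead that $\omega(f^*\calL_\calB)^{\wedge d}|_{\calX^{\mathrm{an}}}\equiv 0$. The hypotheses of Theorem~\ref{thm_volume_form_vanishing} are exactly the ones we are given --- $f\in\Hom(\calA,\calB)^\circ_\RR$, $\calL_\calB$ ample on $\calB$, $\omega=\omega(f^*\calL_\calB)$ --- so it yields $\calX^{\mathrm{deg}}(g-g')=\calX$. Now $g'\geq g-t$ gives $g-g'\leq t$, and the degeneracy loci are monotone in their index: directly from Definition~\ref{def_degeneracy_over_complex}, any positive-dimensional $\calY\subseteq\calX$ with $\delta_v(\calY)<g-g'$ also satisfies $\delta_v(\calY)<t$, so $\calX^{\mathrm{deg}}(g-g')\subseteq\calX^{\mathrm{deg}}(t)$. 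Hence $\calX=\calX^{\mathrm{deg}}(g-g')\subseteq\calX^{\mathrm{deg}}(t)\subseteq\calX$, so $\calX^{\mathrm{deg}}(t)=\calX$, contradicting $t$-nondegeneracy. Therefore $\omega(f^*\calL_\calB)^{\wedge d}|_{\calX^{\mathrm{an}}}\not\equiv 0$, and the displayed equivalence gives $[(f^*\widetilde\calL_\calB)^d]_\calX>0$.

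The argument itself is short; the substance lies in the results it invokes --- the identification of the invariant Monge--Amp\`ere measure with the Betti form (Theorem~\ref{thm_MAmeasure_Betti}, Corollary~\ref{cor_intersection_to_form}) and, above all, Theorem~\ref{thm_volume_form_vanishing}, which is where the Ax--Schanuel theorem (through Corollary~\ref{cor_AxSchanuelVertical}) actually does the work. The main point requiring care is the preliminary bookkeeping: reconciling ``$t$-nondegenerate over the number field $K$'' with ``$\calX_\CC^{\mathrm{deg}}(t)\neq\calX_\CC$'' via Proposition~\ref{prop_descent} and Definition~\ref{def_degeneracy_over_number_field}, and tracking self-intersection numbers through the finite \'etale reduction when $\calX\times_S S'$ fails to be irreducible. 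One could alternatively run the middle step through Theorem~\ref{thm_criterion_for_Xdeg_equal_X}, but routing it through Theorem~\ref{thm_volume_form_vanishing} avoids needing the extra hypothesis that $\calX$ is not contained in a strict group subscheme of $\calA$.
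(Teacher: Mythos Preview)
Your argument is correct and follows the paper's route: reduce to nonvanishing of the Betti volume form via Corollary~\ref{cor_intersection_to_form}, then invoke the Ax--Schanuel input (Theorem~\ref{thm_volume_form_vanishing}) to force $\calX^{\mathrm{deg}}(g-g')=\calX$ and hence $\calX^{\mathrm{deg}}(t)=\calX$ by monotonicity. The paper's one-line proof cites Theorem~\ref{thm_criterion_for_Xdeg_equal_X} alongside Corollary~\ref{cor_intersection_to_form}, but the substantive link from Betti-form vanishing to degeneracy is exactly Theorem~\ref{thm_volume_form_vanishing} as you use it; your reductions (the \'etale cover for the level structure and the base change to $\CC$) just make explicit what the paper leaves tacit.
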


The main theorem of this paper is deduced below.
\begin{theorem}\label{thm_main}
   Let $\pi:\calA\rightarrow S$ be an abelian scheme on a normal quasi-projective variety $S$ defined over a number field $K$. Let $\calL_\calA\in\Pic_0(\calA/S)$ be ample and let $\overbar\calM:=\overbar\calL_\calA+\pi^*\overbar M_S\in \hPic_{\mathrm{nef}}(\calA/\calO_K)$ with $\overbar M_S\in \hPic_{\mathrm{nef}}(S/\calO_K)$.  For $t\in \NN$ and a subvariety $\calX\subseteq\calA$, there exist $\epsilon,c>0$ such that for any $K^{\mathrm{alg}}$-point
\[
    p\in \left(\calX\setminus \calX^{\mathrm{deg}}(t)\right)\cap C(\epsilon,\calA_{\leq t})
    \]the total height $h_{\overbar\calM}(p)\leq c$.
\end{theorem}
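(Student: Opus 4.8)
The strategy is to reduce Theorem~\ref{thm_main} to the single-quotient result Theorem~\ref{thm_for_one_quotient}, applied to each of the finitely many possible targets $\calB$, and then to upgrade the ``some open dense subset'' conclusion there to the precise complement of the degeneracy locus via a Noetherian induction. First I would reduce to the case where $\calA/S$ has a level-$n_0$-structure (hence \eqref{eqn_all_endomorphism_appears}) by passing to a finite \'etale cover of $S$ via Lemma~\ref{lem_existlevelstr}: heights and the degeneracy locus are compatible with pullback along such a cover (Definition~\ref{def_degeneracy_over_number_field}(1)), and $C(\epsilon,\calA_{\leq t})$ pulls back appropriately since flat group subschemes of relative dimension $\le t$ pull back to such. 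Next, recall the key observation from the outline: a flat group subscheme $\calH\subseteq\calA$ of relative dimension $\le t$ is contained in the kernel of some surjective homomorphism $\varphi:\calA\to\calB$ with $\rdim\calB=g-\rdim\calH\ge g-t$; this follows from Poincar\'e complete reducibility. By Lemma~\ref{lem_finite_target} there are only finitely many isogeny classes of such $\calB$, and within each we may fix one representative $\calB_1,\dots,\calB_m$ (all of relative dimension $g'\ge g-t$). For any $\calB$-null point $x$ and any point in $C(\epsilon,x)$ we are exactly in the setting controlled by $\Null(\calB)$, so $\calX\cap C(\epsilon,\calA_{\leq t})\subseteq\bigcup_{j=1}^m \bigl(\calX\cap C(\epsilon,\Null(\calB_j))\bigr)$ up to the finite-target reduction.

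Now suppose first that $\calX$ itself is $t$-nondegenerate, i.e.\ $\calX^{\mathrm{deg}}(t)\ne\calX$. Then by Theorem~\ref{thm_geometric_intersection_with_nondegenracy}, for each $j$ and each surjective $\RR$-homomorphism $f:\calA\dashrightarrow\calB_j$ we have $[(f^*\widetilde\calL_{\calB_j})^d]_{\calX}>0$, where $\calL_{\calB_j}\in\Pic_0(\calB_j/S)$ is any chosen ample line bundle. Hence Theorem~\ref{thm_for_one_quotient} applies to each $(\calA,\calB_j)$: there exist $\epsilon_j,c_j>0$ and a dense open $\calU_j\subseteq\calX$ with $h_{\overbar\calM}(p)\le c_j$ for all $p\in\calU_j(\Ka)\cap C(\epsilon_j,\Null(\calB_j))$. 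Taking $\epsilon:=\min_j\epsilon_j$, $c:=\max_j c_j$ and $\calU_0:=\bigcap_j\calU_j$, which is dense open in $\calX$, we get a height bound on $\calU_0\cap C(\epsilon,\calA_{\leq t})$. The remaining problem is that $\calU_0$ is merely \emph{some} dense open, not the complement of $\calX^{\mathrm{deg}}(t)$. This is handled by Noetherian induction on $\dim\calX$: the closed complement $\calX\setminus\calU_0$ decomposes into finitely many irreducible components $\calX_1,\dots,\calX_k$; each $\calX_i$ either lies inside $\calX^{\mathrm{deg}}(t)$ — in which case it contributes nothing to $\calX\setminus\calX^{\mathrm{deg}}(t)$ — or, if not, then (using the criterion Theorem~\ref{thm_criterion_for_Xdeg_equal_X}, which governs when the degeneracy locus equals the whole variety, together with the fact that $\calX_i^{\mathrm{deg}}(t)\subseteq\calX^{\mathrm{deg}}(t)\cap\calX_i$ by monotonicity of vertical defect under passing to subvarieties) $\calX_i$ is again $t$-nondegenerate and has strictly smaller dimension, so the inductive hypothesis furnishes $\epsilon_i,c_i$ working on $\calX_i\setminus\calX^{\mathrm{deg}}(t)$ intersected with $C(\epsilon_i,\calA_{\leq t})$. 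One must also invoke Proposition~\ref{prop_descent} so that $\calX^{\mathrm{deg}}(t)$ is genuinely defined over $K$ and the algebraic points being considered make sense. Taking the minimum of the finitely many $\epsilon$'s and the maximum of the finitely many $c$'s across $\calU_0$ and all the $\calX_i$ completes the induction.

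Finally, if $\calX$ is $t$-degenerate, i.e.\ $\calX^{\mathrm{deg}}(t)=\calX$, then $\calX\setminus\calX^{\mathrm{deg}}(t)=\emptyset$ and the statement is vacuously true for any $\epsilon,c$. One subtlety I would be careful about is the ``finite target'' reduction: a flat group subscheme of relative dimension $\le t$ is killed by a homomorphism to \emph{some} $\calB$ in the isogeny class of a quotient of $\calA$, but to match $\Null(\calB_j)$ exactly one may need to pass between a chosen $\calB_j$ and an isogenous target; since isogenies change heights only boundedly (by the quadraticity of N\'eron--Tate heights and comparison of ample classes, as used throughout \S\ref{sec_homomorphism_approx}), this only affects the constants $\epsilon_j,c_j$ and not the structure of the argument. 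I expect the genuine conceptual content to be entirely packaged in Theorems~\ref{thm_geometric_intersection_with_nondegenracy} and~\ref{thm_for_one_quotient}; the main work here is the bookkeeping of the Noetherian induction and verifying that $t$-nondegeneracy passes to the relevant subvarieties, which is where I would spend the most care.
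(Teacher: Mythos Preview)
Your proposal is correct and follows essentially the same approach as the paper: reduce to finitely many targets $\calB_j$ via Lemma~\ref{lem_finite_target}, apply Theorem~\ref{thm_geometric_intersection_with_nondegenracy} to verify the positivity hypothesis of Theorem~\ref{thm_for_one_quotient} on each, intersect the resulting dense opens, and then Noetherian-induct to enlarge the open set to the full complement of $\calX^{\mathrm{deg}}(t)$ using the inclusion $\calX'^{\mathrm{deg}}(t)\subseteq\calX^{\mathrm{deg}}(t)$ for subvarieties $\calX'\subseteq\calX$.

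Two small remarks: the isogeny subtlety you flag is handled more directly in the paper by observing that if $\calA/N\calH$ is isogenous to $\calB_j$ then composing gives a surjective homomorphism $\calA\to\calB_j$ killing $\calH$, so $\calH\subseteq\Null(\calB_j)$ on the nose and no constant adjustment is needed; and the invocation of Theorem~\ref{thm_criterion_for_Xdeg_equal_X} in the induction step is superfluous, since the inclusion $\calX_i^{\mathrm{deg}}(t)\subseteq\calX^{\mathrm{deg}}(t)$ follows immediately from the definition of the degeneracy locus.
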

As explained in Cor.~\ref{cor_choice_total_height}, we may replace the height $h_{\overbar{\calM}}$ by another arbitrary total height. 
\begin{proof}
    Assume without loss of generality that $\calX$ is $t$-nondegenerate. By Lem.~\ref{lem_finite_target}, there are finitely many possible abelian schemes $\calB_1,\cdots,\calB_r$ of $\calA$ that admit surjective homomorphisms from $A$ and have relative dimension $\geq g-t$. By Thm.~\ref{thm_geometric_intersection_with_nondegenracy}, the conditions in Thm.~\ref{thm_for_one_quotient} are satisfied for any $\calB\in\{\calB_1,\cdots,\calB_r\}$. Thus for each $1\leq i\leq r$, we find $\epsilon_i,c_i>0$ and open dense subsets $\calU_i\subseteq\calX$ such that the total height of $K^{\mathrm{alg}}$-points in $\calU_i\cap C(\epsilon_i,\Null(\calB_i))$ is bounded by $c_i$. 

    If $\calH$ is a flat group subscheme of $\calA$ of relative dimension $\leq t$, there exists $N\in \NN$ such that $N\calH$ is an abelian subscheme. The quotient $\calA/N\calH$ is an abelian scheme of relative dimension $\geq g-t$, which is hence isogenous to some $\calB$ from $\{\calB_1,\cdots,\calB_r\}$. Then $\calH\subseteq\Null(\calB)$. Thus
    $\calA_{\leq t}=\bigcup_{1\leq i\leq r}\Null(\calB_i).$ 
    
    Let $\calU:=\cap_{i}\calU_i$, $\epsilon:=\min\{\epsilon_i\}_i$ and $c:=\max\{c_i\}_i.$ Then we get that the total height of $K^{\mathrm{alg}}$-points in $\calU\cap C(\epsilon,\calA_{\leq t})$ is bounded by $c$. 
    
    For the rest, we show that we can enlarge $\calU$ to contain $\calX\setminus\calX^{\mathrm{deg}}(t)$. Indeed, if $\calX'$ is a closed subvariety of $\calX$, then $\calX'^{\mathrm{deg}}(t)\subseteq\calX^{\mathrm{deg}}(t)$ by definition. In particular, if $\calX'$ is not contained in $\calX^{\mathrm{deg}}(t)$, then $\calX'\neq\calX'^{\mathrm{deg}}(t)$ and $\calX'$ is $t$-nondegenerate. Now suppose $\calX':=\calX\setminus\calU$ is not contained in $\calX^{\mathrm{deg}}(t)$, then replacing $\calX$ by $\calX'$, we can find $\calU',\epsilon',c'$ as above with the required property. We can enlarge $\calU$ to $\calU'\cup\calU$ after replacing $\epsilon$ by $\min\{\epsilon,\epsilon'\}$ and $c$ by $\max\{c,c'\}$. By Noetherian induction, we can thus assume $\calX\setminus\calX^{\mathrm{deg}}(t)\subseteq\calU$. The proof is now complete.
\end{proof}

\section{Applications}\label{sec_app}
In the first part, we apply the main theorem and criterion to study specialization of Mordell--Weil groups. 
After some simple observation, we restrict attention to a particular case, and present a specialization theorem generalizing Silverman's theorem \cite[Thm.~C]{Silverman_Specialization}. 

In the second part, we establish a bounded height analogue of Conj.~\ref{conj_zhang} using basic linear algebra on quadratic forms.

In the end, we propose a conjecture aligned with the philosophy of Pink's Conj.~\ref{conj_pink}, as an optimal hope to generalize the main theorem.


\subsection{Specialization theorem}
Let $S$ be a normal quasi-projective variety over a number field $K$ with function field $F$, and let $\calA/S$ be an abelian scheme with generic fiber $A$. By properness, the group of sections $\calA(S)$ is identified with the group $A(F)$, which is finitely generated by the Lang--N\'eron theorem~\cite{LangNeron_rationalpoints}. For any $s\in S$, we have a specialization map
\[
\spec_s:A(F)\rightarrow \calA_s
\]which associates to any section $\sigma\in A(F)$ the point $\sigma(s)\in \calA_s$.  If $S'\rightarrow S$ is an \'etale cover and $s'$ is a point above $s$, then $\calA_{s'}=\calA_s$ and $\spec_s$ factors through $\spec_{s'}$. 

Note that $\spec_s$ is injective on the subgroup of constant sections. One can ask in general
\begin{question}
    Given a subgroup $\Lambda\subseteq A(F)$, what can be said about the set of closed points $s\in S$ where $\spec_s:\Lambda\rightarrow \calA_s$ fails to be injective?
\end{question}

For the abelian variety $A/F$ and a subgroup $\Lambda\subseteq A(F)$, define
  \[
      \Sigma(A,\Lambda):=\{s\in S(K^{\mathrm{alg}})\mid \spec_s:\Lambda \rightarrow \calA_s \text{ is not injective}.\}
\]We start with the following basic observation.

\begin{lemma}\label{lem_basic_specialization_facts}
Let $A,B,A_1,A_2$ be the generic fibers of respective abelian schemes $\calA,\calB,\calA_1,\calA_2$ over $S$. Suppose $\Lambda\subseteq A(F),\Lambda'\subseteq B(F)$ are subgroups. The following holds true:
\begin{enumerate}
    \item $\Sigma(A,\bar\Lambda)=\Sigma(A,\Lambda)$, where
    \[\bar\Lambda:=\{\sigma\in A(F) \mid \exists N\in \NN\setminus\{0\}, N\sigma\in \Lambda\}.\]
    \item If $\varphi:A\rightarrow B$ is an isogeny, then $\Sigma(A,\Lambda)=\Sigma(B,\varphi(\Lambda))$.
    \item $\Sigma(A\times B,\Lambda\times \Lambda')=\Sigma(A,\Lambda)\cup \Sigma(B,\Lambda')$.
    \item If $(p_1,p_2):A \rightarrow A_1\times A_2$ is an isogeny, then 
    \[
    \Sigma(A,\Lambda)\subseteq\Sigma(A_1,p_1(\Lambda))\cup \Sigma(A_2,p_2(\Lambda)).
    \]
\end{enumerate}
    
\end{lemma}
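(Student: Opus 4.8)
The plan is to prove each of the four statements by tracking when sections vanish at a specialized point, using the fact that injectivity of $\spec_s$ on a subgroup $\Lambda$ is equivalent to: no nonzero element of $\Lambda$ specializes to the zero section at $s$. Equivalently, $s \in \Sigma(A,\Lambda)$ precisely when there is $0 \neq \sigma \in \Lambda$ with $\sigma(s) = 0$ in $\calA_s$.

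For (1), the containment $\Sigma(A,\Lambda) \subseteq \Sigma(A,\bar\Lambda)$ is immediate from $\Lambda \subseteq \bar\Lambda$. For the reverse, if $s \in \Sigma(A,\bar\Lambda)$, pick $0 \neq \sigma \in \bar\Lambda$ with $\sigma(s) = 0$, and $N \in \NN\setminus\{0\}$ with $N\sigma \in \Lambda$. Since $A(F)$ is torsion-free up to the torsion subgroup---actually we must be slightly careful here, as $\sigma$ could be torsion---if $N\sigma = 0$ then $\sigma$ is torsion and already lies in $\Lambda$ only if... So the cleaner argument: if $N\sigma \neq 0$, then $N\sigma \in \Lambda \setminus \{0\}$ and $(N\sigma)(s) = N \cdot \sigma(s) = 0$, so $s \in \Sigma(A,\Lambda)$; if $N\sigma = 0$, then $\sigma$ is a nonzero torsion section, and I would note that a nonzero torsion section of an abelian scheme over a normal base specializes to a nonzero point at every $s$ (torsion sections are disjoint from the zero section after an étale base change, or one uses that $\calA[N] \to S$ is étale), contradicting $\sigma(s)=0$. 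So only the first case occurs, giving $s \in \Sigma(A,\Lambda)$.

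For (2), if $\varphi: A \to B$ is an isogeny, then $\spec_s \circ \varphi = \varphi_s \circ \spec_s$ where $\varphi_s: \calA_s \to \calB_s$ is again an isogeny (hence has finite kernel). If $s \in \Sigma(A,\Lambda)$ with witness $0 \neq \sigma \in \Lambda$, $\sigma(s)=0$, then $\varphi(\sigma)(s) = \varphi_s(\sigma(s)) = 0$; and $\varphi(\sigma) \neq 0$ since $\varphi$ is injective on $A(F)$ up to torsion---here one must handle the torsion case as in (1). Conversely if $\varphi(\sigma)(s) = 0$ for $0 \neq \varphi(\sigma) \in \varphi(\Lambda)$, then $\sigma(s) \in \ker\varphi_s$, so some multiple $M\sigma(s) = (M\sigma)(s) = 0$, and $M\sigma \neq 0$ (again modulo the torsion remark), placing $s$ in $\Sigma(A,\Lambda)$; invoking part (1) lets one pass between $\Lambda$ and its saturation freely to absorb the multiple $M$. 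For (3), injectivity of $\spec_s$ on $\Lambda \times \Lambda'$ fails iff it fails on one of the factors, since $(\sigma,\sigma')(s) = 0$ iff $\sigma(s) = 0$ and $\sigma'(s) = 0$, and a nonzero element of the product has a nonzero coordinate---this is a direct set-theoretic unwinding. For (4), if $s \in \Sigma(A,\Lambda)$ with $0 \neq \sigma \in \Lambda$, $\sigma(s) = 0$, then $p_i(\sigma)(s) = 0$ for $i=1,2$; since $(p_1,p_2)$ is injective up to torsion, $\sigma \neq 0$ forces $p_1(\sigma) \neq 0$ or $p_2(\sigma) \neq 0$ (modulo torsion), so $s$ lies in $\Sigma(A_1,p_1(\Lambda))$ or $\Sigma(A_2,p_2(\Lambda))$; combined with part (1) to handle saturations, this gives the stated inclusion.

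The main obstacle, and the only real subtlety, is the consistent handling of torsion sections: a priori a nonzero section $\sigma$ could have $N\sigma = 0$, and then arguments passing to multiples break down. The fix is the standard fact that for an abelian scheme $\calA/S$ over a normal base, the $N$-torsion subgroup scheme $\calA[N]$ is finite étale over $S$, so distinct torsion sections never collide; in particular a nonzero torsion section is everywhere nonzero on fibers. I would state this as a preliminary remark and then the four items follow by the elementary manipulations above. One could also simply observe that replacing $\Lambda$ by $\bar\Lambda$ via (1) and noting $A(F)_{\mathrm{tors}}$ contributes nothing to any $\Sigma$ reduces everything to the torsion-free quotient, where all the "up to torsion" caveats disappear.
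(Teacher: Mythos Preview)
Your proposal is correct and follows essentially the same approach as the paper: both proofs are elementary unwindings of the definition of $\Sigma(A,\Lambda)$ in terms of nonzero sections specializing to zero. The one place where the paper differs is in (2) and (4). For (2), rather than tracking kernels directly as you do, the paper takes an isogeny $\psi:B\to A$ with $\psi\circ\varphi=[N]$ and applies the easy inclusion $\Sigma(B,\varphi(\Lambda))\subseteq\Sigma(A,\psi\varphi(\Lambda))=\Sigma(A,N\Lambda)$, then invokes (1). For (4), the paper first uses (2) to replace $A$ by $A_1\times A_2$, then observes $\Lambda\subseteq p_1(\Lambda)\times p_2(\Lambda)$ and applies (3). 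Your direct arguments work equally well; the paper's route is just a bit more economical. You are also more explicit about the one genuine point the paper glosses over, namely that a nonzero torsion section of $\calA/S$ never specializes to zero (the paper simply asserts ``Then $\sigma$ is non-torsion'' in its proof of (1)); your justification via \'etaleness of $\calA[N]\to S$ is the right one.
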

\begin{proof}

    (1) Suppose $\spec_s:\bar\Lambda\rightarrow \calA_s$ is not injective for a closed point $s\in S$, and $\sigma$ is a nonzero section in the kernel. Then $\sigma$ is non-torsion. Let $N\in \NN\setminus\{0\}$ with $N\sigma\in \Lambda$. Then $N\sigma$ is nonzero and in the kernel of $\spec_s:\Lambda\rightarrow \calA_s$.
    
    (2) We clearly have $\Sigma(A,\Lambda)\subseteq\Sigma(B,\varphi(\Lambda))$. For the other direction, take an isogeny $\psi:B\rightarrow A$ such that $\psi\circ\varphi=[N]$ and use (1).
    
    (3) By definition.
   
    (4) It suffices to assume $A=A_1\times A_2$ by (2). Note that $\Lambda \subseteq p_1(\Lambda)\times p_2(\Lambda)$. The result hence follows from (3).
\end{proof}

Due to (4), it is essential to study the case where $A$ is simple.
In the following, we will consider a particular case.


\begin{defn}
    An abelian variety $A/F$ is said to have \emph{maximal variation} if the induced period map $S\rightarrow \AAA_g$ is generically finite.
\end{defn}


\begin{theorem}\label{thm_spec_maximal_variation}
    If all simple abelian subvarieties of $A_{F^{\mathrm{alg}}}$ have maximal variation and dimension at least $\dim S$, then $\Sigma(A,A(F))$ is contained in the union of a strict Zariski closed subset and a set of bounded height in $S(K^{\mathrm{alg}})$.
\end{theorem}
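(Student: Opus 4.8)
The plan is to reduce the statement to the main theorem (Theorem~\ref{thm_main}) applied to a carefully chosen subvariety $\calX$ of a fibered power of $\calA$, combining it with the geometric criterion (Theorem~\ref{thm_criterion_for_Xdeg_equal_X}) to verify nondegeneracy. First I would reduce to the case where $\calA/S$ has a level-$n_0$-structure and satisfies \eqref{eqn_all_endomorphism_appears}, by passing to a finite \'etale cover $S'\to S$ via Lemma~\ref{lem_existlevelstr} (this only enlarges the sets in question, and by Lemma~\ref{lem_basic_specialization_facts}(2) applied fiberwise, or rather by the remark that $\spec_s$ factors through $\spec_{s'}$, the conclusion descends). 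Next, using Lemma~\ref{lem_basic_specialization_facts}(4) and complete reducibility, I would reduce to the case where $A$ itself is (geometrically) simple: an isogeny $A\sim A_1^{m_1}\times\cdots\times A_k^{m_k}$ and parts (2),(3),(4) of that lemma let us replace $\Sigma(A,A(F))$ by a union of sets $\Sigma(A_i,A_i(F))$ up to adding a set of the appropriate form, so it suffices to treat one geometrically simple factor, which still has maximal variation and dimension $\geq\dim S$ by hypothesis. Also, by Lemma~\ref{lem_basic_specialization_facts}(1) we may replace $A(F)$ by a torsion-free finitely generated subgroup $\Lambda$ of full rank $r$ with linearly independent generators $\sigma_1,\dots,\sigma_r$, since any nontrivial relation modulo torsion gives, after multiplying by $N$, a nontrivial relation in $\Lambda$.

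Now set $\underline\sigma=(\sigma_1,\dots,\sigma_r)\colon S\to\calA^r_S$, let $\calX\subseteq\calA^r_S$ be its image, and recall $\calH_{\underline\lambda}=\ker(\calA^r_S\to\calA,\ \underline x\mapsto\underline\lambda\cdot\underline x)$ for $\underline\lambda\in\ZZ^r\setminus\{0\}$, which is a flat group subscheme of relative codimension $g$ (equivalently of relative dimension $(r-1)g$). A point $s$ lies in $\Sigma(A,\Lambda)$ precisely when $\underline\sigma(s)\in\bigcup_{\underline\lambda\neq0}\calH_{\underline\lambda}\subseteq(\calA^r_S)_{\leq (r-1)g}$. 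Applying Theorem~\ref{thm_main} to $\calX\subseteq\calA^r_S$ with $t=(r-1)g$ (so that in particular $C(0,\,\cdot\,)\subseteq C(\epsilon,\,\cdot\,)$, and the points of the \emph{exact} intersection $\calX\cap(\calA^r_S)_{\leq t}$ have bounded total height), it remains to show $\calX$ is $t$-nondegenerate, i.e.\ $\calX^{\mathrm{deg}}((r-1)g)\neq\calX$. By Theorem~\ref{thm_criterion_for_Xdeg_equal_X} (using that $\calX$ is not contained in a strict group subscheme of $\calA^r_S$, because $\sigma_1,\dots,\sigma_r$ are $\QQ$-linearly independent and $\End(A)=\End(A_{F^{\mathrm{alg}}})$ — here one uses simplicity so $\End(A)_\QQ$ is a division algebra and no $\End$-linear relation can hold), I must rule out the existence of an abelian subscheme $\calB\subseteq\calA^r_S$ of relative dimension $g'$ with $\dim\varphi_{/\calB}(\calX)<\dim\calX-\max\{0,g'-(r-1)g\}=\dim S-\max\{0,g'-(r-1)g\}$, where $\varphi_{/\calB}\colon\calA^r_S\to\calA^r_S/\calB\to\gothA_{rg-g'}$ is quotient followed by a moduli sieve.

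\textbf{The main obstacle} is exactly this last nondegeneracy verification: controlling all abelian subschemes $\calB$ of the fibered power $\calA^r_S$. Since $A$ is geometrically simple, $\End(A^r)_\QQ\cong M_r(\End(A)_\QQ)$ and every abelian subscheme $\calB$ is, up to isogeny, of the form $\calA^{r''}$ for some $0\le r''\le r$, so the quotient $\calA^r_S/\calB$ is isogenous to $\calA^{r'}_S$ with $r'=r-r''$ and $g'=r''g$. The composite $\varphi_{/\calB}$ then factors (up to isogeny on the target) through $\calA^{r'}_S\to\gothA^{\,?}$, and since $A$ has maximal variation, the period map $S\to\AAA_g$ is generically finite; one checks the relevant mixed period map restricted to $\calX$ (which maps $S$-isomorphically onto its image, as $\underline\sigma$ is a section) is generically finite onto its image as soon as $r'\geq1$, giving $\dim\varphi_{/\calB}(\calX)=\dim S$, while $\max\{0,g'-(r-1)g\}=\max\{0,(r''-(r-1))g\}$ equals $0$ once $r''\le r-1$, i.e.\ $r'\ge1$. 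When $r''=r$ (so $\calB=\calA^r_S$ up to isogeny, $r'=0$) we get $\dim\varphi_{/\calB}(\calX)\ge\dim\varphi_{/\calB}(S)$ which is $\ge\dim S-g\cdot 0$... here one must instead use the dimension hypothesis $\dim A\ge\dim S$: the period map $S\to\AAA_g$ being generically finite forces $\dim S\le g=\dim A$, hence $g'=rg\ge\dim S$ and $\max\{0,g'-(r-1)g\}=g\ge\dim S$, so the inequality $\dim\varphi_{/\calB}(\calX)<\dim S-g\le0$ is impossible. Thus in every case the criterion fails to produce a degenerate $\calB$, so $\calX$ is $(r-1)g$-nondegenerate, and Theorem~\ref{thm_main} gives the bounded-height conclusion for $\Sigma(A,\Lambda)$ outside the Zariski closed set $\calX^{\mathrm{deg}}((r-1)g)$; pushing this set forward to $S$ and unwinding the reductions completes the proof. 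The delicate point throughout is keeping track of how ``up to isogeny'' interacts with the moduli sieve and the dimension count, and handling the subvarieties $\calY\subseteq\calX$ that could contribute to the degeneracy locus — but these are precisely the ones the criterion organizes, so no separate argument is needed.
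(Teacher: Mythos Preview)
Your overall strategy---embed the sections as a curve $\underline\sigma$ in a fibered power, verify $t$-nondegeneracy via Theorem~\ref{thm_criterion_for_Xdeg_equal_X}, then invoke Theorem~\ref{thm_main}---is the right one and matches the paper. However, there is a genuine gap at the point where you assert that $\calX=\underline\sigma(S)$ is not contained in a strict group subscheme of $\calA^r_S$. Your justification (``$\sigma_1,\dots,\sigma_r$ are $\QQ$-linearly independent \ldots\ simplicity so $\End(A)_\QQ$ is a division algebra and no $\End$-linear relation can hold'') is incorrect: abelian subschemes of $\calA^r_S$ are cut out by $\End(A)$-linear relations, and $\QQ$-linear independence of the $\sigma_i$ does \emph{not} rule these out. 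For instance, if $\End(A)_\QQ$ is a real quadratic field and $\phi\in\End(A)\setminus\ZZ$, then $\sigma_1:=\sigma$ and $\sigma_2:=\phi\sigma$ are $\QQ$-linearly independent yet $(\sigma_1,\sigma_2)$ lies in the proper abelian subscheme $\{(x,\phi x)\}\subsetneq\calA^2_S$. Without this hypothesis, Theorem~\ref{thm_criterion_for_Xdeg_equal_X} is not directly applicable, and your choice $t=(r-1)g$ loses its meaning.

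The paper handles this by never reducing to simple $A$ and instead replacing the ambient $\calA^r_S$ by the smallest abelian subscheme $\calB$ containing $\underline\sigma$ (after scaling so $\underline\sigma\subseteq\calB$), then setting $t=\rdim\calB-\dim S$. In that ambient scheme $\underline\sigma$ tautologically generates, so the criterion applies; maximal variation of every nonzero quotient of $\calB$ (inherited from the simple factors) gives $\dim\varphi(\underline\sigma)=\dim S$ for every nontrivial quotient, and the case of the zero quotient is handled by $t=\rdim\calB-\dim S$. The dimension hypothesis is then used differently from your sketch: it guarantees that the image of $\underline\lambda\cdot(-)\big|_\calB$ in $\calA$, being a nonzero abelian subscheme, has relative dimension $\ge\dim S$, hence $\calH_{\underline\lambda}\cap\calB$ has relative dimension $\le t$. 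Your detour through Lemma~\ref{lem_basic_specialization_facts}(4) to reduce to simple $A$ is thus unnecessary and does not repair the gap; the essential move you are missing is passing to $\calB$.
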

The maximal variation assumption on the simple components is equivalent to $A$ having no constant part if $\dim S=1$; so the above result implies Silverman's theorem in the number field case. The dimension assumption ensures the intersection is at most just likely. 

\begin{proof}
Without loss of generality, we assume \eqref{eqn_all_endomorphism_appears} by possibly passing to a finite \'etale cover using Lem.~\ref{lem_existlevelstr}. 
Let $\sigma_1,\cdots,\sigma_r\in A(F)$ be a basis of $A(F)_\QQ$ and let $\underline{\sigma}$ be the image of $(\sigma_1,\cdots,\sigma_r):S\rightarrow \calA_S^{r}$. Let $\calB$ be the smallest (necessarily flat) group subscheme containing $\underline{\sigma}$.  Replacing $\sigma_1,\cdots,\sigma_r$ by $N\sigma_1,\cdots,N\sigma_r$ if necessary, we can and do assume $\calB$ is an abelian subscheme of $\calA_S^r$. Write
\[
g:=\rdim\calB, \, d:=\dim S, \, t:=g-d.
\]

Claim: $\underline{\sigma}$ is $t$-nondegenerate (or optimally nondegenerate). Indeed, take any nonzero abelian subscheme $\calB_1$ of $\calB$ and any surjective homomorphism $\varphi:\calB\rightarrow\calB_1$. Let $\iota_a:\calB_1\rightarrow \gothA_{g'}$ be a moduli sieve with base map $\iota:S\rightarrow \AAA_{g'}$. Since $\calB_1$ has maximal variation, $\iota$ is generically finite. So
\begin{equation}\label{eqn_}
    \dim \iota_a(\varphi(\underline{\sigma}))=\dim\underline{\sigma}\geq \dim \underline{\sigma}-\max\{0,g-g'-t\}.
\end{equation}
For the zero map $\calB\rightarrow 0$, we also have
\[
0\geq \dim \underline{\sigma} -\max\{0,g'-t\}=0.
\]
By Thm.~\ref{thm_criterion_for_Xdeg_equal_X}, these imply that $\underline{\sigma}$ is $t$-nondegenerate. 

Now we consider the following types of flat subgroup schemes of $\calA^r_S$:
\[
\calH_{\underline{\lambda}}:=\{(x_1,\cdots,x_r)\in \calA^r_S\mid \sum_i\lambda_ix_i=0\}
\]where $\underline{\lambda}=(\lambda_1,\cdots,\lambda_r)$ is a nonzero sequence of integers. Note that $\calX$ is not contained in $\calH_{\lambda}$ by choice of $\sigma_i$. So $\calH_\lambda\cap\calB$ is a strict flat group subscheme of $\calB$. The assumption on dimension of the simple factors ensures that the relative dimension of $\calH_\lambda\cap\calB$ is at most $g-d$. Hence Thm.~\ref{thm_main} tells us that the $K^{\mathrm{alg}}$-points of
\[
\Sigma_1:=\left(\calX\setminus\calX^{\mathrm{deg}}(t)\right)\cap \cup_{\lambda}\calH_\lambda
\]form a set of bounded total height.

If the specialization map $\spec_s:A(F)\rightarrow \calA_s(K^{\mathrm{alg}})$ is not injective, then there exists a nonzero $\underline{\lambda}\in\NN^r$ such that $\sum_i\lambda_i\sigma_i$ specializes to $0\in \calA_s(K^{\mathrm{alg}})$ for $s\in S(K^{\mathrm{alg}})$, which means
\[
(\sigma_1,\cdots,\sigma_r)_s\in \calX\cap\calH_\lambda.
\]Let $\pi:\calA^r_S\rightarrow S$ be the structure map. Then 
\[
\Sigma(A,A(F))\subseteq \pi\left( \calX^{\mathrm{deg}}(t) \right)\cup  \pi(\Sigma_1).
\]Here $\pi(\Sigma_1)$ is a set of bounded height. The proof is now complete.
\end{proof}

\subsection{Towards Zhang's conjecture}
Let us first recall the setup of Shou-Wu~Zhang's conjecture. Let $\calA\rightarrow C$ be an abelian scheme on a curve $C$ defined over a number field $K$. Let $\langle\cdot ,\cdot \rangle$ denote the  N\'eron--Tate height pairing associated to an ample line bundle $\calL\in \Pic_0(\calA/C)$. Consider a finitely generated torsion free subgroup $\Lambda\subseteq \calA(C)$ with linearly independent generators $\sigma_1,\cdots, \sigma_r$. In his 1998 ICM talk, S.~Zhang~\cite{Zhang_ICM98} defined the function 
 \[
h_{\Lambda}(s):=\det \left( \langle \sigma_i(s),\sigma_j(s)\rangle_{i,j}\right)
 \]for any $s\in C(K^{\mathrm{alg}})$, and proposed Conj.~\ref{conj_zhang}. This conjecture, in the case of constant abelian scheme, is particularly noteworthy as it implies both the Mordell conjecture and the Bogomolov conjecture for curves. We discuss the non-constant case below. It is also worth mentioning that the so-called \emph{relative Bogomolov conjecture}~\cite{DimitrovGaoHabegger_RBC} originates from Conj.~\ref{conj_zhang} in the special case where $\rank\Lambda=1$.

As noted in the ICM address, Zhang's conjecture was initially stated without a dimension assumption. However, Poonen soon found a counterexample involving a section of an elliptic surface. In this example, the section could possess infinitely many torsion points, yielding infinitely many points with $h_\Lambda(s)=0$. The picture here nowadays is clear: the dimensions of the section and the torsion sections add up to the total ambient dimension which made the intersection more than \emph{unlikely}. From this perspective, the bounded height result that follows is seen as optimal.

\begin{theorem}\label{thm_towardszhang}
    Let $\pi:\calA\rightarrow C$ be an abelian scheme on a curve $C$ defined over a number field $K$, and assume $\calA/C$ has no constant part. Let $\Lambda\subseteq \calA(C)$ be a finitely generated torsion free subgroup. There is $\epsilon>0$ such that 
    \[
    \{s\in C(K^{\mathrm{alg}})\mid h_{\Lambda}(s)<\epsilon\}
    \]is a set of bounded height.
\end{theorem}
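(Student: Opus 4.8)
The plan is to apply the main theorem (Theorem~\ref{thm_main}) to the section $\underline\sigma=(\sigma_1,\dots,\sigma_r)$ embedded in a suitable fibered power of $\calA$, and to feed it with an elementary ``geometry of numbers'' reduction translating smallness of the Gram determinant $h_\Lambda(s)=\det(\langle\sigma_i(s),\sigma_j(s)\rangle)$ into membership of $\underline\sigma(s)$ in an $\epsilon$-height neighborhood of the subgroup schemes $\calH_{\underline\lambda}$. First I would reduce to a convenient situation: after a finite \'etale base change (Lem.~\ref{lem_existlevelstr}), harmless for the conclusion, assume \eqref{eqn_all_endomorphism_appears}; choose linearly independent generators $\sigma_1,\dots,\sigma_r$ of $\Lambda$ and replace them by $N\sigma_i$ for a suitable $N$ (this only rescales $h_\Lambda$ by $N^{2r}$) so that the smallest group subscheme $\calB$ of $\calA^r_C$ containing $\calX:=\underline\sigma$ is an abelian subscheme. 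Put $g_\calB:=\rdim\calB$, $t:=g_\calB-1$, and equip $\calB$ with the restriction of a box product of the given ample $\calL$ on $\calA/C$.

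\textbf{Nondegeneracy and invoking the main theorem.} Since $\calA/C$ has no constant part, neither does $\calB$, so over the curve $C$ every simple isogeny factor of every abelian subscheme of $\calB$ has maximal variation. Arguing exactly as in the proof of Theorem~\ref{thm_spec_maximal_variation} (the case $\dim C=1$) via the criterion of Theorem~\ref{thm_criterion_for_Xdeg_equal_X} applied inside $\calB$: for any nonzero quotient $\calB/\calB'$ the period map is generically finite, hence $\dim\varphi_{/\calB'}(\calX)=\dim\calX=1$, while for $\calB'=\calB$ one has $0\ge 1-\max\{0,g_\calB-t\}=0$; thus $\calX$ is $t$-nondegenerate, and $\calX^{\mathrm{deg}}(t)$ is a finite set of closed points of the curve $\calX$. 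Theorem~\ref{thm_main} then gives $\epsilon_0,c_0>0$ such that every $K^{\mathrm{alg}}$-point of $\bigl(\calX\setminus\calX^{\mathrm{deg}}(t)\bigr)\cap C(\epsilon_0,\calB_{\le t})$ has bounded total height. Since points of $C$ lying over the finitely many closed points of $\calX^{\mathrm{deg}}(t)$ contribute only bounded height anyway, it remains to produce $\epsilon>0$ with the implication $h_\Lambda(s)<\epsilon\ \Longrightarrow\ \underline\sigma(s)\in C(\epsilon_0,\calB_{\le t})$.

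\textbf{From the Gram determinant to $\calB_{\le t}$.} Fix $s$ and set $q_s(\underline\lambda):=\hat h\bigl(\sum_i\lambda_i\sigma_i(s)\bigr)$, a positive semidefinite quadratic form on $\ZZ^r$ with Gram matrix $G_s$, so $h_\Lambda(s)=\det G_s$. If $\det G_s=0$ the points $\sigma_i(s)$ are $\QQ$-dependent modulo torsion, so some $\sum_i\lambda_i\sigma_i(s)$ ($\underline\lambda\ne 0$) is torsion and $\underline\sigma(s)$ already lies in $\calH_{\underline\lambda'}\cap\calB\subseteq\calB_{\le t}$ for a suitable $\underline\lambda'\ne 0$ --- here the key point, from the reduction, is that $\calH_{\underline\mu}\cap\calB\subsetneq\calB$ for every $\underline\mu\ne 0$ by linear independence, hence $\rdim(\calH_{\underline\mu}\cap\calB)\le t$. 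If $0<\det G_s<\epsilon$, Minkowski's first theorem yields $\underline\lambda\in\ZZ^r\setminus\{0\}$ with $q_s(\underline\lambda)\le\gamma_r(\det G_s)^{1/r}<\gamma_r\epsilon^{1/r}$, $\gamma_r$ depending only on $r$. Write $z:=\sum_i\lambda_i\sigma_i(s)=\phi_{\underline\lambda}(\underline\sigma(s))$ (so $\hat h(z)=q_s(\underline\lambda)$) where $\phi_{\underline\lambda}:\calA^r_C\to\calA$ is the map with kernel $\calH_{\underline\lambda}$, and let $\calC:=\phi_{\underline\lambda}(\calB)$, a nonzero abelian subscheme of $\calA$. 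Using Poincar\'e reducibility for $\phi_{\underline\lambda}|_\calB:\calB\to\calC$ I would produce an integral homomorphism $\theta:\calC\to\calB$ and $m\in\NN$ with $\phi_{\underline\lambda}\circ\theta=[m]_\calC$; picking an $m$-division point $z''$ of $z$ in $\calC_s$ and setting $\psi:=\theta(z'')\in\calB_s$, the point $x:=\underline\sigma(s)-\psi$ lies in $\ker(\phi_{\underline\lambda}|_\calB)\subseteq\calB_{\le t}$ and $\underline\sigma(s)=x+\psi$. To control $\hat h(\psi)$ uniformly in $\underline\lambda$ one uses the theorem of the cube: $\phi_{\underline\lambda}^*\calL_\calA\equiv\sum_i\lambda_i^2\,\pr_i^*\calL_\calA\preceq|\underline\lambda|^2\calL_{\calA^r_C}$, which bounds the degrees of the relevant isogenies polynomially in $|\underline\lambda|$; combined with the $|\underline\lambda|^{-2}$ gain from dividing by $m\sim|\underline\lambda|^2$ this gives $\hat h(\psi)\le C_1\,q_s(\underline\lambda)/|\underline\lambda|^2\le C_1\gamma_r\,\epsilon^{1/r}$ with $C_1$ independent of $\underline\lambda$ and $s$. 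Choosing $\epsilon$ with $C_1\gamma_r\epsilon^{1/r}<\epsilon_0$ forces $\hat h(\psi)<\epsilon_0\le\epsilon_0\max\{1,h(x)\}$, i.e.\ $\underline\sigma(s)\in C(\epsilon_0,\calB_{\le t})$.

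\textbf{Conclusion and the main obstacle.} Combining the previous paragraphs, any $s\in C(K^{\mathrm{alg}})$ with $h_\Lambda(s)<\epsilon$ has $h_{\overbar\calM}(\underline\sigma(s))\le c_0$ (or lies over one of finitely many closed points); since the total height dominates the height of $\pi(\underline\sigma(s))=s$ up to a constant, $\{s\in C(K^{\mathrm{alg}}):h_\Lambda(s)<\epsilon\}$ has bounded height, and Northcott's theorem gives the last ``in particular'' clause. I expect the genuinely delicate point to be the final estimate in the third paragraph: passing from a Minkowski-short but possibly large-norm integer relation back to an \emph{honest} $\epsilon_0$-neighborhood of a group subscheme \emph{of $\calB$} (not merely of $\calA^r_C$), with all height-comparison constants uniform over the infinitely many $\underline\lambda$; the cube-formula domination $\phi_{\underline\lambda}^*\calL_\calA\preceq|\underline\lambda|^2\calL_{\calA^r_C}$, together with the saving from passing to $|\underline\lambda|^2$-division points, is exactly what makes this uniformity work.
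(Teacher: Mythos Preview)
Your overall plan --- reduce to $\underline\sigma\subset\calB\subset\calA^r_C$, check $t$-nondegeneracy with $t=g_\calB-1$ via Theorem~\ref{thm_criterion_for_Xdeg_equal_X}, apply Theorem~\ref{thm_main}, and then convert ``$h_\Lambda(s)$ small'' into ``$\underline\sigma(s)\in C(\epsilon_0,\calB_{\le t})$'' --- is exactly the paper's. The first two paragraphs are fine.

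The gap is in your third paragraph. After Minkowski you obtain $\underline\lambda$ with $q_s(\underline\lambda)$ absolutely small and then try to produce a small-height preimage $\psi\in\calB_s$ of $z=\phi_{\underline\lambda}(\underline\sigma(s))$ by choosing, for each $\underline\lambda$, a Poincar\'e section $\theta:\calC\to\calB$ with $\phi_{\underline\lambda}\circ\theta=[m]$. Two problems: (i) the displayed identity $\phi_{\underline\lambda}^*\calL_\calA\equiv\sum_i\lambda_i^2\,\pr_i^*\calL_\calA$ is wrong --- the theorem of the cube produces cross terms $\lambda_i\lambda_j\bigl((\pr_i+\pr_j)^*-\pr_i^*-\pr_j^*\bigr)\calL_\calA$, so only an inequality $\phi_{\underline\lambda}^*\calL_\calA\preceq c|\underline\lambda|^2\calL_{\calA^r}$ holds; (ii) more seriously, $\phi_{\underline\lambda}|_\calB:\calB\to\calC$ has positive-dimensional kernel in general, so it is not an isogeny, ``$m\sim|\underline\lambda|^2$'' has no meaning, and the pair $(\theta,m)$ is far from canonical (you may add any map $\calC\to\ker(\phi_{\underline\lambda}|_\calB)$ to $\theta$). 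Nothing you wrote pins down $\theta^*\calL_\calB$ uniformly in $\underline\lambda$, so the asserted bound $\hat h(\psi)\le C_1\,q_s(\underline\lambda)/|\underline\lambda|^2$ is unjustified.

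The paper sidesteps this entirely by replacing Minkowski's \emph{absolute} bound with a \emph{relative} one: the smallest eigenvalue of the Gram matrix is at most $h_\Lambda(s)^{1/r}$, and a rational approximation of the corresponding eigenvector gives $\underline\mu\ne 0$ with $q(\underline\mu)<\epsilon_0\max_i\mu_i^2$. Then the single division point $P_1=(z/\mu_1,0,\dots,0)\in\calA^r_s$ (with $\mu_1=\max_i|\mu_i|$) already satisfies $\hat h(P_1)=q(\underline\mu)/\mu_1^2<\epsilon_0$, and $P_2:=\underline\sigma(s)-P_1\in\calH_{\underline\mu}$. One \emph{fixed} projection $p_1\circ\varphi:\calA^r\to\calB$ coming from a single Poincar\'e splitting $\calA^r\sim\calB\times\calB'$ (chosen once, with the N\'eron--Tate height on $\calA^r$ pulled back through $\varphi$ so that projecting only decreases $\hat h$) then brings the decomposition into $\calB$. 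No $\underline\lambda$-dependent sections, hence no uniformity issue; this is the missing idea in your argument.
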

In particular, by the Northcott property, there are only finitely many closed points $s\in C$ with $[\kappa(s):K]\leq d$ and $h_\Lambda(s)<\epsilon$.
\begin{proof}
 Let $\sigma_1,\cdots,\sigma_r$ be linearly independent generators of $\Lambda$, and let $\underline{\sigma}$ be the image of $(\sigma_1,\cdots,\sigma_r):S\rightarrow \calA_C^{r}$. Let $\calB$ be the smallest (necessarily flat) group subscheme containing $\calX$.  Replacing $\sigma_1,\cdots,\sigma_r$ by $N\sigma_1,\cdots,N\sigma_r$ if necessary, we can and do assume $\calB$ is an abelian subscheme of $\calA_S^r$, with relative dimension denoted by $g$. We also fix an isogeny decomposition $\varphi:\calA\rightarrow \calB\times \calB'$ such that $p_1\circ\varphi|_{\calB}$ is the identity map where $p_1:\calB\times \calB'\rightarrow \calB$ is the projection. For convenience, we take the fiber-wise N\'eron--Tate height on $\calA$ to be induced from two fiber-wise N\'eron--Tate heights on $\calB,\calB'$ through pullback by $\varphi$. By abuse of notation, all N\'eron--Tate heights are denoted by $\hat h$.
 
 As in the proof of Thm.~\ref{thm_spec_maximal_variation}, the curve $\underline{\sigma}$ is optimally nondegenerate. Therefore, by Thm.~\ref{thm_main}, there exists $\epsilon_0>0$ such that the closed points of 
 \[
    \Sigma:=\pi\left(\underline{\sigma}\cap C(\epsilon_0,\calB_{\leq g-1})\right)
 \]have bounded height.

Set $\epsilon:=(\epsilon_0/2\sqrt{r})^r$. \emph{Claim}: if $s\in C(K^{\mathrm{alg}})$ with $\hat h_\Lambda(s)<\epsilon$, then $s\in\Sigma$. Let $s$ be such a point. To simplify notation, denote the matrix $\left(\langle \sigma_i(s),\sigma_j(s)\rangle\right)_{i,j}$ by $M_s$. Consider the quadratic form $q(\underline{x}):=\underline{x}M_s\underline{x}^t$ for $\underline{x}\in \RR^r$. For any $\underline{\lambda}=(\lambda_1,\cdots,\lambda_r)\in\ZZ^r$, notice that
    \[
    q(\underline{\lambda})=\langle  \underline{\lambda}\cdot\underline{\sigma}(s),\underline{\lambda}\cdot\underline{\sigma}(s)\rangle=\hat h(\sum_i \lambda_i \sigma_i(s))\geq 0.
    \] Hence $q(\underline{x})$ is positive semi-definite by continuity.

By linear algebra, we know that if $\lVert \underline{x}\rVert=1$ where $\lVert\cdot\rVert$ is the standard norm, then $q(\underline{x})$ has minimum equal to the smallest eigenvalue (or equally singular value) $t_0$ of $M_s$, achieved by a corresponding eigenvector $\underline{x_0}\in\RR^r$. Since $t_0^r\leq h_\Lambda(s)<\epsilon$, we derive $t_0<\epsilon_0/2\sqrt{r}$. By approximating $\underline{x_0}$ using vectors in $\QQ^r$, we can at least find $\underline{\mu}\in \ZZ^r\setminus\{0\}$ such that
\[
q(\underline{\mu})< \frac{\epsilon_0}{\sqrt{r}} \lVert\underline{\mu}\rVert^2\leq \epsilon_0\max_{i}\{\mu_i\}^2.
\]Without loss of generality, assume $\max_{i}\{\mu_i\}=\mu_1$. This implies that the point
\[
P_1=(\sum_i \mu_i \sigma_i(s)/\mu_1,0,\cdots,0)\in \calA^r_s
\]has height $\hat h(P_1)<\epsilon_0$. Define $Q_1:=p_1\circ\varphi(P_1)\in \calB$. Then $\hat h(Q_1)<\epsilon_0$.

Consider the flat subgroup scheme 
\[
\calH_{\underline{\mu}}:=\{(a_1,\cdots,a_r)\in \calA_S^r\mid \sum_i\mu_i a_i=0\}.
\]Since $\sigma_1,\cdots,\sigma_r$ are linearly independent, $\underline{\sigma}$ is not contained in $\calH_{\underline{\mu}}$. Hence $\calH_{\underline{\mu}}\cap\calB$ is a strict group subscheme in $\calB$. In particular, $\calH_{\underline{\mu}}\subseteq\calB_{\leq g-1}$. Notice that $P_2:=\underline{\sigma}(s)-P_1\in \calH_{\underline{\mu}}$. Consider the projection $Q_2:=p_1\circ\varphi(P_2)=\underline{\sigma}(s)-Q_1\in \calH_{\underline{\mu}}\cap\calB$. Then we see that 
\[
\underline{\sigma}(s)=Q_2+Q_1\in C(\calH_{\epsilon_0,\underline{\mu}}\cap\calB)\subseteq C(\epsilon_0,\calB_{\leq g-1}).
\]In particular, $s\in \Sigma$. The proof is now complete.
\end{proof}

\subsection{A conjecture}
The general Conjecture \ref{conj_pink} of Pink  suggests to consider not only \emph{flat} group subschemes of $\calA$. Denote by
\[
\calA_{(\leq t)}:=\bigcup_{s\in S}\bigcup_{\substack{H\subseteq \calA_s \\ \dim H\leq r}}H
\]with the union taken over all points $s\in S$ and all group subschemes $H\subseteq\calA_s$ of dimension at most $r$. Note that $\calA_{\leq t}\subseteq\calA_{(\leq t)}$ and the inclusion is strict in general. However, there are still some interesting cases with $\calA_{\leq t}=\calA_{(\leq t)}$; for instance, this is the case if $\calA$ is a fiber product of elliptic fibrations over $S$.  We propose the following conjecture which is an idealistic extension of Thm.~\ref{thm_main}.
\begin{conj}\label{conj_height}
     For $t\in \NN$ and a subvariety $\calX\subseteq\calA$, the closed points of
    \[
    \left(\calX\setminus \calX^{\mathrm{deg}}(t)\right)\cap \calA_{(\leq t)}
    \]form a set of bounded total height.
\end{conj}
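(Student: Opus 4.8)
Toward Conjecture~\ref{conj_height}, the plan is to \emph{stratify the base $S$ by special subvarieties} and reduce to Theorem~\ref{thm_main} on each stratum. The point of departure is that $\calA_{(\leq t)}$ differs from $\calA_{\leq t}$ exactly by those group subvarieties of the fibers $\calA_s$ that fail to propagate to flat group subschemes over all of $S$, and such a group subvariety always propagates over the smallest special subvariety of $S$ passing through $s$. Concretely, I would first prove the set-theoretic identity
\[
\calA_{(\leq t)}\;=\;\bigcup_{S'}\bigl(\calA|_{S'}\bigr)_{\leq t},
\]
the union being over the countable collection of normalizations of special subvarieties $S'\subseteq S$ of the period map $S\to\AAA_g$ (each normal, quasi-projective and defined over a number field, with $S'=S$ recovering $\calA_{\leq t}$). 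Indeed, if $H\subseteq\calA_s$ is a group subscheme of dimension $\leq t$, its identity component is an abelian subvariety corresponding to a sub-representation of the Mumford--Tate group at the Hodge-generic point $s$ of $S':=\langle s\rangle$; being monodromy-invariant, it underlies a sub-variation of Hodge structure and hence spreads to an abelian subscheme of $\calA|_{S'}$, and together with a suitable component of $\calA|_{S'}[N]$ accounting for the torsion cosets, $H$ is seen to lie inside $\bigl(\calA|_{S'}\bigr)_{\leq t}$.

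Next I would show that the degeneracy locus behaves monotonically under this restriction. The vertical defect $\delta_v(\calY)$ of a subvariety $\calY\subseteq\calX$ with $\pi(\calY)\subseteq S'$ is computed from its weakly special closure, which depends only on the smallest special subvariety of the base containing $\pi(\calY)$ — and that subvariety is the same whether one works inside $\calA/S$ or inside $\calA|_{S'}/S'$. It follows that $\calX|_{S'}^{\mathrm{deg}}(t)\subseteq\calX^{\mathrm{deg}}(t)\cap\calA|_{S'}$, where $\calX|_{S'}:=\calX\cap\calA|_{S'}$, so that $\bigl(\calX\setminus\calX^{\mathrm{deg}}(t)\bigr)\cap\calA|_{S'}\subseteq\calX|_{S'}\setminus\calX|_{S'}^{\mathrm{deg}}(t)$. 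Applying Theorem~\ref{thm_main} to the abelian scheme $\calA|_{S'}\to S'$ and to the irreducible components of $\calX|_{S'}$ (first passing, as in Proposition~\ref{prop_descent} and Definition~\ref{def_degeneracy_over_number_field}, to an \'etale cover carrying a level structure so that $\calX|_{S'}^{\mathrm{deg}}(t)$ descends over the number field), one obtains for each $S'$ a bound $c_{S'}$ for the total height of the $K^{\mathrm{alg}}$-points of $\bigl(\calX\setminus\calX^{\mathrm{deg}}(t)\bigr)\cap\bigl(\calA|_{S'}\bigr)_{\leq t}$.

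The main obstacle is \textbf{uniformity across the infinitely many strata}: piecing the bounds together requires $\sup_{S'}c_{S'}<\infty$. Special subvarieties of bounded complexity are finite in number, and a double induction on $\dim S$ and $t$ would handle those; the real difficulty is that a point of $\calX$ lying on a group subscheme supported over a special subvariety of very large complexity could a priori have arbitrarily large height. Resolving this would demand a version of Theorem~\ref{thm_main} that is uniform over the family of special subvarieties of a fixed abelian scheme — most naturally obtained by working once and for all inside the universal family $\gothA_g$, stratified by its special subvarieties, and establishing a uniform lower bound for the pertinent self-intersection numbers, equivalently a uniform non-degeneracy statement for the associated Betti forms. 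Such uniformity, in the spirit of the uniform Mordell--Lang estimates of Dimitrov--Gao--Habegger \cite{DimitrovGaoHabegger_UML}, is exactly what is presently missing, which is why the statement is left as a conjecture; a reduction through Pink's Conjecture~\ref{conj_pink} in the unlikely range would run into the same open input.
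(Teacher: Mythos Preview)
The statement is a \emph{conjecture}; the paper does not prove it. The only commentary the paper offers is a one-line heuristic (the degeneracy locus already records all subgroup structure) together with the expectation that ``a certain generalization of the compactness result in Prop.~\ref{prop_compactness}'' should be the key missing step. So there is no proof to compare against, only a suggested line of attack.

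Your proposal is a genuine strategy rather than a proof, and you say so. The stratification identity $\calA_{(\leq t)}=\bigcup_{S'}(\calA|_{S'})_{\leq t}$ over special subvarieties $S'\subseteq S$ is correct: an abelian subvariety of a fibre spreads over the Hodge locus of its projector, hence over the special closure of the point, and torsion cosets spread for free. Your monotonicity claim $\calX|_{S'}^{\mathrm{deg}}(t)\subseteq\calX^{\mathrm{deg}}(t)\cap\calA|_{S'}$ is also correct, because by Definition~\ref{def_ws}(2) the weakly special closure of $\calY$ is computed inside $\calA_{\pi(\calY)}$ and is therefore insensitive to whether one regards $\pi(\calY)$ as sitting in $S$ or in $S'$. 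Applying Theorem~\ref{thm_main} stratum by stratum is then legitimate, and you have put your finger on exactly the obstruction: the constants $c_{S'}$ are produced one special subvariety at a time, with no control as the complexity of $S'$ grows.

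Your route and the paper's hint are different in flavour but not in difficulty. The paper proposes to strengthen the \emph{input} to the existing argument --- a compactness statement for homomorphisms that would simultaneously capture the non-flat group subschemes appearing over special loci --- whereas you propose to iterate the \emph{output} over a stratification and then demand uniformity. In practice these converge: making your $c_{S'}$ uniform would require uniform lower bounds on the self-intersections $[(f^*\widetilde\calL_\calB)^d]_{\calX|_{S'}}$ across all $S'$, which is tantamount to controlling the relevant homomorphism spaces in a single compact family --- precisely a relative form of Proposition~\ref{prop_compactness}. Neither approach is currently within reach, which is why the statement remains a conjecture.
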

An analogue containing an $\epsilon$-neighborhood can be stated as well. 
One heuristic reason for the conjecture is that $\calX^{\mathrm{deg}}(t)$ already takes into account all the group subscheme structure in $\calA$. For instance, in Thm.~\ref{thm_main}, it is totally valid to consider a subvariety $\calX$ in a fiber of $\calA/S$ and the theorem in this form provides a weaker result, since only the more coarse subgroup structure of $\calA_\eta$ is considered. We expect a certain generalization of the compactness result in Prop.~\ref{prop_compactness} to be a key step towards the conjecture.


\bibliographystyle{abbrv}
\bibliography{arithmetic}

\begin{thebibliography}{10}

\bibitem{ACM_Image}
J.~Achter and S.~Casalaina-Martin.
\newblock Images of abelian schemes.
\newblock {\em Available on arXiv}, 2023.

\bibitem{BC_PowersofEC}
F.~Barroero and L.~Capuano.
\newblock Linear relations in families of powers of elliptic curves.
\newblock {\em Algebra Number Theory}, 10(1):195--214, 2016.

\bibitem{BC_productsofEC}
F.~Barroero and L.~Capuano.
\newblock Unlikely intersections in products of families of elliptic curves and the multiplicative group.
\newblock {\em Q. J. Math.}, 68(4):1117--1138, 2017.

\bibitem{BC_CurvesinAbelianScheme}
F.~Barroero and L.~Capuano.
\newblock Unlikely intersections in families of abelian varieties and the polynomial {P}ell equation.
\newblock {\em Proc. Lond. Math. Soc. (3)}, 120(2):192--219, 2020.

\bibitem{BT_Capacity}
E.~Bedford and B.~A. Taylor.
\newblock A new capacity for plurisubharmonic functions.
\newblock {\em Acta Math.}, 149(1-2):1--40, 1982.

\bibitem{BMZ_99}
E.~Bombieri, D.~Masser, and U.~Zannier.
\newblock Intersecting a curve with algebraic subgroups of multiplicative groups.
\newblock {\em Internat. Math. Res. Notices}, (20):1119--1140, 1999.

\bibitem{BMZ_Anomalous}
E.~Bombieri, D.~Masser, and U.~Zannier.
\newblock Anomalous subvarieties---structure theorems and applications.
\newblock {\em Int. Math. Res. Not. IMRN}, (19):Art. ID rnm057, 33, 2007.

\bibitem{BMZ_plane08}
E.~Bombieri, D.~Masser, and U.~Zannier.
\newblock Intersecting a plane with algebraic subgroups of multiplicative groups.
\newblock {\em Ann. Sc. Norm. Super. Pisa Cl. Sci. (5)}, 7(1):51--80, 2008.

\bibitem{CGHX_GBC}
S.~Cantat, Z.~Gao, P.~Habegger, and J.~Xie.
\newblock The geometric {B}ogomolov conjecture.
\newblock {\em Duke Math. J.}, 170(2):247--277, 2021.

\bibitem{Conrad_Trace}
B.~Conrad.
\newblock Chow's {$K/k$}-image and {$K/k$}-trace, and the {L}ang-{N}\'{e}ron theorem.
\newblock {\em Enseign. Math. (2)}, 52(1-2):37--108, 2006.

\bibitem{Demailly_MAoperator}
J.-P. Demailly.
\newblock Monge-{A}mp\`ere operators, {L}elong numbers and intersection theory.
\newblock In {\em Complex analysis and geometry}, Univ. Ser. Math., pages 115--193. Plenum, New York, 1993.

\bibitem{Demailly_Complex}
J.-P. Demailly.
\newblock Complex analytic and differential geometry.
\newblock {A}vailable at \url{https://www-fourier.ujf-grenoble.fr/~demailly/manuscripts/agbook.pdf}, 2012.

\bibitem{DimitrovGaoHabegger_UML}
V.~Dimitrov, Z.~Gao, and P.~Habegger.
\newblock Uniformity in {M}ordell-{L}ang for curves.
\newblock {\em Ann. of Math. (2)}, 194(1):237--298, 2021.

\bibitem{DimitrovGaoHabegger_RBC}
V.~Dimitrov, Z.~Gao, and P.~Habegger.
\newblock A consequence of the relative {B}ogomolov conjecture.
\newblock {\em J. Number Theory}, 230:146--160, 2022.

\bibitem{Faltings_DAonAV}
G.~Faltings.
\newblock Diophantine approximation on abelian varieties.
\newblock {\em Ann. of Math. (2)}, 133(3):549--576, 1991.

\bibitem{FC_DAV}
G.~Faltings and C.-L. Chai.
\newblock {\em Degeneration of abelian varieties}, volume~22 of {\em Ergebnisse der Mathematik und ihrer Grenzgebiete (3) [Results in Mathematics and Related Areas (3)]}.
\newblock Springer-Verlag, Berlin, 1990.
\newblock With an appendix by David Mumford.

\bibitem{Gao_APZ}
Z.~Gao.
\newblock A special point problem of {A}ndr\'{e}-{P}ink-{Z}annier in the universal family of {A}belian varieties.
\newblock {\em Ann. Sc. Norm. Super. Pisa Cl. Sci. (5)}, 17(1):231--266, 2017.

\bibitem{Gao_towards}
Z.~Gao.
\newblock Towards the {A}ndre-{O}ort conjecture for mixed {S}himura varieties: the {A}x-{L}indemann theorem and lower bounds for {G}alois orbits of special points.
\newblock {\em J. Reine Angew. Math.}, 732:85--146, 2017.

\bibitem{Gao_Betti}
Z.~Gao.
\newblock Generic rank of {B}etti map and unlikely intersections.
\newblock {\em Compos. Math.}, 156(12):2469--2509, 2020.

\bibitem{Gao_mixed}
Z.~Gao.
\newblock Mixed {A}x-{S}chanuel for the universal abelian varieties and some applications.
\newblock {\em Compos. Math.}, 156(11):2263--2297, 2020.

\bibitem{Gao_HDR}
Z.~Gao.
\newblock Distribution of points on varieties: various aspects and interactions.
\newblock {A}vailable at \url{https://ziyangjeremygao.github.io/articles/hdr.pdf}, 2021.

\bibitem{Gao_ICCM}
Z.~Gao.
\newblock Bigness of the tautological line bundle and degeneracy loci in families of abelian varieties.
\newblock {A}vailable at \url{https://ziyangjeremygao.github.io/articles/ALBBigness.pdf}, 2024.

\bibitem{GH_RMM}
Z.~Gao and P.~Habegger.
\newblock The {R}elative {M}anin-{M}umford {C}onjecture.
\newblock {\em Available on arXiv}, 2023.

\bibitem{GH_principles}
P.~Griffiths and J.~Harris.
\newblock {\em Principles of algebraic geometry}.
\newblock Wiley Classics Library. John Wiley \& Sons, Inc., New York, 1994.
\newblock Reprint of the 1978 original.

\bibitem{EGA_IV3}
A.~Grothendieck.
\newblock \'{E}l\'{e}ments de g\'{e}om\'{e}trie alg\'{e}brique. {IV}. \'{E}tude locale des sch\'{e}mas et des morphismes de sch\'{e}mas. {III}.
\newblock {\em Inst. Hautes \'{E}tudes Sci. Publ. Math.}, (28):255, 1966.

\bibitem{Guo_IF}
R.~Guo.
\newblock An integration formula of {C}hern forms on quasi-projective varieties.
\newblock {\em Available on arXiv}, 2023.

\bibitem{Habegger_BHConAV}
P.~Habegger.
\newblock Intersecting subvarieties of abelian varieties with algebraic subgroups of complementary dimension.
\newblock {\em Invent. Math.}, 176(2):405--447, 2009.

\bibitem{Habegger_BHCtori}
P.~Habegger.
\newblock On the bounded height conjecture.
\newblock {\em Int. Math. Res. Not. IMRN}, (5):860--886, 2009.

\bibitem{HP_ominimality}
P.~Habegger and J.~Pila.
\newblock O-minimality and certain atypical intersections.
\newblock {\em Ann. Sci. \'{E}c. Norm. Sup\'{e}r. (4)}, 49(4):813--858, 2016.

\bibitem{Klingler_Hodge}
B.~Klingler.
\newblock Hodge loci and atypical intersections: conjectures.
\newblock {\em Available on arXiv}, 2017.

\bibitem{KUY_survey}
B.~Klingler, E.~Ullmo, and A.~Yafaev.
\newblock Bi-algebraic geometry and the {A}ndr\'{e}-{O}ort conjecture.
\newblock In {\em Algebraic geometry: {S}alt {L}ake {C}ity 2015}, volume~97 of {\em Proc. Sympos. Pure Math.}, pages 319--359. Amer. Math. Soc., Providence, RI, 2018.

\bibitem{Kuhne_BHCsemiAV}
L.~K\"{u}hne.
\newblock The bounded height conjecture for semiabelian varieties.
\newblock {\em Compos. Math.}, 156(7):1405--1456, 2020.

\bibitem{LangNeron_rationalpoints}
S.~Lang and A.~N\'{e}ron.
\newblock Rational points of abelian varieties over function fields.
\newblock {\em Amer. J. Math.}, 81:95--118, 1959.

\bibitem{Lazarsfeld_PositivityI}
R.~Lazarsfeld.
\newblock {\em Positivity in algebraic geometry. {I}}, volume~48 of {\em Ergebnisse der Mathematik und ihrer Grenzgebiete. 3. Folge. A Series of Modern Surveys in Mathematics [Results in Mathematics and Related Areas. 3rd Series. A Series of Modern Surveys in Mathematics]}.
\newblock Springer-Verlag, Berlin, 2004.
\newblock Classical setting: line bundles and linear series.

\bibitem{MasserZannier_torsionEC0}
D.~Masser and U.~Zannier.
\newblock Torsion anomalous points and families of elliptic curves.
\newblock {\em C. R. Math. Acad. Sci. Paris}, 346(9-10):491--494, 2008.

\bibitem{Masser_Specialization}
D.~W. Masser.
\newblock Specializations of endomorphism rings of abelian varieties.
\newblock {\em Bull. Soc. Math. France}, 124(3):457--476, 1996.

\bibitem{Milne_AV}
J.~S. Milne.
\newblock Abelian varieties.
\newblock In {\em Arithmetic geometry ({S}torrs, {C}onn., 1984)}, pages 103--150. Springer, New York, 1986.

\bibitem{Mumford_GIT}
D.~Mumford, J.~Fogarty, and F.~Kirwan.
\newblock {\em Geometric invariant theory}, volume~34 of {\em Ergebnisse der Mathematik und ihrer Grenzgebiete (2) [Results in Mathematics and Related Areas (2)]}.
\newblock Springer-Verlag, Berlin, third edition, 1994.

\bibitem{PS_Analytic09}
Y.~Peterzil and S.~Starchenko.
\newblock Complex analytic geometry and analytic-geometric categories.
\newblock {\em J. Reine Angew. Math.}, 626:39--74, 2009.

\bibitem{PilaZannier_MM}
J.~Pila and U.~Zannier.
\newblock Rational points in periodic analytic sets and the {M}anin-{M}umford conjecture.
\newblock {\em Atti Accad. Naz. Lincei Rend. Lincei Mat. Appl.}, 19(2):149--162, 2008.

\bibitem{Pink_comb_published}
R.~Pink.
\newblock A combination of the conjectures of {M}ordell-{L}ang and {A}ndr\'{e}-{O}ort.
\newblock In {\em Geometric methods in algebra and number theory}, volume 235 of {\em Progr. Math.}, pages 251--282. Birkh\"{a}user Boston, Boston, MA, 2005.

\bibitem{Pink_comb_unpublished}
R.~Pink.
\newblock A common generalization of the conjectures of {A}ndr\'e-{O}ort, {M}anin-{M}umford, and {M}ordell-{L}ang.
\newblock {A}vailable at \url{https://people.math.ethz.ch/~pink/ftp/AOMMML.pdf}, 2005.

\bibitem{Raynaud_BookFA}
M.~Raynaud.
\newblock {\em Faisceaux amples sur les sch\'{e}mas en groupes et les espaces homog\`enes}.
\newblock Lecture Notes in Mathematics, Vol. 119. Springer-Verlag, Berlin-New York, 1970.

\bibitem{Remond_IntersectionII}
G.~R\'{e}mond.
\newblock Intersection de sous-groupes et de sous-vari\'{e}t\'{e}s. {II}.
\newblock {\em J. Inst. Math. Jussieu}, 6(2):317--348, 2007.

\bibitem{Remond_IntersectionIII}
G.~R\'{e}mond.
\newblock Intersection de sous-groupes et de sous-vari\'{e}t\'{e}s. {III}.
\newblock {\em Comment. Math. Helv.}, 84(4):835--863, 2009.

\bibitem{Silverberg_fod}
A.~Silverberg.
\newblock Fields of definition for homomorphisms of abelian varieties.
\newblock {\em J. Pure Appl. Algebra}, 77(3):253--262, 1992.

\bibitem{Silverman_Specialization}
J.~H. Silverman.
\newblock Heights and the specialization map for families of abelian varieties.
\newblock {\em J. Reine Angew. Math.}, 342:197--211, 1983.

\bibitem{UY_special}
E.~Ullmo and A.~Yafaev.
\newblock A characterization of special subvarieties.
\newblock {\em Mathematika}, 57(2):263--273, 2011.

\bibitem{Viada_optimal}
E.~Viada.
\newblock The optimality of the bounded height conjecture.
\newblock {\em J. Th\'{e}or. Nombres Bordeaux}, 21(3):769--784, 2009.

\bibitem{YuanZhang_ALB}
X.~Yuan and S.-W. Zhang.
\newblock Adelic line bundles on quasi-projective varieties.
\newblock {\em Available on arXiv}, 2024.

\bibitem{Zhang_ICM98}
S.-W. Zhang.
\newblock Small points and {A}rakelov theory.
\newblock In {\em Proceedings of the {I}nternational {C}ongress of {M}athematicians, {V}ol. {II} ({B}erlin, 1998)}, number Extra Vol. II, pages 217--225, 1998.

\bibitem{Zilber_exp}
B.~Zilber.
\newblock Exponential sums equations and the {S}chanuel conjecture.
\newblock {\em J. London Math. Soc. (2)}, 65(1):27--44, 2002.

\end{thebibliography}

\end{document}